\numberwithin{equation}{section}
\newcommand{\R}{\mathbb{R}}
\newcommand{\T}{\mathbb{T}}
\newcommand{\C}{\mathbb{C}}
\newcommand{\Z}{\mathbb{Z}}
\numberwithin{equation}{section} 
\newcommand{\tp}{\mathfrak{p}}
\newtheorem{theorem}{Theorem}[section]
\newtheorem{assumption}{Assumption}[section]
\newtheorem{definition}{Definition}[section]
\newtheorem{lemma}{Lemma}[section]
\newtheorem{proposition}{Proposition}[section]
\newtheorem{corollary}{Corollary}[section]
\newtheorem{remark}{Remark}[section]
\newcommand{\p}{\partial} 
\newcommand{\lb}{\langle}
\newcommand{\rb}{\rangle} 
\newcommand{\ept}{\epsilon^{-\frac{1}{3}}}
\newcommand{\epbp}{\epsilon^{\frac{1 + 2 \beta}{3}}}
\newcommand{\epbn}{\epsilon^{-\frac{1 + 2 \beta}{3}}}
\newcommand{\epbv}{\epsilon^{-\frac{1 - \beta}{3}}}
\newcommand{\red}[1]{\textcolor{red}{#1}}
\newcommand{\tc}[2]{T_{#1#2}} 
\newcommand{\gr}{\mathcal{G}_{k} (y,z;\Lambda)} 
\newcommand{\grl}{\mathcal{G}_{k_\ell} (y,z;\Lambda_\ell)}
\newcommand{\ai}{A_{\Theta}^{-1}}
\newcommand{\ail}{A_{\Theta_\ell}^{-1}}
\newcommand{\sd}[1]{S_{ #1  \delta(\Lambda)}}
\newcommand{\blue}[1]{\textcolor{blue}{#1}}
\begin{document}

\title{Uniform vorticity depletion and inviscid damping for periodic shear flows in the high Reynolds number regime}

\author{Rajendra Beekie}
\address{Duke University}
\email{rajendra.beekie@duke.edu}
\author{Shan Chen}
\address{University of Minnesota}
\email{chen7081@umn.edu}
\author{Hao Jia}
\address{University of Minnesota}

\email{jia@umn.edu}


\thanks{HJ and SC  are supported in part by NSF grant DMS-1945179. RB is supported in part by NSF grant DMS-2202974}

\begin{abstract}
{\small}

We study the dynamics of the two dimensional Navier-Stokes equations linearized around a shear flow on a (non-square) torus which possesses exactly two non-degenerate critical points. We obtain linear inviscid damping and vorticity depletion estimates for the linearized flow that are uniform with respect to the viscosity, and enhanced dissipation type decay estimates. The main task is to understand the associated Rayleigh and Orr-Sommerfeld equations, under the natural assumption that the linearized operator around the shear flow in the inviscid case has no discrete eigenvalues. The key difficulty is to understand the behavior of the solution to Orr-Sommerfeld equations in three distinct regimes depending on the spectral parameter: the non-degenerate case when the spectral parameter is away from the critical values, the intermediate case when the spectral parameter is close to but still separated from the critical values, and the most singular case when the spectral parameter is inside the viscous layer. 
\end{abstract}

\maketitle
\setcounter{tocdepth}{1}
\pagestyle{plain}

\tableofcontents

\section{Introduction and main results}\label{sec:imr}
Assume that $\nu\in(0,1)$ and $\tp\in(2\pi,\infty)$. Consider the equation  
\begin{equation}\label{int1}
\left\{\begin{array}{rl}
\partial_t\omega-\nu\Delta\omega+b(y)\partial_x\omega-b''(y)\partial_x\psi=0,&\\
\Delta\psi=\omega,&
\end{array}\right.
\end{equation}
for $(x,y,t)\in\T\times\T_{\tp}\times[0,\infty)$. 

Equation \eqref{int1} can be obtained as the linearization of the two dimensional Navier-Stokes equations on $\T\times\T_{\tp}$ around the meta-steady state $(b(y), 0)$. Alternatively, one can view the shear flow $(b(y),0)$ as an exact steady state to the Navier-Stokes equations with a small external forcing $F:=(-\nu b''(y),0), y\in\T_{\tp}$. Equation \eqref{int1} was studied classically, in an attempt to explain the onset of turbulence. See for example \cite{Eckert} for an interesting history on this problem. We refer also to \cite{Almog,CWZ2,JiaUM} for more recent results. 

We note that it is natural to study \eqref{int1} on a non-square torus (for our purposes), since on a square torus some of the most interesting shear flows such as Kolmogorov flows are not stable \cite{CotiZelatiKol}. To establish asymptotic stability, a more natural object to study on a square torus seems to be the dipole flow, see Remark 1.3 in \cite{Dongyi3}. 

Our main goal in this paper is to understand the precise dynamics of \eqref{int1} starting from relatively smooth initial data, and obtain bounds on the solution that are uniform with respect to the viscosity $\nu\in(0,\nu_0)$ for some small $\nu_0>0$. In particular we shall capture some of the 
more refined dynamics associated with periodic shear flows, and prove uniform vorticity depletion and linear inviscid damping results. Such problems are well known and were discussed for example in section 11, chapter 6 of the book \cite{BeVi} by Bedrossian and Vicol .

\subsection{Review of recent results on inviscid damping and main problems} The study of stability problems in mathematical analysis of fluid dynamics has a long and distinguished history, dating back to the work of Kelvin \cite{Kelvin}, Orr \cite{Orr} and Rayleigh \cite{Ray} among many others, and continuing to the present day. Despite many significant recent progresses, many important questions still remain open. The main issue is that very often the most interesting physical settings involve high Reynolds number (or equivalently the viscosity has to be taken very small), where the Navier-Stokes equations degenerate to Euler equations and the continuous spectrum plays a dominant role.  As a starting point to a more complete and precise theory of hydrodynamic stability, it is therefore important to understand mathematically the property of the linearized flow in the high Reynolds number regime, including the limiting inviscid case. Below we briefly review some recent works on the linearized flow for both Euler and Navier-Stokes equations with small viscosity, focusing on two dimensional shear flows which is our main object of study in this paper, and mentioning nonlinear inviscid damping and other extensions only briefly. 

\subsubsection{The case of monotonic shear flow}
For the Euler equations, there are significant recent progresses on the asymptotic stability of monotonic shear flows and vortices, assuming spectral stability, see for example \cite{Bed2,Grenier,JiaVortex,JiaL,JiaG,Xiaoliu,Stepin,dongyi,Zillinger1,Zillinger2} for linear results. The main mechanism of stabilization is the so called ``inviscid damping", which refers to the transfer of energy of vorticity to higher and higher frequencies leading to decay of the stream and velocity functions, as $t\to\infty$.  Extending the linearized stability analysis for inviscid fluid equations to the full nonlinear setting is a challenging problem, and the only available results are on spectrally stable monotonic shear flows \cite{BM,IJacta,IOJI,NaZh}, and on point vortices \cite{IOJI2}. We refer also to the recent review article \cite{IJICM} for a more in-depth discussion of recent developments of both linear and nonlinear inviscid damping. 


In view of the results on linear inviscid damping for the Euler equation, it is natural to expect that even for Navier-Stokes equations with small viscosity we should have the same inviscid damping results with explicit rates of decay, in time, of the stream functions and velocity fields. There are several results in this direction, all of which are for {\it monotonic} shear flows, see for example \cite{BH20, Bed6,CLWZ,MZ22} for the Couette flow with full nonlinear analysis or precise linear results, and \cite{Grenier} for the spectrally restricted stream function for ``mixing layer" type shear flows (but a description of the full solution without spectral restrictions is not available).  More recently, Chen, Wei and Zhang \cite{CWZ2} established uniform inviscid damping with quantitative bounds for general spectrally stable monotonic shear flows in a periodic channel. More precisely, they proved that the stream function decays like $1/t$ as $t\to\infty$, uniformly in viscosity. We remark that it remains an interesting question if the stream function decays as in the Euler equation (with rate $1/t^2$). It seems that one needs to understand the boundary layer more clearly to settle this quesiton.  The third author \cite{JiaUM} proved uniform (in viscosity) Gevrey bounds on the profile of the vorticity, for spectrally stable monotonic shear flows in $\T\times\R$ (thus avoiding the boundary layer issue), which yields sharp bounds on the stream function (and in particular $1/t^2$ decay rate).

\subsubsection{The case of non-monotonic shear flows}
Many physically important shear flows are not monotonic, such as Poiseuille flow and Kolmogorov flows. For such flows on the linear inviscid level, there is an additional significant physical phenomenon called ``vorticity depletion" which refers to the asymptotic vanishing of vorticity as $t\to\infty$ near the critical points of the shear flow, first predicted in Bouchet and Morita \cite{Bouchet}, and proved rigorously in Wei-Zhang-Zhao \cite{Dongyi2}.  A similar phenomenon for the case of vortices was observed earlier by Bassom and Gilbert \cite{Bassom} and demonstrated in Bedrossian-Coti Zelati-Vicol \cite{Bed2}. In \cite{JiaVortex} Ionescu and the third author obtained a refined description of the dynamics of the linearized flow around monotonic vortices in Gevrey spaces, which is a critical step towards proving nonlinear vortex symmetrization. 

In \cite{Dongyi2} by Wei-Zhang-Zhao, sharp linear inviscid damping estimates and quantitative depletion estimates were obtained for an important class of ``symmetric shear flows" in a periodic channel (see also \cite{Dongyi3} by Wei-Zhang-Zhao for a similar result for Kolmogorov flow). When no symmetry is assumed, Ionescu, Iyer and the third author proved optimal inviscid damping rates and quantitative vorticity depletion estimates in \cite{Iyer}.

As in the case of monotonic shear flows, it is also natural to consider the linearized equation around non-monotonic shear flows for the Navier-Stokes equations and study the corresponding vorticity depletion and linear inviscid damping estimates that are uniform with respect to the viscosity $0<\nu\ll1$. This problem is more complicated than the monotonic case and, as far as we know, is completely open, since the corresponding Orr Sommerfeld equations are {\it genuinely} fourth order singular ODEs and the estimates needed to establish optimal vorticity depletion and linear inviscid damping are very strong. We note also that the Orr-Sommerfeld equations have been studied classically, with a focus on distribution of eigenvalues, see \cite{Gre1,Gre2} and references therein for recent results in this direction. We also mention the essay \cite{Eckert} for an interesting account on the history of research in this direction. Our main task here is quite different, which is to establish very precise control for the resolvent for the spectral parameter on the continuous spectrum (of the inviscid Euler equation).

\subsection{Main equations and assumptions}

In this paper we address this problem and study the uniform vorticity depletion and linear asymptotic stability around a periodic shear flow $(b(y),0)$ on $\T\times\T_{\tp}$, together with enhanced dissipation estimates. The choice of the domain is motivated by our goal of understanding the interaction of viscosity and critical points of shear flows, and by the desire to avoid the boundary layers which arise when there is a boundary. We note that the issues of critical points and boundary layers are somewhat decoupled, and for the sake of clarity of argument, it makes sense to tackle these difficulties separately. 

Strictly speaking, $(b(y),0)$ is not a steady state for the Navier-Stokes equations, and becomes steady state only with a small external force $F=(-\nu b''(y),0)$. In our setting, we consider the viscosity $\nu$ to be very small. The effect of diffusion on the background shear flow is negligible, at least up to the diffusion time scale $T\ll \frac{1}{\nu}$. After the diffusion time, due to the enhanced dissipation effect, the flow is essentially dominated by a heat evolution, at least heuristically. Therefore our analysis below still captures the main dynamics of the Navier-Stokes equations near shear flows, even when we do not add the external forcing.  

\subsubsection{Main spectral assumption on the background shear flow} 

Our main assumptions on the background flow $(b(y),0), y\in \T_{\tp}$ are the following.
\begin{assumption}\label{MaAs}
We assume that the shear flow (b(y),0), $y\in \T_{\tp}$ satisfies the following conditions:
\begin{itemize}
\item $b\in C^4(\T_{\tp})$. The set $S:=\{y\in\T_{\tp}: b'(y)=0\}=\{y_{1\ast}, y_{2\ast}\}$, and for some $\varkappa\in(0,1)$, we have
\begin{equation}\label{asp1}
|b''(y)|\in[\varkappa,1/\varkappa]\,\,{\rm for\,\,}y\in S, \quad{\rm and}\quad\|b\|_{C^4(\T_\tp)}\leq1/\varkappa.
\end{equation}

\item The linearized operator $L_k: L^2(\T_{\tp})\to L^2(\T_{\tp})$ with $k\in\Z\backslash\{0\}$ defined for $g\in L^2(\T_{\tp})$ as
\begin{equation}\label{asp2}
L_kg(y):=b(y)g(y)-b''(y)\varphi, \quad{\rm where}\,\,(-k^2+\partial_y^2)\varphi(y)=g(y),\,\,y\in\T_{\tp},
\end{equation}
has no eigenvalues, nor generalized embedded eigenvalues (also called singular limiting modes) in $\{b(y), y\in\T_{\tp}\}$. 
\end{itemize}
\end{assumption}

In contrast to the spectral property of the linearized operator around monotonic shear flows, the spectral property of $L_k$ is less understood, especially on the mechanism of the generation of discrete eigenvalues and embedded eigenvalues. From general spectral theory, we know that the spectrum of $L_k$ consists of the continuous spectrum 
 \begin{equation}\label{continspe1}
 \Sigma:=\big\{b(y):\,y\in \T_\tp\big\},
 \end{equation} 
 together with some discrete eigenvalues with nonzero imaginary part which can only accumulate at the set of continuous spectrum $\Sigma$. Unlike the case of monotonic shear flows where the discrete eigenvalues can accumulate only at inflection points of the background shear flow, there appears no simple characterization of the possible accumulation points for non-monotonic shear flows.
 
 Recall that $\lambda\in\Sigma$ is called an embedded eigenvalue if there exists a nontrivial $g\in L^2(\T_{\tp})$, such that 
 \begin{equation}\label{F3.20}
 L_kg=\lambda g. 
 \end{equation}
 
 For non-monotonic shear flows, this definition is too restrictive, as accumulation points of discrete eigenvalues may no longer be embedded eigenvalues. To capture the discrete eigenvalues, we recall the following definition of ``generalized embedded eigenvalues", which can be found already in \cite{Dongyi2}, adapted to our setting. 
 
 \begin{definition}\label{emb1}
 We call $\lambda\in\Sigma$ a generalized embedded eigenvalue, if one of the following conditions is satisfied. 
 \begin{itemize}
 \item  $\lambda$ is an embedded eigenvalue.
 
 \item $\lambda\not\in\{ b(y_{1\ast}), b(y_{2\ast})\}$ and there exists a nontrivial $\psi\in H^1(\T_{\tp}): \T_\tp \to \mathbb{C}$ such that in the sense of distributions on $\T_{\tp}$,
 \begin{equation}\label{emb2}
 (k^2-\partial_y^2)\psi(y)+{\rm P.V.}\frac{b''(y)\psi(y)}{b(y)-\lambda}+i\pi\sum_{z\in \T_\tp, \,b(z)=\lambda}\frac{b''(z)\psi(z)}{|b'(z)|}\delta(y-z)=0.
 \end{equation}
 
 \end{itemize}
 
 \end{definition}
 
 We remark that our assumption that the critical points $y_{1\ast}, y_{2\ast}$ of $b(y)$ being non-degenerate implies that the sum in \eqref{emb2} is finite. The spectral assumption \ref{MaAs} is satisfied by a large class of shear flows $(b(y),0)$, including $\sin\frac{2\pi y}{\tp}$.

\subsubsection{Main equations}
Taking the Fourier transform in $x$ in the equation \eqref{int1}, we obtain for each $k\in\Z$ that
\begin{equation}\label{int2}
\left\{\begin{array}{rl}
\partial_t\omega_k+\nu(k^2-\partial_y^2)\omega_k+ikb(y)\omega_k-ikb''(y)\psi_k=0,&\\
(-k^2+\partial_y^2)\psi_k=\omega_k,&
\end{array}\right.
\end{equation}
for $(y,t)\in\T_{\tp}\times[0,\infty)$.
 For $k\in\Z\backslash\{0\}$, define the operator $L_{k,\nu}: H^2(\T_{\tp})\to L^2(\T_{\tp})$ as follows. For $g\in H^2(\T_{\tp})$ and $y\in\T_{\tp}$,
\begin{equation}\label{int5}
L_{k,\nu}g(y):=(\nu/k)\partial_y^2g-ib(y)g+ib''(y)\varphi, \quad{\rm with}\,\,(-k^2+\partial_y^2)\varphi=g.
\end{equation}
We can rewrite equation \eqref{int2} more abstractly as
\begin{equation}\label{int3}
\partial_t\omega_k^\ast=kL_{k,\nu}\omega_k^\ast, \quad{\rm for}\,\,t\ge0,
\end{equation}
where we have set for $y\in\T_{\tp}$ and $t\ge0$,
\begin{equation}\label{int4}
\omega_k^\ast(t,y)=e^{\nu k^2t}\omega_k(t,y).
\end{equation}
By relatively standard spectral theory and our bounds below (see section \ref{sec:pmt} for more details), for sufficiently small $\nu_0>0,\sigma_\sharp>0$ and $\alpha\ge-\sigma_\sharp|\nu/k|^{1/2}$, (denoting $\iota={\rm sign}\, k$) we have the following representation formula for $y\in\T_{\tp}$ and $t>0$,
\begin{equation}\label{int6}
\begin{split}
\omega_k^\ast(t,y)&=\frac{1}{2\pi i}\int_{i\R+\iota\alpha } e^{\mu tk}\big[(\mu-L_{k,\nu})^{-1}\omega_{0k}\big](y)\,d\mu\\
&=\frac{1}{2\pi }\int_{\R} e^{i\lambda tk+\alpha |k|t}\big[(i\lambda+\iota\alpha-L_{k,\nu})^{-1}\omega_{0k}\big](y)\,d\lambda\\
&=-\frac{1}{2\pi }e^{\alpha|k|t}\int_{\R} e^{-i\lambda tk}\big[(i\lambda-\iota\alpha+L_{k,\nu})^{-1}\omega_{0k}\big](y)d\lambda.
\end{split}
\end{equation}
In the above, $\omega_{0k}$ denotes the $k-$th Fourier coefficient of the initial data $\omega|_{t=0}$.

Define for $y\in\T_{\tp}, \lambda\in\R$ and $\iota={\rm sign}\, k$,
\begin{equation}\label{int7}
\omega_{k,\nu}(y,\lambda+i\iota\alpha):=\big[(i\lambda-\iota\alpha+L_{k,\nu})^{-1}\omega_{0k}\big](y).
\end{equation}
It follows from \eqref{int7} that 
\begin{equation}\label{int8}
(i\lambda-\iota\alpha)\omega_{k,\nu}(y,\lambda+i\iota\alpha)+L_{k,\nu}\omega_{k,\nu}(y,\lambda)=\omega_{0k}(y), \quad{\rm for}\,\,y\in\T_{\tp}, \lambda\in\R.
\end{equation}
Therefore $\omega_{k,\nu}(y,\lambda)$ satisfies the equation for $k\in\Z\backslash\{0\}$, $y\in\T_{\tp}, \lambda\in\R$, $\iota={\rm sign}\, k$,
\begin{equation}\label{int9}
\left\{\begin{array}{rl}
\Big[\frac{\nu}{k}\partial_y^2+i(\lambda-b(y))-\iota\alpha\Big]\omega_{k,\nu}(y,\lambda+i\iota\alpha)+ib''(y)\psi_{k,\nu}(y,\lambda+i\iota\alpha)=\omega_{0k}(y),&\\
(-k^2+\partial_y^2)\psi_{k,\nu}(y,\lambda+i\iota\alpha)=\omega_{k,\nu}(y,\lambda+i\iota\alpha).&
\end{array}\right.
\end{equation}

We summarize our calculations in the following proposition.
\begin{proposition}\label{intP1}
Assume that $\nu\in(0,\nu_0)$ with sufficiently small $\nu_0>0$ and $k\in\Z\backslash\{0\}$. Suppose that $\omega_k(t,y)$ satisfies the regularity condition $\omega_k(t,y)\in C^\infty((0,\infty)\times\T_{\tp})$ and $\omega_k(t,\cdot)\in C([0,\infty), L^2(\T_{\tp}))$ is the solution to the system of equations 
\begin{equation}\label{intP2}
\left\{\begin{array}{rl}
\partial_t\omega_k(t,y)+\nu(k^2-\partial_y^2)\omega_k(t,y)+ikb(y)\omega_k(t,y)-ikb''(y)\psi_k(t,y)&=0,\\
(-k^2+\partial_y^2)\psi_k(t,y)&=\omega_k(t,y),
\end{array}\right.
\end{equation}
for $(y,t)\in\T_{\tp}\times[0,\infty)$, with initial data $\omega_k(0,y)=\omega_{0k}(y)\in C^\infty(\T_{\tp})$. Define the operator $L_{k,\nu}: H^2(\T_{\tp})\to L^2(\T_{\tp})$ as follows. For any $g\in H^2(\T_{\tp})$, 
\begin{equation}\label{intP3}
L_{k,\nu}g(y)=(\nu/k)\partial_y^2g-ib(y)g+ib''(y)\varphi, \quad{\rm where\,\,}(-\partial_y^2+k^2)\varphi=g\,\,{\rm for}\,\,y\in\T_{\tp}. 
\end{equation}
 Set for $y\in\T_{\tp}, \lambda\in\R$, $\iota={\rm sign}\, k$, $\alpha\ge-\sigma_\sharp|\nu/k|^{1/2}$ with sufficiently small $\sigma_\sharp\in(0,1/10)$, 
\begin{equation}\label{intP4}
\omega_{k,\nu}(y,\lambda+i\iota\alpha)=\big[(i\lambda-\iota\alpha+L_{k,\nu})^{-1}\omega_{0k}\big](y).
\end{equation}
Then $\omega_{k,\nu}(y,\Lambda)$ satisfies the equation for $k\in\Z\backslash\{0\}$, $y\in\T_{\tp}, \lambda\in\R$,
\begin{equation}\label{intP5}
\left\{\begin{array}{rl}
\Big[\frac{\nu}{k}\partial_y^2+i(\lambda-b(y))-\iota\alpha\Big]\omega_{k,\nu}(y,\lambda+i\iota\alpha)+ib''(y)\psi_{k,\nu}(y,\lambda+i\iota\alpha)=\omega_{0k}(y),&\\
(-k^2+\partial_y^2)\psi_{k,\nu}(y,\lambda+i\iota\alpha)=\omega_{k,\nu}(y,\lambda+i\iota\alpha).&
\end{array}\right.
\end{equation}
We have the representation formulae for $y\in\T_{\tp}, t>0$,
\begin{equation}\label{intP6}
\begin{split}
\omega_k(t,y)&=-\frac{1}{2\pi }e^{\alpha|k|t}e^{-\nu k^2t}\int_{\R} e^{-ik\lambda t}\omega_{k,\nu}(y,\lambda+i\iota\alpha)\,d\lambda,\\
\psi_k(t,y)&=-\frac{1}{2\pi }e^{\alpha|k|t}e^{-\nu k^2t}\int_{\R} e^{-ik\lambda t}\psi_{k,\nu}(y,\lambda+i\iota\alpha)\,d\lambda.
\end{split}
\end{equation}

\end{proposition}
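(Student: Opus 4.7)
The overall strategy is a Dunford--Taylor / Bromwich representation of the semigroup generated by $kL_{k,\nu}$, combined with the spectral localization for $L_{k,\nu}$ that will be established in the body of the paper. I would proceed as follows.

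First, apply the substitution \eqref{int4} to reduce \eqref{intP2} to $\partial_t\omega_k^\ast=kL_{k,\nu}\omega_k^\ast$. Since $(\nu/k)\partial_y^2$ generates an analytic semigroup on $L^2(\T_\tp)$ and the remaining piece $g\mapsto -ib(y)g+ib''(y)(-k^2+\partial_y^2)^{-1}g$ is bounded on $L^2$ (the Biot--Savart inverse is smoothing of order two), perturbation theory shows that $kL_{k,\nu}$ generates a $C^0$-semigroup with growth bound uniform in $\nu\in(0,\nu_0)$. The standard Bromwich representation
\begin{equation*}
\omega_k^\ast(t,y)=\frac{1}{2\pi i}\int_{\iota\alpha_0+i\R}e^{\mu tk}\bigl[(\mu-L_{k,\nu})^{-1}\omega_{0k}\bigr](y)\,d\mu
\end{equation*}
then holds for any $\alpha_0$ larger than this growth bound.

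Next, invoke the resolvent estimates to be proved in later sections, which place $\sigma(L_{k,\nu})$ in $\{\operatorname{Re}\mu\leq -\sigma_\sharp|\nu/k|^{1/2}\}$ for sufficiently small $\sigma_\sharp\in(0,1/10)$. In particular $i\lambda-\iota\alpha\in\rho(L_{k,\nu})$ for every $\lambda\in\R$ and every $\alpha\geq -\sigma_\sharp|\nu/k|^{1/2}$, so $\omega_{k,\nu}(\cdot,\lambda+i\iota\alpha)\in H^2(\T_\tp)$ is well defined by \eqref{intP4}. Applying $(i\lambda-\iota\alpha+L_{k,\nu})$ to both sides of \eqref{intP4} and expanding $L_{k,\nu}$ using \eqref{intP3} yields the first line of \eqref{intP5}; the second line is the definition of $\psi_{k,\nu}$.

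Now deform the contour from $\iota\alpha_0+i\R$ down to $\iota\alpha+i\R$. By the spectral localization just recalled no poles are crossed, so the only thing to verify is that the horizontal closing segments contribute nothing in the limit $|\operatorname{Im}\mu|\to\infty$. This reduces to a uniform bound $\|(\mu-L_{k,\nu})^{-1}\omega_{0k}\|_{L^2}=O(|\operatorname{Im}\mu|^{-1})$ on the strip, which for $|\operatorname{Im}\mu|\gg\|b\|_{C^0}$ follows from a routine $L^2$ energy estimate on \eqref{intP5} since the transport term $\lambda-b(y)$ then dominates. Parametrizing the shifted contour as $\mu=i\lambda+\iota\alpha$, using $d\mu=i\,d\lambda$ and $\iota\alpha k=\alpha|k|$, and then performing the change of variable $\lambda\mapsto -\lambda$ to convert $(i\lambda+\iota\alpha-L_{k,\nu})^{-1}$ into $-\omega_{k,\nu}(\cdot,\lambda+i\iota\alpha)$ reproduces line three of \eqref{int6}; multiplying by $e^{-\nu k^2t}$ as in \eqref{int4} recovers the first line of \eqref{intP6}, and applying $(-k^2+\partial_y^2)^{-1}$ together with the second line of \eqref{intP5} gives the formula for $\psi_k$.

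The only non-routine ingredient is the spectral localization of $L_{k,\nu}$, uniformly in $\nu$, together with the high-frequency resolvent decay used in the contour shift. Both are direct consequences of the fine three-regime Orr--Sommerfeld analysis (spectral parameter away from the critical values, close to but outside the viscous layer, and inside the viscous layer) that occupies the remainder of the paper; once these inputs are available, the semigroup generation, the algebraic verification of \eqref{intP5}, and the contour manipulation producing \eqref{intP6} are essentially bookkeeping.
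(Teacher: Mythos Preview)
Your proposal is correct and follows essentially the same route as the paper: the authors derive \eqref{intP5} directly from \eqref{intP4} by expanding $L_{k,\nu}$, and for \eqref{intP6} they state that ``the representation formulae \eqref{intP6} for $\omega_0\in C^\infty(\T_\tp)$ follow from the theory of semigroups of sectorial operators'' (referring to Engel--Nagel), with the required resolvent solvability and decay on $\{\operatorname{Re}\mu\ge -\sigma_\sharp|\nu/k|^{1/2}\}$ supplied by the limiting absorption principle later in the paper (their Proposition~\ref{PROrep}). Your Bromwich-contour argument with explicit justification of the contour shift via the $O(|\lambda|^{-1})$ resolvent bound is exactly the content the paper defers to \cite{Engel} and Proposition~\ref{PROrep}.
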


Strictly speaking the vorticity function $\omega_k(t,y)$ and stream function $\psi_k(t,y)$ also depend on $\nu$. We omit this dependence from our notations for the sake of simplicity, since there is no danger of confusion.

\subsection{Main results}

Our main result is the following theorem.
\begin{theorem}\label{thm}
Assume the spectral condition \ref{MaAs}. Fix $\gamma\in[15/8,2)$. There exist $\sigma_{\sharp}\in(0,1)$ and $\nu_0\in(0,1/10)$ sufficiently small depending on $\gamma$, such that the following statement holds for $\nu\in(0,\nu_0)$.  Assume that 
$$\omega(t,x,y)\in C([0,\infty), H^3(\T\times\T_\tp))$$ with the associated stream function $ \psi(t,x,y)$ is the unique solution to \eqref{int1}, with initial data $\omega_0\in H^3(\T\times\T_\tp)$ satisfying for all $y\in\T_\tp$,
\begin{equation}\label{thm0}
\int_{\T}\omega_0(x,y)\,dx=0.
\end{equation}

Then we have the following bounds.

(i) Uniform inviscid damping estimates:
for $(x,y,t)\in\T\times\T_\tp\times[0,\infty)$,
\begin{equation}\label{th0.5}
|\psi(t,x,y)|\lesssim_{\gamma} \frac{|y-y_{1\ast}|^{\gamma-2}+|y-y_{2\ast}|^{\gamma-2}}{\langle t\rangle^2}e^{-\sigma_{\sharp}\nu^{1/2}t}\|\omega_0\|_{H^3(\T\times\T_\tp)},
\end{equation}
\begin{equation}\label{thm1}
\begin{split}
&|u^x(t,x,y)|\lesssim_{\gamma} \frac{1}{\langle t\rangle}e^{-\sigma_\sharp\nu^{1/2}t}\|\omega_0\|_{H^3(\T\times\T_\tp)}, \\
&|u^y(t,x,y)|\lesssim_{\gamma} \frac{|y-y_{1\ast}|^{\gamma-2}+|y-y_{2\ast}|^{\gamma-2}}{\langle t\rangle^2}e^{-\sigma_\sharp\nu^{1/2}t}\|\omega_0\|_{H^3(\T\times\T_\tp)}.
\end{split}
\end{equation} 

(ii) Uniform vorticity depletion estimates: for $j\in\{1,2\}$ there exists a decomposition
\begin{equation}\label{thm2}
\omega(t,x,y):=\omega^j_{\rm loc}(t,x,y)+\omega^j_{\rm nloc}(t,x,y),
\end{equation}
where for $(x,y,t)\in\T\times\T_\tp\times[0,\infty)$,
\begin{equation}\label{thm3}
\begin{split}
&|\omega^j_{\rm loc}(t,x,y)|\lesssim_{\gamma} \big(|y-y_{j\ast}|+\nu^{1/4}\big)^{\gamma}e^{-\sigma_\sharp\nu^{1/2}t}\|\omega_0\|_{H^3(\T\times\T_\tp)},\\
& |\omega^j_{\rm nloc}(t,x,y)|\lesssim_{\gamma} \frac{1}{\langle t\rangle^{\gamma/2}}e^{-\sigma_\sharp\nu^{1/2}t}\|\omega_0\|_{H^3(\T\times\T_\tp)}.
 \end{split}
\end{equation}
In the above, $\langle t\rangle:=\sqrt{1+t^2}$, and the implied constants depend, in addition to $\gamma$, also on $\varkappa$ from \eqref{asp1} and the structure constant $\kappa\in(0,1)$ from Proposition \ref{lap_main}.
\end{theorem}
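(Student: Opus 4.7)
The plan is to reduce the theorem to uniform quantitative bounds on the resolvent $\omega_{k,\nu}(y,\lambda+i\iota\alpha)$ and $\psi_{k,\nu}(y,\lambda+i\iota\alpha)$ via the representation formula \eqref{intP6}. For each $k\in\Z\backslash\{0\}$, I would establish pointwise bounds on $\omega_{k,\nu}$ and on its first two $\lambda$-derivatives, uniform in $\nu\in(0,\nu_0)$ and in $|\alpha|\leq\sigma_\sharp|\nu/k|^{1/2}$; these are the inputs needed to convert \eqref{intP6} into the estimates of the theorem through integration by parts in $\lambda$. The exponential factor $e^{-\sigma_\sharp\nu^{1/2}t}$ arises automatically from the contour shift by $\iota\alpha$ provided the resolvent bounds hold uniformly on the shifted contour; the time decay $\langle t\rangle^{-1}$ and $\langle t\rangle^{-2}$ come from one and two integrations by parts in $\lambda$ against the oscillation $e^{-ik\lambda t}$; and the vorticity depletion is extracted by showing $|\omega_{k,\nu}(y,\lambda+i\iota\alpha)|\lesssim(|y-y_{j\ast}|+\nu^{1/4})^\gamma$ whenever $\lambda$ is close to $b(y_{j\ast})$.

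The core of the argument is a careful analysis of the Orr--Sommerfeld system \eqref{intP5}, split into three regimes according to the distance of $\lambda$ from the critical values $b_{j\ast}:=b(y_{j\ast})$. In the non-degenerate regime, when $\lambda$ is bounded away from both $b_{1\ast}$ and $b_{2\ast}$, the Rayleigh limit is invertible by Assumption \ref{MaAs} (no discrete or generalized embedded eigenvalues), and one builds a Green's function from two fundamental solutions of $(\partial_y^2-k^2)\psi+b''\psi/(b-\lambda)=\cdot$, treating the term $(\nu/k)\partial_y^2\omega$ perturbatively in the spirit of \cite{Dongyi2,Iyer,JiaUM}. In the intermediate regime, when $|\lambda-b_{j\ast}|\in(C\nu^{1/2},c_0)$, the critical set $\{b(y)=\lambda\}$ consists of two simple turning points on either side of $y_{j\ast}$ that lie outside the viscous layer of width $\nu^{1/4}$; here one still builds inviscid fundamental solutions, but must carefully track the two-turning-point cancellation that produces the depletion factor. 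In the most singular regime, $|\lambda-b_{j\ast}|\lesssim\nu^{1/2}$, the critical and viscous layers merge and one constructs genuinely fourth-order fundamental solutions via matched Airy-type asymptotics on the scale $\nu^{1/4}$, gluing inner expansions near $y_{j\ast}$ to outer Rayleigh solutions.

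Given the three-regime resolvent bounds, the uniform inviscid damping estimates \eqref{th0.5}--\eqref{thm1} follow by integrating by parts twice in $\lambda$ in the $\psi$-formula of \eqref{intP6}, with the weight $|y-y_{j\ast}|^{\gamma-2}$ appearing because $\partial_\lambda^2\psi_{k,\nu}$ is singular near $\lambda=b_{j\ast}$ in a way that corresponds, after the $\lambda$-integration, to this $y$-weight via the critical-layer pairing. For \eqref{thm2}--\eqref{thm3}, I would split the $\lambda$-integral at a cutoff near $b_{j\ast}$: the near-piece gives $\omega^j_{\rm loc}$ and uses the pointwise bound $|\omega_{k,\nu}|\lesssim(|y-y_{j\ast}|+\nu^{1/4})^\gamma$ with no gain in $t$, while the far-piece gives $\omega^j_{\rm nloc}$ and a fractional integration by parts in $\lambda$ of order $\gamma/2$ trades regularity away from $b_{j\ast}$ for the $\langle t\rangle^{-\gamma/2}$ decay. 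Summation over $k\in\Z\backslash\{0\}$ is then controlled by the $H^3$ bound on $\omega_0$ together with the $k$-dependence of the resolvent norms.

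The main obstacle is unambiguously the viscous regime: \eqref{intP5} is there a genuinely fourth-order singular ODE with small parameter $\nu/k$ multiplying the highest derivative and with the zeroth-order coefficient $i(\lambda-b(y))-\iota\alpha$ vanishing on the critical set. Constructing four independent fundamental solutions with usable pointwise asymptotics, matching them cleanly across the viscous layer of width $\nu^{1/4}$, and then differentiating the resulting Green's function twice in $\lambda$ with the correct singular behavior is substantially more delicate than anything in the monotonic-shear setting, because the two turning points near each $y_{j\ast}$ force the depletion cancellation to be executed by hand inside the matched-asymptotic construction. The exponent restriction $\gamma\in[15/8,2)$ presumably reflects the best Hölder-type regularity in $\lambda$ that this matching procedure can deliver uniformly in $\nu$.
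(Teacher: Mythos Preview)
Your high-level skeleton---representation formula \eqref{intP6}, contour shift by $\iota\alpha$ for the exponential factor, a three-regime analysis of the resolvent, and integration by parts in $\lambda$ for time decay---matches the paper. The technical route, however, is quite different. The paper does \emph{not} construct four independent fundamental solutions of the fourth-order ODE via matched Airy asymptotics. Instead it factors the problem: it first obtains pointwise kernel bounds for the inverse of the \emph{second-order} generalized Airy operator $A_\Theta=\epsilon\partial_y^2+i(\lambda-b(y))-\alpha$ (Proposition~\ref{Airy_main}), and then reformulates \eqref{intP5} as an integral equation $\psi+T_\Theta\psi=(\text{RHS})$, where $T_\Theta$ couples $A_\Theta^{-1}$ with a \emph{modified} Green's function $\mathcal{G}_k^j$ that has the non-perturbative part of the potential $b''/(b-\lambda)$ already built in near $y_{j\ast}$. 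The limiting absorption principle $\|\psi+T_\Theta\psi\|\gtrsim\|\psi\|$ in weighted spaces $X^{\sigma_1,\sigma_2}$ is proved by a compactness/blow-up argument that reduces, in the concentrated case, to the absence of eigenvalues for the linearized Poiseuille flow on $\R$. Your matched-asymptotic construction would bypass this machinery but would have to reproduce the same weighted depletion bounds by hand, which is exactly the step the paper's reformulation is designed to make tractable.

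One point in your plan is likely to fail as written: you propose differentiating the resolvent twice directly in $\lambda$, but the paper explicitly warns that this generates terms of order $A_\Theta^{-3}$, which are too singular to control. The paper's remedy is the \emph{good derivative} $D_\lambda:=\partial_\lambda+a(y,\lambda)\partial_y$ with $a\approx 1/b'$ cut off near the critical points; $D_\lambda$ commutes favorably with $A_\Theta$ (see \eqref{pmpr1.23}), and $\partial_\lambda^2\psi$ is recovered from $D_\lambda^2\psi$ plus lower-order pieces via \eqref{pmt0.12}. Also, the vorticity split is not a cutoff in $\lambda$ near $b_{j\ast}$ with a fractional integration by parts: the paper splits according to whether $|\lambda-b(y)|\ll\min_j|b(y)-b(y_{j\ast})|+|\nu/k|^{1/2}$ (local) or not (nonlocal), and for $\omega_{k,\rm nloc}$ uses one ordinary integration by parts in $\lambda$ together with an optimization over an auxiliary parameter $\tau$ (see \eqref{pmt3.8}) to produce the $\langle t\rangle^{-\gamma/2}$ decay.
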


\bigskip

We have the following remarks on Theorem \ref{thm}. 

\begin{enumerate}
 \item The exponent $\gamma$ is almost sharp. The endpoint case $\gamma=2$  seems to require new ideas to treat. \\

    \item The restriction that $\tp\in(2\pi,\infty)$ is only needed for the spectral assumption \ref{MaAs} to hold. An important example that satisfies \ref{MaAs} is the Kolmogorov flow $(\sin(2\pi y/\tp), 0)$. See also Meshalkin and Sinai \cite{Sinai} for an investigation of spectral stability of Kolmogorov flows on general torus.\\

    \item The condition \eqref{thm0} removes the zero mode in $x$ of vorticity from our results, since the evolution of the zero mode corresponding to $k=0$ in equation \eqref{intP2} is trivial. \\
    
     \item The bounds \eqref{th0.5}-\eqref{thm1} provide uniform-in-viscosity inviscid damping estimates. We note that to get $1/t^2$ decay as $t\to\infty$, one needs to use integration by parts in \eqref{intP6} twice with respect to $\lambda$, while the decay bound on $u^x$ involves only one integration by parts in $\lambda$ and one derivative in $y$. In the neighborhood of the critical points $y_{1\ast}, y_{2\ast}$, taking a derivative with respect to $\lambda$ is roughly equivalent to $\frac{1}{b'(y)+i\nu^{1/4}}\partial_y$, which explains the unbounded factor $|y-y_{1\ast}|^{\gamma-2}+|y-y_{2\ast}|^{\gamma-2}$ in the bounds for $\psi$ and $u^y$. \\

     \item The bounds \eqref{thm3} confirm the ``vorticity depletion" phenomenon which predicts the asymptotic vanishing of vorticity near the critical points $y_{1\ast}$ and $y_{2\ast}$ as $t\to\infty$. In addition, we note that there is a cut-off distance $\nu^{1/4}$ to the vorticity depletion effect in contrast to the Euler equation case, which is natural since the viscous effect dominates in small scales and the dissipation it causes offsets the vorticity depletion over the viscous region. \\
     

     \item  We comment on the regularity assumptions on the initial data. By a simple dimension counting argument, we expect that \eqref{thm3} to be quite sharp. On the other hand,  in \eqref{th0.5}-\eqref{thm1}, one might be able to reduce the regularity assumption to $\omega_0\in H^{5/2}(\T\times\T_\tp)$. For the sake of conciseness we have refrained from optimization in this direction.\\

     \item It is a natural problem to also consider the problem in $\T\times[0,1]$, to include a boundary. There are essential new difficulties due to the presence of boundary layers. We refer to the recent work \cite{Almog,CWZ2} and references therein for significant results on monotonic shear flows in this direction. 
     The boundary effect is completely decoupled from the issue of critical points of $b(y)$ away from the boundary, and therefore can be treated separately. We plan to address this issue in a future work. \end{enumerate}

\subsection{Main ideas of the proof} In this section we briefly outline the main ideas of the proof. To present the key elements in the simplest settings, we assume that $k=1$ and $\alpha=0$ in \eqref{intP5}. 

The fact that we can take $\alpha$ as $-\sigma_\sharp|\nu/k|^{1/2}$ is crucial to obtain the enhanced dissipation bound in our approach, which requires that we show stability of our bounds with respect to moving from $\alpha=0$ to $\alpha=-\sigma_\sharp|\nu/k|^{1/2}$. The higher $k$ cases should, in principle, make the problem easier since the problem becomes more ``elliptic". In practice, however, the analysis becomes considerably more complicated as we need to track the dependence on $k$ of various quantities, as optimally as possible. Nonetheless these are secondary issues that we neglect here to emphasize the main points. 

To further simplify the discussion, we assume also that $\omega_{0k}$ vanishes in a neighborhood of $\{y_{1\ast}, y_{2\ast}\}$. The depletion estimates are still nontrivial and manifest as a repelling effect for the vorticity away from the critical points. The general case can be reduced to a case similar to this. 

With the help of the representation formula \eqref{intP6}, it suffices to study the ``spectral density functions" for the stream function $\psi(y,\lambda)$ and $\omega(y,\lambda)$, see \eqref{intP5}. For the sake of conciseness, we have suppressed the dependence on $\nu$ (recall that we assumed $k=1$, $\alpha=0$). Indeed, the uniform-in-viscosity inviscid damping bounds follow from integration by parts in $\lambda$ using \eqref{intP6}, and the vorticity depletion bounds follow from weighted estimates on $\psi(y,\lambda)$ which imply asymptotic vanishing of $\psi(y,\lambda)$ as $y\to \{y_{1\ast}, y_{2\ast}\}$. 

Roughly speaking, the bounds on $\psi(y,\lambda)$ are that 
\begin{equation}\label{oop1}
\begin{split}
    &|\psi(y,\lambda)|\sim \big(|\lambda-b(y)|+\min\{|\lambda-b(y_{1\ast})|, |\lambda-b(y_{2\ast})|\}+|\nu|^{1/2}\big)^{\gamma/2},\\
    & |\partial_y\psi(y,\lambda)|\sim \big(|\lambda-b(y)|+\min_{j\in\{1,2\}}\{|\lambda-b(y_{j\ast})|\}+|\nu|^{1/2}\big)^{\frac{\gamma-1}{2}}\sum_{j=1}^2\Big\langle\log\Big|\frac{\lambda-b(y)}{\lambda-b(y_{j\ast})}\Big|\Big\rangle,\\
    &|\partial^2_y\psi(y,\lambda)|\sim \big(|\lambda-b(y)|+\min_{j\in\{1,2\}}\{|\lambda-b(y_{j\ast})|\}+|\nu|^{1/2}\big)^{\frac{\gamma-2}{2}}\sum_{j=1}^2 \bigg|\frac{\lambda-b(y_{j\ast})}{\lambda-b(y)}\bigg|. 
    \end{split}
\end{equation}
We also have similar bounds on $\partial_\lambda \psi(y,\lambda)$, with the heuristic that in terms of size for $\beta\in\{1,2\}$,
\begin{equation}\label{oop2}
    \partial^\beta_\lambda\psi(y,\lambda)\sim \big(|\lambda-b(y)|+\min\{|\lambda-b(y_{1\ast})|, |\lambda-b(y_{2\ast})|\}+|\nu|^{1/2}\big)^{-\beta/2}\partial^\beta_y\psi(y,\lambda). 
\end{equation}
The bounds \eqref{oop1} show that $\psi(y,\lambda)$ becomes very small as $y$ approaches the critical points $\{y_{1\ast}, y_{2\ast}\}$ when $\lambda$ is close to one of the critical values $\{ b(y_{1*}), b(y_{2*}) \}$. These decay bounds are crucial for obtaining the explicit depletion rate for the vorticity functions, see \eqref{thm3}. The bounds on $\partial_y^2\psi(y,\lambda)$ and $\partial_\lambda^2\psi(y,\lambda)$ have to be refined near $\{y\in\T_\tp:b(y)-\lambda=0\}$ so that they can be integrated, see Lemma \ref{RDS1} below. 

The first critical step in proving the bounds \eqref{oop1}-\eqref{oop2} is to reformulate the equation \eqref{intP5} as 
\begin{equation}\label{oop3}
    (-1+\partial_y^2)\psi(y,\lambda)+\Big[\nu\partial_y^2+i(\lambda-b(y))\Big]^{-1}\big(ib''\psi(\cdot,\lambda)\big)(y)=\Big[\nu\partial_y^2+i(\lambda-b(y))\Big]^{-1}\omega_{01}(y), 
\end{equation}
and to justify the heuristic that 
\begin{equation}\label{oop4}
    \Big[\nu\partial_y^2+i(\lambda-b(y))\Big]^{-1}\approx \frac{1}{i(\lambda-b(y))+\ell_\nu(\lambda)},
\end{equation}
where 
\begin{equation}\label{oop5}
    \ell_\nu(\lambda):=\left\{\begin{array}{ll}
         \nu^{1/3},& {\rm if}\,\, \min_{j\in\{1,2\}}\{|\lambda-b(y_{j\ast})|\}\gtrsim1, \\
         \min\limits_{j\in\{1,2\}}\{|\lambda-b(y_{j\ast})|\}^{1/3}\nu^{1/3},&  {\rm if}\,\, \nu^{1/2}\ll\min_{j\in\{1,2\}}\{|\lambda-b(y_{j\ast})|\}\ll1, \\
         \nu^{1/2},&{\rm if}\,\, \min_{j\in\{1,2\}}\{|\lambda-b(y_{j\ast})|\}\lesssim\nu^{1/2}.
    \end{array}\right.
\end{equation}

To rigorously show \eqref{oop4} in various senses, we prove pointwise estimates for the kernel of $\Big[\nu\partial_y^2+i(\lambda-b(y))\Big]^{-1}$, see Proposition \ref{Airy_main}, as well as obtain refined characterizations of the (quantitative) singular behavior near $\{y\in\T_\tp:b(y)-\lambda=0\}$. The factor $\ell_\nu(\lambda)$ in \eqref{oop5} is determined by the kernel estimates.

To study \eqref{oop3}, we use \eqref{oop4} and obtain the following approximate equation for $y\in\T_\tp$, 
\begin{equation}\label{oop6}
    (-1+\partial_y^2)\psi(y,\lambda)+\frac{ib''(y)\psi(y,\lambda)}{i(\lambda-b(y))+\ell_\nu(\lambda)}\approx\frac{\omega_{01}(y)}{i(\lambda-b(y))+\ell_\nu(\lambda)}.   
\end{equation}
As in \cite{Iyer}, depending on how close $\lambda$ is to the critical values $\{b(y_{1\ast}), b(y_{2\ast})\}$, the nonlocal term $\frac{ib''(y)\psi(y,\lambda)}{i(\lambda-b(y))+\ell_\nu(\lambda)}$ plays very different roles. 
\begin{itemize}
    \item Case I: non-degenerate region $\min_{j\in\{1,2\}}\{|\lambda-b(y_{j\ast})|\}\gtrsim1$. In this case, the nonlocal term can be viewed as a compact perturbation of the main part $-1+\partial_y^2$ of the equation and can be treated using our spectral assumptions in \ref{asp2}. 

    \item Case II: the intermediate region $\nu^{1/2}\ll\min_{j\in\{1,2\}}\{|\lambda-b(y_{j\ast})|\}\lesssim 1$. In this case, as observed in \cite{JiaVortex} and \cite{Iyer}, assuming for concreteness that $\nu^{1/2}\ll|\lambda-b(y_{1\ast})|\ll 1$, we have by simple computations that for $|\lambda-b(y_{1\ast})|^{1/2}\ll|y-y_{1\ast}|\ll1$, the nonlocal term is
\begin{equation}\label{oop7}
\frac{ib''(y)}{i(\lambda-b(y))+\ell_\nu(\lambda)}\approx -\frac{2}{|y-y_{1\ast}|^2},
\end{equation}
which is both {\it critically singular} and {\it favorable}. This term can not be considered as a perturbation. In section \ref{sec:modifiedgreen} we incorporate this term into the main part (in addition to $-1+\partial_y^2$) and study the associated Green's functions which we call ``modified Green's function" as in \cite{JiaVortex,Iyer}. After taking off the non-perturbative part of the nonlocal term, the rest of the terms can be treated perturbatively, using the spectral assumptions in \ref{asp2}. 

\item Case III: the viscous region $\min_{j\in\{1,2\}}\{|\lambda-b(y_{j\ast})|\}\lesssim \nu^{1/2}$. As is expected, the viscosity dominates in small regions which in our setting limits the depletion effect to a limit length scale. The scale $\ell_\nu(\lambda)$ is  determined by the bounds we obtained for the kernel of $\Big[\nu\partial_y^2+i(\lambda-b(y))\Big]^{-1}$. On the technical level, in the viscous region, the analogous computation \eqref{oop7} is valid only for $|y-y_{1\ast}|\gg |\nu|^{1/4}$, where we note that the lower bound is independent of $\lambda$. This cutoff is ultimately responsible for the factor $\nu^{1/4}$ that appeared in \eqref{thm3}.

\end{itemize}

After identifying the main terms using the above analysis, we then show that the resulting equations can be solved, by proving suitable limiting absorption principle in weighted Sobolev spaces which are designed to capture both the regularity and vanishing of the spectral density functions $\psi(y,\lambda)$. 

In order to prove the limiting absorption principle, we reformulate the left hand side of \eqref{oop6} as  
\begin{equation}
    \label{oop8}
    \psi(y, \lambda) + T_{\Theta} \psi (y,\lambda)  
\end{equation}
where $T_{\Theta}$ is defined based on whether $\lambda$ is in the non-degenerate or intermediate/viscous region (see \eqref{lapj1} and \eqref{lapj2}). Written in this way, the limiting absorption principle can then be understood as the statement that ``$I+T_\Theta$ is invertible" in suitable weighted Sobolev spaces. To establish this, we proceed via a contradiction argument after showing that $T_\Theta$ is ``small" when the frequency is high by proving strong regularity estimates. The contradiction then follows as a consequence of the spectral assumption \ref{MaAs}. 

An essential difficulty that arises when the viscosity $\nu>0$ in contrast to the Euler equation case is that the singularities we need to handle are no longer explicit. The complexity of the singularities can be seen from the kernel of $\Big[\nu\partial_y^2+i(\lambda-b(y))\Big]^{-1}$, which although on the level of regularity is comparable with the simpler term in \eqref{oop4}, behaves nonetheless rather differently. Indeed, the kernel is highly oscillatory and does not admit a simple expression. 

The lack of explicit formula for the operator $\Big[\nu\partial_y^2+i(\lambda-b(y))\Big]^{-1}$ presents serious technical challenges especially when estimating $\partial_\lambda^2\psi(y,\lambda)$ since we need to take two derivatives in $\lambda$ in equation \eqref{oop3} and the resulting singularities involve very singular terms of the order $\Big[\nu\partial_y^2+i(\lambda-b(y))\Big]^{-3}$. To resolve this issue, instead of taking derivatives in $\lambda$, we use the {\it good derivative} $D_\lambda$ defined in \eqref{mainprop0.2}, which does not worsen the singularities. Using the observation that $\partial_y^\beta\psi(y,\lambda)$ with $\beta\in\{1,2\}$ can be understood using the equation \eqref{intP5} directly, we can then obtain sufficient control on $\partial_\lambda^\beta\psi(y,\lambda)$ for our purposes.

\subsection{Notations and conventions}\label{sec:nota}
Here we summarize some important notations and conventions used throughout the rest of the paper, for easy references. 

In the sections below we assume that $k\in\Z\cap[1,\infty)$ as the case of $k\in\Z\cap(-\infty,-1]$ follows by a complex conjugation. Define for $k\in\Z\backslash\{0\}, m\in\Z\cap[1,\infty)$, and $h(y)\in H^m(\T_\tp)$,
\begin{equation}\label{Hmk}
\|h\|^2_{H^m_k(\T_\tp)}:=\sum_{j=0}^m|k|^{2(m-j)}\|\partial_y^jh\|_{L^2(\T_\tp)}^2.
\end{equation}

Below we list some notations that we reserve for important quantities and briefly explain their roles. 
\begin{itemize}
    
    \item {\it Fundamental parameters $\lambda,\, \nu, \,k, \,\alpha, \,\epsilon,\, j$}:
    These parameters are directly given by the equations as in Proposition \ref{intP1}. $\nu\in(0,1)$ is the viscosity and will be fixed throughout the paper. $k\in\Z\cap[1,\infty)$ denotes the Fourier mode in $x$. We sometimes also use $\epsilon:=\nu/k\in(0,1)$ to simplify notations. 
We will use $\lambda\in\R$ to denote our main spectral parameter, and $\alpha\ge-\sigma_\sharp|\nu/k|^{1/2}$ as an auxiliary spectral parameter which are useful when we shift the contour of integral, as in Proposition \ref{intP1}, where $\sigma_\sharp\in(0,1)$ will be specified below. $j\in\{1,2\}$ is used to specify the the critical point $y_{j\ast}$ under consideration. \\

\item {\it Derived parameters $\delta_0,\,\delta_1(\lambda),\,\delta_2(\lambda), \, \Sigma_{j,\delta_0}, \,S^j_d$}:
These parameters are introduced to simplify notations. It follows from Assumption \ref{MaAs} that there exists a $\delta_0\in(0,1/8)$ such that
\begin{equation}\label{IntN4.0}
\begin{split}
&|y_{1\ast}-y_{2\ast}|>10\delta_0,\sup_{y\in\cup_{y_\ast\in\{y_{1\ast},y_{2\ast}\}}(y_\ast-4\delta_0,y_\ast+4\delta_0)}|b'''(y)|\delta_0<|b''(y_\ast)|/10.
\end{split}
\end{equation}
Define
\begin{equation}\label{IntN4.1}
\begin{split}
 \delta_1(\lambda):=&8\min_{y_\ast\in\{y_{1\ast},y_{2\ast}\}}\sqrt{|\lambda-b(y_\ast)|/b''(y_\ast)},\\ \delta_2(\lambda):=&\frac{1}{8}\min_{y_\ast\in\{y_{1\ast},y_{2\ast}\}}\sqrt{|\lambda-b(y_\ast)|/b''(y_\ast)},
 \end{split}
 \end{equation}
and for $j\in\{1,2\}$ the set
\begin{equation}\label{IntN4.2}
\Sigma_{j,\delta_0}:=\Big[b(y_{j\ast})-|b''(y_{j\ast})|\delta_0^2/16,\, b(y_{j\ast})+|b''(y_{j\ast})|\delta_0^2/16\Big].
\end{equation}
We introduce also for $j\in\{1,2\}, d\in(0,1/10)$ the shorthand
\begin{equation}\label{IntN4.3}
S^j_d:=[y_{j\ast}-d,y_{j\ast}+d]. 
\end{equation}

\item {\it Composite notations $\Lambda, \Theta$}:
Below we use the composite symbol $\Lambda$ and $\Theta$ to indicate dependence of certain quantities on $\Lambda:=(\lambda, \alpha, \epsilon)$ (or $\Lambda =(\lambda,\alpha,k,\nu)$ recalling that $\epsilon =\nu/k$). For example, $h(y; \Lambda)$ denotes a function $h$ of $y$ that depends on the parameters $\lambda, \alpha$ and $\epsilon$. We use $\Theta:=(\lambda,\alpha,\epsilon)$ (or $\Theta =(\lambda,\alpha,k,\nu)$) for similar purposes. The motivation for using both  $\Lambda$ and $\Theta$ with identical purpose is purely aesthetic. \\

\item {\it Weights}: To capture the precise depletion phenomena, we need to prove various weighted estimates. For this purpose, define 
\begin{equation}\label{Intd1}
\delta(\Lambda):=|\alpha|^{1/2}+C^\dagger|\epsilon|^{1/4}+8\min_{y_\ast\in\{y_{1\ast},y_{2\ast}\}}\sqrt{|\lambda-b(y_\ast)|/b''(y_\ast)}.
\end{equation}
The constant $C^\dagger\gg1$ depends only on $\delta_0$ and will be chosen below in Proposition \ref{prop:k:bounds}. 
For $\lambda\in\R$ and $j\in\{1,2\}$ with 
\begin{equation}\label{Intd2}
   \delta(\Lambda)\leq \tp/8,
\end{equation}
we define the weights for $y\in\T_\tp$,
\begin{equation}\label{Intd3}
    \varrho_j(y ; \Lambda):=|y-y_{j\ast}|+\delta(\Lambda),\quad \varrho_{j,k}(y ; \Lambda):=\varrho_j(y ; \Lambda)\wedge \frac{1}{k},\quad d_{j,k}:=\delta(\Lambda)\wedge \frac{1}{k}.
\end{equation}
In the above, $\alpha\wedge\beta$ is the minimum of two real numbers $\alpha,\beta$.

\item {\it Weighted spaces}: For $\mathfrak{M}:=(\lambda,\alpha,k,\nu,j)$ and $\sigma_1,\sigma_2\in\R$, we define the weighted space $X^{\sigma_1,\sigma_2}(\mathfrak{M})$ using the norm for any function $g$ on $\T_\tp$,
\begin{equation}\label{Intd4}
\begin{split}
\|g\|_{X^{\sigma_1,\sigma_2}(\mathfrak{M})}:=&\sum_{\beta\in\{0,1\}}(\delta(\Lambda))^{-1/2+\sigma_1}\Big\|\big[\delta(\Lambda)\wedge\frac{1}{k}\big]^{\sigma_2+\beta}\partial_y^\beta g\Big\|_{L^2(S^j_{\delta(\Lambda)})}\\
&+\sum_{\beta\in\{0,1\}}\Big\|\varrho_j^{\sigma_1}(\cdot;\Lambda)\varrho_{j,k}^{\sigma_2+\beta}(\cdot;\Lambda)\partial^\beta_yg\Big\|_{L^\infty(\T_{\tp}\backslash S^j_{\delta(\Lambda)})}.
\end{split}
\end{equation}

\end{itemize}

Lastly we remark that all the implied constants are uniform with respect to the parameters $\lambda, \alpha, k,\nu$ and only depend on the $\varkappa$ from \eqref{asp1} and the structure constant $\kappa$ from Proposition \ref{lap_main} in subsection \ref{sec:bspd}.

\subsection{Organization of the paper}
The rest of the paper is organized as follows. 

\begin{itemize}
    
    \item 
In section \ref{sec:Green} we study the Green's function and modified Green's function for the main elliptic operators from the Rayleigh equation. 

\item
In section \ref{sec:pbk} we prove pointwise bounds on the kernel of the inverse for the generalized Airy operators that appear in the Orr-Sommerfeld equations, and use them to bound the inverse of the generalized Airy operators in various weighted spaces.

\item

In section \ref{sec:mbsd} we state the limiting absorption principle and use them to derive the main bounds on the spectral density functions. 

\item

In section \ref{sec:pmt} we prove our main result, Theorem \ref{thm}.

\item
In sections \ref{sec:nondeg2} and \ref{sec:deg2} we give the proof of the limiting absorption principles assuming various bounds on $T_\Theta$. 

\item 
In section \ref{sec:T} we prove the technical propositions needed on $T_{\Theta}$ for the proofs of the limiting absorption principles.  

\end{itemize}

\section{Bounds on the Green's function and modified Green's function}\label{sec:Green}
In this section we recall and prove some useful estimates for Green's functions associated with Rayleigh equations, including the Green's function for the standard ellitpic operator $k^2-\partial_y^2$ on $\T_\tp$ and for the elliptic operator $k^2-\partial_y^2+V(y)$ on $\T_\tp$ where $V$ is a potential term coming from the nonlocal term in the Rayleigh equations. Throughout the section we assume that $k\in\Z\cap[1,\infty)$. 
\subsection{Elementary properties of the standard Green's function}\label{sub1}
Recall that the Green's function $G_k(y,z)$ solves  for $y, z\in\T_{\tp}$,
\begin{equation}\label{eq:Helmoltz}
-\frac{d^2}{dy^2}G_k(y,z)+k^2G_k(y,z)=\delta(y-z).
\end{equation}
 $G_k$ has the symmetry
\begin{equation}\label{Gk2}
G_k(y,z)=G_k(z,y), \qquad {\rm for}\,\,y, z\in\T_{\tp}.
\end{equation}
We note the following point-wise bounds for $G_k$ with $y,z\in\T_\tp$,
\begin{equation}\label{Helm1.0}
    |G_k(y,z)|+|k|^{-1}|\partial_yG_k(y,z)|\lesssim |k|^{-1}e^{-|k||y-z|},
    \end{equation}
and the integral bounds
\begin{equation}\label{Gk1.1}
\begin{split}
\sup_{y\in\T_{\tp},\beta\in\{0,1\}}\bigg[|k|^{3/2-\beta}\left\|\partial_{y,z}^{\beta}G_k(y,z)\right\|_{L^2(z\in\T_{\tp})}\bigg]\lesssim 1.
\end{split}
\end{equation}

Define $F_k(y,z)$ for $ y, z\in\T_\tp$ as
\begin{equation}\label{GkFk}
F_k(y,z):=\partial_z\partial_yG_k(y,z)-\delta(y-z),\qquad{\rm for}\,\,y,z\in\T_{\tp}.
\end{equation}
Then $F_k$ satisfies the bounds
\begin{equation}\label{Gk3.1}
\begin{split}
&\sup_{y\in\T_{\tp},\beta\in\{0,1\}}\bigg[|k|^{-1/2-\beta}\left\|\partial_{y,z}^{\beta}F_k(y,z)\right\|_{L^2(z\in\T_{\tp})}\bigg]\lesssim 1.
\end{split}
\end{equation}
These estimates are well known and follow also from the argument below in section \ref{sec:modifiedgreen}. 


\subsection{Bounds on the modified Green's function}\label{sec:modifiedgreen}
For applications below, we need to study the ``modified Green's function" $\mathcal{G}^j_{k}(y,z;\Lambda)$ defined for the parameters $j\in\{1,2\}, \lambda\in \Sigma_{j,\delta_0}$, and $\delta(\Lambda)\leq1/8$, which satisfies for $y,z\in\T_{\tp},$
\begin{equation}\label{mGk1}
(k^2-\partial_y^2)\mathcal{G}^j_{k}(y,z;\Lambda)+\frac{b''(y)}{b(y)-\lambda -i\alpha}\Big[\varphi_0\big(\frac{y-y_{j\ast}}{\delta_0}\big)-\varphi_0\big(\frac{y-y_{j\ast}}{\delta(\Lambda)}\big)\Big]\mathcal{G}^j_{k}(y,z;\Lambda)=\delta(y-z).
\end{equation}
In the above, $\varphi_0\in C_c^\infty(-2,2)$ with $\varphi_0\equiv 1$ on $[-1,1]$.

The bounds we need for the modified Green's function $\mathcal{G}^j_k(y,z;\Lambda)$ are the following.

\begin{lemma}\label{mGk50} 

Let $\mathcal{G}^{j}_{k}(y,z;\Lambda)$ be defined as in \eqref{mGk1} for $j\in\{1,2\}, \lambda\in \Sigma_{j,\delta_0}$ and $\delta(\Lambda)\leq1/8.$ 
Then we have for $\beta\in\{0,1\}$ and $y, z\in\T_{\tp}$, $\mathcal{G}^j_{k}(y,z;\Lambda)=\mathcal{G}^j_{k}(z,y;\Lambda)$, and

\begin{equation}\label{mGk51}
\varrho_{j,k}^\beta(y;\Lambda) \big|\partial_y^{\beta}\mathcal{G}^j_{k}(y,z;\Lambda)\big| \lesssim  \varrho_{j,k}(z;\Lambda) \min\Big\{e^{-|k||y-z|}, \frac{\varrho_j^2(y;\Lambda)}{\varrho_j^2(z;\Lambda)},\,\frac{\varrho_j(z;\Lambda) }{\varrho_j(y;\Lambda)}\Big\}.
\end{equation}

\end{lemma}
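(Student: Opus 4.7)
\emph{Proof strategy.} Rewrite \eqref{mGk1} as $\mathcal{L}\mathcal{G}^j_k(\cdot,z;\Lambda) = \delta(\cdot-z)$ with $\mathcal{L} := -\partial_y^2 + k^2 + V(y;\Lambda)$, and note that $V$ is supported in $\{\delta(\Lambda) \leq |y-y_{j\ast}| \leq 2\delta_0\}$. The symmetry $\mathcal{G}^j_k(y,z;\Lambda) = \mathcal{G}^j_k(z,y;\Lambda)$ is a standard consequence of the formal (bilinear, without complex conjugation) self-adjointness of $\mathcal{L}$. For the pointwise bound I would construct two linearly independent solutions $\phi_\pm$ of $\mathcal{L}\phi = 0$ on $\R$ (viewing $V$ lifted periodically) that capture the vanishing and singular asymptotics at $y_{j\ast}$, express $\mathcal{G}^j_k$ on $\T_\tp$ through them via the standard Green's function construction with the appropriate periodic gluing, and read off the three-part minimum by case analysis.

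The key observation driving the construction is that on the support of $V$, Taylor expansion of $b$ about $y_{j\ast}$ together with \eqref{asp1} and \eqref{IntN4.0} gives
\begin{equation*}
V(y;\Lambda) = \frac{2}{(y-y_{j\ast})^2} + O(1)
\end{equation*}
uniformly in $\Lambda$. The homogeneous equation is thus a perturbation of the inverse-square ODE $-\phi'' + 2\phi/(y-y_{j\ast})^2 = 0$, whose fundamental solutions are exactly $(y-y_{j\ast})^2$ and $(y-y_{j\ast})^{-1}$. I would construct a vanishing solution $\phi_+$ and a singular solution $\phi_-$ on $(y_{j\ast}-2\delta_0, y_{j\ast}+2\delta_0)$ by Volterra iteration seeded by these two profiles (the $\delta(\Lambda)$-cut-off in $V$ automatically regularizes $\phi_-$ at scale $\delta(\Lambda)$, and the constant-coefficient region $|y-y_{j\ast}| < \delta(\Lambda)$ produces $\cosh$/$\sinh$ matching), then continue $\phi_\pm$ outside this interval as combinations of $e^{\pm k(y-y_{j\ast})}$. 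Normalize so that $|W[\phi_+,\phi_-]| \gtrsim 1$ uniformly in $\Lambda$, which is available because the leading profiles $\varrho_j^2$ and $\varrho_j^{-1}$ have unit Wronskian. The resulting profiles then satisfy
\begin{equation*}
|\phi_+(y;\Lambda)| \lesssim \varrho_j^2(y;\Lambda),\qquad |\phi_-(y;\Lambda)| \lesssim \varrho_j^{-1}(y;\Lambda),\qquad |\phi_\pm'(y;\Lambda)| \lesssim \varrho_{j,k}^{-1}(y;\Lambda)|\phi_\pm(y;\Lambda)|
\end{equation*}
in the region $|y-y_{j\ast}| \lesssim 1/|k|$, with exponential growth/decay at scale $e^{\pm |k||y-y_{j\ast}|}$ in the complementary region.

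Via the standard Green's function construction on $\T_\tp$, schematically $\mathcal{G}^j_k(y,z;\Lambda) = W^{-1}\phi_+(y_{\text{near}};\Lambda)\phi_-(y_{\text{far}};\Lambda)$ with $y_{\text{near}}, y_{\text{far}}$ denoting $y,z$ ordered by proximity to $y_{j\ast}$ (and with contributions from both directions around the torus summed, controlled through \eqref{Helm1.0}), the three parts of the minimum emerge directly: the factor $e^{-|k||y-z|}$ is inherited from the exponentials in the regime where $V$ does not act along the segment joining $y$ to $z$; $\varrho_j^2(y;\Lambda)/\varrho_j^2(z;\Lambda)$ arises when $\varrho_j(y) \leq \varrho_j(z)$ from the vanishing asymptotic of $\phi_+$; and $\varrho_j(z;\Lambda)/\varrho_j(y;\Lambda)$ arises in the symmetric regime from the singular asymptotic of $\phi_-$. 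The prefactor $\varrho_{j,k}(z;\Lambda)$ appears through the Wronskian normalization combined with the sharp pointwise sizes of $\phi_\pm$, and the $\varrho_{j,k}^{-\beta}(y;\Lambda)$ weight for $\beta = 1$ is precisely the cost of differentiating $\phi_\pm$. The principal technical obstacle I expect is executing the Volterra construction uniformly in $\Lambda$ across the transition at scale $\delta(\Lambda)$ where $V$ is cut off, while simultaneously preserving the Wronskian lower bound and the leading asymptotic profiles; the natural remedy is to run the contractions in weighted $L^\infty$ norms carrying weights $\varrho_j^{-2}$ and $\varrho_j$ respectively, so as to absorb the $O(1)$ error in the expansion of $V$ without degrading the $\varrho_j^{\pm}$-profiles or the lower bound on $|W|$.
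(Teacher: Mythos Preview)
Your approach via explicit fundamental solutions and Volterra iteration is sound and genuinely different from the paper's proof. The paper never constructs $\phi_\pm$; instead it runs weighted energy (multiplier) estimates directly on $h(\cdot)=\mathcal G^j_k(\cdot,z;\Lambda)$. First it tests the equation against $\bar h$ and uses the sign $\Re V\ge 0$ to obtain the local bound $|h(z)|\lesssim\varrho_{j,k}(z;\Lambda)$. Then it propagates this by two ``entanglement'' inequalities: choosing a multiplier $\varphi$ with $\varphi'=k\varphi$ away from $z$ yields the factor $e^{-|k||y-z|}$, while the substitution $h=r\,h^\ast$ with $r(y)=|y-y_{j\ast}|^{1/2}$ and a multiplier satisfying $r^2\varphi'=m\varphi$, $m^2=r^2|r'|^2+r^4\Re V\approx 9/4$, extracts exactly the power decay $\varrho_j^2/\varrho_j^2$ and $\varrho_j/\varrho_j$ associated with the indicial roots $2,-1$.

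Your route is closer in spirit to the inviscid predecessors \cite{Iyer,JiaVortex}: the profiles $\varrho_j^2$ and $\varrho_j^{-1}$ appear explicitly and the three-part minimum drops out of the formula $\phi_+(y_<)\phi_-(y_>)/W$, with the $\beta=1$ bound for free. The paper's energy argument trades this explicitness for robustness: it needs only $\Re V\ge 0$ and a lower bound on $m$, never the precise form of $V$, and it works directly on $\T_\tp$ without the periodic gluing step. Two small points on your sketch: the remainder $V-2/(y-y_{j\ast})^2$ is actually $O(|y-y_{j\ast}|^{-1})$ (from the cubic Taylor term in $b$), not $O(1)$, though this is still harmless for the Volterra contraction in the weighted norms you propose; and the uniform Wronskian lower bound is most cheaply secured by building $\phi_-$ from $\phi_+$ via reduction of order, so that $W\equiv 1$ and you only need two-sided bounds on $\phi_+$.
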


As a corollary of Lemma \ref{mGk50}, we also have the following bounds.

\begin{lemma}\label{mGk30}
Under the assumptions of Lemma \ref{mGk50},
 define for $y,z\in \T_{\tp}$,
 \begin{equation}\label{mGk31}
\mathcal{H}^j_k(y,z;\Lambda):=\Big[\partial_z+\varphi_0\big(\frac{y-y_{j\ast}}{10\delta(\Lambda)}\big)\partial_y\Big]\mathcal{G}^j_{k}(y,z;\Lambda).
\end{equation}
Then we have the bounds for $y\in\T_\tp, z\in S^j_{4\delta(\Lambda)}$ and $\beta\in\{0,1\}$,
\begin{equation}\label{mGk52}
\begin{split}
&\varrho_{j,k}^\beta(y;\Lambda)\big|\partial_y^\beta\mathcal{H}^j_{k}(y,z;\Lambda)\big|\lesssim \min\Big\{e^{-|k||y-z|}, \frac{\varrho_j^2(y;\Lambda)}{\varrho_j^2(z;\Lambda)},\,\frac{\varrho_j(z;\Lambda) }{\varrho_j(y;\Lambda)}\Big\},
\end{split}
\end{equation}
and for $y,z\in\T_\tp$,
\begin{equation}\label{lem:mGK30:New}
\varrho_{j,k}^\beta(y;\Lambda)\big|\partial_y^\beta(\partial_z + \partial_y) \mathcal{G}^{j}_k(y,z;\Lambda) \big|\lesssim \min \left\{e^{-(|k|/2)|y-z|}, \frac{\varrho_{j,k}(y;\Lambda)}{\varrho_{j}(z;\Lambda)}, \frac{\varrho_{j,k}(z;\Lambda)}{\varrho_{j}(y;\Lambda)}    \right\}.
\end{equation}



\end{lemma}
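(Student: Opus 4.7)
The plan is to derive a PDE for $(\partial_y+\partial_z)\mathcal{G}^j_k$ in which the delta source is traded for a gentler source, invert using Lemma \ref{mGk50}, and then obtain \eqref{mGk52} by pasting the resulting bound against Lemma \ref{mGk50} itself across the cutoff $\varphi_0((y-y_{j\ast})/(10\delta(\Lambda)))$ that defines $\mathcal{H}^j_k$.

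Let $V(y):=\frac{b''(y)}{b(y)-\lambda-i\alpha}\bigl[\varphi_0(\tfrac{y-y_{j\ast}}{\delta_0})-\varphi_0(\tfrac{y-y_{j\ast}}{\delta(\Lambda)})\bigr]$ denote the potential in \eqref{mGk1}. Differentiating \eqref{mGk1} in $y$ and in $z$ separately and adding, the identity $\partial_y\delta(y-z)+\partial_z\delta(y-z)=0$ cancels the source, yielding
\begin{equation*}
(k^2-\partial_y^2+V(y))(\partial_y+\partial_z)\mathcal{G}^j_k(y,z;\Lambda)=-V'(y)\,\mathcal{G}^j_k(y,z;\Lambda),
\end{equation*}
which inverts via $\mathcal{G}^j_k$ itself to
\begin{equation*}
(\partial_y+\partial_z)\mathcal{G}^j_k(y,z;\Lambda)=-\int_{\T_\tp}\mathcal{G}^j_k(y,w;\Lambda)\,V'(w)\,\mathcal{G}^j_k(w,z;\Lambda)\,dw.
\end{equation*}
The derivative $V'$ breaks into three pieces: a harmless $O(1)$ outer piece supported in $|w-y_{j\ast}|\sim\delta_0$; a bulk piece of size $O(\varrho_j^{-3}(w;\Lambda))$ on $\delta(\Lambda)\le|w-y_{j\ast}|\le 2\delta_0$ coming from differentiating $b''/(b-\lambda-i\alpha)$; and a singular shell piece $\varphi_0'((w-y_{j\ast})/\delta(\Lambda))/\delta(\Lambda)$ multiplied by $b''/(b-\lambda-i\alpha)$, of size $O(\delta(\Lambda)^{-3})$ on $|w-y_{j\ast}|\sim\delta(\Lambda)$ (where $|b-\lambda-i\alpha|\sim\delta(\Lambda)^2$).

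To prove \eqref{lem:mGK30:New} for $\beta=0$, I plug these into the integral formula and apply Lemma \ref{mGk50}: each Green's function factor supplies $\varrho_{j,k}(w;\Lambda)\lesssim\varrho_j(w;\Lambda)$, giving two powers of $\varrho_j(w;\Lambda)$ to pair against $V'$. The bulk piece then integrates as $\int\varrho_j^{-1}(w;\Lambda)\,dw\lesssim|\log\delta(\Lambda)|$ and is absorbed into constants, while the shell piece contributes $\delta(\Lambda)^{-3}\cdot\delta(\Lambda)^2\cdot\delta(\Lambda)\lesssim 1$. Picking in each $\min$ of Lemma \ref{mGk50} the optimal of the three options consistently (the exponential on the exterior portion of the $w$-integration, the two algebraic ratios near the critical point) produces the three alternatives in \eqref{lem:mGK30:New}; the dilation $|k|\to|k|/2$ is the standard cost of retaining one exponential after absorbing the other into the $w$-integration. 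For $\beta=1$, I differentiate the integral representation once in $y$, so that $\mathcal{G}^j_k(y,w;\Lambda)$ is replaced by $\partial_y\mathcal{G}^j_k(y,w;\Lambda)$ and the $\beta=1$ bound of Lemma \ref{mGk50} produces exactly the factor $\varrho_{j,k}^{-1}(y;\Lambda)$ absorbed on the left of \eqref{lem:mGK30:New}.

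For \eqref{mGk52} the hypothesis $z\in S^j_{4\delta(\Lambda)}$ gives $\varrho_j(z;\Lambda)\sim\delta(\Lambda)$, and I split in $y$ using the cutoff in \eqref{mGk31}. On $\{|y-y_{j\ast}|\le 10\delta(\Lambda)\}$ the cutoff is $\equiv 1$ so $\mathcal{H}^j_k=(\partial_y+\partial_z)\mathcal{G}^j_k$ and \eqref{mGk52} follows from \eqref{lem:mGK30:New}. On $\{|y-y_{j\ast}|\ge 20\delta(\Lambda)\}$ the cutoff vanishes and $\mathcal{H}^j_k=\partial_z\mathcal{G}^j_k$; the symmetry $\mathcal{G}^j_k(y,z;\Lambda)=\mathcal{G}^j_k(z,y;\Lambda)$ together with Lemma \ref{mGk50} for $\beta=1$ gives $|\partial_z\mathcal{G}^j_k|\lesssim\varrho_{j,k}(y;\Lambda)\,\varrho_{j,k}^{-1}(z;\Lambda)\min\{\ldots\}$, and the factor $\varrho_{j,k}^{-1}(z;\Lambda)\sim(\delta(\Lambda)\wedge|k|^{-1})^{-1}$ exactly removes the $\varrho_{j,k}(z;\Lambda)$ present in Lemma \ref{mGk50}. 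On the transition shell the two estimates are comparable, and the extra cutoff-derivative term of size $1/\delta(\Lambda)$ is controlled by the bound $|\partial_y\mathcal{G}^j_k|\lesssim(\delta(\Lambda)\wedge|k|^{-1})/\delta(\Lambda)\cdot\min\{\ldots\}$ from Lemma \ref{mGk50}. The $\beta=1$ part follows by differentiating once more while keeping the same region split. The main obstacle I expect is the bookkeeping in the convolution against $V'$: tracking which alternative in each $\min$ of Lemma \ref{mGk50} to pick for $\mathcal{G}^j_k(y,w;\Lambda)$ and $\mathcal{G}^j_k(w,z;\Lambda)$ as $w$ traverses the support of $V'$, and checking that the thin shell piece does not leak a spurious $\delta(\Lambda)^{-1}$. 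This hinges on the precise matching between the shell width $\delta(\Lambda)$, the $\delta(\Lambda)^{-2}$ size of $b''/(b-\lambda-i\alpha)$ there, and the two $\varrho_{j,k}\sim\delta(\Lambda)\wedge|k|^{-1}$ factors from the outer Green's functions.
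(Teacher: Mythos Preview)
Your derivation of the equation $(k^2-\partial_y^2+V)(\partial_y+\partial_z)\mathcal{G}^j_k=-V'\,\mathcal{G}^j_k$ and the inversion via $\mathcal{G}^j_k$ itself is exactly how the paper proves \eqref{lem:mGK30:New}. One warning on your bookkeeping: the claim that ``the bulk piece integrates as $\int\varrho_j^{-1}(w)\,dw\lesssim|\log\delta(\Lambda)|$ and is absorbed into constants'' is wrong as written --- a logarithm cannot be absorbed. What actually happens is that if you retain the $\min$ factors from Lemma \ref{mGk50} (rather than discarding them as $\leq 1$ and keeping only the $\varrho_{j,k}(w)$ prefactors), then on each of the three ranges $\varrho_j(w)\le\varrho_j(z)$, $\varrho_j(z)\le\varrho_j(w)\le\varrho_j(y)$, $\varrho_j(w)\ge\varrho_j(y)$ the extra decay from the $\min$'s improves the integrand to $\varrho_j^{-2}(w)$ or better, and no logarithm arises. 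This is precisely the ``bookkeeping'' you anticipate at the end, and it is not optional.

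For \eqref{mGk52} your route differs from the paper's. The paper does not split in $y$; instead it computes the equation satisfied by $\mathcal{H}^j_k$ \emph{directly}. The key observation is that for $z\in S^j_{4\delta(\Lambda)}$ the cutoff $\varphi_0((y-y_{j\ast})/(10\delta(\Lambda)))$ is identically $1$ in a neighborhood of $y=z$, so the distributional term $\varphi_0\cdot\partial_y\delta(y-z)$ reduces to $\partial_y\delta(y-z)$ and cancels $\partial_z\delta(y-z)$. One is left with
\[
(k^2-\partial_y^2+V)\mathcal{H}^j_k = -2(\partial_y\varphi^\dagger)\partial_y^2\mathcal{G}^j_k-(\partial_y^2\varphi^\dagger)\partial_y\mathcal{G}^j_k-\varphi^\dagger(\partial_yV)\mathcal{G}^j_k,
\]
a source supported in $|y-y_{j\ast}|\le 20\delta(\Lambda)$ with size $\lesssim\max\{k\delta(\Lambda)^{-1},\delta(\Lambda)^{-2}\}e^{-k|y-z|}$; a single application of Lemma \ref{mGk50} then gives \eqref{mGk52}. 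This is cleaner than your region-splitting because the compact support of the source means the $w$-integration is over a set of measure $\sim\delta(\Lambda)$, and one retains the full exponential rate $e^{-k|y-z|}$ rather than $e^{-(k/2)|y-z|}$. Your approach is also valid, but you should be aware that on the outer region you will only recover $e^{-(k/2)|y-z|}$ (inherited from \eqref{lem:mGK30:New}), which is harmless for the applications but does not literally match the stated bound.
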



The rest of the subsection is devoted to the proof of Lemma \ref{mGk50} and Lemma \ref{mGk30}.

\subsubsection{Proof of Lemma \ref{mGk50}}
For simplicity, we suppress the dependence of $\mathcal{G}_k^j$ on $z,\Lambda$, $k$, $j$ which are fixed, and set for $y\in\T_{\tp}$,
\begin{equation*}
    h(y) := \mathcal{G}_k^j(y,z;\Lambda),\quad V(y):=\frac{b''(y)}{b(y)-\lambda - i\alpha}\Big[\varphi_0\big(\frac{y-y_{j\ast}}{\delta_0}\big)-\varphi_0\big(\frac{y-y_{j\ast}}{\delta(\Lambda)}\big)\Big].
\end{equation*}
We also set for $y\in\T_\tp$,
\begin{equation*}
    \varrho(y):=\varrho_{j}(y;\Lambda),\quad \varrho_k(y):=\varrho_{j,k}(y ; \Lambda),\quad y_\ast:=y_{j\ast}. 
\end{equation*}

We separate the proofs into three steps, corresponding to different configurations of $y$ and $z$. With a slight abuse of notation we shall prove the bounds \eqref{mGk51} for $y=y_1\in\T_\tp, z\in\T_\tp$ and use $y$ as a free variable. 

{\bf Step 1: local estimates near $z\in\T_\tp$}.
Multiplying $\overline{h}$ to \eqref{mGk1} and integrating over $\T_{\tp}$, we get that
\begin{equation}\label{mGk_1_1}
    \int_{\T_{\tp}}|\partial_y h(y)|^2+|k|^2|h(y)|^2 dy+\int_{\T_{\tp}}V(y)|h(y)|^2 dy = \bar{h}(z).
\end{equation}
Notice that $\Re V(y)\geq0$ and for $y\in S^j_{\delta_0}$ satisfying
$ |y-y_*|\gg \delta(\Lambda),$
that
\begin{equation}\label{mGk_1_2}
    1+\Re V(y)\gtrsim\frac{1}{\varrho^2(y)}.
\end{equation}
It follows from \eqref{mGk_1_1} that for large $C_0>1$,
\begin{equation}\label{mGk_1_3}
    \int_{\T_{\tp}}|\partial_y h(y)|^2+|k|^2|h(y)|^2 dy+\int_{y\in S^j_{\delta_0},|y-y_*|>C_0\delta(\Lambda)}\frac{1}{\varrho^2(y)}|h(y)|^2 dy\lesssim|h(z)|.
\end{equation}
Using the inequality that
\begin{equation}\label{mGk_sob}
    \|h\|_{L^\infty(J)}\lesssim\|h\|_{L^2(J_*)}|J|^{-\frac{1}{2}}+\|\partial_y h\|_{L^2(J)}|J|^\frac{1}{2},
\end{equation}
for any interval $J,\ J_*$ with $J_*\subseteq J$ and $|J_*|\gtrsim|J|$, and choosing the interval 
\begin{equation*}
    J = \Big\{y\in \T_\tp: |y-z|\leq 2C_0\min\{1/|k|,\varrho(z)\}\Big\},
\end{equation*}
we obtain that
\begin{equation}\label{mGk_1_4}
    |h(z)|\lesssim\min \{1/|k|,\varrho(z)\}.
\end{equation}
It follows from Sobolev type inequality and \eqref{mGk_1_3} that
\begin{equation}\label{mGk_1_5}
    \sup_{y,z\in\T_{\tp},\, |y-z|\leq \min\{\varrho(z), 1/|k|\}}|\mathcal{G}^j_{k}(y,z;\lambda + i\alpha)|\lesssim\min\{\varrho(z), 1/|k|\}. 
\end{equation}
Then, by \eqref{mGk1} we also obtain that
\begin{equation}
    \sup_{y,z\in\T_{\tp},\, |y-z|\leq \min\{\varrho(z), 1/|k|\}}|\partial_y\mathcal{G}^j_{k}(y,z;\Lambda)|\lesssim1.
\end{equation}


{\bf Step 2: Global bounds (I) large $k$ case.}
In this step we prove, under the assumption that
\begin{equation}\label{mGk510.01}
    1/k\leq \varrho_{j}(z;\Lambda),
\end{equation}
for $\beta\in\{0,1\}$ and $y, z\in\T_{\tp}$,
\begin{equation}\label{mGk510}
\varrho_{j,k}^\beta(y;\Lambda) \big|\partial_y^{\beta}\mathcal{G}^j_{k}(y,z;\lambda + i\alpha)\big| \lesssim  \varrho_{j,k}(z;\Lambda) e^{-|k||y-z|}. 
\end{equation}
We will remove the assumption \eqref{mGk510.01} at the end of the proof. 

Let $\varphi\in C^1_p(\T_{\tp})$, the space of piecewise $C^1$ functions, with $\varphi(z)=0$, to be chosen more precisely below. We multiply $\varphi^2\overline{h}$ to \eqref{mGk1} and integrate over $\T_{\tp}$ to get that
\begin{equation}
    \label{eq:energy1}
    \int_{\T_{\tp}} |\varphi(y) \partial_y h(y)|^2dy  + \left( k^2 + V(y) \right) |\varphi(y) h(y)|^2 dy+ \int_{\T_{\tp}} 2 \varphi'(y) \varphi(y) \overline{h}(y) \partial_y h(y) dy =0. 
\end{equation}
Taking the real part of \eqref{eq:energy1}, using Cauchy-Schwarz inequality and the fact that $\Re V(y)\geq0$ we obtain that  
\begin{equation}
    \int_{\T_{\tp}} \Big[|\varphi '(y)|^2 - |k|^2 |\varphi(y)|^2  \Big]|h(y)|^2 dy \geq 0.
\end{equation}

We assume, without loss of generality that $y_1$ lies in the positive direction to $z$ counterclockwise, and that the distance from $z$ to $y_1$ is greater than $1/k$ (otherwise the desired bound \eqref{mGk510} follows from \eqref{mGk_1_5}). We now specify $\varphi$ as follows
\begin{equation}
    \begin{split}
        &\varphi(z) = 0, \quad \varphi''(y) = 0 \quad \text{for}\ y \in \left[z, z+c/k\right]\backslash\{0\},\quad \varphi|_{y\in\{z-c/k, z+c/k\}}  = 1,\\
        &\varphi'(y) = k\varphi(y) \quad \text{for}\ y \in \left[ z+c/k, y_1 - c/k \right],\quad \varphi'(y) = 0 \quad \text{for}\ y \in \left[  y_1 - c/k, z+\mathfrak{p}/2 \right].
    \end{split}
\end{equation}

In the above $0<c<1$ is a small constant. And then extend $\varphi$ to $[z-\mathfrak{p}/2,z]$ by an even reflection across $z$. We denote $I = [y_1-c/k,y_1+c/k]$, then
$\varphi(y)\approx e^{|k||y_1-z|}$ for $y\in I$ and 
\begin{equation}
    \int_{I} |h(y)|^2 dy\lesssim e^{-2|k||y_1-z|}\int_{z-c/k}^{z+c/k}|h(y)|^2dy.
\end{equation}
By equation \eqref{mGk1} and a local energy inequality, we obtain with $I':=[y_1-c/(2k), y_1+c/(2k)]$, 
\begin{equation}\label{mGk_2_15}
    \int_{I'}\varrho^{-2}(y)|h(y)|^2+ |\partial_y h(y)|^2dy\lesssim k^2\int_I|h(y)|^2dy.
\end{equation}
By \eqref{mGk_sob}, we obtain that for $y\in I'$,
\begin{equation}\label{mGk_k_h}
    |h(y)|\lesssim \varrho_k(z)e^{-|k||y_1-z|}.
\end{equation}
The rest of the bounds in \eqref{mGk510} then follow from equation \eqref{mGk1} and \eqref{mGk_k_h}.

{\bf Step 3: Global estimates (II) large $V$ case.}
In this step we prove, under the assumption that 
\begin{equation}\label{mGk51'.001}
    1/k\ge \varrho_j(z; \Lambda),
\end{equation}
the bounds for $\beta\in\{0,1\}$ and $y, z\in\T_{\tp}$,
\begin{equation}\label{mGk51'}
\varrho_{j,k}^\beta(y;\Lambda) \big|\partial_y^{\beta}\mathcal{G}^j_{k}(y,z;\Lambda)\big| \lesssim  \varrho_{j,k}(z;\Lambda) \min\Big\{\frac{\varrho_j^2(y;\Lambda)}{\varrho_j^2(z;\Lambda)},\,\frac{\varrho_j(z;\Lambda) }{\varrho_j(y;\Lambda)}\Big\}.
\end{equation}
We will remove the assumption \eqref{mGk51'.001} at the end of the proof.

 Without loss of generality, we assume the critical point $y_{j*}=0$, and consider only the main case $y_1,z\in [0,\delta_0]$ and $z>\delta(\Lambda)$. In addition, we assume that $y_1>z$, the other case following from the symmetry of the modified Green's function.  
 
Multiplying $\varphi^2\overline{h}$ to \eqref{mGk1} and integrating over $\T_{\tp}$, we have
\begin{equation}\label{mGk_3_1}
    \int_{\T_{\tp}} |\varphi(y) \partial_y h(y)|^2  + \left( k^2 + V(y) \right) |\varphi(y) h(y)|^2 dy+ \int_{\T_{\tp}} 2 \varphi'(y) \varphi(y) \overline{h}(y) \partial_y h(y)\,dy =0. 
\end{equation}
Set for $y\in\T_\tp$, 
\begin{align}\label{mGk_3_2}
    h(y) = r(y) h^*(y),
\end{align}
where $r(y)\in C^1_p(\T_\tp)$ is positive and is chosen to satisfy the following properties 
\begin{equation}
    r(y) = \left\{\begin{array}{lll}
        |y|^{1/2}&\text{on}\ [-\delta_0,\delta_0]\backslash J, \ J:=[-z,z],\\
    (\delta_0)^{\frac{1}{2}} &\text{on}\, \ \T_\tp\backslash [-\delta_0,\delta_0].
    \end{array}\right.
\end{equation}

From \eqref{mGk_3_1} and \eqref{mGk_3_2} we obtain that
\begin{equation}\label{eq:entanglement}
\begin{split}
     &\int_{\T_\tp} \frac{1}{r^2(y)} \Big[ (r^2(y)\varphi'(y))^2  - \left( r^2(y) |r'(y)|^2 + r^4(y) \Re V(y)   \right)  \varphi^2(y) \Big] |h^\ast(y)|^2  dy\\
     &\geq -\int_{\T_\tp} \varphi^2(y)|h^\ast(y)|^2  (r'(y)r(y))' dy.
     \end{split}
\end{equation}
and therefore for $\varphi$ satisfying $\varphi\equiv0$ on $[-z,z]$,
\begin{equation}\label{eq:entangle2}
    \int_{\T_{\tp}\backslash J}\frac{1}{r^2(y)} \Big[ (r^2(y)\varphi'(y))^2  - \left( r^2(y) |r'(y)|^2 + r^4(y) \Re V(y)   \right)  \varphi^2(y) \Big] |h^*(y)|^2 dy\geq0.
\end{equation}

We notice the pointwise bounds for $y\in[-\delta_0,\delta_0]\backslash J$ and some $C_2>1$ that
\begin{equation}
   m^2(y):= r^2(y) |r'(y)|^2 + r^4(y) \Re V(y) \geq\max\left\{\frac{1}{4} ,\frac{9}{4}-C_2\frac{\delta(\Lambda)^2}{y^2}-C_2|y|\right\}.
\end{equation}
It is clear that there exists a $y_1'\in\T_\tp$ such that 
\begin{equation}\label{eq:entangle2.1}
    \int_{[z,y_1-\varrho(y_1)/8]}|m(y)|dy=\int_{[y_1'+\varrho(y_1')/8,-z]}|m(y)|dy. 
\end{equation}
In the above, we identify $\T_\tp$ with the circle of circumference $\tp$, and for $a,b\in\T_\tp$ where $b$ is in the positive direction of $a$ (counterclockwise), $[a,b]$ denotes the interval from $a$ to $b$.

We now specify $\varphi\in C^1_p(\T_\tp)$ as the following 
\begin{align}\label{entan2.10}
    &\varphi|_{[-z,z]} = 0, \quad  \varphi ''(y) = 0, \quad y \in [-z-c\varrho(z), z + c\varrho(z)]\backslash[-z,z],\quad  \varphi|_{\{-z-c\varrho(z),z+c\varrho(z)\}} = 1;  \notag\\
    &r(y)\varphi'(y)  = -\Big[ r^2(y) |r'(y)|^2 + r^4(y) \Re V(y)\Big]^{1/2} \varphi(y), \quad y \in [y_1'+\varrho(y_1')/8, -z]; \\
    &r(y)\varphi'(y)  = \Big[ r^2(y) |r'(y)|^2 + r^4(y) \Re V(y)\Big]^{1/2} \varphi(y), \quad y \in [z,y_1-\varrho(y_1)/8]; \\
    & \varphi'(y)=0,\ y\in[y_1-\varrho(y_1)/8,y_1'+\varrho(y_1')/8].
\end{align}
From the definition of $\varphi$, we obtain the bound for $y\in[y_1-\varrho(y)/8,y_1+\varrho(y)/8]$ for a small $c>0$,
\begin{equation}\label{eq:entangle2.2}
    \varphi(y)\ge c\Big[\varrho(y_1)/\varrho(z)\Big]^{3/2}. 
\end{equation}
Then the bound \eqref{eq:entangle2} and the definiton \eqref{entan2.10} imply that 
\begin{equation}\label{entan2.3}
\begin{split}
  &\Big[\varrho(z)/\varrho(y_1)\Big]^3  \int_{[y_1-\varrho(y_1)/8,y_1+\varrho(y_1)/8]}\frac{1}{r^2(y)}|h^\ast(y)|^2dy\\
  &\lesssim \int_{[z-c\varrho(z), z + c\varrho(z)]} \varrho^{-2}(z)|h(y)|^2dy\lesssim \varrho(z).
  \end{split}
\end{equation}
The desired bounds \eqref{mGk51'} then follow from \eqref{entan2.3}, equation \eqref{mGk1} and local energy inequality. We omit the routine details. 

{\bf Step 4: Removing assumptions \eqref{mGk510.01} and \eqref{mGk51'.001}. } We consider first the assumption \eqref{mGk510.01}. The key is to establish the bound 
\begin{equation}\label{step4.1}
    k\int_{z-c/k}^{z+c/k}h^2(y)dy\lesssim \varrho^2(z),
\end{equation}
in the case that $1/k\ge \varrho(z)$. We can use the bound \eqref{mGk51'} and then \eqref{step4.1} follows. 

We now turn to the assumption \eqref{mGk51'.001}. The key is to prove the bound 
\begin{equation}\label{step4.2}
    \int_{[z-c\varrho(z), z + c\varrho(z)]} \varrho^{-1}(z)|h(y)|^2dy\lesssim 1/k^2,
\end{equation}
in the case that $1/k\leq\varrho(z)$. In this case, we can use the bound \eqref{mGk510} and \eqref{step4.2} follows. 

Lemma \ref{mGk50} is now proved. 

\subsubsection{Proof of Lemma \ref{mGk30}}
This proof of \eqref{mGk52} is almost the same as the one in \cite{Iyer}. We briefly outline the main ideas below.

Denote for $y\in\T_{\tp}$,
\begin{equation}
    \varphi^{\dag}(y):=\varphi_0 \Big(\frac{y-y_{j*}}{10\delta(\Lambda)} \Big).
\end{equation}
By direct computation, we see that $\mathcal{H}_k^j(y,z;\Lambda)$ satisfies for $y\in\T_\tp$,
\begin{equation}\label{eq:Hpoisson}
\begin{split}
    &(k^2-\partial_y^2)\mathcal{H}_k^j(y,z;\Lambda)+V(y)\mathcal{H}_k^j(y,z;\Lambda) \\
    &=-2\partial_y\varphi^{\dag}(y)\partial_y^2\mathcal{G}^j_k(y,z;\Lambda)-\partial_y^2\varphi^{\dag}(y)\partial_y\mathcal{G}^j_k(y,z;\Lambda)-\varphi^{\dag}(y)\partial_yV(y)\mathcal{G}^j_k(y,z;\Lambda)\\
    &:=F(y,z;\Lambda). 
\end{split}
\end{equation}
Notice the inequality that for $z\in S^j_{4\delta}, y\in\T_\tp$, 
\begin{equation*}
    |F(y,z;\Lambda)|\lesssim\max\Big\{|k|(\delta(\Lambda))^{-1}, (\delta(\Lambda))^{-2}\Big\}e^{-|k||y-z|},\quad\ \text{Supp}\,F\subseteq[-20\delta(\Lambda),20\delta(\Lambda)].
\end{equation*}
Then the desired bound \eqref{mGk52} follows from \eqref{mGk51}. 

We now turn to the proof of \eqref{lem:mGK30:New}. Notice that $(\partial_y+\partial_z)\mathcal{G}^j_k(y,z;\Lambda)$ satisfies the equation for $y,z\in\T_\tp$,
\begin{equation}\label{entang400}
    \Big[k^2-\partial_y^2+V(y)\Big](\partial_y+\partial_z)\mathcal{G}^j_k(y,z;\Lambda)=-\partial_yV(y)\mathcal{G}^j_k(y,z;\Lambda). 
\end{equation}
The desired bounds \eqref{lem:mGK30:New} then follow from \eqref{mGk51}. 




\section{Pointwise bounds on the kernel of generalized Airy operators and applications}\label{sec:pbk}

In this section we consider the generalized Airy operator $\epsilon \partial_y^2 -\alpha + i (\lambda-b(y)) $, where $b(y)$ satisfies Assumption \ref{asp1}, $\epsilon\in(0,1), \lambda\in\R$ and $\alpha\ge -\sigma_0 \epsilon^{1/2}$ with a sufficiently small $\sigma_0\in(0,1)$. Our main goal is to prove pointwise bounds for the kernel of the generalized Airy operator and then obtain estimates on various operators involving the generalized Airy operator. 

\subsection{Pointwise estimates on the kernel of generalized Airy operators}
Depending on the value of $\lambda\in\R$, we divide into three regions.
\begin{enumerate}
    \item  the non-degenerate region:  $\min_{j\in\{1,2\}}\{|b(y_{j*})-\lambda|\} \gtrsim1$;
    \item the intermediate region: $\epsilon^{\frac{1}{2}}\ll \min_{j\in\{1,2\}}\{|b(y_{j*})-\lambda|\} \ll 1 $;
    \item the viscous region:  $\min_{j\in\{1,2\}}\{|b(y_{j*})-\lambda|\} \lesssim \epsilon^{\frac{1}{2}} $. 
\end{enumerate}

\begin{proposition}\label{Airy_main}
   Assume that  $b(y)$ satisfies Assumption \ref{asp1} and that $\epsilon\in(0,1)$. Then there exist $\sigma_0,c_0\in(0,1)$ such that the following statement holds. 
   
  For $\lambda\in\R$, $y,z\in\T_\tp$, denote $\Lambda:=(\epsilon,\lambda,\alpha)$ and let $k(y,z; \Lambda)$ be the fundamental solution to the generalized Airy operator $\epsilon \partial_y^2 -\alpha + i (\lambda-b(y)) $ on $\T_\tp$, where $\alpha\ge -\sigma_0\epsilon^{1/2}$. More precisely, for $y,z\in\T_\tp$, in the sense of distributions,
    \begin{equation}\label{eq:airy}
        \epsilon \partial_y^2 k(y,z; \Lambda)  -\alpha k(y,z; \Lambda) + i (\lambda-b(y)) k(y,z; \Lambda) = \delta(y -z).
    \end{equation}
    We have the following bounds for $k(y,z; \Lambda)$ with $\lambda\in\R$, $y,z\in\T_\tp$. 
    \begin{enumerate}
        \item if $\min_{j\in\{1,2\}}\{|b(y_{j*})-\lambda|\} \gtrsim1$, then
        \begin{equation}
        \label{eq:kernel:bounds:3}
        \begin{split}
            |k(y,z; \Lambda)| \lesssim   &
             \frac{\epsilon^{-\frac{2}{3}}}{\lb \ept\alpha , \ept( b(z) - \lambda) \rb^{\frac{1}{2}} }\\
             &\times \exp\left[-c_0 \lb \ept\alpha , \ept( b(y)-\lambda) , \ept( b(z)-\lambda)  \rb^{\frac{1}{2}} |y -z|\ept \right];
             \end{split}
        \end{equation}
        \item if $\epsilon^{\frac{1}{2}}\ll  \min_{j\in\{1,2\}}\{|b(y_{j*})-\lambda|\} \ll 1 $, then
        \begin{equation}
        \label{eq:kernel:bounds:2}
        \begin{split}
           |k(y,z; \Lambda)|\lesssim   &\frac{\epsilon^{-\frac{2 + \beta}{3}}}{\lb \epbn\alpha,  \epbn(b( z)-\lambda) \rb^{\frac{1}{2}} } \\
           &\times \exp\left[-c_0 \lb  \epbn\alpha, (b( y)-\lambda) \epbn, (b( z)-\lambda)\epbn \rb^{\frac{1}{2}} |y -z|\epbv\right];
            \end{split}
        \end{equation}
        where $\beta$ is determined by $\min_{i\in\{1,2\}}\{|b(y_{i*})-\lambda|\} \approx \epsilon^{2 \beta}$, $\beta \in [0,1/4]$.
        \item if $\min_{j\in\{1,2\}}\{|b(y_{j*})-\lambda|\} \lesssim  \epsilon^{\frac{1}{2}} $, then
        \begin{equation}
        \label{eq:kernel:bounds:1}
        \begin{split}
            |k(y,z; \Lambda)| \lesssim &  
            \frac{\epsilon^{-\frac{3}{4}}}{\lb  \epsilon^{-\frac{1}{2}}\alpha,  \epsilon^{-\frac{1}{2}}(b( z)-\lambda)) \rb^{\frac{1}{2}} } \\
            &\times\exp\left[-c_0 \lb \epsilon^{-\frac{1}{2}}\alpha, \epsilon^{-\frac{1}{2}}(b( y)-\lambda), \epsilon^{-\frac{1}{2}}(b( z)-\lambda)  \rb^{\frac{1}{2}} |y -z|\epsilon^{-\frac{1}{4}} \right] .
            \end{split}
        \end{equation}
    \end{enumerate}
\end{proposition}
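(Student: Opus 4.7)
The plan is to establish the pointwise bounds by combining a weighted energy method with an adaptive rescaling suited to each regime. Writing $V(y) := -\alpha + i(\lambda - b(y))$, so that $\epsilon \partial_y^2 k + V(y) k = \delta(y-z)$, I would proceed as follows.

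First, I would set up an Agmon-type weighted estimate. Given a real-valued weight $\Phi(y)$ to be chosen, multiply the equation by $e^{2\Phi(y)} \bar{k}$ and integrate over $\T_\tp$. The real part yields
\begin{equation*}
\epsilon \int_{\T_\tp} e^{2\Phi}|\partial_y k|^2 \,dy + \int_{\T_\tp} e^{2\Phi}\bigl(\alpha - \epsilon|\Phi'|^2\bigr)|k|^2\,dy = e^{2\Phi(z)} \Re k(z,z;\Lambda),
\end{equation*}
while the imaginary part yields control of $\int e^{2\Phi}(\lambda - b(y))|k|^2$. If I can choose $\Phi$ so that $\epsilon|\Phi'(y)|^2 \le (1-c_1)\bigl\langle \alpha,\, |\lambda - b(y)|\bigr\rangle$, then a Cauchy--Schwarz/test-function argument combined with Sobolev embedding in rescaled coordinates near the diagonal will produce both the exponential factor $e^{-\Phi(y) + \Phi(z)}$ in $k$ and the correct amplitude prefactor.

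The weight $\Phi$ is then selected by regime, based on the local WKB length scale of $V(y)$. In the non-degenerate case $\min_j|b(y_{j*}) - \lambda|\gtrsim 1$, the turning set $\{b = \lambda\}$ lies where $|b'|\gtrsim 1$; the semiclassical Airy scale is $\epsilon^{1/3}$, and I would take
\begin{equation*}
\Phi(y) = c_0 \, \epsilon^{-1/3}\!\int_z^y \bigl\langle \ept\alpha,\,\ept(b(w)-\lambda),\,\ept(b(z)-\lambda)\bigr\rangle^{1/2}\,dw,
\end{equation*}
reducing near the diagonal to the classical inhomogeneous Airy Green's function via $y = z + \epsilon^{1/3}\tilde{y}$. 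This produces the prefactor $\epsilon^{-2/3}/\langle\ept\alpha, \ept(b(z)-\lambda)\rangle^{1/2}$ in \eqref{eq:kernel:bounds:3}. The intermediate case $\min_j|b(y_{j*}) - \lambda|\approx \epsilon^{2\beta}$, $\beta\in(0,1/4]$, is handled by the same template but with the turning point $y_0$ lying at distance $\sim \epsilon^\beta$ from the nearest $y_{j*}$, so that $|b'(y_0)|\sim \epsilon^\beta$ and the effective Airy scale becomes $(\epsilon/\epsilon^\beta)^{1/3} = \epsilon^{(1-\beta)/3}$; correspondingly I take $\Phi(y) = c_0 \epsilon^{-(1-\beta)/3}\int_z^y\bigl\langle \epbn\alpha,\,\epbn(b(w)-\lambda),\,\epbn(b(z)-\lambda)\bigr\rangle^{1/2}\,dw$, yielding the prefactor $\epsilon^{-(2+\beta)/3}$ in \eqref{eq:kernel:bounds:2}. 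In the viscous regime $\min_j|b(y_{j*})-\lambda|\lesssim \epsilon^{1/2}$, the turning points lie within $\epsilon^{1/4}$ of $y_{j*}$ where $b(y) - \lambda \approx \tfrac{1}{2}b''(y_{j*})(y-y_{j*})^2 - (\lambda-b(y_{j*}))$; the local model is a complex Weber/parabolic-cylinder operator at scale $\epsilon^{1/4}$, and the analogous weight with $\epsilon^{-1/2}$ replacing $\ept$ (and $\epsilon^{-1/4}$ replacing $\epsilon^{-1/3}$) produces the bound \eqref{eq:kernel:bounds:1} with prefactor $\epsilon^{-3/4}$.

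The main obstacles will be: (i) verifying the Agmon condition $\epsilon|\Phi'|^2\le(1-c_1)\langle\alpha,|\lambda - b(y)|\rangle$ with a uniform $c_0$ across all three regimes, including the transition where $\beta\downarrow 0$ or $\beta\uparrow 1/4$, which requires a careful elementary analysis of $b$ near $y_{j*}$ using Assumption \ref{MaAs} and the lower bound on $b''$; (ii) converting the weighted $L^2$ control into the $L^\infty$ pointwise bound with the correct amplitude, which I would handle by local rescaling to the appropriate model equation and an $L^2\to L^\infty$ Sobolev embedding in the rescaled variable; and (iii) accommodating the periodicity of $\T_\tp$, for which I would either unfold to $\R$ and sum over periods using the exponential decay, or insert cutoffs on intervals of length comparable to the current Airy/Weber scale and re-sum. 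Finally, the hypothesis $\alpha \ge -\sigma_0 \epsilon^{1/2}$ is essential and enters precisely to ensure that even the negative contribution of $\alpha$ can be absorbed by the Agmon weight term on the viscous scale, which fixes the smallness of $\sigma_0$.
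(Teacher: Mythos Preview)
Your overall strategy---an Agmon-type weighted energy estimate with weight adapted to the local WKB/Airy scale in each regime---is essentially the paper's approach. The paper's ``entanglement inequality'' (Lemma~3.2) is exactly your Agmon estimate written multiplicatively: for any cutoff $\varphi$ with $\varphi(z)=0$,
\[
\int_{\T_\tp}\Big[|\varphi'|^2 - c_0^2\,\epsilon^{-\frac{2-2\beta}{3}}\big\langle\epbn\alpha,\epbn(b(y)-\lambda)\big\rangle\,|\varphi|^2\Big]|k(y,z)|^2\,dy \ge 0,
\]
and the paper then takes $\varphi$ solving $\varphi'=c_0 Q^{-1}(x)\varphi$ on the relevant arc, which is your $\varphi=e^{\Phi}$. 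The diagonal bound is obtained separately (Lemma~3.1) via an unweighted energy argument plus a spectral-gap inequality and a Sobolev bound on an interval of the appropriate scale, rather than by reducing to explicit Airy/Weber models as you propose; either route works for the prefactor.

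There is, however, a genuine gap. You write ``the imaginary part yields control of $\int e^{2\Phi}(\lambda-b(y))|k|^2$,'' but this integral is \emph{signed}: $\lambda - b(y)$ changes sign across the critical layer, so the identity alone does not deliver the coercivity term $\langle\alpha,|\lambda-b(y)|\rangle$ that your Agmon condition requires. Since $\alpha$ may be as negative as $-\sigma_0\epsilon^{1/2}$, the real part cannot supply it either. The paper resolves this by multiplying the equation by $\varphi^2\phi\,\overline{k}$ where $\phi$ is a regularization of $\mathrm{sgn}(b(y)-\lambda)$ on the scale $\epsilon^{(1-\beta)/3}$; the imaginary part then produces $\int|b(y)-\lambda|\varphi^2|k|^2$ up to errors of size $\epsilon^{(1+2\beta)/3}\int\varphi^2|k|^2$ and $\epsilon\int\varphi^2|\phi'||k||\partial_y k|$, which are absorbed via the spectral-gap inequality
\[
\int|h|^2 \le \tau\,\epbn\int|b-\lambda||h|^2 + C(\tau)\,\epsilon^{\frac{2-2\beta}{3}}\int|\partial_y h|^2.
\]
This smoothed-sign-function trick, combined with the spectral gap, is the technical heart of the entanglement inequality and is missing from your sketch; without it the weighted estimate does not close and the exponential off-diagonal decay is not established.
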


The rest of the subsection is devoted to the proof of Proposition \ref{Airy_main}. 
Without loss of generality, below we only consider the case $\min_{j\in\{1,2\}}\{|b(y_{j*})-\lambda|\} = |b(y_{1*})-\lambda|$, and denote $y_*=y_{1*}$ for simplicity. Through the rest of this section, we also denote $k(y,z)=k(y,z; \Lambda)$ for the ease of notations.

\subsubsection{Global energy estimates and bounds at $z$}
We first use the global energy estimates to obtain bounds on $k$ at $y=z$. 
\begin{lemma}\label{Airy_kzz}
For all $z\in\T_\tp$, we have the following bounds.
\begin{enumerate}
    \item if $|b(y_*)-\lambda| \gtrsim1$,
    \begin{equation}\label{eq:diagonal:bounds:3}
        |k(z,z)|\lesssim \frac{\epsilon^{-\frac{2}{3}}}{\lb \ept\alpha,  \ept(b( z)-\lambda) \rb^{\frac{1}{2}} }.
    \end{equation}
    \item if $\epsilon^{\frac{1}{2}}\ll |b(y_*)-\lambda| \lesssim 1 $, 
    \begin{equation}\label{eq:diagonal:bounds:2}
        |k(z,z)|\lesssim \frac{\epsilon^{-\frac{2 + \beta}{3}}}{\lb \epbn\alpha,  \epbn(b( z)-\lambda) \rb^{\frac{1}{2}} }.
    \end{equation}
    \item if $|b(y_*)-\lambda| \lesssim \epsilon^{\frac{1}{2}}$, 
    \begin{equation}
        \label{eq:diagonal:bounds:1}
        |k(z,z)|\lesssim \frac{\epsilon^{-\frac{3}{4}}}{ \lb   \epsilon^{-\frac{1}{2}}\alpha ,  \epsilon^{-\frac{1}{2}}(b(z)-\lambda)\rb^{\frac{1}{2}} }.
    \end{equation}
\end{enumerate}
\end{lemma}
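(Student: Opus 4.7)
I would derive the three diagonal bounds by a unified local energy argument around $z$. Pair equation \eqref{eq:airy} with $\overline{k(y,z)}\chi_L^2(y)$, where $\chi_L\in C_c^\infty(\T_\tp)$ is a cutoff supported in $I_L:=\{|y-z|\leq L\}$, equal to $1$ on $I_{L/2}$, and $L$ is the Airy length scale of the regime under consideration: $L=\epsilon^{1/3}$ in case (1), $L=\epsilon^{(1-\beta)/3}$ in case (2), and $L=\epsilon^{1/4}$ in case (3). Integrating by parts once and applying Cauchy--Schwarz to absorb the commutator $\epsilon\int\chi_L\chi_L'\bar k\,\partial_y k$, the real part of the resulting identity yields a local $H^1$-type bound
\begin{equation*}
    \epsilon\,\|\chi_L\partial_yk\|_{L^2}^2 + \alpha\,\|\chi_Lk\|_{L^2}^2 \;\lesssim\; \epsilon L^{-2}\,\|k\|_{L^2(I_L)}^2 + |k(z,z)|,
\end{equation*}
while the imaginary part controls $\bigl|\int\chi_L^2(\lambda-b(y))|k|^2\,dy\bigr|$ by a quantity of the same order. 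Here $\alpha$ may be slightly negative, but only by $O(\sigma_0\epsilon^{1/2})$, which is easily absorbed by taking $\sigma_0$ small.

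To convert these into a pointwise bound at $z$ I would apply the elementary trace inequality $|k(z,z)|^2\lesssim L^{-1}\|k\|_{L^2(I_L)}^2 + L\,\|\partial_yk\|_{L^2(I_L)}^2$. The $L^2$ bound on $k$ needed to close Sobolev comes from splitting $I_L$ by the sign of $\lambda-b(y)$: on the sub-interval where $|\lambda-b(y)|$ is bounded below by $|b'(z)|L$ (or by $|b''(z)|L^2$ when $z$ is near a turning point of $\lambda-b$), the imaginary part of the identity produces $\|k\|_{L^2}^2\lesssim |k(z,z)|/(|b'(z)|L)$. In each of the three regimes this simplifies so that, after substitution into the trace inequality and absorption of one power of $|k(z,z)|^{1/2}$, the ``flat'' factors $\epsilon^{-2/3}$, $\epsilon^{-(2+\beta)/3}$, $\epsilon^{-3/4}$ emerge.

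The enhancement factor $\lb\ept\alpha,\ept(b(z)-\lambda)\rb^{1/2}$ (and its $\beta$-weighted analogues in cases (2), (3)) is then extracted by rerunning the energy identity on the shorter dissipation scale $L':=\sqrt{\epsilon/(|\alpha|+|\lambda-b(z)|)}$ whenever $|\alpha|+|\lambda-b(z)|$ exceeds the relevant Airy threshold. On $I_{L'}$, Taylor expansion freezes $\lambda-b(y)\approx\lambda-b(z)$, so the operator is essentially constant-coefficient with complex parameter $\alpha-i(\lambda-b(z))$; the identity directly gives $\|k\|_{L^2(I_{L'})}^2\lesssim|k(z,z)|/(|\alpha|+|\lambda-b(z)|)$, and the trace inequality produces $|k(z,z)|\lesssim 1/\sqrt{\epsilon(|\alpha|+|\lambda-b(z)|)}$, which after rewriting through the rescaled brackets $\ept\alpha,\ept(b(z)-\lambda)$ (and the $\beta$-dependent analogue $\epbn\alpha$ in case (2)) coincides with the claimed improvement.

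\textbf{Main obstacle.} The chief difficulty lies in case (2), where the natural scale depends on $\beta$ through $\lambda$. I would need to show that the two subregime estimates glue uniformly across the threshold $|\alpha|+|\lambda-b(z)|\sim\epsilon^{(1+2\beta)/3}$ for every admissible $\beta\in[0,1/4]$, and in particular handle the parabolic band $\{|z-y_{1\ast}|\sim\epsilon^\beta\}$ in which the turning points of $\lambda-b$ live. Inside that band the sign-splitting argument must be redone using the quadratic Taylor term $\tfrac12 b''(y_{1\ast})\bigl((y-y_{1\ast})^2-(z-y_{1\ast})^2\bigr)$ rather than the linear one; fortunately this quadratic vanishes exactly at the turning points, which is what produces the correct $\beta$-weighted factors in the final bound.
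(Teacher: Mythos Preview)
Your approach is genuinely different from the paper's. The paper does \emph{not} localize: it multiplies \eqref{eq:airy} by $\overline{k(y,z)}$ and by $(\lambda-b(y))\overline{k(y,z)}$ and integrates over all of $\T_\tp$, so no cutoff commutators appear. The resulting global identities give control of $\epsilon\int|\partial_y k|^2$ and of $\int(\lambda-b)^2|k|^2$ in terms of $|k(z,z)|$; the passage to an $L^2$ bound on $k$ itself is then made through the \emph{spectral gap} inequalities
\[
\int_{\T_\tp}|k|^2\;\lesssim\;\epsilon^{-\frac{2+4\beta}{3}}\!\int_{\T_\tp}(\lambda-b)^2|k|^2+\epsilon^{\frac{2-2\beta}{3}}\!\int_{\T_\tp}|\partial_y k|^2,
\]
after which the local Sobolev inequality on an interval of the correct length closes the bound. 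The second multiplier $(\lambda-b)\bar k$ and the spectral gap are the two ingredients your sketch does not use.

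Your localized scheme has a real gap at the critical layer. After cutting off at scale $L$ the commutator contributes $\epsilon L^{-2}\|k\|_{L^2(I_L)}^2$, which for the Airy choice of $L$ scales exactly like the quantity you are trying to control. You propose to close this via the imaginary part and ``sign-splitting'', but when $|b(z)-\lambda|\lesssim|b'(z)|L$ (equivalently, $z$ within an Airy length of the level set $\{b=\lambda\}$) the function $\lambda-b(y)$ changes sign on $I_L$, so $\int\chi_L^2(\lambda-b)|k|^2$ gives no lower bound on $\|\chi_Lk\|_{L^2}^2$; controlling $\|k\|_{L^2}$ only on the sub-interval where $|\lambda-b|$ is bounded below leaves the complementary interval uncontrolled, and that is precisely where the commutator lives. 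Your ``main obstacle'' paragraph concerns the $\beta$-dependence in case (2), but the sign-change obstruction is already present in case (1) at any $z$ with $b(z)=\lambda$, $b'(z)\neq0$. The global multiplier $(\lambda-b)\bar k$ together with the spectral gap inequality is exactly what replaces your missing $L^2$ control; without an analogue of it, the localized argument does not close for the flat factor. Your enhancement step on the shorter scale $L'$ is fine once the flat bound is available, since on $I_{L'}$ the coefficient $\lambda-b(y)$ is essentially constant and nonzero.
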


\begin{proof}
We first prove the degenerate case (2). 
Multiplying \eqref{eq:airy} with $\overline{k(y,z)}$ and $(\lambda-b(y))\overline{k(y,z)}$ respectively, and integrating over $\T_\tp$, we get that
\begin{equation}\label{airy*kbar:2}
    -\epsilon\int_{\T_\tp}|\partial_yk(y,z)|^2 dy-\alpha\int_{\T_\tp}|k(y,z)|^2dy+i\int_{\T_\tp}(\lambda-b(y))|k(y,z)|^2 dy = \overline{k(z,z)};
\end{equation}
\begin{equation}\label{airy*vkbar:2}
\begin{split}
     -\epsilon\int_{\T_\tp}(\lambda-b(y))|\partial_y k(y,z)|^2dy+\epsilon\int_{\T_\tp}b^\prime(y)\partial_yk(y,z)\overline{k(y,z)}dy-\alpha\int_{\T_\tp}(\lambda-b(y))|k(y,z)|^2\\+i\int_{\T_\tp}(\lambda-b(y))^2|k(y,z)|^2 dy=(\lambda-b(z))\overline{k(z,z)}.
\end{split}
\end{equation}
It follows from \eqref{airy*kbar:2} that 
\begin{equation}\label{dybound:2}
    \epsilon\int_{\T_\tp}|\partial_yk(y,z)|^2 dy+\alpha\int_{\T_\tp}|k(y,z)|^2 dy\leq|k(z,z)|.
\end{equation}
Similarly, it follows from \eqref{airy*vkbar:2} that
\begin{equation}\label{vbound1:2}
    \int_{\T_\tp}(\lambda-b(y))^2|k(y,z)|^2 dy\leq|\lambda-b(z)||k(z,z)|+\epsilon\int_{\T_\tp}|b^\prime(y)||\partial_yk(y,z)||k(y,z)|dy
\end{equation}
Using the spectral gap inequalities  
\begin{equation}\label{spec_gap:2}
\int_{\T_\tp}|b^\prime (y)|^2|k(y,z)|^2 dy\lesssim \epsilon^{-\frac{2-2\beta}{3}}\int_{\T_\tp}(\lambda-b(y))^2|k(y,z)|^2dy+\epsilon^{\frac{2+4\beta}{3}}\int_{\T_\tp}|\partial_yk(y,z)|^2 dy,
\end{equation}
and
\begin{equation}\label{spec_gap:2.1}
    \int_{\T_\tp}|k(y,z)|^2 dy\lesssim \epsilon^{-\frac{2+4\beta}{3}}\int_{\T_\tp}(\lambda-b(y))^2|k(y,z)|^2dy+\epsilon^{\frac{2-2\beta}{3}}\int_{\T_\tp}|\partial_yk(y,z)|^2 dy,
\end{equation}
we get that for any $\tau\in(0,1)$ and a corresponding $C(\tau)\in(1,\infty)$,
\begin{equation}\label{vbound2:2}
\begin{split}
    &\epsilon\int_{\T_\tp}|b^\prime(y)|\cdot|\partial_yk(y,z)|\cdot|k(y,z)|dy\\
    &\leq \epsilon^{\frac{2-2\beta}{3}}\tau\int_{\T_\tp}|b^\prime(y)|^2|k(y,z)|^2 dy+\epsilon^{\frac{4+2\beta}{3}}\frac{1}{\tau}\int_{\T_\tp}|\partial_yk(y,z)|^2 dy\\
    &\lesssim\tau\int_{\T_\tp}(b( y)-\lambda)^2|k(y,z)|^2dy+\epsilon^{\frac{4+2\beta}{3}} C(\tau)\int_{\T_\tp}|\partial_yk(y,z)|^2 dy,
\end{split}
\end{equation}
and that for sufficiently small $\sigma_0\in(0,1)$ and $\alpha\ge-\sigma_0\epsilon^{\frac{1+2\beta}{3}}$,
\begin{equation}\label{vbound2:2J}
     \epsilon\int_{\T_\tp}|\partial_yk(y,z)|^2 dy+|\alpha|\int_{\T_\tp}|k(y,z)|^2 dy\lesssim|k(z,z)|+\sigma_0\epsilon^{-\frac{1+2\beta}{3}}\int_{\T_\tp}(b(y) - \lambda)^2|k(y,z)|^2 dy.
\end{equation}
Thus combining \eqref{vbound1:2} and \eqref{vbound2:2}-\eqref{vbound2:2J}, we obtain that 
\begin{equation}\label{vbound3:2}
    \int_{\T_\tp}(b(y) - \lambda)^2|k(y,z)|^2 dy\lesssim|b(z)-\lambda||k(z,z)|+\epbp|k(z,z)|,
\end{equation}
which further implies that 
\begin{equation}\label{vbound4:2}
    \epbn\int_{\T_\tp}(b(y)-\lambda)^2|k(y,z)|^2 dy\lesssim\lb\epbn(b(z)-\lambda)\rb|k(z,z)|.
\end{equation}

Then, in view of \eqref{vbound2:2J} and \eqref{vbound3:2}, we can get the desired bounds \eqref{eq:diagonal:bounds:2} using the inequality
\begin{equation}\label{maxbound_form:2}
    \|f\|_{L^\infty(I)}\lesssim\|f\|_{L^2(I)}|I|^{-\frac{1}{2}}+\|f^\prime\|_{L^2(I)}|I|^{\frac{1}{2}}
\end{equation}
for an interval $I$, as follows.

If $\alpha\lesssim \epbp$, we choose 
$$
I=\left\{y\in\T_\tp: \epbv|y-z|\leq\lb\epbn(\lambda-b(z))\rb^{-\frac{1}{2}}\right\},\quad |I| \approx \epsilon^{\frac{1-\beta}{3}}\lb\epbn(\lambda-b(z))\rb^{-\frac{1}{2}}.
$$

If $\alpha\gg \epbp$, we choose $I = \left\{y\in\T_\tp: \epbv|y-z|\leq\lb\epbn\alpha\rb^{-\frac{1}{2}}\right\}$, $|I| \approx \epsilon^{\frac{1-\beta}{3}}\lb\epbn\alpha\rb^{-\frac{1}{2}}$. 
This concludes the proof of case (2) of Lemma \ref{Airy_kzz}. 

The cases (1) and (3) follow from similar arguments.
Indeed the non-degenerate case (1), we obtain \eqref{eq:diagonal:bounds:1} by just applying $\beta = 0$ into \eqref{eq:diagonal:bounds:2}, if additionally $|\lambda|\lesssim1$. If $|\lambda|\gg1$, then there is no need to use the spectral gap inequality \eqref{spec_gap:2}, and \eqref{vbound3:2} with $\beta=0$ follows directly from \eqref{vbound1:2}. 

For the viscous case, \eqref{eq:diagonal:bounds:3} follows from the same arguments as in the degenerate case  if we take $\beta = \frac14$. 
The proof of Lemma \ref{Airy_kzz} is now complete.
\end{proof}

\subsubsection{The entanglement inequality}
The following lemma will be used frequently in our proof of Proposition \ref{Airy_main}. 

\begin{lemma}\label{entangle}
For $\beta\in[0,1/4]$ and non-negative cutoff function $\varphi(y)\in C^1(\T_\tp)$ with $\varphi(z)=0$,  there exists $c_0\in(0,1)$ independent of $\beta$ and $\varphi$ such that
\begin{equation}\label{eq:entangle}
    \int_{\T_\tp}\left[|\varphi^\prime|^2-c_0^2\epsilon^{-\frac{2-2\beta}{3}}\lb\epbn\alpha,\epbn(b(y)-\lambda)\rb|\varphi|^2\right]|k(y,z)|^2 dy\geq0.
\end{equation}
\end{lemma}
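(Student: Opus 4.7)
The plan is to multiply equation~\eqref{eq:airy} by $\varphi^2\,\overline{k(y,z)}$ and integrate over $\T_\tp$. Since $\varphi(z)=0$ and $k$ is continuous, the $\delta(y-z)$ contribution drops. Integration by parts yields
\[
\epsilon X + \alpha Q - iS \;=\; -2\epsilon\int_{\T_\tp}\varphi\,\varphi'\,\bar k\,\partial_y k\,dy,
\]
where $X=\int\varphi^2|\partial_y k|^2$, $Q=\int\varphi^2|k|^2$, $R=\int|\varphi'|^2|k|^2$, $S=\int(\lambda-b)\varphi^2|k|^2$. Taking squared modulus and applying Cauchy--Schwarz on the right-hand side gives the master inequality $(\epsilon X+\alpha Q)^2+S^2\le 4\epsilon^2 XR$, equivalent to $4\epsilon R(\epsilon R-\alpha Q)\ge S^2\ge 0$. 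This immediately produces $\epsilon R\ge\alpha Q$, giving the $\alpha$-part of the Japanese bracket when $\alpha\ge 0$; for $\alpha\in[-\sigma_0\epsilon^{1/2},0)$ and $\beta\le 1/4$, one has $\epsilon^{-1}|\alpha|\le\sigma_0\epsilon^{-(2-2\beta)/3}$, absorbed into the baseline below. An AM--GM split on the real part also gives the auxiliary bound $X\lesssim|\alpha|Q/\epsilon+R$.

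\textbf{Step 2 (baseline $\epsilon^{-(2-2\beta)/3}Q$).} The spectral-gap inequality~\eqref{spec_gap:2.1} is structural in $b$ and applies to any $H^1(\T_\tp)$ function; I would apply it to $\varphi k$ and combine with $|\partial_y(\varphi k)|^2\le 2|\varphi'|^2|k|^2+2\varphi^2|\partial_y k|^2$ to obtain
\[
Q\;\lesssim\;\epsilon^{-(2+4\beta)/3}\int(b-\lambda)^2\varphi^2|k|^2\,dy+\epsilon^{(2-2\beta)/3}(R+X).
\]
The pointwise ODE $\epsilon^2|\partial_y^2 k|^2=(\alpha^2+(b-\lambda)^2)|k|^2$ valid for $y\ne z$, together with one further integration by parts (whose boundary contributions at $z$ vanish because $\varphi(z)=0$) and the Step~1 bound on $X$, delivers $\int(b-\lambda)^2\varphi^2|k|^2\lesssim\epsilon^{(4+2\beta)/3}R$. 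Substituting and absorbing the $|\alpha|Q$ cross-term using Step~1 yields $R\gtrsim\epsilon^{-(2-2\beta)/3}Q$.

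\textbf{Step 3 (pointwise $|b-\lambda|$ weight and conclusion).} By Cauchy--Schwarz and the bound from Step~2,
\[
\int|b-\lambda|\varphi^2|k|^2 \;\le\; Q^{1/2}\Big(\int(b-\lambda)^2\varphi^2|k|^2\Big)^{1/2}\;\lesssim\; Q^{1/2}\epsilon^{(2+\beta)/3}R^{1/2},
\]
so that $\epsilon^{-1}\int|b-\lambda|\varphi^2|k|^2\lesssim\epsilon^{-(1-\beta)/3}(QR)^{1/2}$. A weighted AM--GM with parameter $\eta=\epsilon^{-(2-2\beta)/3}$ gives $\epsilon^{-1}\int|b-\lambda|\varphi^2|k|^2\lesssim\tfrac12\epsilon^{-(2-2\beta)/3}Q+\tfrac12R$. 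Summing the bounds from Steps~1--3 and using $\langle a,b\rangle\le 1+|a|+|b|$ then produces
\[
R\;\gtrsim\;\epsilon^{-(2-2\beta)/3}Q+\epsilon^{-1}|\alpha|Q+\epsilon^{-1}\int|b-\lambda|\varphi^2|k|^2\;\ge\;c_0^2\int\epsilon^{-(2-2\beta)/3}\langle\epbn\alpha,\epbn(b-\lambda)\rangle\varphi^2|k|^2\,dy
\]
for a uniform $c_0\in(0,1)$, which is the desired inequality.

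\textbf{Main obstacle.} The delicate step is the bound $\int(b-\lambda)^2\varphi^2|k|^2\lesssim\epsilon^{(4+2\beta)/3}R$ in Step~2: turning the pointwise ODE identity into an $R$-weighted integral bound requires careful integration by parts handling the distributional singularities of $\partial_y k$ (jump of size $1/\epsilon$) and $\partial_y^2 k$ (delta) at $y=z$ -- all killed by $\varphi(z)=0$ -- while simultaneously tracking the $\beta$-scaling uniformly across $\beta\in[0,1/4]$ so that the Airy endpoint ($\beta=0$) and the viscous endpoint ($\beta=1/4$) are covered by the same constant $c_0$.
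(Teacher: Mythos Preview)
There is a genuine gap in Step~2: the key inequality
\[
\int_{\T_\tp}(b(y)-\lambda)^2\varphi^2|k(y,z)|^2\,dy\;\lesssim\;\epsilon^{(4+2\beta)/3}R
\]
is false. Take $\beta=0$, $\alpha=0$, and $z$ with $|b(z)-\lambda|\sim 1$, so that $|k(y,z)|\sim\epsilon^{-1/2}\exp(-c|y-z|\epsilon^{-1/2})$ near $z$ by Proposition~\ref{Airy_main}. Choose $\varphi$ with $\varphi(z)=0$, $\varphi(y)\approx|y-z|$ for small $|y-z|$, and $|\varphi'|\lesssim 1$. Then $(b-\lambda)^2\sim 1$ on the support, and direct computation gives $Q\sim\epsilon^{1/2}$, $R\sim\epsilon^{-1/2}$, so the left side is $\sim Q\sim\epsilon^{1/2}$ while $\epsilon^{4/3}R\sim\epsilon^{5/6}\ll\epsilon^{1/2}$. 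The pointwise identity $\epsilon^2|\partial_y^2 k|^2=(\alpha^2+(b-\lambda)^2)|k|^2$ is tautological here --- it just rewrites the left side as $\epsilon^2\int\varphi^2|\partial_y^2 k|^2$ --- and any further integration by parts produces uncontrolled terms such as $\epsilon\int\varphi^2|b-\lambda||\partial_y k|^2$. The best bound obtainable by this route is $\int(b-\lambda)^2\varphi^2|k|^2\lesssim\epsilon R$, saturated in the example above. Feeding \emph{that} into the quadratic spectral gap \eqref{spec_gap:2.1} yields only $Q\lesssim\epsilon^{(1-4\beta)/3}R$, which is too weak for every $\beta\in[0,1/4]$ (at $\beta=1/4$ it gives $Q\lesssim R$ instead of $Q\lesssim\epsilon^{1/2}R$).

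The paper supplies the missing idea: multiply \eqref{eq:airy} by $\varphi^2\phi\,\overline{k}$ where $\phi$ is a \emph{smoothed sign function}, $\phi\approx\mathrm{sgn}(b-\lambda)$ regularized on the Airy scale $\epsilon^{(1-\beta)/3}$, with $|\phi'|\lesssim\epsilon^{-(1-\beta)/3}$ and $|(b-\lambda)(\mathrm{sgn}(b-\lambda)-\phi)|\lesssim\epsilon^{(1+2\beta)/3}$. Taking the imaginary part directly yields the \emph{first-power} bound $\int|b-\lambda|\varphi^2|k|^2\lesssim\epsilon(R+X)\lesssim\epsilon R$. Your Step~1 identity cannot deliver this because $S=\int(\lambda-b)\varphi^2|k|^2$ suffers cancellation wherever $b-\lambda$ changes sign; the sign function $\phi$ is precisely what removes that cancellation. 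Combined with the first-power spectral gap \eqref{spec_gap_vphik:2} (which is sharper than \eqref{spec_gap:2.1} by exactly one power of $\epsilon^{(1+2\beta)/3}$), this gives $Q\lesssim\epsilon^{(2-2\beta)/3}R$, after which your Step~1 and the summation in Step~3 go through.
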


\begin{remark}
    The statement of our entanglement inequality is the same as that in \cite{JiaUM}. However, our proof is much more flexible since our method here does not require the separation into regions where $\lambda-b(y)$ keeps its sign. In particular, we no longer need to use the local to global energy inequality, see section 3.2 of \cite{JiaUM}, which is essential in \cite{JiaUM} but is somewhat cumbersome.
\end{remark}

\begin{proof}
We provide the detailed proof only for the most complicated case when $\lambda\in b(\T_\tp)$. The proof of the case $\lambda\in\R\backslash b(\T_\tp)$ is similar and easier.

We first introduce a real-valued test function $\phi(y)$ which is a standard regularization of the jump function $\text{sgn}(b(y)-\lambda)$ on the scale of $\epsilon^{\frac{1-\beta}{3}}$, the natural length scale at $y\in\T_\tp$ with $\lambda=b(y)$, and which satisfies the following properties:
\begin{enumerate}
    \item $|\phi^\prime(y)|\lesssim \epsilon^{-\frac{1-\beta}{3}}$;
    \item $|\phi(y)-\text{sgn}(b(y)-\lambda)||b(y)-\lambda|\lesssim\epsilon^{\frac{1+2\beta}{3}}$.
\end{enumerate}

Multiplying $\varphi^2(y)\overline{k(y,z)}$ to \eqref{Airy_main} and integrating over $\T_\tp$, we have
\begin{equation}\label{airy*phi2kba:2}
\begin{split}
     -\epsilon\int_{\T_\tp}\varphi^2(y)|\partial_y k(y,z)|^2 dy-\epsilon\int_{\T_\tp}2\varphi^\prime(y)\varphi(y)\overline{k(y,z)}\partial_yk(y,z)dy-\alpha\int_{\T_\tp}\varphi^2(y)|k(y,z)|^2 dy\\
    +i\int_{\T_\tp}(b(y)-\lambda)\varphi^2(y)|k(y,z)|^2 dy = 0.
\end{split}
\end{equation}
By taking the real part and applying the H\"older inequality, we get that
\begin{equation}\label{phikboundw/a:2}
    \alpha\epsilon^{-1}\int_{\T_\tp}\varphi^2(y)|k(y,z)|^2 dy+\int_{\T_\tp}\varphi^2(y)|\partial_y k(y,z)|^2 dy\lesssim\int_{\T_\tp}|\varphi^\prime(y)|^2|k(y,z)|^2dy.
\end{equation}
We also multiply $\varphi^2(y)\phi(y)\overline{k(y,z)}$ to \eqref{Airy_main} and integrate over $\T_\tp$ to get that
\begin{equation}\label{airy*phi2sgnkbar:2}
\begin{split}
    -\epsilon\int_{\T_\tp}\varphi^2(y)\phi(y)|\partial_yk(y,z)|^2dy-\epsilon\int_{\T_\tp}\left(2\varphi(y)\varphi^\prime(y)\phi(y)+\varphi^2(y)\phi^\prime(y)\right)\overline{k(y,z)}\partial_yk(y,z)dy\\-\alpha\int_{\T_\tp}\varphi^2(y)\phi(y)|k(y,z)|^2 dy+i\int_{\T_\tp}(b(y)-\lambda)\varphi^2(y)\phi(y)|k(y,z)|^2 dy = 0.
\end{split}
\end{equation}
Taking the imaginary part, we get that
\begin{equation}
\begin{split}
   & \int_{\T_\tp}(b(y)-\lambda)\varphi^2(y)\phi(y)|k(y,z)|^2 dy\\
&\leq\epsilon\int_{\T_\tp}\Big[2|\varphi(y)||\varphi^\prime(y)||\phi(y)|+\varphi^2(y)|\phi^\prime(y)|\Big]\big|k(y,z)\big||\partial_y k(y,z)|dy
    \end{split}
\end{equation}
Thus by the construction of the function $\phi(y)$, we conclude that
\begin{equation}\label{phikboundw/v:2}
\begin{split}
   & \int_{\T_\tp}|b(y)-\lambda|\varphi^2(y) |k(y,z)|^2 dy \\
    &= \int_{\T_\tp}(b(y)-\lambda)\Big[\text{sgn}(b(y)-\lambda)-\phi(y)\Big]\varphi^2(y)|k(y,z)|^2dy+\int_{\T_\tp}(b(y)-\lambda)\phi(y)\varphi^2|k(y,z)|^2dy\\
&\leq\int_{\T_\tp}\epbp\varphi^2(y)|k(y,z)|^2dy+\epsilon\int_{\T_\tp}\Big[2|\varphi(y)||\varphi^\prime(y)||\phi(y)|+\varphi^2(y)|\phi^\prime(y)|\Big]|k(y,z)||\partial_y k(y,z)|dy.
\end{split}
\end{equation}
By the spectral gap inequality that for any $\tau\in(0,1)$, a corresponding $C(\tau)\in(1,\infty)$, and any  $h\in H^1(\T_\tp)$,
\begin{equation}\label{spec_gap_vphik:2}
\int_{\T_\tp}|h(y)|^2dy\leq\tau \epbn\int_{\T_\tp}|b(y)-\lambda||h(y)|^2dy+C(\tau)\epsilon^{\frac{2-2\beta}{3}}\int_{\T_\tp}|\partial_y h(y)|^2dy,
\end{equation}
 together with \eqref{phikboundw/v:2}, we get that 
\begin{equation}\label{spec_gap_vphik:2.000}
\begin{split}
    &\int_{\T_\tp}|b(y)-\lambda|\varphi^2(y) |k(y,z)|^2 dy\\
    &\lesssim \epsilon\int_{\T_\tp}|\varphi^\prime(y)|^2|k(y,z)|^2+\epsilon\int_{\T_\tp}\varphi^2(y)|\partial_yk(y,z)|^2dy.
\end{split}
\end{equation}
Using the spectral gap inequality \eqref{spec_gap_vphik:2}, the bounds \eqref{phikboundw/a:2} when $\alpha\ge-\sigma_0\epsilon^{\frac{1+2\beta}{3}}$ with a sufficiently small $\sigma_0\in(0,1)$, and \eqref{spec_gap_vphik:2.000}, we then obtain the desired inequality \eqref{eq:entangle}. 

\end{proof}

\subsubsection{Point-wise bounds for the kernel}
Finally, we prove our main conclusion Proposition \ref{Airy_main}.

We introduce the length scale $Q(x)$ for $x\in\T_\tp$ which will be used frequently in our argument. Notice that there is a slight abuse of notation and $x$ in this section is not to be confused with the $x$ variable for the non-square torus $\T\times\T_\tp$.

We first define for $x\in\T_\tp, \beta\in[0,1/4]$,
\begin{equation*}
    Q(x;\beta) :=\frac{\epsilon^{\frac{1-\beta}{3}}}{\lb\epbn\alpha,\epbn(b(x)-\lambda)\rb^{\frac{1}{2}}}.
\end{equation*}
Since $\beta$, determined by $\lambda\in b(\T_\tp)$, is fixed, we also use the following shortened notation $Q(x)$. In the non-degenerate case, $Q(x) = Q(x;0)$; in the degenerate case, $Q(x) = Q(x;\beta)$; and in the viscous case, $Q(x) = Q(x;\frac{1}{4})$.

When $|y-z|\lesssim Q(z)$, notice here $\{|y-z|\leq Q(z)\}\subset I$ where $I$ is taken from the proof of Lemma \ref{Airy_kzz}. Thus by Lemma \ref{Airy_kzz}, 
\begin{equation}
    |k(y,z)|\lesssim\epsilon^{-1}Q(z)\cdot e^{0},
\end{equation}
which satisfies \eqref{eq:kernel:bounds:3}-\eqref{eq:kernel:bounds:1} naturally. 

When $Q(z)\ll|z-y|$, we will apply the entanglement inequality to get the bound. Without loss of generality, we assume $y\in\T_\tp$ is in the positive direction to $z$, counter-clockwisely. 

Let $\Gamma_{x_1,x_2}$ be the arc on $\T_\tp$ from $x_1$ to $x_2$ in the counter-clockwise direction. We assume that 
\begin{equation}\label{kernelB1}
    \int_{\Gamma_{z+Q(z),y}}Q^{-1}(x)\,dx\ge  \int_{\Gamma_{y,z-Q(z)}}Q^{-1}(x)\,dx.
\end{equation}
Then we can find $y_\ast\in[z+Q(z),y]$ such that 
\begin{equation}\label{kernelB2}
     m_\ast:=c_0\int_{\Gamma_{z+Q(z),y_\ast}}Q^{-1}(x)\,dx=  c_0\int_{\Gamma_{y+Q(y),z-Q(z)}}Q^{-1}(x)\,dx. 
\end{equation}
Consider the points 
\begin{align}
\label{eq:entangle:endpoints1:2}
A_1 &= z - Q(z), \notag \\
A_2 &= z + Q(z), \notag \\
A_3 &=  y_\ast, \notag \\
A_4 &= y+Q(y).
\end{align}
 Using \eqref{eq:entangle:endpoints1:2} we choose $\varphi$ for the entanglement inequality \eqref{eq:entangle} satisfying the following conditions:
\begin{align}
\label{eq:entanglefunction1:2}
    & \varphi'(x) =   c_0 Q^{-1}(x) \varphi(x) \quad
    \text{ for }  x \in [A_2, A_3]; \notag\\
    & \varphi'(x) =   -c_0 Q^{-1}(x) \varphi(x) \quad
    \text{ for }  x \in [A_4, A_1], \quad \varphi(A_1)=\varphi(A_2) = 1; \notag \\
    &\varphi''(x) \equiv 0, \quad\text{ for } x \in [A_3, A_4] \cup [A_1,z)\cup(z, A_2],\quad \varphi(z)=0.
\end{align}

We have from  \eqref{eq:entanglefunction1:2} that there exists $c_1 \in (0,1)$, such that
\begin{equation}
\label{eq:A_3:value:2}
   \varphi(A_3)=e^{m_\ast} \gtrsim \exp\left[ c_1 \Big\langle\epbn\alpha,\epbn(b(y)-\lambda), \epbn(b(z)-\lambda)\Big\rangle^{\frac{1}{2}}|y-z|\epsilon^{-\frac{1-\beta}{3}}  \right]. 
    \end{equation}
Using the entanglement inequality \eqref{eq:entangle} we obtain that 
\begin{equation}
    \int_{y}^{y+Q(y)}e^{2m_\ast}Q^{-2}(x)|k(x,z)|^2dx\lesssim \int_{z-Q(z)}^{z+Q(z)}Q^{-2}(x)|k(x,z)|^2\,dx,
\end{equation}
which together with equation \eqref{eq:airy} implies the desired point-wise estimates. The proof of Proposition \ref{Airy_main} is now complete. 

\subsection{Applications of the kernel estimates}
In this section we prove several bounds concerning the operator 
\begin{equation}\label{eqAt}
    A_\Theta:=\frac{\nu}{k}\partial_y^2+i(\lambda-b(y))-\alpha.
\end{equation}
In the above, we use the same assumptions and notations as in proposition \ref{intP1} and recall that $k\in\Z\cap[1,\infty)$. We also used the composite symbol $\Theta:=(k,\alpha,\nu,\lambda)$ for the ease of notations. 

Assume that $j\in\{1,2\}, \gamma\in[7/4,2)$. 
 For $\lambda\in \Sigma_{j,\delta_0}, y\in \T_{\tp}$ and $\epsilon:=\nu/k\in(0,1/8)$, we denote $\Lambda:=(\lambda,\alpha,\epsilon)$ and recall from \eqref{Intd1} that
\begin{equation}\label{DeC1.92}
d_{j,k}(\Lambda):=\delta(\Lambda)\wedge \frac{1}{|k|},\quad \varrho_{j,k}(y;\Lambda):=\varrho_{j}(y;\Lambda)\wedge \frac{1}{|k|}.
\end{equation} 
Then we have the following bounds. 

\begin{lemma}\label{ake1}
For any $\sigma_1,\sigma_2,\sigma_3\in\R$, there exists $\epsilon_1=\epsilon_1(\sigma_1, \sigma_2,\sigma_3)>0$ sufficiently small, such that for $0<\nu<\epsilon_1, k\in\Z\cap[1,\infty)$, $\lambda\in\R$, and $1/4\ge\alpha\ge-\sigma_0|\nu/k|^{1/2}$ as in Proposition \ref{intP1}, we have the following bounds for 
\begin{equation}\label{ake1.1}
    w=\varrho_j^{\sigma_1}(y;\Lambda)\varrho_{j,k}^{\sigma_2}(y;\Lambda)\big(i(\lambda-b(y))+|\alpha|+|\nu/k|^{1/2}+|\nu/k|^{1/3}|\lambda-b(y_{j\ast})|^{1/3}\big)^{\sigma_3}
\end{equation}
and $p\in[1,\infty]$, $h\in L^p(\T_\tp)$.

(i) For $\lambda\in\Sigma_{j,\delta_0},  m\in\Z\cap[0,1]$, $j\in\{1,2\}$,
\begin{equation}\label{ake2}
\begin{split}
&\left\|w(y)\partial_y^m A_\Theta^{-1}h\right\|_{L^p(\T_\tp)}\\
&\lesssim_{\sigma_1,\sigma_2,\sigma_3} \left\|\frac{w(y)}{(L_{j}(y;\Lambda))^{m}\big(i(\lambda-b(y))+|\alpha|+|\nu/k|^{1/2}+|\nu/k|^{1/3}|\lambda-b(y_{j\ast})|^{1/3}\big)}h\right\|_{L^p(\T_\tp)}, 
\end{split}
\end{equation}
where for $y\in\T_\tp$ and $|\lambda-b(y_{j\ast})|\ge|\nu/k|^{1/2}$,
\begin{equation}\label{ake3}
L_{j}(y;\Lambda):=\frac{|\nu/k|^{1/3}|\lambda-b(y_{j\ast})|^{-1/6}}{\langle |\nu/k|^{-1/3}|\lambda-b(y_{j\ast})|^{-1/3}(\lambda-b(y)),  |\nu/k|^{-1/3}|\lambda-b(y_{j\ast})|^{-1/3}\alpha\rangle^{1/2}},\end{equation}
and for $y\in\T_\tp$, $|\lambda-b(y_{j\ast})|<|\nu/k|^{1/2}$,
\begin{equation}\label{ake3.1}
L_{j}(y;\Lambda):=|\nu/k|^{1/4}\langle |\nu/k|^{-1/2}(\lambda-b(y)), |\nu/k|^{-1/2}\alpha\rangle^{-1/2};
\end{equation}
In addition, we have for $m\in\{1,2\}$ the bounds 
\begin{equation}\label{ake3.15}
\begin{split}
&\left\|w(y)\partial_y^m A_\Theta^{-1}h\right\|_{L^p(\T_\tp)}\\
&\lesssim_{\sigma_1,\sigma_2,m,p} \left\|\frac{w(y)}{(L_{j}(y;\Lambda))^{m-1}\big(i(\lambda-b(y))+|\alpha|+|\nu/k|^{1/2}+|\nu/k|^{1/3}|\lambda-b(y_{j\ast})|^{1/3}\big)}\partial_yh\right\|_{L^p(\T_\tp)} \\
&\qquad\qquad+\left\|\frac{w(y)\varrho_{j}(y;\Lambda)}{(L_{j}(y;\Lambda))^{m-1}\big(i(\lambda-b(y))+|\alpha|+|\nu/k|^{1/2}+|\nu/k|^{1/3}|\lambda-b(y_{j\ast})|^{1/3}\big)^{2}}h\right\|_{L^p(\T_\tp)}.
\end{split}
\end{equation}
In particular, if $h\equiv 0$ for $|y-y_{j\ast}|\leq \delta(\lambda)$  then we have for $m\in\{1,2\}$,
\begin{equation}\label{ake3.2}
\begin{split}
&\left\|\varrho_j^{\sigma_1}(y;\Lambda)\varrho_{j,k}^{\sigma_2}(y;\Lambda)\partial_y^m A_\Theta^{-1}h\right\|_{L^p(\T_\tp)}\\
&\lesssim_{\sigma_1,\sigma_2,m,p} \left\|\frac{\varrho_j^{\sigma_1-2}(y;\Lambda)\varrho_{j,k}^{\sigma_2}(y;\Lambda)}{(L_{j}(y;\Lambda))^{m-1}}\partial_yh\right\|_{L^p(\T_\tp)}+\left\|\frac{\varrho_j^{\sigma_1-3}(y;\Lambda)\varrho_{j,k}^{\sigma_2}(y;\Lambda)}{(L_{j}(y;\Lambda))^{m-1}}h\right\|_{L^p(\T_\tp)}.
\end{split}
\end{equation}
(ii) For $\lambda\in\R\backslash\big(\cup_{j\in\{1,2\}}\Sigma_{j,\delta_0}\big)$, $m\in\{0,1\}$,
\begin{equation}\label{ake4}
\left\|\partial_y^m A_\Theta^{-1}h\right\|_{L^p(\T_\tp)}\lesssim \left\|(L_{j}(y;\Lambda))^{-m}\big(i(\lambda-b(y))+|\alpha|+|\nu/k|^{1/3}\big)^{-1}h\right\|_{L^p(\T_\tp)}, 
\end{equation}
where
\begin{equation}\label{ake5}
L(y;\Lambda):=|\nu/k|^{1/3}\langle |\nu/k|^{-1/3}(\lambda-b(y)), |\nu/k|^{-1/3}\alpha\rangle^{-1/2}.
\end{equation}
\end{lemma}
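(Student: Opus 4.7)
My plan is to exploit the Green's function representation
\[ A_\Theta^{-1} h(y) = \int_{\T_\tp} k(y,z;\Lambda)\,h(z)\, dz, \]
where $k(y,z;\Lambda)$ is the fundamental solution studied in Proposition \ref{Airy_main} (setting $\epsilon = \nu/k$). The three pointwise bounds \eqref{eq:kernel:bounds:3}--\eqref{eq:kernel:bounds:1} all share the common structure $|k(y,z;\Lambda)| \lesssim \epsilon^{-1} L_j(z;\Lambda)\exp\bigl(-c|y-z|/L_j(z;\Lambda)\bigr)$, where $L_j$ is the Airy length scale as defined in \eqref{ake3}--\eqref{ake3.1}. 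The overall strategy is to reduce each claimed weighted $L^p$ bound to a Schur-type estimate on the kernel $\partial_y^m k(y,z;\Lambda)$, leveraging this uniform structure across the three spectral regimes.

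To prove \eqref{ake2}, I would first establish the weight comparison $w(y) \asymp w(z)$ whenever $|y-z| \lesssim L_j(y;\Lambda)$, which reduces to the structural inequality $L_j(y;\Lambda) \lesssim \delta(\Lambda) \leq \varrho_j(y;\Lambda)$. One verifies this in each regime: in the intermediate case with $\min_i |\lambda-b(y_{i\ast})| \sim \epsilon^{2\beta}$, the restriction $\beta \leq 1/4$ yields $\epsilon^{(1-\beta)/3} \lesssim \epsilon^\beta \asymp \delta(\Lambda)$, while the cutoff $C^\dagger \epsilon^{1/4}$ built into $\delta(\Lambda)$ handles the viscous boundary. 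Once the weight can be pulled inside the integral, the $m=0$ case of \eqref{ake2} follows from the computation $\int_{\T_\tp} |k(y,z;\Lambda)|\,dz \sim \epsilon^{-1} L_j^2(y;\Lambda)$, which a regime-by-regime verification shows equals exactly $(i(\lambda-b(y)) + |\alpha| + |\nu/k|^{1/2} + |\nu/k|^{1/3}|\lambda-b(y_{j\ast})|^{1/3})^{-1}$; Schur's test (applied with the weighted kernel $|k(y,z;\Lambda)|\,\mathrm{stuff}(z)$, whose two one-dimensional integrals are both bounded by $1$ thanks to the weight comparison) then promotes this to the $L^p$ statement. For the $m=1$ bound, I would use the equation $\epsilon \partial_y^2 k = \delta(y-z) + (\alpha - i(\lambda-b(y))) k$ together with the interpolation $|\partial_y k|^2 \lesssim |k||\partial_y^2 k|$ away from the diagonal to gain the factor $L_j(y;\Lambda)^{-1}$ in the pointwise kernel bound; the same Schur-type integration then produces the extra $1/L_j$ factor in \eqref{ake2}.

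For \eqref{ake3.15} and \eqref{ake3.2}, the plan is to differentiate the defining identity $A_\Theta u = h$ (with $u := A_\Theta^{-1}h$) once in $y$ to obtain
\[ A_\Theta(\partial_y u) = \partial_y h + i\, b'(y)\, u, \qquad \text{so} \qquad \partial_y u = A_\Theta^{-1}\bigl(\partial_y h + i b'(y)\, u\bigr). \]
Applying the already-established bound \eqref{ake2} with $m=0$ or $m=1$ to this identity then yields \eqref{ake3.15} for $m=1$ or $m=2$: the $\partial_y h$ contribution gives the first term, and the $b'(y)u$ contribution gives the second after using $|b'(y)| \lesssim \varrho_j(y;\Lambda)$ together with the $m=0$ bound on $u$ itself. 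Bound \eqref{ake3.2} is an immediate consequence, since on the support of $h$ (disjoint from $S^j_{\delta(\Lambda)}$) one has $|\lambda - b(y)| \asymp \varrho_j^2(y;\Lambda)$, which collapses the denominators in \eqref{ake3.15} to the cleaner form of \eqref{ake3.2}. Case (ii) is handled by the same Schur-test argument using the simpler kernel bound \eqref{eq:kernel:bounds:3}, with no role played by $\varrho_j$ or $\varrho_{j,k}$.

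The main obstacle I anticipate is the uniform-in-parameters bookkeeping required to show that the piecewise-defined Airy scale $L_j(y;\Lambda)$ combines with the kernel amplitude to produce exactly the denominator of \eqref{ake2} across all three regimes of Proposition \ref{Airy_main}. While each individual case is a routine computation, aligning the constants across the regime boundaries $|\lambda-b(y_{j\ast})|\sim\epsilon^{1/2}$ and $|\lambda-b(y_{j\ast})|\sim 1$ is the delicate step, and is precisely where the constants $\sigma_0$ and $C^\dagger$ appearing in $\delta(\Lambda)$ must be chosen compatibly. A related subtlety is ensuring that the $\sigma_3 \neq 0$ factor in the weight \eqref{ake1.1}, which itself depends on $\lambda-b(y)$, varies slowly on the Airy scale; this follows by essentially the same computation used for $\varrho_j$ in the weight-comparison step.
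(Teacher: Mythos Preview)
Your proposal is correct and essentially mirrors the paper's proof: kernel representation from Proposition~\ref{Airy_main}, the observation that the weights $\varrho_j$, $\varrho_{j,k}$, $w_\ast$, and $L_j$ vary slowly on the Airy length scale $L_j$ (the paper states this as $w(y)\lesssim_\sigma w(z)\,e^{\sigma(L_j^{-1}(y)+L_j^{-1}(z))|y-z|}$), a Schur-type argument to obtain \eqref{ake2}, and the commuted equation $A_\Theta(\partial_y u)=\partial_y h+ib'(y)u$ to derive \eqref{ake3.15} and \eqref{ake3.2}. The only cosmetic difference is in the Schur step: the paper partitions $\T_\tp$ into intervals $\mathcal{I}_i$ on which $L_j$ is roughly constant and bounds the interaction by $e^{-c|i-i'|}$, whereas your direct Schur test requires the exponential (not merely local) form of the weight comparison to absorb the kernel tails --- a point you will need to make explicit but which is routine once stated.
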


\begin{proof}

The main idea of the proof is to keep track of the correct length scales, and to show that the kernel for $A_\Theta^{-1}$ always defines a smaller length scale on which it decays when compared with the weights. Therefore we can treat $A_\Theta^{-1}$ as a local operator when proving these weighted estimates.

We focus only on the proof of (i), with (ii) being similar. Denote $N(y,z)$, $y,z\in\T_\tp$ as the kernel of $A_\Theta^{-1}$, i.e. for any $h\in L^2(\T_\tp)$ and $y\in\T_\tp$, 
\begin{equation}\label{ake5.01}
    A_\Theta^{-1}h(y)=\int_{\T_\tp} N(y,z)h(z)\,dz.
\end{equation}
It follows from Proposition \ref{Airy_main} that for some $c_0\in(0,1)$, $y,z\in\T_\tp$,
\begin{equation}\label{ake5.02}
    |N(y,z)|\lesssim |\nu/k|^{-1}L_{j}(y;\Lambda)e^{-c_0\big(L^{-1}_{j}(y;\Lambda)+L^{-1}_{j}(z;\Lambda)\big)|y-z|}.
\end{equation}
Denote for $y\in\T_\tp$,
\begin{equation}\label{ake5.03}
    w_\ast(y):=i(\lambda-b(y))+|\alpha|+|\nu/k|^{1/2}+|\nu/k|^{1/3}|\lambda-b(y_{j\ast})|^{1/3}.
\end{equation}

We note that for 
\begin{equation}\label{ake5.04}
    w\in\{\varrho_j^{\sigma_1}, \varrho_{j,k}^{\sigma_2}, L_{j}(\cdot;\Lambda),w_\ast\}
\end{equation}
    and any $\sigma>0$, $y,z\in\T_\tp$,
\begin{equation}\label{ake5.1}
w(y)\lesssim_\sigma w(z)\,e^{\sigma \big(L^{-1}_{j}(y;\Lambda)+L^{-1}_{j}(z;\Lambda)\big)|y-z|}.
\end{equation}

Using \eqref{ake5.1}, we can bound for any $w$ as in \eqref{ake5.04}, 
\begin{equation}\label{ake5.05}
    \begin{split}
&\Big\|w(y)A_\Theta^{-1}h(y)\Big\|_{L^p(\T_\tp)}\\
&\lesssim\Big\|w(y)\int_{\T_\tp}N(y,z)h(z)dz\Big\|_{L^p(\T_\tp)}\\
&\lesssim_{\sigma_1,\sigma_2} \Big\||\nu/k|^{-1}L_{j}(y;\Lambda)w_\ast(y)\int_{\T_\tp}e^{-c_1\big(L^{-1}_{j}(y;\Lambda)+L^{-1}_{j}(z;\Lambda)\big)|y-z|}w_\ast^{-1}(z)w(z)|h(z)|dz\Big\|_{L^p(\T_\tp)}.
    \end{split}
\end{equation}
To complete the proof of \eqref{ake2}, we divide $\T_\tp$ as the union of intervals $\mathcal{I}_i$, $i\in\Z\cap[1,N]$ for some integer $N\ge1$, where the intervals $\mathcal{I}_i$ only intersect at the endpoints with each other and $|\mathcal{I}_i|\approx L_{j}(y;\Lambda)\approx\ell_i$ for $y\in \mathcal{I}_i$. 

Then using \eqref{ake5.05} we can bound for suitable constants $c_2,c_3\in(0,1)$ and each $i\in\Z\cap[1,N]$,
\begin{equation}\label{ake5.07}
\begin{split}
&\Big\|w(y)A_\Theta^{-1}h(y)\Big\|_{L^p(\mathcal{I}_i)}\\
&\lesssim_{\sigma_1,\sigma_2} \Big\|\sum_{i'=1}^N\int_{\mathcal{I}_{i'}}\ell_i^{-1}e^{-c_2(\ell_{i}^{-1}+\ell_{i'}^{-1})|y-z|}w_\ast^{-1}(z)w(z)|h(z)|dz\Big\|_{L^p(\mathcal{I}_{i})}\\
&\lesssim_{\sigma_1,\sigma_2} \Big\|\sum_{i'=1}^N\int_{\mathcal{I}_{i'}}\ell_i^{-1}e^{-c_3|i-i'|}e^{-c_3\ell_i^{-1}|y-z|}w_\ast^{-1}(z)w(z)|h(z)|dz\Big\|_{L^p(\mathcal{I}_{i})}\\
&\lesssim_{\sigma_1,\sigma_2} \sum_{i'=1}^Ne^{-c_3|i-i'|}\big\|w_\ast^{-1}(z)w(z)h\big\|_{L^p(\mathcal{I}_{i'})}.
\end{split}
\end{equation}
The desired bound \eqref{ake2} follows from \eqref{ake5.07}.

The desired bounds \eqref{ake3.2} follow from the bounds \eqref{ake2} and the equation for $y\in\T_\tp$,
\begin{equation}\label{ake5.2}
\Big[\frac{\nu}{k}\partial_y^2+i(\lambda-b(y))-\alpha\Big]\partial_yu(y)=\partial_yh(y)+ib'(y)u(y),
\end{equation}
where
\begin{equation}\label{ake5.3}
u=A_\Theta^{-1}h.
\end{equation}
\end{proof}

The above lemmas are very useful for treating $A_\Theta^{-1}$ away from the singular region when $|\lambda-b(y)|\gtrsim \delta(\lambda)$. In the viscous layer when $|\lambda-b(y_{j\ast})|\lesssim |\nu/k|^{1/2}$, we can use the viscous term $(\nu/k)\partial_y^2$ to gain regularity without much loss. To deal with the singular behavior in the remaining case when $|\nu/k|^{1/2} \lesssim |\lambda-b(y)|\lesssim \delta(\lambda)$, we rely on the following lemma. 
\begin{lemma}\label{ake6}
For any $\sigma_1,\sigma_2\in\R$, there exists $\epsilon_1=\epsilon_1(\sigma_1, \sigma_2)>0$ sufficiently small, such that for $0<\nu<\epsilon_1, k\in\Z\cap[1,\infty)$, $\lambda\in\R$, $j\in\{1,2\}$, $-\sigma_0|\nu/k|^{1/2}\leq\alpha\ll |\nu/k|^{1/3}|\lambda-b(y_{j\ast})|^{1/3}$, where $\sigma_0$ is from Propostion \ref{Airy_main}, the following statement holds. 

(i) Assume that $\lambda\in\Sigma_{j,\delta_0}\cap b(\T_\tp)$ and $|\lambda-b(y_{j\ast})|\gg |\nu/k|^{1/2}$. In this case there exist $y^+_{\lambda}, y^-_{\lambda}\in S^j_{3\delta}$, arranged so that $y^+_\lambda$ is in the positive direction, counter-clockwise, to $y^-_\lambda$, such that 
\begin{equation}\label{ake6.0}
 \lambda=b(y^+_{\lambda})=b(y^-_{\lambda}).
 \end{equation}
 Choose a cut-off function $\varphi\in C_c^\infty(-1,1)$ with $\varphi\equiv 1$ on $[-1/2,1/2]$. Recall the definition \eqref{DeC1.92} for $d_{j,k}$. Assume that 
 $${\rm supp}\,h\subseteq S^j_{9\delta}.$$
 Denote
\begin{equation}\label{ake6.1}
\epsilon:=\nu/k, \,\,M:=\|h\|_{L^2(S^j_{9\delta})}+d_{j,k}(\Lambda)\|\partial_yh\|_{L^2(S^j_{9\delta})}.
\end{equation}
There exists a decomposition for $y\in\T_\tp$,
\begin{equation}\label{ake6.3}
A_\Theta^{-1}h(y):=w_1(y,\lambda)+w_2(y,\lambda),
\end{equation}
satisfying the property that for $y\in\T_\tp$,
\begin{equation}\label{ake6.4}
|w_1(y,\lambda)|\lesssim \sum_{\dagger\in\{+,-\}}\Big[d_{j,k}(\Lambda)|\lambda-b(y_{j\ast})|^{1/2}\Big]^{-1}|y-y^\dagger_{\lambda}|^{-1/2}M+\Big[\big(d_{j,k}(\Lambda)\big)^{1/2}|\lambda-b(y_{j\ast})|\Big]^{-1}M,
\end{equation}
and
\begin{equation}\label{ake6.5}
\begin{split}
&\bigg|w_2(y,\lambda)-\sum_{\dagger\in\{+,-\}}\frac{h(y^\dagger_\lambda)\varphi\big((y-y^\dagger_\lambda)/|\lambda-b(y_{j\ast})|^{1/2}\big)}{ib'(y^\dagger_\lambda)(y-y^\dagger_\lambda)+ |\epsilon|^{1/3}|\lambda-b(y_{j\ast})|^{1/3}}\bigg|\\
&\lesssim \sum_{\dagger\in\{+,-\}}|\lambda-b(y_{j\ast})|^{-1/2}\frac{|\epsilon|^{-1/3}|\lambda-b(y_{j\ast})|^{1/6}}{1+(|\epsilon|^{1/3}|\lambda-b(y_{j\ast})|^{-1/6})^{-2}|y-y^\dagger_{\lambda}|^2}\big(d_{j,k}(\Lambda)\big)^{-1/2}M.
\end{split}
\end{equation}

(ii) Assume that $\lambda\in b(\T_\tp)\backslash\Sigma_{j,\delta_0}$. 
In this case there exist $y^+_{\lambda}, y^-_{\lambda}\in \T_\tp \backslash S_{j,\delta_0}$, arranged so that $y^+_\lambda$ is in the positive direction, counter-clockwise, to $y^-_\lambda$, such that 
\begin{equation}\label{ake7.0}
 \lambda=b(y^+_{\lambda})=b(y^-_{\lambda}).
 \end{equation}
 Denote
\begin{equation}\label{ake7.1}
\epsilon:=\nu/k, \,\,M:=\|h\|_{L^2(\T_\tp)}+|k|^{-1}\|\partial_yh\|_{L^2(\T_\tp)}.
\end{equation}
There exists a decomposition for $y\in\T_\tp$
\begin{equation}\label{ake7.3}
A_\Theta^{-1}h(y):=w_1(y,\lambda)+w_2(y,\lambda),
\end{equation}
satisfying the property that for $y\in\T_\tp$,
\begin{equation}\label{ake7.4}
|w_1(y,\lambda)|\lesssim \sum_{\dagger\in\{+,-\}}|k||y-y^\dagger_{\lambda}|^{-1/2}M,
\end{equation}
and for $y\in\T_\tp$,
\begin{equation}\label{ake7.5}
\begin{split}
&\bigg|w_2(y,\lambda)-\sum_{\dagger\in\{+,-\}}\frac{h(y^\dagger_\lambda)\varphi(y-y^\dagger_\lambda)}{ib'(y^\dagger_\lambda)(y-y^\dagger_\lambda)+ |\epsilon|^{1/3}}\bigg|\lesssim \sum_{\dagger\in\{+,-\}}\frac{|\epsilon|^{-1/3}}{1+|\epsilon|^{-2/3}|y-y^\dagger_{\lambda}|^2}|k|^{1/2}M.
\end{split}
\end{equation}

\end{lemma}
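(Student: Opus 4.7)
For case (i), the hypothesis $\lambda \in \Sigma_{j,\delta_0} \cap b(\T_\tp)$ together with $|\lambda - b(y_{j\ast})| \gg \epsilon^{1/2}$ (where $\epsilon = \nu/k$) places us squarely in the intermediate regime of Proposition \ref{Airy_main}, with the two zeros $y^\pm_\lambda$ satisfying $|y^\pm_\lambda - y_{j\ast}| \sim |b'(y^\pm_\lambda)| \sim |\lambda - b(y_{j\ast})|^{1/2}$ and Airy length scale $\ell_A := \epsilon^{1/3}|\lambda - b(y_{j\ast})|^{-1/6}$, which is much smaller than $|\lambda - b(y_{j\ast})|^{1/2}$. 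The strategy is to freeze $h$ at the critical values $y^\pm_\lambda$ on the larger scale $|\lambda - b(y_{j\ast})|^{1/2}$. Set
\[
\chi^\dagger(y) := \varphi\!\left(\frac{y - y^\dagger_\lambda}{|\lambda - b(y_{j\ast})|^{1/2}}\right), \qquad h_\star := \sum_{\dagger \in \{+,-\}} h(y^\dagger_\lambda)\,\chi^\dagger,
\]
and define $w_2 := A_\Theta^{-1} h_\star$ and $w_1 := A_\Theta^{-1}(h - h_\star)$.

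\textbf{Bounds on $w_1$.} The residual $h - h_\star$ splits into the fluctuation $(h - h(y^\dagger_\lambda))\chi^\dagger$ near each $y^\dagger_\lambda$, plus $h$ on the region where both cutoffs vanish. On the support of $\chi^\dagger$ one has $|h(y) - h(y^\dagger_\lambda)| \lesssim |y - y^\dagger_\lambda|^{1/2} \|\partial_y h\|_{L^2}$ by Sobolev embedding on the interval. Feeding this fluctuation into the kernel bound \eqref{eq:kernel:bounds:2}, whose exponential factor $e^{-c_0|y-z|/\ell_A}$ provides integrability of the weight $|z - y^\dagger_\lambda|^{-1/2}$ on the Airy scale, one recovers the $|y - y^\dagger_\lambda|^{-1/2}$ profile in \eqref{ake6.4}; the constant prefactor $[d_{j,k}|\lambda - b(y_{j\ast})|^{1/2}]^{-1}$ tracks the weights from Lemma \ref{ake1}. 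The contribution from the region outside both cutoffs is estimated directly through Lemma \ref{ake1} with weight $\varrho_{j,k}$, producing the second, weaker, summand $[d_{j,k}^{1/2}|\lambda - b(y_{j\ast})|]^{-1}M$ in \eqref{ake6.4}.

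\textbf{Model comparison for $w_2$.} For each $\dagger$, we compare $A_\Theta^{-1}\chi^\dagger$ against the model
\[
M^\dagger(y) := \frac{\chi^\dagger(y)}{ib'(y^\dagger_\lambda)(y - y^\dagger_\lambda) + \epsilon^{1/3}|\lambda - b(y_{j\ast})|^{1/3}}.
\]
Applying $A_\Theta$ to $M^\dagger$ and using $b(y) - \lambda = b'(y^\dagger_\lambda)(y - y^\dagger_\lambda) + O((y - y^\dagger_\lambda)^2)$, the leading singular term matches $\chi^\dagger$ up to a model Airy identity which fixes the denominator constant (this is the heart of why the effective scale is $\epsilon^{1/3}|\lambda - b(y_{j\ast})|^{1/3}$). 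The remainder $A_\Theta M^\dagger - \chi^\dagger$ decomposes into (a) the Taylor quadratic error times $M^\dagger$, concentrated on $\mathrm{supp}\,\chi^\dagger$; (b) the diffusive piece $(\nu/k)\partial_y^2 M^\dagger$, providing the Airy smoothing; (c) boundary contributions from $\varphi'$ and $\varphi''$, supported where $|y - y^\dagger_\lambda| \sim |\lambda - b(y_{j\ast})|^{1/2} \gg \ell_A$. Inverting each remainder through $A_\Theta^{-1}$ using Proposition \ref{Airy_main} and Lemma \ref{ake1} produces precisely the Airy-localized bump $\ell_A^{-1}(1 + \ell_A^{-2}|y - y^\dagger_\lambda|^2)^{-1}$ featured in \eqref{ake6.5}. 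Case (ii) is structurally identical: $|b'(y^\pm_\lambda)| \gtrsim 1$, Airy scale $\epsilon^{1/3}$, no factors of $|\lambda - b(y_{j\ast})|$, and the cutoff $\varphi$ is not rescaled.

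\textbf{Main obstacle.} The delicate point is verifying that inversion of each remainder piece through $A_\Theta^{-1}$ yields a sharply concentrated Airy-scale bump rather than a broad profile — this requires invoking the full exponential decay on scale $\ell_A$ in Proposition \ref{Airy_main}, not merely the diagonal pointwise bounds. A parallel subtlety is tracking the cancellation between $(\nu/k)\partial_y^2 M^\dagger$ and the smoothing of the singular kernel $1/(ib'(y^\dagger_\lambda)(y - y^\dagger_\lambda))$, which is what identifies the specific additive constant $\epsilon^{1/3}|\lambda - b(y_{j\ast})|^{1/3}$ in the denominator of the model and hence makes the decomposition \eqref{ake6.3}--\eqref{ake6.5} effective in later applications.
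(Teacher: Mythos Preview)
Your split $w_2 = A_\Theta^{-1}h_\star$, $w_1 = A_\Theta^{-1}(h-h_\star)$ is not what the paper does. The paper takes $w_2$ to be an \emph{explicit} function: the rescaled constant-coefficient Airy profile $\epsilon^{-1/3}|b'(y^\dagger_\lambda)|^{-2/3}\,h(y^\dagger_\lambda)\,W\big(\epsilon^{-1/3}|b'(y^\dagger_\lambda)|^{1/3}(y-y^\dagger_\lambda)\big)\,\chi^\dagger$, where $W$ solves $(\partial_Y^2 - iY - \epsilon^{-1/3}|b'|^{-2/3}\alpha)W = 1$ on $\R$. The bound \eqref{ake6.5} then drops out immediately from the a priori estimate $|W(Y) - 1/(1-iY)| \lesssim 1/(1+Y^2)$, quoted from \cite{JiaUM}; all remaining effort goes into $w_1 = A_\Theta^{-1}h - w_2$ via the rewriting \eqref{ake103} and Lemma~\ref{ake1}. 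Your route shifts the balance: your $w_1$ bound is clean (fluctuation plus kernel, as you say), but now you must manufacture the bound on $w_2 - M^\dagger$ from scratch by inverting $A_\Theta M^\dagger - \chi^\dagger$. Both routes can be made to work; the paper's buys \eqref{ake6.5} for free at the cost of an external reference, yours is self-contained but heavier on the $w_2$ side.

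There is, however, a real gap in your $w_2$ step. Write $M^\dagger = \chi^\dagger/(D+c)$ with $D$ the linear part; then $i(\lambda - b(y))M^\dagger$ to leading order is $-D\,\chi^\dagger/(D+c) = -\chi^\dagger + cM^\dagger$ (and with the sign of $D$ as you wrote it, one in fact gets $-\chi^\dagger$ rather than $+\chi^\dagger$ as the leading match). So besides your pieces (a)--(c), the remainder $A_\Theta M^\dagger - \chi^\dagger$ carries an undeclared $cM^\dagger$ term: size $O(1)$ at the peak, tails $\sim \ell_A/|y-y^\dagger_\lambda|$. There is no ``model Airy identity'' and no cancellation between $\epsilon\partial_y^2 M^\dagger$ and anything else that eliminates this term or fixes the specific value of $c$; only the \emph{order} $c\sim \epsilon^{1/3}|\lambda-b(y_{j\ast})|^{1/3}$ matters, and it is determined by asking that $\epsilon\partial_y^2 M^\dagger$ and $cM^\dagger$ be comparable. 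Your outline is repairable --- applying Lemma~\ref{ake1} to $cM^\dagger$ does yield the Airy bump in \eqref{ake6.5} --- but as written the remainder decomposition is incomplete and the mechanism you invoke (a cancellation pinning down $c$) does not exist.
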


\begin{proof}
Without loss of generality, we assume that $\epsilon>0$. We focus on the proof of the case (i), with (ii) being similar and simpler. Define for $y\in\T_\tp$, 
\begin{equation}\label{ake101}
    w(y,\lambda):=A_\Theta^{-1}h(y).
\end{equation}
It follows from Lemma \ref{ake1}, the bound \eqref{ake6} and Sobolev inequalities that for $y\in\T_\tp$ with $$\min\{|y-y_\lambda^+|,|y-y_\lambda^-|\}\gtrsim |\lambda-b(y_{j\ast})|^{1/2}$$ we have bound
\begin{equation}\label{ake102}
    |w(y,\lambda)|\lesssim |\lambda-b(y_{j\ast})|^{-1}\|h\|_{L^\infty(\T_\tp)}\lesssim \Big[\big(d_{j,k}(\Lambda)\big)^{1/2}|\lambda-b(y_{j\ast})|\Big]^{-1}M.
\end{equation}
Note that $w$ satisfies for $\dagger\in\{+,-\}$, $|y-y^\dagger_\lambda|\ll |\lambda-b(y_{j\ast})|^{1/2}$,
\begin{equation}\label{ake103}
\begin{split}
    &\Big[\epsilon\partial_y^2+ib'(y_\lambda^\dagger)(y_\lambda^\dagger-y)-\alpha\Big]w(y,\lambda)\\
    &=h(y)+i\Big[b'(y_\lambda^\dagger)(y_\lambda^\dagger-y)+b(y)-b(y_\lambda^\dagger)\Big]w(y,\lambda)\\
    &=h(y^\dagger_\lambda)+h(y)-h(y^\dagger_\lambda)+i\Big[b'(y_\lambda^\dagger)(y_\lambda^\dagger-y)+b(y)-b(y_\lambda^\dagger)\Big]w(y,\lambda).
    \end{split}
\end{equation}
To extract the most singular part near $y=y^\dagger_\lambda$, we define for $y\in\R$, $W(y,\Lambda)$  as the solution to 
\begin{equation}\label{ake104}
    \Big[\partial_y^2-iy-\epsilon^{-1/3}|b'(y_\lambda^\dagger)|^{-2/3}\alpha\Big]W(y,\Lambda)=1. 
\end{equation}
It follows from Proposition 3.1 in \cite{JiaUM} that $W(y, \Lambda)$ satisfies the following bounds for $y\in\R$,
\begin{equation}\label{ake105}
\Big|W(y,\Lambda)-\frac{1}{1-iy}\Big|\lesssim \frac{1}{1+y^2}.
\end{equation}
The desired bounds then follow from Lemma \ref{ake1} and the inequality \eqref{ake105} using equation \eqref{ake103}, by taking 
\begin{equation}\label{ake106}
    w_2(y,\lambda):=\sum_{\dagger\in\{+,-\}}
    \epsilon^{-1/3}|b'(y_\lambda^\dagger)|^{-2/3}h(y^\dagger_\lambda)W\Big((y-y^\dagger_\lambda)\epsilon^{-1/3}|b'(y_\lambda^\dagger)|^{1/3},\Lambda\Big)\varphi\big((y-y^\dagger_\lambda)/|\lambda-b(y_{j\ast})|^{1/2}\big).
\end{equation}

\end{proof}

\begin{remark}\label{RMb1}
Denote for $y\in\T_\tp, \dagger\in\{+,-\}$,
\begin{equation}\label{RMb0}
g(y):=\int_{\T_\tp}\mathcal{G}_k^j(y,z;\Lambda)A_\Theta^{-1}h(z)\,dz.
\end{equation}
It follows from \eqref{ake6.3} and \eqref{ake6.5}, using integration by parts and the identity for $\iota\in\R\backslash\{0\}$
\begin{equation}\label{RMb1.001}
    \frac{1}{y-y^\dagger+i\iota}=\partial_y\log\frac{y-y^\dagger+i\iota}{\delta},
\end{equation}
that 
\begin{equation}\label{RMb0.1}
\begin{split}
   & \left\|g\right\|_{L^2(S^j_{9\delta})}+d_{j,k}(\Lambda)\left\|\partial_yg\right\|_{L^2(S^j_{9\delta})} \lesssim \frac{d_{j,k}^{1/2}(\Lambda)}{\delta^{1/2}}M,
    \end{split}
\end{equation}
and for $y\in \T_\tp\backslash S^j_{9\delta}$, 
\begin{equation}\label{RMb0.2}
\begin{split}
    &|g(y)|+d_{j,k}(\Lambda)|\partial_yg(y)|\lesssim d_{j,k}^{-1/2}\delta^{-1}\Big[\delta^{-1}\int_{|z-y_{j\ast}|<10\delta }\mathcal{G}_{k}^j(y,z;\Lambda)\,dz\Big]\,M.
    \end{split}
\end{equation}

\end{remark}

\section{The limiting absorption principle and main bounds on the spectral density function}\label{sec:mbsd}

In this section we state the limiting absorption principle, and derive bounds on the spectral density functions using the limiting absorption principle and the equations \eqref{intP5}. 


\subsection{The limiting absorption principle for the Orr-Sommerfeld equation}\label{sec:lap}
For $k\in\Z\cap[1,\infty), \nu\in(0,1/8), \lambda\in\R, \alpha\ge -\sigma_0|\nu/k|^{1/2}$ with $\sigma_0 \in(0,1/2)$ sufficiently small from Proposition \ref{Airy_main}, denoting $\Theta:=(\lambda,\alpha,\nu,k,j)$,
 we define the operator $T_\Theta: H^1(\T_\tp)\to H^2(\T_\tp)$ as follows.

If $\lambda\in\R\backslash\big(\cup_{j=1}^2\Sigma_{j,\delta_0}\big)$ or $\alpha\ge\delta_0$, we define for any $h\in H^1(\T_\tp)$ and $y\in\T_\tp$,
\begin{equation}\label{lapj1}
    T_\Theta h(y):=\Delta_k^{-1}A_\Theta^{-1}\big(ib''h\big)(y),
\end{equation}
where we used the notation $\Delta_k:=\partial_y^2-k^2$.

If $\lambda\in\Sigma_{j,\delta_0}$ and $-\sigma_0|\nu/k|^{1/2}\leq\alpha\leq\delta_0$, for some $j\in\{1,2\}$, then we define for any $h\in H^1(\T_\tp)$ and $y\in\T_\tp$,
\begin{equation}\label{lapj2}
\begin{split}
T_\Theta h(y):=-\int_{\T_\tp}\mathcal{G}_k^j(y,z;\Lambda)&\Big\{A_\Theta^{-1}\big(ib''h\big)(z)\\
&\quad+\frac{b''(z)}{b(z)-\lambda -i\alpha}\Big[\varphi_0\big(\frac{z-y_{j\ast}}{\delta_0}\big)-\varphi_0\big(\frac{z-y_{j\ast}}{\delta(\Lambda)}\big)\Big]h(z)\Big\}dz.
\end{split}
\end{equation}

The main technical tool we need is the following limiting absorption principle. 
\begin{proposition}\label{lap_main}
Fix $\gamma\in[7/4,2)$. Under the assumptions in Proposition \ref{intP1}, there exist $\nu_0,\kappa,\sigma_\sharp\in(0,1/10)$ sufficiently small depending on $\gamma$, such that the following statement holds. Assume that 
\begin{equation}\label{eqRan1}
(\sigma_1,\sigma_2)\in\big\{(0,-\gamma), (1,-\gamma+1), (1-2(2-\gamma), -\gamma+2)\big\}. 
\end{equation}
For $k\in\Z\cap[1,\infty), \nu\in(0,\nu_0), \alpha\ge -\sigma_\sharp|\nu/k|^{1/2}$ and $\lambda\in\R$, we have the following bounds. 
    
   (i) If $\lambda\in\Sigma_{j,\delta_0}$ for some $j\in\{1,2\}$, then for any $h\in H^2(\T_\tp)$ and $(\sigma_1,\sigma_2)$ as in \eqref{eqRan1},
   \begin{equation}\label{lapj3}
        \big\|h+T_\Theta h\big\|_{X^{\sigma_1,\sigma_2}(\mathfrak{M})}\ge \kappa \|h\|_{X^{\sigma_1,\sigma_2}(\mathfrak{M})};
    \end{equation}
   (ii) If $\lambda\in\R\backslash\big(\cup_{j=1}^2\Sigma_{j,\delta_0}\big)$, then 
   \begin{equation}\label{lapj4}
       \big\|h+T_\Theta h\big\|_{H^1_k(\T_\tp)}\ge \kappa \|h\|_{H^1_k(\T_\tp)}.
   \end{equation}
\end{proposition}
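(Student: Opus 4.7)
The plan is to establish both \eqref{lapj3} and \eqref{lapj4} by a contradiction and compactness argument, leveraging the spectral Assumption \ref{MaAs}. Suppose the conclusion fails; then there exist sequences $\Theta_n=(\lambda_n,\alpha_n,\nu_n,k_n,j)$ (with $\alpha_n\geq-\sigma_\sharp|\nu_n/k_n|^{1/2}$) and $h_n$ normalized to have unit norm in the appropriate space such that $\|h_n+T_{\Theta_n}h_n\|=o(1)$. Passing to subsequences, $k_n$ either diverges to infinity or stabilizes at some $k_*\in\Z\setminus\{0\}$, and $(\lambda_n,\alpha_n,\nu_n/k_n)\to(\lambda_*,\alpha_*,\epsilon_*)$ with $\alpha_*\geq0$, $\epsilon_*\geq0$. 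I would handle the large-$k$ and bounded-$k$ cases in turn.

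\textbf{High-frequency regime.} First I would verify that when $|k|$ is sufficiently large, $T_\Theta$ is a strict contraction in the ambient norm. In the non-degenerate case \eqref{lapj1}, the standard Green's function bounds \eqref{Gk1.1} supply a factor $|k|^{-1}$ from $\Delta_k^{-1}$, while Lemma \ref{ake1} bounds $A_\Theta^{-1}(ib''\cdot)$ uniformly in $H^1_k$; multiplying yields $\|T_\Theta\|_{H^1_k\to H^1_k}\lesssim|k|^{-1}$ and hence \eqref{lapj4} for $|k|$ large. In the degenerate case \eqref{lapj2}, the modified Green's function $\mathcal{G}_k^j$ from Lemma \ref{mGk50} is precisely engineered to absorb the critically singular and non-perturbative term $-2/|y-y_{j\ast}|^2$ identified in \eqref{oop7}; the residual potential $[\varphi_0((z-y_{j\ast})/\delta_0)-\varphi_0((z-y_{j\ast})/\delta(\Lambda))]b''/(b-\lambda-i\alpha)$ is supported where $|z-y_{j\ast}|\gtrsim\delta(\Lambda)$ and is thus controlled by the weighted pointwise estimate \eqref{mGk51} together with the $A_\Theta^{-1}$ bounds of Lemma \ref{ake1} in $X^{\sigma_1,\sigma_2}(\mathfrak{M})$. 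The combined estimate again produces a small power of $1/|k|$, giving contraction. These technical frequency estimates on $T_\Theta$ are to be carried out separately in Section \ref{sec:T}.

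\textbf{Bounded frequency and limiting modes.} For bounded $k_n\to k_*$, I would extract a subsequential limit of $h_n$ using Rellich-type compactness. If $\alpha_*>0$ or $\epsilon_*>0$, then $A_{\Theta_*}^{-1}$ is a well-defined bounded operator and $(I+T_{\Theta_*})h_*=0$ would correspond to a genuine spectral mode of the viscous or shifted operator; a standard energy identity using the sign of $\mathrm{Im}\,\alpha_*$ or the dissipation forces $h_*=0$, contradicting the norm normalization. The genuinely delicate case is $\alpha_*=\epsilon_*=0$. Here Lemma \ref{ake6} is the key tool: it decomposes $A_{\Theta_n}^{-1}$ into a smooth remainder and a sharp singular piece whose limit is precisely $\mathrm{P.V.}(b-\lambda_*)^{-1}$ plus the Plemelj contribution $i\pi\sum_{b(z)=\lambda_*}\delta_z/|b'(z)|$. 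Matching this with the definition \eqref{emb2}, the limit $h_*$ and its stream function $\psi_*:=\Delta_k^{-1}h_*$ produce a nontrivial generalized embedded eigenfunction of $L_{k_*}$ (or an honest embedded/discrete eigenfunction in the degenerate subcase $\lambda_*\in\{b(y_{1\ast}),b(y_{2\ast})\}$, via the Green's function representation in \eqref{lapj2}), contradicting Assumption \ref{MaAs}.

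\textbf{Main obstacle.} The principal technical difficulty will be proving that the subsequential limit $h_*$ is nontrivial and has the regularity required by Definition \ref{emb1}. In the degenerate regime, the weights $\varrho_{j,k_n}$ and $d_{j,k_n}$ defining $X^{\sigma_1,\sigma_2}(\mathfrak{M}_n)$ themselves degenerate as $\delta(\Lambda_n)\to0$, so the $L^\infty$-type normalization away from $y_{j\ast}$ does not automatically survive the limit. I expect that I will need a two-scale analysis: rescale around $y_{j\ast}$ at the natural length $\delta(\Lambda_n)$ to extract an inner profile, compare it with the outer profile on $\T_\tp\setminus S^j_{\delta(\Lambda_n)}$, and show that at least one of the two limits is nontrivial. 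The restriction $\gamma\in[7/4,2)$ reflects exactly the range in which the weighted Hardy and Sobolev embeddings built into $X^{\sigma_1,\sigma_2}$ remain compact on both scales; the endpoint $\gamma=2$ saturates one of these embeddings and is genuinely excluded. The constant $\kappa\in(0,1)$ in \eqref{lapj3}--\eqref{lapj4} will then be identified as the quantitative output of this compactness-contradiction scheme.
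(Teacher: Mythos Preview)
Your overall strategy matches the paper's: contradiction, high-frequency smallness of $T_\Theta$ (the paper obtains $\|T_\Theta\|\lesssim\langle k\delta(\Lambda)\rangle^{-1/4}$ in the degenerate case and $\|T_\Theta\|\lesssim k^{-1/2}$ in the non-degenerate case via the decomposition $T_\Theta=T_{I1}+T_{I2}+T_{v1}+T_{v2}$ of Section~\ref{sec:T}), then compactness and identification of the limit as a forbidden spectral mode. Your ``two-scale'' intuition for the degenerate case is also correct: the paper splits into a \emph{non-concentrated} case where mass persists outside $S_{\delta_0/2}$ and a \emph{concentrated} case where it does not, and in the latter performs the blow-up $Y=y/\delta(\Lambda_\ell)$ you anticipate.

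There is, however, one genuine conceptual gap in your proposal. In the concentrated case, after rescaling, the limiting profile $F_*$ does \emph{not} solve a Rayleigh-type equation for the original flow $b$, so Assumption~\ref{MaAs} plays no role there. Instead, since $b(y)\approx\tfrac12 b''(0)y^2$ near the critical point, the rescaled limit satisfies the Rayleigh (or Orr--Sommerfeld, if $\epsilon_*:=\lim\epsilon_\ell/\delta_\ell^4>0$) equation for the \emph{Poiseuille} profile $\tfrac12 b''(0)Y^2$ on $\R$. The contradiction is then obtained from the separate, well-known fact that the linearized operator around Poiseuille flow has no discrete or generalized embedded eigenvalues; this is not part of Assumption~\ref{MaAs} and must be invoked independently. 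Moreover, when $\epsilon_*>0$ (the viscous endpoint), the limiting Orr--Sommerfeld equation on $\R$ is handled by a direct energy identity: testing against $\overline{W_*}$ and using $\|W\|_{L^2}^2\le 2\|YW\|_{L^2}\|\partial_Y W\|_{L^2}$ yields a contradiction \emph{provided} $|\lambda_*\alpha_*/\epsilon_*|$ is small enough. This is precisely what forces the choice of $\sigma_\sharp$ to depend on the constant $M$ separating the intermediate and viscous regimes --- a dependence your proposal does not account for, and which is the real reason $\sigma_\sharp$ cannot simply be taken equal to the $\sigma_0$ of Proposition~\ref{Airy_main}.
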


\begin{remark}
We briefly explain the design of the exponents \eqref{eqRan1} and the weighted norm \eqref{Intd4}, which will be used in section \ref{sec:bspd} to control the spectral density functions and their derivatives in $\lambda$ up to two derivatives. 

The weights used in Proposition \ref{lap_main} are motivated by the decay of the modified Green's functions near the critical points $y_{1\ast}, y_{2\ast}$. Assuming $k=1$ and $\min\{|z-y_{1\ast}|,|z-y_{2\ast}|\}\gtrsim1$ for simplicity, the modified Green's function  $\mathcal{G}_k^j(y,z;\Lambda)\sim (|y-y_{j\ast}|+\delta(\Lambda))^2$
for $y$ close to $y_{j\ast}$. This asymptotic matches the weights used in \eqref{Intd4} for $(\sigma_1,\sigma_2)=(0,-\gamma)$. The use of $L^2$ and $L^\infty$ based spaces adapts to the singularity at $\lambda=b(y)$ that appears in \eqref{lapj2}. 

For $(\sigma_1,\sigma_2)=(0,-\gamma)$ and $(\sigma_1,\sigma_2)=(1,-\gamma+1)$, the corresponding norms reflect the heuristic that taking one derivative in $\lambda$ costs $\varrho_j^{-1}\varrho_{j,k}^{-1}$ (in contrast to taking one derivative in $y$ which costs only $\varrho_{j,k}^{-1}$). The extra adjustment for $(\sigma_1,\sigma_2)=(1-2(2-\gamma), -\gamma+2)$, where the weight $\varrho^{1-2(2-\gamma)}_j\varrho_{j,k}^{-\gamma+2}$ was used in favor of the seemingly more natural weight $\varrho^{2}_j\varrho_{j,k}^{-\gamma+2}$, is due to the requirement that the weights need to vary more slowly than the modified Green's function to avoid nonlocal effects. In applications below (see \eqref{mainprop2}), we need to incorporate an additional factor $\delta(\Lambda)^{1+2(2-\gamma)}$ to match the two derivatives taken. 

\end{remark}
   
\subsection{Main bounds on the spectral density function}\label{sec:bspd}
Assume that $j\in\{1,2\}, k\in\Z\cap[1,\infty)$, $|\alpha|\leq \sigma_{\sharp}|\nu/k|^{1/2}$ and $ \gamma\in[7/4,2)$. 

We define the ``modified spectral density function" $\psi_{k,\nu}^\ast(y,\lambda)$ for $y\in\T_{\tp}$ as 
\begin{equation}\label{mainprop0.3}
\psi_{k,\nu}^\ast(y,\Lambda):=\psi_{k,\nu}(y,\Lambda)-\sum_{j=1}^2\varphi_0\big((y-y_{j\ast})/\delta_0\big)\frac{\omega_{0k}(y)}{ib''(y)}.
\end{equation}
Then $\psi_{k,\nu}^\ast(y,\lambda)$ satisfies the equation for $y\in\T_{\tp}$,
\begin{equation}\label{mainprop0.31}
\left\{\begin{array}{rl}
\Big[\frac{\nu}{k}\partial_y^2+i(\lambda-b(y))-\alpha\Big]\omega^\ast_{k,\nu}(y,\Lambda)+ib''(y)\psi^\ast_{k,\nu}(y,\Lambda)=f_{0k}(y,\Lambda),&\\
(-k^2+\partial_y^2)\psi^\ast_{k,\nu}(y,\Lambda)=\omega^\ast_{k,\nu}(y,\Lambda),&
\end{array}\right.
\end{equation}
where 
\begin{equation}\label{mainprop0.32}
\begin{split}
f_{0k}(y,\Lambda):=&\,\,\omega_{0k}(y)-\sum_{j=1}^2\varphi_0\big((y-y_{j\ast})/\delta_0\big)\omega_{0k}(y)\\
&-\Big[\frac{\nu}{k}\partial_y^2+i(\lambda-b(y))-\alpha\Big](\partial_y^2-k^2)\Big\{\sum_{j=1}^2\varphi_0\big((y-y_{j\ast})/\delta_0\big)\frac{\omega_{0k}(y)}{ib''(y)}\Big\}.
\end{split}
\end{equation}

The proof of our main theorems is based on the following proposition on the regularity of the spectral density function. 

\begin{proposition}\label{mainprop}
Fix $\gamma\in[15/8,2)$ and $\nu_0, \sigma_\sharp\in(0,1)$ from Proposition \ref{lap_main}. Assume that $\nu\in(0,\nu_0)$, $k\in\Z\cap[1,\infty)$, $\epsilon:=\nu/k\in(0,1/8)$, $|\alpha|\leq\sigma_\sharp|\nu/k|^{1/2}$. Let $\omega_{0k}, \omega_{k,\nu}, \psi_{k,\nu}$ be defined as in Proposition \ref{intP1}, and let $ \omega^\ast_{k,\nu}, \psi^\ast_{k,\nu}$ be as in \eqref{mainprop0.31}-\eqref{mainprop0.32}.


Define the ``good derivative" $D_\lambda$ for $\lambda\in\R, y\in\T_{\tp}$ as follows.
If $\lambda\in b(\T_\tp)$ and $\min\{|\lambda-b(y_{1\ast})|, |\lambda-b(y_{2\ast})|\}\ge 10|\nu/k|^{1/2}$, then
\begin{equation}\label{mainprop0.2}
D_\lambda:=\partial_\lambda+\Big[1-\varphi_0\big(\frac{y-y_{1\ast}}{\delta_2(\lambda)}\big)\Big]\Big[1-\varphi_0\big(\frac{y-y_{2\ast}}{\delta_2(\lambda)}\big)\Big]\frac{1}{b'(y)}\partial_y.
\end{equation}
If $\lambda\not\in b(\T_\tp)$ or $\min\{|\lambda-b(y_{1\ast})|, |\lambda-b(y_{2\ast})|\}< 10|\nu/k|^{1/2}$, denote $D_\lambda:=\partial_\lambda$. 
 
 Then we have the following bounds. 

 (i) For $\lambda\in \R\backslash\big(\cup_{j\in\{1,2\}}\Sigma_{j,\delta_0}\big), \sigma\in\{0,1,2\}$,
\begin{equation}\label{mainprop3}
\big\|D_\lambda^\sigma\, \psi_{k,\nu}(\cdot,\Lambda)\big\|_{H^1_k(\T_{\tp})}\lesssim |k|^{-5/2+\sigma}\frac{1}{\langle\lambda\rangle^{\sigma+1}}\big\|\omega_{0k}\big\|_{H^3_k(\T_{\tp})};
\end{equation}
 
 (ii) For $j\in\{1,2\}, \lambda\in\Sigma_{j,\delta_0}$,
\begin{equation}\label{mainprop2}
\begin{split}
&\big\|\psi_{k,\nu}^\ast(\cdot,\Lambda)\big\|_{X^{0,-\gamma}(\mathfrak{M})}\lesssim |k|^{-1/2}\big\|\omega_{0k}\big\|_{H^3_k(\T_{\tp})},\\
&\big\|D_\lambda\, \psi_{k,\nu}^\ast(\cdot,\Lambda)\big\|_{X^{1,-\gamma+1}(\mathfrak{M})}\lesssim |k|^{-1/2}\big\|\omega_{0k}\big\|_{H^3_k(\T_{\tp})},\\
&\big\|D_\lambda^2\, \psi_{k,\nu}^\ast(\cdot,\Lambda)\big\|_{X^{1-2(2-\gamma),-\gamma+2}(\mathfrak{M})}\lesssim |k|^{-1/2}(\delta(\Lambda))^{-1-2(2-\gamma)}\big\|\omega_{0k}\big\|_{H^3_k(\T_{\tp})}.
\end{split}
\end{equation}

\end{proposition}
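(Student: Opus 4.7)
\textbf{Proof plan for Proposition \ref{mainprop}.} The plan is to reduce the coupled system \eqref{mainprop0.31} to a single scalar fixed-point equation of the form $\psi^{\ast}_{k,\nu}+T_{\Theta}\psi^{\ast}_{k,\nu}=G_{0k}$, invoke the limiting absorption principle (Proposition \ref{lap_main}) to invert $I+T_{\Theta}$ in the appropriate norm, and then iterate this procedure after applying $D_{\lambda}$, controlling the resulting commutators with the kernel estimates from Sections \ref{sec:Green}--\ref{sec:pbk}. Concretely, solving the first equation of \eqref{mainprop0.31} for $\omega^{\ast}_{k,\nu}$ gives $\omega^{\ast}_{k,\nu}=A_{\Theta}^{-1}(f_{0k}-ib''\psi^{\ast}_{k,\nu})$; substituting into $\Delta_{k}\psi^{\ast}_{k,\nu}=\omega^{\ast}_{k,\nu}$ and inverting by $\Delta_{k}^{-1}$ in the non-degenerate case (respectively by $\mathcal{G}^{j}_{k}$ in the degenerate case, absorbing the singular part of the potential as in the definition \eqref{lapj2}), one arrives at
\begin{equation*}
\psi^{\ast}_{k,\nu}+T_{\Theta}\psi^{\ast}_{k,\nu}=G_{0k}(\cdot;\Lambda),
\end{equation*}
where $G_{0k}$ is an explicit expression in $A_{\Theta}^{-1}f_{0k}$, composed either with $\Delta_{k}^{-1}$ or with $\mathcal{G}^{j}_{k}$. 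The $\sigma=0$ bounds in \eqref{mainprop2}, \eqref{mainprop3} then follow directly from \eqref{lapj3}, \eqref{lapj4} once one verifies the corresponding norm bound for $G_{0k}$; the latter is a routine consequence of Lemmas \ref{ake1} and \ref{mGk50}--\ref{mGk30}, together with Remark \ref{RMb1} and the assumption $\omega_{0k}\in H^{3}_{k}(\T_{\tp})$.

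For the $\sigma=1$ estimates, I would apply $D_{\lambda}$ to the identity, producing
\begin{equation*}
(I+T_{\Theta})(D_{\lambda}\psi^{\ast}_{k,\nu})=D_{\lambda}G_{0k}-[D_{\lambda},T_{\Theta}]\psi^{\ast}_{k,\nu},
\end{equation*}
and reinvoke Proposition \ref{lap_main} in the space $X^{1,-\gamma+1}(\mathfrak{M})$. The crucial point here is the definition \eqref{mainprop0.2}: away from the critical points the auxiliary vector field $b'(y)^{-1}\partial_{y}$ is regular, and inside $S^{j}_{\delta_{2}}$ the $\partial_{\lambda}$ part alone is harmless because the weights in $X^{1,-\gamma+1}$ already accommodate one $\varrho_{j}^{-1}\varrho_{j,k}^{-1}$ loss. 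The commutator $[D_{\lambda},T_{\Theta}]$ will involve $\partial_{\lambda}$ falling on the Airy kernel and on the modified Green's function; I would expand both using \eqref{eq:airy} (i.e. $\partial_{\lambda}A_{\Theta}^{-1}=-iA_{\Theta}^{-2}$, but rewritten as $-A_{\Theta}^{-1}\circ(i\cdot)\circ A_{\Theta}^{-1}$ so that no squared resolvent actually appears after the accompanying $b'(y)^{-1}\partial_{y}$ contribution is integrated by parts), and control the resulting expressions by the pointwise kernel bounds \eqref{eq:kernel:bounds:3}--\eqref{eq:kernel:bounds:1} together with the weighted bounds of Lemma \ref{ake1}.

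The main obstacle is the $\sigma=2$ estimate, since a brute-force application of $\partial_{\lambda}^{2}$ produces $A_{\Theta}^{-3}$ type kernels whose singularities cannot be absorbed into any of the weights in \eqref{eqRan1}. The strategy is to avoid ever forming such iterated resolvents by using the good derivative $D_{\lambda}$ twice, applied to the identity above, and then exploiting the observation in the introduction that at the level of size $\partial_{y}^{\beta}\psi^{\ast}_{k,\nu}$ (with $\beta\in\{1,2\}$) is governed directly by \eqref{intP5}: this allows one to trade each surplus $\lambda$-derivative hitting a resolvent for a $y$-derivative hitting $\psi^{\ast}_{k,\nu}$, which is controllable via the equation. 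Concretely, I would write
\begin{equation*}
(I+T_{\Theta})(D_{\lambda}^{2}\psi^{\ast}_{k,\nu})=D_{\lambda}^{2}G_{0k}-2[D_{\lambda},T_{\Theta}](D_{\lambda}\psi^{\ast}_{k,\nu})-[D_{\lambda},[D_{\lambda},T_{\Theta}]]\psi^{\ast}_{k,\nu},
\end{equation*}
estimate each term in $X^{1-2(2-\gamma),-\gamma+2}(\mathfrak{M})$, and close the bound with the $\sigma=1$ estimate already obtained. The weight $\varrho_{j}^{1-2(2-\gamma)}$ is calibrated precisely so that (a) it is compatible with the $\varrho_{j}^{2}$ behavior of $\mathcal{G}^{j}_{k}$ and (b) the $L^{\infty}$ outer norm absorbs the principal value singularity at $\lambda=b(y)$ that the second derivative produces. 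The extra factor $\delta(\Lambda)^{-1-2(2-\gamma)}$ on the right of \eqref{mainprop2} records the two derivatives on the cutoff $\varphi_{0}((y-y_{j\ast})/\delta(\Lambda))$ embedded in the definition of $T_{\Theta}$ and in $f_{0k}$. The hard technical work is this commutator bookkeeping; once done, Proposition \ref{lap_main} closes the estimate.
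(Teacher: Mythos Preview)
Your plan is correct and matches the paper's strategy: reduce to a fixed-point equation, invoke Proposition~\ref{lap_main}, and iterate after applying $D_\lambda$, with the commutator structure controlled by the kernel estimates of Sections~\ref{sec:Green}--\ref{sec:pbk}. The one organizational difference worth noting is that the paper carries out the $D_\lambda$ iteration at the level of the PDE $(\partial_y^2-k^2)\psi^\ast+A_\Theta^{-1}(ib''\psi^\ast)=A_\Theta^{-1}f_{0k}$ \emph{before} inverting by $\mathcal{G}_k^j$ (see \eqref{refb5}, \eqref{refb9} and the commutator relations \eqref{pmpr1.23}), rather than at the level of the operator identity $\psi^\ast+T_\Theta\psi^\ast=G_{0k}$ as you propose; this sidesteps having to differentiate $\mathcal{G}_k^j(\cdot;\Lambda)$ and the cutoffs in \eqref{lapj2} with respect to $\lambda$, which in your formulation would appear as additional pieces of $[D_\lambda,T_\Theta]$. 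Your route still closes, but the paper's ordering keeps the bookkeeping lighter.
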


As a corollary of Lemma \ref{ake6} and Proposition \ref{mainprop}, we also have the following refined description of the singularities of $\partial_yD_\lambda \psi^\ast_{k,\nu}(y,\Lambda)$ when $y$ approaches the points $\{y\in\T_\tp: b(y)=\lambda\}$. 
\begin{corollary}\label{RDS1}
There exist $\nu_0, \sigma_\sharp>0$ sufficiently small, such that for $0<\nu<\nu_0, k\in\Z\cap[1,\infty)$, $\lambda\in\R$, $j\in\{1,2\}$, $|\alpha|\leq\sigma_\sharp|\nu/k|^{1/2}$, the following statement holds.  Denote
 \begin{equation}\label{RDS2.1}
     M_k:=\|\omega_{0k}\|_{H^3_k(\T_\tp)}.
 \end{equation}

(i) Assume that $\lambda\in\Sigma_{j,\delta_0}\cap b(\T_\tp)$ and $|\lambda-b(y_{j\ast})|\gg |\nu/k|^{1/2}$. 
Then we have the bounds for $|\lambda-b(y)|\ll|b(y)-b(y_{j\ast})|$,
\begin{equation}\label{RDS3}
    |\partial_yD_\lambda\psi^\ast_{k,\nu}(y,\Lambda)|\lesssim \big(|y-y_{j\ast}|\wedge\frac{1}{|k|}\big)^{\gamma-2}|y-y_{j\ast}|^{-1}\bigg(1+\bigg|\log{\bigg|\frac{\lambda-b(y)}{b(y)-b(y_{j\ast})}\bigg|}\bigg|\bigg)|k|^{-1/2}M_k,
\end{equation}
and
\begin{equation}\label{RDS4}
\begin{split}
   & \bigg|\omega_{k,\nu}(y,\Lambda)+\frac{ib''(y)\psi^\ast_{k,\nu}(y,\Lambda)}{i(\lambda-b(y))+|\nu/k|^{1/3}|y-y_{j\ast}|^{2/3}}\bigg|\\
   &\lesssim \varrho_{j,k}^{\gamma-1/2}(y,\Lambda)|y-y_{j\ast}|^{-1/2}\frac{|\nu/k|^{-1/3}|y-y_{j\ast}|^{1/3}}{1+(|\nu/k|^{1/3}|y-y_{j\ast}|^{2/3})^{-2}|\lambda-b(y)|^2}|k|^{-1/2}M_k.
    \end{split}
\end{equation}

 (ii) If $\lambda\in b(\T_\tp)\backslash\big(\cup_{j=1}^2 \Sigma_{j,\delta_0}\big)$, we have the bounds for $y\in\T_\tp$ with $|\lambda-b(y)|\ll1$, 
 \begin{equation}\label{RDS5}
      |\partial_yD_\lambda\psi_{k,\nu}(y,\Lambda)|\lesssim \big(1+|\log{|\lambda-b(y)|}|\big)|k|^{-1/2}M_k,
 \end{equation}
 and
 \begin{equation}\label{RDS6}
     \begin{split}
   & \bigg|\omega_{k,\nu}(y,\Lambda)-\frac{\omega_{0k}(y)-ib''(y)\psi_{k,\nu}(y,\Lambda)}{i(\lambda-b(y))+|\nu/k|^{1/3}}\bigg|\\
   &\lesssim \frac{|\nu/k|^{-1/3}}{1+|\nu/k|^{-2/3}|\lambda-b(y)|^2}|k|^{-5/2}M_k.
    \end{split}
 \end{equation}
\end{corollary}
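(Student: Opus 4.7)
The starting point is to recast the vorticity equation \eqref{mainprop0.31} as the representation
\begin{equation*}
\omega^*_{k,\nu}(\cdot,\Lambda)=A_\Theta^{-1}\bigl[f_{0k}(\cdot,\Lambda)-ib''(\cdot)\psi^*_{k,\nu}(\cdot,\Lambda)\bigr],
\end{equation*}
and similarly $\omega_{k,\nu}=A_\Theta^{-1}[\omega_{0k}-ib''\psi_{k,\nu}]$ in the non-degenerate setting of part (ii). All four claims follow from inserting the explicit pointwise decomposition of $A_\Theta^{-1}$ from Lemma \ref{ake6} into this representation, with the local norm $M$ from \eqref{ake6.1} controlled by Proposition \ref{mainprop}: namely $\|\psi^*_{k,\nu}\|_{X^{0,-\gamma}(\mathfrak{M})}\lesssim|k|^{-1/2}M_k$ in case (i), and $\|\psi_{k,\nu}\|_{H^1_k(\T_\tp)}\lesssim|k|^{-5/2}\langle\lambda\rangle^{-1}M_k$ in case (ii).

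For the approximation bounds \eqref{RDS4} and \eqref{RDS6}, the main singular contribution is read off directly from the $w_2$ piece of the decomposition \eqref{ake6.3}. The model term $h(y_\lambda^\dagger)\varphi(\cdot)/\bigl[ib'(y_\lambda^\dagger)(y-y_\lambda^\dagger)+\epsilon^{1/3}|\lambda-b(y_{j*})|^{1/3}\bigr]$ matches the proposed approximation once one identifies the denominator with $i(\lambda-b(y))+\epsilon^{1/3}|y-y_{j*}|^{2/3}$ via the linearization $b(y)-\lambda\approx b'(y_\lambda^\dagger)(y-y_\lambda^\dagger)$ and the algebraic identity $|\lambda-b(y_{j*})|^{1/3}\sim|y_\lambda^\dagger-y_{j*}|^{2/3}\approx|y-y_{j*}|^{2/3}$ (all valid in the regime $|\lambda-b(y)|\ll|b(y)-b(y_{j*})|$), and once one replaces $h(y_\lambda^\dagger)$ by $-ib''(y)\psi^*_{k,\nu}(y)$ using the H\"older regularity of $\psi^*_{k,\nu}$ inherent in $X^{0,-\gamma}$. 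The residual errors, together with $w_1$ and the error from the $w_2$ approximation, are absorbed into the right-hand side of \eqref{RDS4} via the explicit bounds \eqref{ake6.4}--\eqref{ake6.5}.

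The derivative bounds \eqref{RDS3} and \eqref{RDS5} are obtained by applying $D_\lambda$ to the Poisson identity $(\partial_y^2-k^2)\psi^*_{k,\nu}=\omega^*_{k,\nu}$ and writing
\begin{equation*}
\partial_yD_\lambda\psi^*_{k,\nu}(y,\Lambda)=-\int_{\T_\tp}\partial_yG_k(y,z)\,D_\lambda\omega^*_{k,\nu}(z,\Lambda)\,dz.
\end{equation*}
The bound $D_\lambda\psi^*_{k,\nu}\in X^{1,-\gamma+1}(\mathfrak{M})$ from Proposition \ref{mainprop}(ii) already supplies the base weight $|y-y_{j*}|^{-1}\varrho_{j,k}(y,\Lambda)^{\gamma-2}$. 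The additional logarithmic factor is produced by substituting the $w_2$ decomposition of Lemma \ref{ake6} for $D_\lambda\omega^*_{k,\nu}$ into the integral and using the antiderivative identity \eqref{RMb1.001}: after one integration by parts the Cauchy-type kernel $1/[ib'(y_\lambda^\dagger)(z-y_\lambda^\dagger)+\text{reg}]$ integrated against $\partial_yG_k(y,z)$ produces precisely the logarithm $\bigl|\log|(\lambda-b(y))/(b(y)-b(y_{j*}))|\bigr|$. The estimate \eqref{RDS5} is handled identically with the simpler regularization $\epsilon^{1/3}$ and the standard Green's function.

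\textbf{Main obstacle.} The delicate part is the extraction of the sharp logarithm in \eqref{RDS3}--\eqref{RDS5}. The weighted bounds of Proposition \ref{mainprop} depend on $\lambda$ only through $\delta(\Lambda)$ and therefore cannot see this finer $\lambda$-dependence; the logarithm comes entirely from the explicit principal-value structure surviving in the $w_2$ decomposition of Lemma \ref{ake6}. The commutation of $D_\lambda$ with $A_\Theta^{-1}$ must be handled with care, since $D_\lambda$ itself (in the intermediate regime) contains the $y$-derivative multiplied by cutoffs at scale $\delta_2(\lambda)$ (cf.\ \eqref{mainprop0.2}). One must verify that the contributions produced when $D_\lambda$ hits these cutoffs do not introduce singularities or spurious logarithms at the transition scale, and that the $w_1$ piece together with the $f_{0k}$ contribution remain strictly dominated by the right-hand side of \eqref{RDS3}.
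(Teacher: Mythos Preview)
Your proposal is correct and follows essentially the same route as the paper, which records only a one-sentence sketch pointing to equations \eqref{mainprop0.31}, \eqref{pmpr1.5}, \eqref{refb5}, Proposition~\ref{mainprop}, and Lemmas~\ref{ake1}, \ref{mGk50}. Your identification of Lemma~\ref{ake6} (the $w_1+w_2$ decomposition) as the source of both the explicit approximants in \eqref{RDS4}, \eqref{RDS6} and the logarithmic singularity in \eqref{RDS3}, \eqref{RDS5} via the antiderivative identity \eqref{RMb1.001} is exactly the mechanism needed, and is implicit in the paper's sketch.

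One minor packaging difference: the paper inverts the full equation \eqref{refb5} for $D_\lambda\psi^\ast_{k,\nu}$ through the modified Green's function $\mathcal{G}_k^j$ (Lemma~\ref{mGk50}), which absorbs the potential $V$ and directly encodes the decay near $y_{j\ast}$. You instead pass through the bare Poisson identity with the standard $G_k$, which forces you to carry the commutator terms $[\partial_y^2,D_\lambda]\psi^\ast$ and to bound the contribution of $A_\Theta^{-1}(ib''D_\lambda\psi^\ast)$ over the inner region $|z-y_{j\ast}|\lesssim\delta(\Lambda)$ by hand using the $L^2$ part of the $X^{1,-\gamma+1}$ norm. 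Both routes close; the paper's is slightly more streamlined because the modified Green's function already knows about the vorticity-depletion weight, but your approach is equally valid and your ``main obstacle'' paragraph correctly flags the points requiring care.
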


The proof of Corollary \ref{RDS1} on the refined properties of the functions $\partial_y\psi^\ast_{k,\nu}$,  $\partial_yD_{\lambda}\psi^\ast_{k,\nu}$ and $\omega_{k,\nu}^\ast$ follows from equations \eqref{mainprop0.31}, \eqref{pmpr1.5}, and \eqref{refb5}  using Proposition \ref{mainprop} and Lemmas \ref{ake1} and \ref{mGk50}. These refined bounds are useful below in the proof of the main theorem. 
The rest of the section is devoted to the proofs of Proposition \ref{mainprop}. We organize the proofs into different sections as the details of the argument are somewhat lengthy. 

\subsection{Proof for statement (i) of Proposition \ref{mainprop}}
We divide the proof into the cases $\sigma\in\{0,1,2\}$, organized in subsections.
\subsubsection{The case $\sigma=0$ for statement (i) of Proposition \ref{mainprop}} To establish \eqref{mainprop} for $\sigma=0$, by the limiting absorption principle \eqref{lapj4}, it suffices to show that 
\begin{equation}\label{pmpr1}
    \big\|(k^2-\partial_y^2)^{-1}A_\Theta^{-1}\omega_{0k}\big\|_{H^1_k(\T_\tp)}\lesssim |k|^{-5/2}\langle\lambda\rangle^{-1} M_k.
\end{equation}
If $\lambda\not\in b(\T_\tp)$, the desired bounds \eqref{pmpr1} follow from Lemma \ref{ake1} and standard elliptic estimates, since in this case $A_\Theta^{-1}$ does not involve any singularities. 

We now consider the case $\lambda\in b(\T_\tp)$. 
We note first the inequality that
\begin{equation}\label{pmpr1.2}
    \Big\|\omega_{0k}\Big\|_{H^1_k(\T_\tp)}\lesssim |k|^{-2}M_k.
\end{equation}
To prove \eqref{pmpr1}, we use \eqref{pmpr1.2}, Lemma \ref{ake1} and Lemma \ref{ake6}, by considering the regions (I) $|\lambda-b(y)|\gtrsim1$ and (II) $|\lambda-b(y)|\ll1$ respectively. In the region (I), the desired bound \eqref{pmpr1} follows from \eqref{pmpr1.2} and Lemma \ref{ake1}. In region (II), we use Lemma \ref{ake6}. After peeling off the easier bounds for $\dagger\in\{+,-\}$ (using the notation as in Lemma \ref{ake6})
\begin{equation}\label{pmpr1.3}
    \bigg\|\int_{\T_\tp}G_k(y,z)\frac{|\nu/k|^{-1/3}}{1+|\nu/k|^{-2/3}|z-y_{\lambda}^\dagger|^2}dz\bigg\|_{H^1_k(\T_\tp)}\lesssim |k|^{-1/2},
\end{equation}
it suffices to notice the inequality that for $\dagger\in\{+,-\}$ (with the notation that $\epsilon':=|\nu/k|^{1/3}|\lambda-b(y_{j\ast})|^{1/3}/b'(y_\lambda^\dagger)$),
\begin{equation}\label{pmpr1.4}
   \bigg\||k|^{-1}\int_{\R}e^{-|k||y-z|}\frac{\varphi(z-y_\lambda^\dagger)}{i(z-y_\lambda^\dagger)+\epsilon'}dz\bigg\|_{H^1_k(\R)}\lesssim |k|^{-1/2}. 
\end{equation}
This completes the proof of \eqref{mainprop3} for $\sigma=0$. 

\subsubsection{The case $\sigma=1$ for statement (i) of Proposition \ref{mainprop}}\label{casesigma1} We now turn to the case $\sigma=1$. 
Set for $y\in\T_\tp, \lambda\in \R$, 
\begin{equation}\label{refb2}
a(y,\lambda):=\Big[1-\varphi_0\big(\frac{y-y_{1\ast}}{\delta_2(\lambda)}\big)\Big]\Big[1-\varphi_0\big(\frac{y-y_{2\ast}}{\delta_2(\lambda)}\big)\Big]\frac{1}{b'(y)}.
\end{equation}
Taking the derivative $D_\lambda$, by a straightforward but somewhat lengthy calculation, we obtain that for $y\in\T_\tp$,
\begin{equation}\label{pmpr1.5}
    \begin{split}
&(\partial_y^2-k^2)D_\lambda \psi_{k,\nu}(y,\Lambda)+A_\Theta^{-1} \big(ib''(\cdot)D_\lambda\psi_{k,\nu}(\cdot,\Lambda)\big)(y)\\
&=A_\Theta^{-1}\big(g^\ast_{0k}(\cdot,\Lambda)\big)(y)+\Big[\partial_y^2a(y,\lambda)\partial_y\psi_{k,\nu}+2\partial_ya(y,\lambda)\partial^2_y\psi_{k,\nu}\Big](y,\Lambda),
\end{split}
\end{equation}
where for $y\in\T_\tp$,
\begin{equation}\label{pmpr1.6}
\begin{split}
g^\ast_{0k}(y,\Lambda):=&\,\,D_\lambda \omega_{0k}(y)+\frac{\nu}{k}\partial_y^2a(y,\lambda)\partial_y\omega_{k,\nu}(y,\Lambda)\\
&+2\frac{\nu}{k}\partial_ya(y,\lambda)\partial_y^2\omega_{k,\nu}(y,\Lambda)-ib'''(y)a(y,\lambda)\psi_{k,\nu}(y,\Lambda)\\
&+i\big[b'(y)a(y,\lambda)-1\big]\omega_{k,\nu}(y,\Lambda).
\end{split}
\end{equation}
The calculations to get \eqref{pmpr1.5}-\eqref{pmpr1.6} are similar to those in subsection \ref{casesigma1'} which are slightly more complicated. For the sake of conciseness, we omit the repetitive details here. By Proposition \ref{lap_main}, it suffices to show for 
\begin{equation}\label{pmpr1.7}
\begin{split}
    h\in A_\Theta^{-1}\Big\{&D_\lambda \omega_{0k}, \frac{\nu}{k}\partial_y^2a(y,\lambda)\partial_y\omega_{k,\nu}(y,\Lambda),\\
&2\frac{\nu}{k}\partial_ya(y,\lambda)\partial_y^2\omega_{k,\nu}(y,\Lambda),b'''(y)a(y,\lambda)\psi_{k,\nu}(y,\Lambda),\\
&\big[b'(y)a(y,\lambda)-1\big]\omega_{k,\nu}(y,\Lambda)\Big\},
\end{split}
\end{equation}
and 
\begin{equation}\label{pmpr1.8}
    \begin{split}
h\in\Big\{\partial_y^2a(y,\lambda)\partial_y\psi_{k,\nu}, \partial_ya(y,\lambda)\partial^2_y\psi_{k,\nu}\Big\},
    \end{split}
\end{equation}
the following bounds hold
\begin{equation}\label{pmpr1.9}
    \big\|(k^2-\partial_y^2)^{-1}h\big\|_{H^1_k(\T_\tp)}\lesssim \langle\lambda\rangle^{-2}|k|^{-3/2}M_k. 
\end{equation}
If $\lambda\in\R\backslash \Big[b(\T_\tp)\cup\big(\cup_{j=1}^2\Sigma_{j,\delta_0}\big)\Big]$, then the desired bounds \eqref{pmpr1.9} follow from \eqref{mainprop3} for the case $\sigma=0$ easily since in this case $A_\Theta^{-1}$ does not involve strong singularities. Below we assume that $\lambda\in b(\T_\tp)\backslash\big(\cup_{j=1}^2\Sigma_{j,\delta_0}\big)$. The desired bounds for 
\begin{equation}\label{pmpr1.10}
   \begin{split}
    h\in \Big\{&A_\Theta^{-1}D_\lambda \omega_{0k}, A_\Theta^{-1}\big(b'''a(y,\lambda)\psi_{k,\nu}),\\
   & A_\Theta^{-1}\big[(b'(y)a(y,\lambda)-1)\omega_{k,\nu}\big], \partial_y^2a(y,\lambda)\partial_y\psi_{k,\nu}\Big\},
\end{split}
\end{equation}
follow from the case $\sigma=0$, using the inequalities \eqref{pmpr1.3}-\eqref{pmpr1.4} and Lemma \ref{ake6}. We note that $b'a-1\equiv0$ in a neighborhood of $\{y\in \T_\tp: b(y)-\lambda=0\}$. Hence $(b'(y)a(y,\lambda)-1)\omega_{k,\nu}$ is not singular. 

To handle the case $h=\partial_ya(y,\lambda)\partial^2_y\psi_{k,\nu}$, we use the identity for $y\in\T_\tp$,
\begin{equation}\label{pmpr1.11}
    \partial^2_y\psi_{k,\nu}(y,\Lambda)=k^2\psi_{k,\nu}(y,\Lambda)+\omega_{k,\nu}(y,\Lambda).
\end{equation}
The desired bounds \eqref{pmpr1.9} for $h=\partial_ya(y,\lambda)\partial^2_y\psi_{k,\nu}$ then follow from analogous argument as in the proof of \eqref{pmpr1}. It remains to prove \eqref{pmpr1.9} for 
\begin{equation}\label{pmpr1.12}
    \begin{split}
    h\in A_\Theta^{-1}\Big\{\frac{\nu}{k}\partial_y^2a(y,\lambda)\partial_y\omega_{k,\nu}(y,\Lambda),\frac{\nu}{k}\partial_ya(y,\lambda)\partial_y^2\omega_{k,\nu}(y,\Lambda)\Big\},
\end{split}
\end{equation}
We consider only the harder case when $h=A_\Theta^{-1}\Big[\frac{\nu}{k}\partial_ya(y,\lambda)\partial_y^2\omega_{k,\nu}\Big]$ which is more singular. 

It follows from equation \eqref{intP5} that for $y\in\T_\tp$,
\begin{equation}\label{pmpr1.13}
    \Big[\frac{\nu}{k}\partial_y^2+i(\lambda-b(y))-\alpha\Big]\omega_{k,\nu}(y,\Lambda)=\omega_{0k}(y)-ib''(y)\psi_{k,\nu}(y,\Lambda).
\end{equation}
Thus for $y\in\T_\tp$, 
\begin{equation}\label{pmpr1.14}
\begin{split}
    &\Big[\frac{\nu}{k}\partial_y^2+i(\lambda-b(y))-\iota\alpha\Big]\partial_y\omega_{k,\nu}(y,\Lambda)\\
    &=\partial_y\big[\omega_{0k}(y)-ib''(y)\psi_{k,\nu}(y,\Lambda)\big]+ib'(y)\omega_{k,\nu}(y,\Lambda).
    \end{split}
\end{equation}
Using the bounds 
\begin{equation}\label{pmpr1.15}
    \big\|\partial_y\big[\omega_{0k}(y)-ib''(y)\psi_{k,\nu}(y,\Lambda)\big]\big\|_{L^2(\T_\tp)}\lesssim |k|^{-2}M_k,
\end{equation}
we obtain from Lemma \ref{ake1} that 
\begin{equation}\label{pmpr1.16}
\begin{split}
    &\Big\|\frac{\nu}{k}A_\Theta^{-1}\Big\{\partial_ya\,\partial_yA_\Theta^{-1}\partial_y\big[\omega_{0k}(y)-ib''(y)\psi_{k,\nu}(y,\Lambda)\big]\Big\}\Big\|_{L^2(\T_\tp)}\\
    &\lesssim\Big\|(i(\lambda-b(y))+|\nu/k|^{1/3})^{-2}\frac{\nu}{k}L^{-1}(y;\Lambda)\partial_y\big[\omega_{0k}(y)-ib''(y)\psi_{k,\nu}(y,\Lambda)\big]\Big\|_{L^2(\T_\tp)}\\
    &\lesssim |k|^{-2}M_k, 
\end{split}
\end{equation}
and for $y\in\T_\tp$,
\begin{equation}\label{pmpr1.17}
\begin{split}
    &\Big|\frac{\nu}{k}A_\Theta^{-1}\Big\{\partial_ya\,(y,\lambda)\partial_yA_\Theta^{-1}\big(b'\omega_{k,\nu}\big)(y,\Lambda)\Big\}\Big|\\
   &\lesssim \frac{1}{(|\lambda-b(y)|+|\nu/k|^{1/3})^3} |\nu/k|\frac{|k|^{-2}M_k}{|\nu/k|^{1/2}(|\nu/k|^{1/3}+|\lambda-b(y)|)^{-1/2}}\\
   &\lesssim \frac{|\nu/k|^{-1/3}}{(1+|\nu/k|^{-1/3}|\lambda-b(y)|)^{5/2}}|k|^{-2}M_k.
\end{split}
\end{equation}
The desired bounds \eqref{pmpr1.9} for $h=A_\Theta^{-1}\Big[\frac{\nu}{k}\partial_ya(y,\lambda)\partial_y^2\omega_{k,\nu}\Big]$ follow from \eqref{pmpr1.14}, and \eqref{pmpr1.16}-\eqref{pmpr1.17}. This completes the proof of \eqref{pmpr1.9} and therefore also the bound \eqref{mainprop2} for $\sigma=1$.

\subsubsection{The case $\sigma=2$ for statement (i) of Proposition \ref{mainprop}}
Similar calculations to \eqref{refb7}-\eqref{refb10} below (which we again omit to avoid repetitive details) show that
\begin{equation}\label{pmpr1.18}
\begin{split}
&(\partial_y^2-k^2) D^2_\lambda\psi_{k,\nu}(y,\Lambda)+A_\Theta^{-1}\big(ib''(\cdot)D^2_\lambda\psi_{k,\nu}(\cdot,\Lambda)\big)(y)\\
&= A_\Theta^{-1}g_{0k}^{\ast\ast}(y,\Lambda)+\Big[\partial_y^2a(y,\lambda)\partial_yD_\lambda\psi_{k,\nu}+2\partial_ya(y,\lambda)\partial^2_yD_\lambda\psi_{k,\nu}\Big](y,\Lambda),
\end{split}
\end{equation}
where we have defined 
\begin{equation}\label{pmpr1.19}
\begin{split}
g_{0k}^{\ast\ast}(y,\Lambda):=&D_\lambda\Big\{g_{0k}^{\ast}+A_\Theta\Big[\partial_y^2a(y,\lambda)\partial_y\psi_{k,\nu}+2\partial_ya(y,\lambda)\partial^2_y\psi_{k,\nu}\Big](y,\Lambda)\Big\}\\
& +\Big[\frac{\nu}{k}\partial_y^2a(y,\lambda)(\partial_y^2-k^2)\partial_y+2\frac{\nu}{k}\partial_ya(y,\lambda)(\partial_y^2-k^2)\partial_y^2\Big]D_\lambda\psi_{k,\nu}(y,\Lambda)\\
& +i\Big[a(y,\lambda)b'(y)-1\Big](\partial_y^2-k^2)D_\lambda \psi_{k,\nu}(y,\Lambda)-ia(y,\lambda)b'''(y)D_\lambda \psi_{k,\nu}(y,\Lambda).
\end{split}
\end{equation}
As in the case of $\sigma=1$, it suffices to show that for 
\begin{equation}\label{pmpr1.20}
\begin{split}
&h\in A_\Theta^{-1}\Big\{D_\lambda g_{0k}^{\ast},D_\lambda A_\Theta\Big[\partial_y^2a(y,\lambda)\partial_y\psi_{k,\nu}+2\partial_ya(y,\lambda)\partial^2_y\psi_{k,\nu}\Big],\\
&\frac{\nu}{k}\partial_y^2a(y,\lambda)(\partial_y^2-k^2)\partial_yD_\lambda\psi_{k,\nu}(y,\Lambda),\frac{\nu}{k}\partial_ya(y,\lambda)(\partial_y^2-k^2)\partial_y^2D_\lambda\psi_{k,\nu}(y,\Lambda),\\
&\Big[a(y,\lambda)b'(y)-1\Big](\partial_y^2-k^2)D_\lambda \psi_{k,\nu}(y,\Lambda),a(y,\lambda)b'''(y)D_\lambda \psi_{k,\nu}(y,\Lambda)\Big\},
\end{split}
\end{equation}
and for
\begin{equation}\label{pmpr1.21}
h\in\Big\{\partial_y^2a(y,\lambda)\partial_yD_\lambda\psi_{k,\nu},\partial_ya(y,\lambda)\partial^2_yD_\lambda\psi_{k,\nu}\Big\},
\end{equation}
we have the bound 
\begin{equation}\label{pmpr1.22}
    \big\|(k^2-\partial_y^2)^{-1}h\big\|_{H^1_k(\T_\tp)}\lesssim \langle \lambda\rangle^{-3}|k|^{-1/2}M_k.
\end{equation}
To prove the bounds \eqref{pmpr1.22} we shall use the following commutator relations for any $q(x,\lambda)$ with sufficient regularity,
\begin{equation}\label{pmpr1.23}
\begin{split}
   \big[A_\Theta, D_\lambda\big]&=i\big(a(y,\lambda)b'(y)-1\big)+\frac{\nu}{k}\partial_y^2a(y,\lambda)\partial_y+2\frac{\nu}{k}\partial_ya(y,\lambda)\partial_y^2,\\
  \big[D_\lambda, q(x,\lambda)\big]&=D_\lambda q(x,\lambda),\,\,\big[D_\lambda, \partial_y\big]=-\partial_ya(y,\lambda)\partial_y.
    \end{split}
\end{equation}
We observe from \eqref{pmpr1.23} that the commutator terms behave better than the uncommutated terms, by either gaining a factor of $\nu/k$ or one derivative. 

Using equation \eqref{pmpr1.5}, the commutator relations \eqref{pmpr1.23} and the identity
\begin{equation}\label{pmpr1.231}
    \partial^2_yD_\lambda\psi_{k,\nu}(y,\Lambda)=\big( \partial^2_y-k^2\big)D_\lambda\psi_{k,\nu}(y,\Lambda)+k^2D_\lambda\psi_{k,\nu}(y,\Lambda),
\end{equation}
the desired bounds for 
\begin{equation}\label{pmpr1.24}
    h\in\Big\{\partial_y^2a(y,\lambda)\partial_yD_\lambda\psi_{k,\nu},\partial_ya(y,\lambda)\partial^2_yD_\lambda\psi_{k,\nu}\Big\}
\end{equation}
and 
\begin{equation}\label{pmpr1.25}
   \begin{split}
&h\in A_\Theta^{-1}\Big\{D_\lambda A_\Theta\Big[\partial_y^2a(y,\lambda)\partial_y\psi_{k,\nu}+2\partial_ya(y,\lambda)\partial^2_y\psi_{k,\nu}\Big],\\
&\frac{\nu}{k}\partial_y^2a(y,\lambda)(\partial_y^2-k^2)\partial_yD_\lambda\psi_{k,\nu}(y,\Lambda),\frac{\nu}{k}\partial_ya(y,\lambda)(\partial_y^2-k^2)\partial_y^2D_\lambda\psi_{k,\nu}(y,\Lambda),\\
&\Big[a(y,\lambda)b'(y)-1\Big](\partial_y^2-k^2)D_\lambda \psi_{k,\nu}(y,\Lambda),a(y,\lambda)b'''(y)D_\lambda \psi_{k,\nu}(y,\Lambda)\Big\},
\end{split}
\end{equation}
follow from the bounds \eqref{mainprop3} for $\sigma\in\{0,1\}$ and the arguments as in proof of the cases $\sigma\in\{0,1\}$, after peeling off the easier commutator terms. 

It remains to prove \eqref{pmpr1.22} for $h=A_\Theta^{-1}D_\lambda g_{0k}^{\ast}$. Using the definition \eqref{pmpr1.6}, it suffices to prove \eqref{pmpr1.22} for
\begin{equation}\label{pmpr1.26}
 \begin{split}
 h\in A_\Theta^{-1}D_\lambda\Big\{&\,D_\lambda \omega_{0k}(y),\,\, \frac{\nu}{k}\partial_y^2a(y,\lambda)\partial_y\omega_{k,\nu}(y,\Lambda),\\
&\frac{\nu}{k}\partial_ya(y,\lambda)\partial_y^2\omega_{k,\nu}(y,\Lambda), \,\,b'''(y)a(y,\lambda)\psi_{k,\nu}(y,\Lambda),\\
&\big[b'(y)a(y,\lambda)-1\big]\omega_{k,\nu}(y,\Lambda)\Big\}.
\end{split}  
\end{equation}
Following similar arguments as in the proof for $h$ as in \eqref{pmpr1.24}-\eqref{pmpr1.25} and considering only the more singular terms, it suffices to prove \eqref{pmpr1.22} for 
\begin{equation}\label{pmpr1.27}
 \begin{split}
 h\in A_\Theta^{-1}D_\lambda\Big\{&\,\frac{\nu}{k}\partial_ya(y,\lambda)\partial_y^2\omega_{k,\nu}(y,\Lambda), \big[b'(y)a(y,\lambda)-1\big]\omega_{k,\nu}(y,\Lambda)\Big\}.
\end{split}  \end{equation}
After peeling off the easier commutator terms using \eqref{pmpr1.23}, we reduce to prove \eqref{pmpr1.22} for 
\begin{equation}\label{pmpr1.28}
 \begin{split}
 h\in A_\Theta^{-1}\Big\{&\,\frac{\nu}{k}\partial_ya(y,\lambda)\partial_y^2(\partial_y^2-k^2)D_\lambda\psi_{k,\nu}(y,\Lambda), \\
 &\big[b'(y)a(y,\lambda)-1\big](\partial_y^2-k^2)D_\lambda\psi_{k,\nu}(y,\Lambda)\Big\}.
\end{split}  \end{equation}
The desired bounds \eqref{pmpr1.22} for $h=A_\Theta^{-1}\big\{
 \big[b'(y)a(y,\lambda)-1\big](\partial_y^2-k^2)D_\lambda\psi_{k,\nu}\big\}(y,\Lambda)$ follows from Lemma \ref{ake6} noting that $b'a-1$ vanishes in a neighborhood of $\{y\in\T_\tp:\,b(y)=\lambda\}$. The proof of \eqref{pmpr1.22} for $A_\Theta^{-1}\big\{\frac{\nu}{k}\partial_ya(y,\lambda)\partial_y^2(\partial_y^2-k^2)D_\lambda\psi_{k,\nu}\big\}(y,\Lambda)$ is similar to \eqref{pmpr1.13}-\eqref{pmpr1.17}.

\subsection{Proof of statement (ii) in Proposition \ref{mainprop}}
We choose $\gamma_0,\gamma_1,\gamma_2\in(1,2)$ with $\gamma<\gamma_2<\gamma_1<\gamma_0<2$, and $\nu_0$ sufficiently small such that Proposition \ref{mainprop} holds for $\gamma_0,\gamma_1,\gamma_2$. 

Fix $j\in\{1,2\}$. We provide the detailed proof only for $\lambda\in b(\T_\tp)\cap\Sigma_{j,\delta_0}$ with $|\lambda-b(y_{j\ast})|\gtrsim |\nu/k|^{1/2}$. The case when $|\lambda-b(y_{j\ast})|\ll |\nu/k|^{1/2}$ or $\lambda\in \Sigma_{j,\delta_0}\backslash b(\T_\tp)$ is easier since the singular behavior of $A_{\Theta}^{-1}$ for $|y-y_{j\ast}|\lesssim |\nu/k|^{1/4}$ is compensated by the decay property of the modified Green's function and we can use Lemma \ref{ake1} and Lemma \ref{ake6} to get effective bounds.

The following weighted bounds are very useful to treat various nonlocal contributions below. 

\begin{lemma}\label{LAMweight}
We have the following pointwise inequality for $\sigma_1\in(-1,1),\sigma_2\in\R$ with $\sigma_1+\sigma_2\in(-1,2)$, any $h\in L^\infty(\T_\tp)$ and $y\in\T_\tp$, 
\begin{equation}\label{pmpr2.03}
\int_{\T_\tp}\Big|\mathcal{G}_k^j(y,z;\Lambda)\varrho_j^{\sigma_1-1}\varrho_{j,k}^{\sigma_2-1}(z;\Lambda)h(z)\Big|dz\lesssim_{\sigma_1,\sigma_2} \varrho_j^{\sigma_1-1}\varrho_{j,k}^{\sigma_2+1}(y;\Lambda)\|h\|_{L^\infty(\T_\tp)}.
\end{equation}
\end{lemma}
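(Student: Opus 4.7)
\textbf{Proof strategy for Lemma \ref{LAMweight}.} The plan is to reduce the claim to a purely pointwise integral inequality by pulling out $\|h\|_{L^\infty}$, then apply the three alternative bounds \eqref{mGk51} on $\mathcal{G}_k^j$ from Lemma \ref{mGk50} on suitable subregions of $\T_\tp$. Writing $r(y) = \varrho_j(y;\Lambda)$ and $r_k(y) = \varrho_{j,k}(y;\Lambda) = r(y) \wedge (1/|k|)$, Lemma \ref{mGk50} with $\beta = 0$ yields
\begin{equation*}
|\mathcal{G}_k^j(y,z;\Lambda)| \lesssim r_k(z) \min\bigl\{e^{-|k||y-z|},\ r(y)^2/r(z)^2,\ r(z)/r(y)\bigr\},
\end{equation*}
so the task becomes to show, for $y\in\T_\tp$ fixed, that the integral
\begin{equation*}
I(y) := \int_{\T_\tp} r_k(z)\,\min\bigl\{e^{-|k||y-z|},\ r(y)^2/r(z)^2,\ r(z)/r(y)\bigr\}\,r(z)^{\sigma_1-1}\,r_k(z)^{\sigma_2-1}\,dz
\end{equation*}
is bounded by $r(y)^{\sigma_1-1}r_k(y)^{\sigma_2+1}$.

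First I would split $\T_\tp$ into three subregions according to the relative size of $r(z)$ and $r(y)$: $R_- = \{z : r(z) \leq r(y)/2\}$ (the \emph{near-critical} region), $R_0 = \{z : r(y)/2 < r(z) < 2r(y)\}$ (the \emph{matched-scale} region), and $R_+ = \{z : r(z) \geq 2 r(y)\}$ (the \emph{far} region). On $R_-$, use the estimate $|\mathcal{G}_k^j| \lesssim r_k(z) r(z)/r(y)$; on $R_+$, use $|\mathcal{G}_k^j| \lesssim r_k(z) r(y)^2/r(z)^2$; on $R_0$, use the exponential decay bound, noting that there $r(z) \approx r(y)$ and $r_k(z) \approx r_k(y)$, so the contribution is roughly $r(y)^{\sigma_1-1}r_k(y)^{\sigma_2}\int e^{-|k||y-z|}dz \lesssim r(y)^{\sigma_1-1} r_k(y)^{\sigma_2+1}$ as required.

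For the remaining two regions I would change variables to $s = |z - y_{j\ast}|$, noting that $r(z) = s + \delta(\Lambda)$, and further subdivide $R_-$ and $R_+$ according to whether $r(z) \leq 1/|k|$ (so $r_k(z) = r(z)$) or $r(z) > 1/|k|$ (so $r_k(z) = 1/|k|$). This reduces everything to one-dimensional integrals of powers of $s+\delta(\Lambda)$. On $R_-$ the integrand becomes essentially $r(y)^{-1}\int_0^{r(y)} (s+\delta)^{\sigma_1} \min\{(s+\delta)^{\sigma_2}, (1/|k|)^{\sigma_2}\cdots\}\,ds$; convergence at $s \to 0$ in the subregime $r(z) \leq 1/|k|$ requires $\sigma_1 + \sigma_2 > -1$, while convergence in the subregime $r(z) > 1/|k|$ requires $\sigma_1 > -1$. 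On $R_+$ one gets $r(y)^2 \int_{r(y)}^{\mathfrak{p}/2} (s+\delta)^{\sigma_1-3}\cdots\,ds$; convergence of this integral requires $\sigma_1 < 1$ (in the $r_k(z) = r(z)$ subregime this becomes $\sigma_1 + \sigma_2 < 2$), and the boundary term at $s = r(y)$ produces precisely the desired power $r(y)^{\sigma_1-1} r_k(y)^{\sigma_2+1}$.

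\textbf{Main obstacle.} The delicate point is the bookkeeping around the cutoff scale $1/|k|$ in the definition of $r_k$: each of $R_-$ and $R_+$ must be split into a subregion where $r(z) \leq 1/|k|$ and one where $r(z) > 1/|k|$, and one must further compare these scales with $r(y)$ and $\delta(\Lambda)$, producing a handful of sub-cases (e.g., $r(y) \leq 1/|k|$ vs.\ $r(y) > 1/|k|$, and analogously for $\delta(\Lambda)$). The restrictions $\sigma_1 \in (-1,1)$ and $\sigma_1 + \sigma_2 \in (-1,2)$ emerge exactly as the endpoint integrability thresholds from these sub-cases, so the proof reduces to checking that the same target $r(y)^{\sigma_1-1} r_k(y)^{\sigma_2+1}$ is recovered in every configuration.
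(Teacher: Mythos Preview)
Your strategy is exactly what the paper does (its entire proof is the sentence ``direct calculations using the bounds on the modified Green's function in Lemma~\ref{mGk50}''), and your three-region split $R_-/R_0/R_+$ together with the $1/|k|$ sub-splitting is the natural way to carry that out.

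There is, however, one genuine gap in your execution on $R_+$ (and, for $\sigma_1<0$, also on $R_-$). Consider $R_+$ in the regime $r(y)\ge 1/|k|$, so that $r_k(y)=r_k(z)=1/|k|$ throughout. Using only the polynomial bound $r_k(z)\,r(y)^2/r(z)^2$ gives
\[
r(y)^2\,|k|^{-\sigma_2}\int_{2r(y)}^{\tp/2} r(z)^{\sigma_1-3}\,dz \;\sim\; r(y)^{\sigma_1}\,|k|^{-\sigma_2},
\]
which is larger than the target $r(y)^{\sigma_1-1}|k|^{-(\sigma_2+1)}$ by exactly the factor $|k|\,r(y)\ge 1$. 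So the claim that ``the boundary term at $s=r(y)$ produces precisely the desired power'' is false in this sub-case. The remedy is already in your hands: on $R_+$ one has $|y-z|\ge r(z)-r(y)\ge r(z)/2$, so the exponential alternative $r_k(z)\,e^{-|k||y-z|}\le |k|^{-1}e^{-|k|r(z)/2}$ is available and gives
\[
|k|^{-\sigma_2}\int_{R_+} e^{-|k|r(z)/2}\,r(z)^{\sigma_1-1}\,dz
\;\lesssim\; |k|^{-(\sigma_2+1)}\,r(y)^{\sigma_1-1}\,e^{-|k|r(y)},
\]
which is more than enough. In short: when $|k|\,r(y)\ge 1$ you must use the exponential bound on $R_\pm$, not just on $R_0$. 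Once this adjustment is made, your proof goes through.
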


The proof follows from direct calculations using the bounds on the modified Green's function in Lemma \ref{mGk50}. 

The inequality \eqref{pmpr2.03} is useful when $|\lambda- b(y)|\gtrsim |\lambda-b(y_{j\ast})|$ so that $A^{-1}_\Theta$ is not singular. To handle the case when $|\lambda- b(y)|\ll |\lambda-b(y_{j\ast})|$, in view of Lemma \ref{ake6} and adopting the notation there, we use the following bounds. Denote $\epsilon:=\nu/k$ and define for $y\in\T_\tp$, 
\begin{equation}\label{pmpr2.031}
    \mathcal{I}_{k,\nu}(y,\lambda):=\int_{\T_\tp}\mathcal{G}_k^j(y,z;\Lambda)\sum_{\dagger\in\{+,-\}}\frac{\varphi_0\big((y-y^\dagger_\lambda)/|\lambda-b(y_{j\ast})|^{1/2}\big)}{ib'(y^\dagger_\lambda)(y-y^\dagger_\lambda)+ |\epsilon|^{1/3}|\lambda-b(y_{j\ast})|^{1/3}}dz.
\end{equation}
Then we have,
\begin{equation}\label{pmpr2.04}
\begin{split}
&d^{-1/2}_{j,k}\big(\delta(\Lambda)\big)^{-1}\sum_{\beta\in\{0,1\}}\big\|d^\beta_{j,k}(\Lambda)\partial^\beta_y\mathcal{I}_{k,\nu}(y,\lambda)\big\|_{L^2(S^j_{8\delta})}\\
&+d^{-2}_{j,k}(\Lambda)\sum_{\beta\in\{0,1\}}\big\|\varrho_{j,k}^{1+\beta}(y,\Lambda)\partial^\beta_y\mathcal{I}_{k,\nu}(y,\lambda)\big\|_{L^\infty(\T_\tp\backslash S^j_{8\delta})}\lesssim |\lambda-b(y_{j\ast})|^{-1/2}.
\end{split}
\end{equation}

We prove the bounds for $D^\sigma_\lambda\, \psi_{k,\nu}^\ast, \sigma\in\{0,1,2\}$, respectively, organized as subsections.
\subsubsection{Proof of the bounds for $D^\sigma_\lambda\, \psi_{k,\nu}^\ast, \sigma=0$ in statement (ii) in Proposition \ref{mainprop}}
Using the bounds \eqref{lapj3} of the limiting absorption principle and the equation \eqref{mainprop0.31}-\eqref{mainprop0.32}, it suffices to show that 
\begin{equation}\label{pmpr2.01}
    \big\|\mathcal{G}_k^j(\Lambda)A_{k,\nu}^{-1}(\Lambda)f_{0k}(\cdot,\Lambda)\big\|_{X^{0,-\gamma_0}(\mathfrak{M})}\lesssim |k|^{-1/2}M_k.
\end{equation}
In the above, with a slight abuse of notation, we denote for $k\in\Z\cap[1,\infty), \nu\in(0,1/18), \lambda\in\cup_{j=1}^2\Sigma_{j,\delta_0}$, $\mathcal{G}_k^j(\Lambda): H^1_k(\T_\tp)\to H^1_k(\T_\tp)$ as follows. For any $h\in H^1_k(\T_\tp)$,
\begin{equation}\label{pmpr2.02}
\mathcal{G}_k^j(\Lambda)h(y):=\int_{\T_\tp}\mathcal{G}_k^j(y,z;\Lambda)h(z)dz. 
\end{equation}
Notice that for $y\in\T_\tp$, 
\begin{equation}\label{pmpr2.0201}
\begin{split}
    A_{\Theta}^{-1}f_{0k}(\cdot,\Lambda)=&\,A_{\Theta}^{-1}\Big[\omega_{0k}(y)-\sum_{j=1}^2\varphi_0\big((y-y_{j\ast})/\delta_0\big)\omega_{0k}(y)\Big]\\
&-(\partial_y^2-k^2)\Big[\sum_{j=1}^2\varphi_0\big((y-y_{j\ast})/\delta_0\big)\frac{\omega_{0k}(y)}{ib''(y)}\Big].
\end{split}
\end{equation}

The desired bounds \eqref{pmpr2.01} follow from Lemma \ref{ake1} and Lemma \ref{ake6} and the bounds on the modified Green's function. 

\subsubsection{Proof of the bounds for $D^\sigma_\lambda\, \psi_{k,\nu}^\ast, \sigma=1$ in statement (ii) in Proposition \ref{mainprop}}\label{casesigma1'}
We note that $\partial_\lambda\psi^\ast_{k,\nu}$ satisfies the equation
\begin{equation}\label{refb1}
\begin{split}
A_{\Theta}\partial_\lambda\omega^\ast_{k,\nu}(y,\Lambda)+ib''(y)\partial_\lambda\psi^\ast_{k,\nu}(y,\Lambda)&=-i\omega^\ast_{k,\nu}(y,\Lambda)\\
&\quad+\partial_\lambda f_{0k}(y,\Lambda),\\
(-k^2+\partial_y^2)\partial_\lambda\psi^\ast_{k,\nu}(y,\Lambda)&=\partial_\lambda\omega^\ast_{k,\nu}(y,\Lambda).
\end{split}\end{equation}
Direct computation shows that $a(y,\lambda)\partial_y\psi^\ast_{k,\nu}$ satisfies the equation for $y\in\T_\tp$,
\begin{equation}\label{refb3}
\begin{split}
&A_{\Theta}(\partial_y^2-k^2)\Big[a(y,\lambda)\partial_y\psi^\ast_{k,\nu}(y,\Lambda)\Big]+ib''(y)a(y,\lambda)\partial_y\psi^\ast_{k,\nu}(y,\Lambda)\\
&=a(y,\lambda)\partial_yf_{0k}(y,\Lambda)+A_{\Theta}\Big[\partial_y^2a(y,\lambda)\partial_y\psi^\ast_{k,\nu}+2\partial_ya(y,\lambda)\partial^2_y\psi^\ast_{k,\nu}\Big](y,\Lambda)\\
&\, +\frac{\nu}{k}\partial_y^2a(y,\lambda)\partial_y\omega^\ast_{k,\nu}(y,\Lambda)+2\frac{\nu}{k}\partial_ya(y,\lambda)\partial_y^2\omega^\ast_{k,\nu}(y,\Lambda)\\
&\,-ib'''(y)a(y,\lambda)\psi^\ast_{k,\nu}(y,\Lambda)+ib'(y)a(y,\lambda)\omega^\ast_{k,\nu}(y,\Lambda).
\end{split}
\end{equation}
It follows from equations \eqref{refb1} and \eqref{refb3} that $y\in\T_\tp$,
\begin{equation}\label{refb5}
\begin{split}
&(\partial_y^2-k^2)D_\lambda \psi^\ast_{k,\nu}(y,\Lambda)+A_\Theta^{-1} \big(ib''(\cdot)D_\lambda\psi^\ast_{k,\nu}(\cdot,\Lambda)\big)(y)\\
&=A_\Theta^{-1}\big(f_{0k}^{\ast}(\cdot,\Lambda)\big)(y)+\Big[\partial_y^2a(y,\lambda)\partial_y\psi^\ast_{k,\nu}+2\partial_ya(y,\lambda)\partial^2_y\psi^\ast_{k,\nu}\Big](y,\Lambda),
\end{split}
\end{equation}
where for $y\in\T_\tp$,
\begin{equation}\label{refb6}
\begin{split}
f_{0k}^{\ast}(y,\Lambda):=&\,\,D_\lambda f_{0k}(y,\Lambda)+\frac{\nu}{k}\partial_y^2a(y,\lambda)\partial_y\omega^\ast_{k,\nu}(y,\Lambda)\\
&+2\frac{\nu}{k}\partial_ya(y,\lambda)\partial_y^2\omega^\ast_{k,\nu}(y,\Lambda)-ib'''(y)a(y,\lambda)\psi^\ast_{k,\nu}(y,\Lambda)\\
&+i\big[b'(y)a(y,\lambda)-1\big]\omega^\ast_{k,\nu}(y,\Lambda).
\end{split}
\end{equation}
Using the bounds \eqref{lapj3} of the limiting absorption principle and the equation \eqref{refb5}-\eqref{refb6}, it suffices to show that 
\begin{equation}\label{refb6.01}
    \big\|\mathcal{G}_k^j(\Lambda)h\big\|_{X^{1,-\gamma+1}(\mathfrak{M})}\lesssim |k|^{-1/2}M_k,
\end{equation}
where
\begin{equation}\label{refb6.02}
 h\in\Big\{A_\Theta^{-1}\big(f_{0k}^{\ast}(\cdot,\Lambda)\big)(y), \Big[\partial_y^2a(y,\lambda)\partial_y\psi^\ast_{k,\nu}+2\partial_ya(y,\lambda)\partial^2_y\psi^\ast_{k,\nu}\Big](y,\Lambda)\Big\}.  
\end{equation}
To prove \eqref{refb6.01}, we divide into the singular region for $y\in \T_\tp$ with $|\lambda-b(y)|\ll |\lambda-b(y_{j\ast})|$ and the non-singular region for $y\in\T_\tp$ with $|\lambda-b(y)|\gtrsim |\lambda-b(y_{j\ast})|$. More precisely, using the notations in Lemma \ref{ake6}, it suffices to prove that for $h$ as in \eqref{refb6.02},
\begin{equation}\label{refb6.03}
      \Big\|\mathcal{G}_k^j(\Lambda) \Big[1-\sum_{\dagger\in\{+,-\}}\varphi_0\big((y-y^\dagger_\lambda)/|\lambda-b(y_{j\ast})|^{1/2}\big)\Big]h\Big\|_{X^{1,-\gamma_1+1}(\mathfrak{M})}\lesssim |k|^{-1/2}M_k,  
\end{equation}
and for $\dagger\in\{+,-\}$
\begin{equation}\label{refb6.04}
        \big\|\mathcal{G}_k^j(\Lambda) \varphi_0\big((y-y^\dagger_\lambda)/|\lambda-b(y_{j\ast})|^{1/2}\big)h\big\|_{X^{1,-\gamma_1+1}(\mathfrak{M})}\lesssim |k|^{-1/2}M_k. 
\end{equation}
To establish \eqref{refb6.03}, we notice the bounds that for $y\in\T_\tp, \dagger\in\{+,-\}$ and $h$ as in \eqref{refb6.02},
\begin{equation}\label{refb6.05}
    \Big|\Big[1-\sum_{\dagger\in\{+,-\}}\varphi_0\big((y-y^\dagger_\lambda)/|\lambda-b(y_{j\ast})|^{1/2}\big)\Big]h(y)\Big|\lesssim \varrho_{j,k}^{\gamma_0-2}\varrho_j^{-2}(y;\Lambda)\lesssim \varrho_{j,k}^{\gamma_1-2}\varrho_j^{\gamma_0-\gamma_1-2}(y;\Lambda). 
\end{equation}
The desired bounds \eqref{refb6.03} then follow from the estimates \eqref{pmpr2.02}. 

To establish \eqref{refb6.04}, we use similar arguments as in section \ref{casesigma1}, since the weights are constant over the support of $\varphi\big((y-y^\dagger_\lambda)/|\lambda-b(y_{j\ast})|^{1/2}\big)$ and can be taken out of the estimates when appropriately accounted for. We consider the terms 
\begin{equation}\label{refbs1}
   h\in A_\Theta^{-1}\Big\{ D_\lambda f_{0k},\frac{\nu}{k}\partial_y^2a\partial_y\omega^\ast_{k,\nu},
\frac{\nu}{k}\partial_ya\partial_y^2\omega^\ast_{k,\nu},b'''(y)a(y,\lambda)\psi^\ast_{k,\nu},\big[b'(y)a(y,\lambda)-1\big]\omega^\ast_{k,\nu}\Big\}
\end{equation}
and 
\begin{equation}\label{refbs2}
    h\in\Big\{\partial_y^2a(y,\lambda)\partial_y\psi^\ast_{k,\nu}(y,\Lambda), \partial_ya(y,\lambda)\partial^2_y\psi^\ast_{k,\nu}(y,\Lambda)\Big\}.
\end{equation}
Using \eqref{mainprop0.32} and setting for $y\in\T_\tp$,
$$p(y)=(\partial_y^2-k^2)\Big[\sum_{j=1}^2\varphi\big((y-y_{j\ast})/\delta_0\big)\omega_{0k}(y)\Big],$$
by direct calculations we obtain that for $y\in\T_\tp$,
\begin{equation}\label{refbs2.01}
\begin{split}
    D_\lambda f_{0k}=&a\partial_y\Big[\omega_{0k}(y)-\sum_{j=1}^2\varphi\big((y-y_{j\ast})/\delta_0\big)\omega_{0k}(y)\Big]+i(ab'-1)p\\
&-A_{\Theta}\Big[a\,\partial_yp\Big]+\frac{\nu}{k}\Big[2\partial_y(\partial_y\partial_yp)-\partial_y^2a\partial_yp\Big],
    \end{split}
\end{equation}
To prove the desired bound \eqref{refb6.04} for $h=A_{\Theta}^{-1} D_\lambda f_{0k}$, by \eqref{refbs2.01}, noting the support property of the first and second terms on the right hand side of \eqref{refbs2.01} and using integration by parts in the third term after cancelling $A_{\Theta}$, it suffices to consider the term 
\begin{equation}\label{refbs2.02}
    h=A_{\Theta}^{-1}\frac{\nu}{k}\Big[2\partial_y(\partial_y\partial_yp)-\partial_y^2a\partial_yp\Big],
\end{equation}
which follows from Lemma \ref{ake1}. 

The desired bounds \eqref{refb6.04} for \begin{equation}\label{refbs6.06}
   h\in A_{\Theta}^{-1}\Big\{\frac{\nu}{k}\partial_y^2a\partial_y\omega^\ast_{k,\nu},
\frac{\nu}{k}\partial_ya\partial_y^2\omega^\ast_{k,\nu},\big[b'(y)a(y,\lambda)-1\big]\omega^\ast_{k,\nu}\Big\}
\end{equation}
follow from Lemma \ref{ake1} and equation \eqref{mainprop0.31} noting the support property of $b'a-1$. The desired bounds for 
\begin{equation}\label{refbs6.07}
    h\in\Big\{b'''(y)a(y,\lambda)\psi^\ast_{k,\nu},\partial_y^2a(y,\lambda)\partial_y\psi^\ast_{k,\nu}(y,\Lambda), \partial_ya(y,\lambda)\partial^2_y\psi^\ast_{k,\nu}(y,\Lambda)\Big\}
\end{equation}
follow from equation \eqref{mainprop0.31} and Lemma \eqref{ake6}, as in \eqref{pmpr1.11}. We omit the lengthy but straightforward calculations and refer to \eqref{pmpr1.12}-\eqref{pmpr1.17} for details. 

\subsubsection{Proof of the bounds for $D^\sigma_\lambda\, \psi_{k,\nu}^\ast, \sigma=2$ in statement (ii) in Proposition \ref{mainprop}}
Similar calculations show that for $y\in\T_\tp$,
\begin{equation}\label{refb7}
\begin{split}
&A_{\Theta}(\partial_y^2-k^2)\partial_\lambda D_\lambda\psi^\ast_{k,\nu}(y,\Lambda)+ib''(y)\partial_\lambda D_\lambda\psi^\ast_{k,\nu}(y,\Lambda)\\
&=\partial_\lambda f_{0k}^{\ast}(y,\Lambda)+\partial_\lambda \Big\{A_{\Theta}\Big[\partial_y^2a(y,\lambda)\partial_y\psi^\ast_{k,\nu}+2\partial_ya(y,\lambda)\partial^2_y\psi^\ast_{k,\nu}\Big](y,\Lambda)\Big\}\\
&\quad -i(\partial_y^2-k^2) D_\lambda\psi^\ast_{k,\nu}(y,\Lambda),
\end{split}
\end{equation}
and 
\begin{equation}\label{refb8}
\begin{split}
&A_{\Theta}(\partial_y^2-k^2)\Big(a(y,\lambda)\partial_y D_\lambda\psi^\ast_{k,\nu}(y,\Lambda)\Big)+ib''(y)a(y,\lambda)\partial_y D_\lambda\psi^\ast_{k,\nu}(y,\Lambda)\\
&= a(y,\lambda)\partial_y\Big\{f_{0k}^{\ast}+A_{\Theta}\Big[\partial_y^2a(y,\lambda)\partial_y\psi^\ast_{k,\nu}+2\partial_ya(y,\lambda)\partial^2_y\psi^\ast_{k,\nu}\Big](y,\Lambda)\Big\}\\
&\quad +ia(y,\lambda)b'(y)(\partial_y^2-k^2)D_\lambda \psi^\ast_{k,\nu}(y,\Lambda)-ia(y,\lambda)b'''(y)D_\lambda \psi^\ast_{k,\nu}(y,\Lambda)\\
&\quad +\Big[\frac{\nu}{k}\partial_y^2a(y,\lambda)(\partial_y^2-k^2)\partial_y+2\frac{\nu}{k}\partial_ya(y,\lambda)(\partial_y^2-k^2)\partial_y^2\Big]D_\lambda\psi^\ast_{k,\nu}(y,\Lambda)\\
&\quad +A_{\Theta}\Big[\partial_y^2a(y,\lambda)\partial_yD_\lambda\psi^\ast_{k,\nu}+2\partial_ya(y,\lambda)\partial^2_yD_\lambda\psi^\ast_{k,\nu}\Big](y,\Lambda).
\end{split}
\end{equation}
Therefore, $D_\lambda^2\psi^\ast_{k,\nu}$ satisfies the equation for $y\in\T_\tp$,
\begin{equation}\label{refb9}
\begin{split}
&(\partial_y^2-k^2) D^2_\lambda\psi^\ast_{k,\nu}(y,\Lambda)+A_{\Theta}^{-1}\big(ib''(\cdot)D^2_\lambda\psi^\ast_{k,\nu}(\cdot,\Lambda)\big)(y)\\
&= A_{\Theta}^{-1}f_{0k}^{\ast\ast}(y,\Lambda)+\Big[\partial_y^2a(y,\lambda)\partial_yD_\lambda\psi^\ast_{k,\nu}+2\partial_ya(y,\lambda)\partial^2_yD_\lambda\psi^\ast_{k,\nu}\Big](y,\Lambda),
\end{split}
\end{equation}
where we have defined for $y\in\T_\tp$,
\begin{equation}\label{refb10}
\begin{split}
f_{0k}^{\ast\ast}(y,\Lambda):=&D_\lambda\Big\{f_{0k}^{\ast}+A_\Theta\Big[\partial_y^2a(y,\lambda)\partial_y\psi^\ast_{k,\nu}+2\partial_ya(y,\lambda)\partial^2_y\psi^\ast_{k,\nu}\Big](y,\Lambda)\Big\}\\
& +\Big[\frac{\nu}{k}\partial_y^2a(y,\lambda)(\partial_y^2-k^2)\partial_y+2\frac{\nu}{k}\partial_ya(y,\lambda)(\partial_y^2-k^2)\partial_y^2\Big]D_\lambda\psi^\ast_{k,\nu}(y,\Lambda)\\
& +i\Big[a(y,\lambda)b'(y)-1\Big](\partial_y^2-k^2)D_\lambda \psi^\ast_{k,\nu}(y,\Lambda)-ia(y,\lambda)b'''(y)D_\lambda \psi^\ast_{k,\nu}(y,\Lambda).
\end{split}
\end{equation}

Using the bounds \eqref{lapj3} of the limiting absorption principle and the equation \eqref{refb5}-\eqref{refb6}, it suffices to show that 
\begin{equation}\label{refb7.01}
\big\|\mathcal{G}_k^j(\Lambda)h\big\|_{X^{1-2(2-\gamma_2),-\gamma_2+2}(\mathfrak{M})}\lesssim |k|^{-1/2}(\delta(\Lambda))^{-1-2(2-\gamma_2)}M_k,
\end{equation}
where
\begin{equation}\label{refb7.02}
 h\in\Big\{A_\Theta^{-1}\big(f_{0k}^{\ast\ast}(\cdot,\Lambda)\big)(y), \partial_y^2a(y,\lambda)\partial_yD_\lambda\psi^\ast_{k,\nu}, \partial_ya(y,\lambda)\partial^2_yD_\lambda\psi^\ast_{k,\nu}\Big\}.  
\end{equation}
To prove the bounds \eqref{refb7.01}-\eqref{refb7.02}, as in section \ref{casesigma1'}, we divide we divide into the singular regions for $y\in \T_\tp$ with $|\lambda-b(y)|\ll |\lambda-b(y_{j\ast})|$ and non-singular regions for $y\in\T_\tp$ with $|\lambda-b(y)|\gtrsim |\lambda-b(y_{j\ast})|$. More precisely, using the notation as in Lemma \ref{ake6}, it suffices to prove that for $h$ as in \eqref{refb7.02},
\begin{equation}\label{refb7.03}
\begin{split}
      &\Big\|\mathcal{G}_k^j(\Lambda) \Big[1-\sum_{\dagger\in\{+,-\}}\varphi\big((y-y^\dagger_\lambda)/|\lambda-b(y_{j\ast})|^{1/2}\big)\Big]h\Big\|_{X^{1-2(2-\gamma_2),-\gamma_2+2}(\mathfrak{M})}\\
      &\lesssim |k|^{-1/2}(\delta(\Lambda))^{-1-2(2-\gamma_2)}M_k,   \end{split}
\end{equation}
and for $\dagger\in\{+,-\}$,
\begin{equation}\label{refb7.04}
\begin{split}
        &\big\|\mathcal{G}_k^j(\Lambda) \varphi\big((y-y^\dagger_\lambda)/|\lambda-b(y_{j\ast})|^{1/2}\big)h\big\|_{X^{1-2(2-\gamma_2),-\gamma_2+2}(\mathfrak{M})}\\
        &\lesssim |k|^{-1/2}(\delta(\Lambda))^{-1-2(2-\gamma_2)}M_k. 
        \end{split}
\end{equation}
To establish \eqref{refb7.03}, using Lemma \eqref{ake1}, \eqref{mainprop2} with $\sigma\in\{0,1\}$ and equation \eqref{mainprop0.31} and \eqref{refb5}-\eqref{refb6}, we obtain the bounds that for $y\in\T_\tp, \dagger\in\{+,-\}$ and $h$ as in \eqref{refb7.02},
\begin{equation}\label{refb7.05}
\begin{split}
   & \Big|\Big[1-\sum_{\dagger\in\{+,-\}}\varphi\big((y-y^\dagger_\lambda)/|\lambda-b(y_{j\ast})|^{1/2}\big)\Big]h(y)\Big|\\
   &\lesssim \varrho_{j,k}^{\gamma_1-3}\varrho_j^{-3}(y;\Lambda)\lesssim \delta^{-1-2(2-\gamma_2)}\varrho_j^{-2+2(2-\gamma_2)}\varrho_{j,k}^{\gamma_2-3}(y;\Lambda).
    \end{split}
\end{equation}
The desired bounds then follow from the estimates \eqref{pmpr2.02}. 

We now turn to the proof of \eqref{refb7.04} for $h$ as in \eqref{refb7.02}. 
The bounds \eqref{refb7.04} for 
\begin{equation}\label{refb7.020}
 h\in\Big\{\partial_y^2a(y,\lambda)\partial_yD_\lambda\psi^\ast_{k,\nu}, \partial_ya(y,\lambda)\partial^2_yD_\lambda\psi^\ast_{k,\nu}\Big\}
\end{equation}
follows from the bounds \eqref{mainprop2} with $\sigma=1$ and integration by parts using the bounds on the modified Green's function. To establish the bound \eqref{refb7.03} for $h=A_\Theta^{-1}\big(f_{0k}^{\ast\ast}(\cdot,\lambda,\nu,\alpha)\big)(y)$, using the commutator relations \eqref{pmpr1.23} and noting that the terms associated with the commutators are easier to handle, and in view of the support property of $ab'-1$, it suffices to prove \eqref{refb7.04} for
\begin{equation}
   \begin{split}
&h\in A_{\Theta}^{-1}\Big\{D_\lambda f_{0k}^{\ast}+A_{\Theta}\Big[\partial_y^2a(y,\lambda)\partial_y\psi^\ast_{k,\nu}+2\partial_ya(y,\lambda)\partial^2_y\psi^\ast_{k,\nu}\Big](y,\Lambda)\\
&\quad +\Big[\frac{\nu}{k}\partial_y^2a(y,\lambda)(\partial_y^2-k^2)\partial_y+2\frac{\nu}{k}\partial_ya(y,\lambda)(\partial_y^2-k^2)\partial_y^2\Big]D_\lambda\psi^\ast_{k,\nu}(y,\Lambda)\Big\}.
\end{split}
\end{equation}
The bound \eqref{refb7.04} follows for 
\begin{equation}
h=\Big[\partial_y^2a(y,\lambda)\partial_y\psi^\ast_{k,\nu}+2\partial_ya(y,\lambda)\partial^2_y\psi^\ast_{k,\nu}\Big](y,\Lambda)
\end{equation}
follows from the bounds on the modified Green's function, integration by parts and the bounds \eqref{lapj3} for $\sigma=0$. The bounds \eqref{refb7.04} for 
\begin{equation}\label{refb7.07} 
    h=A_\Theta^{-1}\Big[\frac{\nu}{k}\partial_y^2a(y,\lambda)(\partial_y^2-k^2)\partial_y+2\frac{\nu}{k}\partial_ya(y,\lambda)(\partial_y^2-k^2)\partial_y^2\Big]D_\lambda\psi^\ast_{k,\nu}(y,\Lambda)
\end{equation}
follows from equation \eqref{refb5} using argument similar to \eqref{pmpr1.13}-\eqref{pmpr1.17}. It remains to consider the term 
\begin{equation}\label{refb7.08}
    h=A_{\Theta}^{-1}D_\lambda f_{0k}^{\ast}.
\end{equation}
Recalling the identity \eqref{refb6} and skipping terms that can be bounded that are similar as before, we focus only on proving \eqref{refb7.04} for
\begin{equation}\label{refb7.09}
    h=A_{\Theta}^{-1}D^2_\lambda f_{0k}. 
\end{equation}
Recalling the formula \eqref{mainprop0.32}, using the commutator relations \eqref{pmpr1.23} and focusing on the most singular terms, it suffices to consider the term 
\begin{equation}\label{refb7.10}
    h=(\partial_y^2-k^2)D_\lambda^2\Big[\sum_{j=1}^2\varphi\big((y-y_{j\ast})/\delta_0\big)\omega_{0k}(y)\Big]. 
\end{equation}
The desired bound \eqref{refb7.04} for $h$ in \eqref{refb7.10} follow from simple calculations using the bounds on the modified Green's function.

The proof of Proposition \ref{mainprop} is now complete.


\section{Proof of the main theorem}\label{sec:pmt}
In this section we give the proof of the main theorem \ref{thm}, using the limiting absorption principle, see Proposition \ref{mainprop} and Corollary \ref{RDS1}, with $\gamma_\ast=\gamma+\frac{2-\gamma}{2}$ for sufficiently small $\nu>0$ depending on $\gamma$, and the representation formula \eqref{intP6}. 

As in \cite{JiaUM}, the representation formulae \eqref{intP6} for $\omega_0\in C^\infty(\T_\tp)$ follow from the theory of semigroups of sectorial operators, see section 4, Chapter II of \cite{Engel}. We omit the standard details and refer to \cite{Engel}. The main task is to establish the following bounds. 
 \begin{proposition}\label{PROrep}
    Assume that $k\in\mathbb{Z}\cap[1,\infty)$ and $\omega_{0k}\in C^{\infty}(\T_\tp)$. Define 
    \begin{equation}
        \aleph :=\{\mu\in\mathbb{C}: \Re \mu\geq-\sigma_\sharp|\nu/k|^{1/2}\}.
    \end{equation}
    For any $\mu=\alpha-i\lambda\in\aleph$ and $\nu\in(0,\nu_0)$, \eqref{intP5} is solvable for a unique solution $\omega_{k,\nu}\in H^2(\T_{\tp})$ which is holomorphic in $\mu$ in the interior of $\aleph$ and continuous on $\aleph$. In addition, we have the bound
    \begin{equation}\label{resest1}
        \|\omega_{k,\nu}(\cdot,\Lambda)\|_{L^2(\T_\tp)}\lesssim_{k,\nu,\omega_{0k}}\frac{1}{\lb\lambda,\alpha\rb},
    \end{equation}
    and
    \begin{equation}\label{resest2}
        \|\partial_{\lambda}\omega_{k,\nu}(\cdot,\Lambda)\|_{L^2(\T_\tp)}\lesssim_{k,\nu,\omega_{0k}}\frac{1}{\lb\lambda,\alpha\rb^2},
    \end{equation}
\end{proposition}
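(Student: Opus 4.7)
The plan is to reformulate \eqref{intP5} as a fixed-point equation for $\psi_{k,\nu}$, invert it using the limiting absorption principle (Proposition \ref{lap_main}), and then extract the integrable resolvent bounds via classical energy estimates. Since $k$ and $\nu$ are fixed here and the implicit constants may depend on them and on $\omega_{0k}$, only qualitative solvability together with polynomial decay as $|\lambda|+|\alpha|\to\infty$ is needed; the sharp uniform-in-viscosity information already resides in Proposition \ref{mainprop}.

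Applying $A_\Theta^{-1}$ to the first line of \eqref{intP5} gives $\omega_{k,\nu} = A_\Theta^{-1}\omega_{0k} - A_\Theta^{-1}(ib''\psi_{k,\nu})$, and substituting into $(\partial_y^2-k^2)\psi_{k,\nu}=\omega_{k,\nu}$ rearranges to
\begin{equation*}
\psi_{k,\nu} + T_\Theta \psi_{k,\nu} = -(\partial_y^2-k^2)^{-1}A_\Theta^{-1}\omega_{0k},
\end{equation*}
with $T_\Theta$ defined by \eqref{lapj1} or \eqref{lapj2} depending on whether $\lambda$ lies outside or inside $\bigcup_j \Sigma_{j,\delta_0}$. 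The kernel bounds of Proposition \ref{Airy_main} combined with Lemma \ref{ake1} show that $T_\Theta$ is compact on $H^1_k(\T_\tp)$ and on the weighted spaces $X^{\sigma_1,\sigma_2}(\mathfrak{M})$, while Proposition \ref{lap_main} furnishes the coercivity bound $\|(I+T_\Theta)\psi\| \ge \kappa\|\psi\|$. The Fredholm alternative therefore yields a unique $\psi_{k,\nu}$, whence $\omega_{k,\nu} = A_\Theta^{-1}(\omega_{0k}-ib''\psi_{k,\nu}) \in H^2(\T_\tp)$ by Lemma \ref{ake1}. Holomorphy of $\mu \mapsto \omega_{k,\nu}$ in the interior of $\aleph$ is automatic from the analytic dependence of $A_\Theta$ on $\mu = \alpha - i\lambda$, and continuity up to $\partial\aleph$ follows from the uniformity of Proposition \ref{lap_main} down to $\alpha = -\sigma_\sharp|\nu/k|^{1/2}$.

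For \eqref{resest1} I would split into two cases. When $\langle\lambda,\alpha\rangle$ is bounded by a constant $C_\ast(k,\nu)$, the preceding solvability yields a uniform $L^2$ bound that is absorbed into the implicit constant. For $\langle\lambda,\alpha\rangle \gg 1$, I pair \eqref{intP5} with $\overline{\omega_{k,\nu}}$ in $L^2(\T_\tp)$ and extract real and imaginary parts, which together with the Poisson estimate $\|\psi_{k,\nu}\|_{L^2}\leq k^{-2}\|\omega_{k,\nu}\|_{L^2}$ absorb the nonlocal term $\int b''\psi_{k,\nu}\overline{\omega_{k,\nu}}$ and produce $(\alpha + \mathrm{dist}(\lambda, b(\T_\tp)))\|\omega_{k,\nu}\|_{L^2} \lesssim \|\omega_{0k}\|_{L^2}$ once $\alpha$ or $\mathrm{dist}(\lambda,b(\T_\tp))$ exceeds a threshold depending on $\|b\|_{C^2}$ and $k$. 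Finally, \eqref{resest2} is derived by differentiating $(i\lambda-\iota\alpha+L_{k,\nu})\omega_{k,\nu}=\omega_{0k}$ in $\lambda$, which gives $(i\lambda-\iota\alpha+L_{k,\nu})\partial_\lambda\omega_{k,\nu} = -i\omega_{k,\nu}$, and reapplying \eqref{resest1} with source $-i\omega_{k,\nu}$ then combining with \eqref{resest1} for the original equation.

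The main obstacle is the intermediate regime where $\lambda \in b(\T_\tp)$ and $\alpha$ is near $-\sigma_\sharp|\nu/k|^{1/2}$: the elementary energy argument fails there because the operator is no longer coercive, so the quantitative control from Proposition \ref{lap_main} (which was engineered precisely to handle this boundary case) must be invoked to close the argument; all other regimes reduce either to Fredholm theory or to direct energy estimates.
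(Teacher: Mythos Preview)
Your proposal is correct and aligns with the paper's own argument, which is extremely terse: it simply observes that Proposition~\ref{lap_main} rules out eigenvalues of $L_{k,\nu}$ in $\aleph$, and that once $\nu>0$ is fixed the operator is elliptic, so the resolvent bounds \eqref{resest1}--\eqref{resest2} follow from standard theory. Your version unpacks this into an explicit Fredholm/energy argument (reformulate as $(I+T_\Theta)\psi=\text{RHS}$, invert via the coercivity in Proposition~\ref{lap_main}, then pair with $\overline{\omega_{k,\nu}}$ for large $\langle\lambda,\alpha\rangle$, and differentiate for \eqref{resest2}); this is exactly what the paper's one-line appeal to ``ellipticity plus no eigenvalues'' abbreviates, so the two approaches are the same in substance. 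Two minor cosmetic points: your right-hand side should read $(\partial_y^2-k^2)^{-1}A_\Theta^{-1}\omega_{0k}$ without the extra minus sign, and in the degenerate region $\lambda\in\Sigma_{j,\delta_0}$ the reformulation uses the modified Green's function $\mathcal{G}_k^j$ rather than $\Delta_k^{-1}$, but neither affects the logic.
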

Proposition \ref{PROrep} follows from the limiting absorption principle, see Proposition \ref{lap_main}, which implies that there is no eigenvalue of $L_{k,\nu}$ inside $\aleph$. The absence of eigenvalues inside $\aleph$ is sufficient to deduce \eqref{resest1}-\eqref{resest2} since we allow the bounds to depend on $\nu$ and consequently the problem is now elliptic.

We divide the proof of Theorem \ref{thm} into three different regions (i) $\min\{|y-y_{1\ast}|, |y-y_{2\ast}|\}\gtrsim 1$, (ii) $|\nu/k|^{1/4}\ll\min\{|y-y_{1\ast}|, |y-y_{2\ast}|\}\ll 1$, and (iii) $\min\{|y-y_{1\ast}|, |y-y_{2\ast}|\}\ll |\nu/k|^{1/4}$. We assume that $t\ge1$ as the case of $0<t<1$ follows from the local wellposedness of the Navier-Stokes equations. 

Denote
\begin{equation}\label{pmt0.1}
    M_k:=\big\|\omega_{0k}\big\|_{H^3_k(\T_\tp)},
\end{equation}
and recall the definition \eqref{refb2} for $a(y,\lambda)$ with $y\in\T_\tp, \lambda\in\R$, 
\begin{equation}\label{pmt0.11}
a(y,\lambda):=\Big[1-\varphi\big(\frac{y-y_{1\ast}}{\delta_2(\lambda)}\big)\Big]\Big[1-\varphi\big(\frac{y-y_{2\ast}}{\delta_2(\lambda)}\big)\Big]\frac{1}{b'(y)}.
\end{equation}
We shall use the following formulae (recall also \eqref{mainprop0.2} for the derivative $D_\lambda$), 
\begin{equation}\label{pmt0.12}
\begin{split}
\partial_\lambda&=D_\lambda-a(y,\lambda)\partial_y,\\
\partial_y\partial_\lambda&=\partial_yD_\lambda-\partial_ya(y,\lambda)\partial_y-a(y,\lambda)\partial_y^2,\\
\partial_\lambda^2&=D_\lambda^2-2a(y,\lambda)\partial_yD_\lambda-\partial_\lambda a(y,\lambda)\partial_y+a(y,\lambda)\partial_ya(y,\lambda)\partial_y+a^2(y,\lambda)\partial_y^2.
\end{split}
\end{equation}
The main tools are Proposition \ref{mainprop}, Corollary \ref{RDS1}, and the representation formula \eqref{intP6}. More specifically we use the identities, with $\alpha=-\sigma_\sharp|\nu/k|^{1/2}$, that for $y\in\T_\tp$ and $t\ge 1$,
\begin{equation}\label{pmt0.13}
    \psi_k(t,y)=\frac{1}{2\pi k^2t^2}e^{\alpha |k|t} e^{-\nu k^2 t}\int_\R e^{-ik\lambda t} \partial_\lambda^2\psi_{k,\nu}^\ast(y,\Lambda)\,d\lambda,
\end{equation}
\begin{equation}\label{pmt0.135}
    \partial_y\psi_k(t,y)=-\frac{1}{2i\pi kt}e^{\alpha |k|t} e^{-\nu k^2 t}\int_\R e^{-ik\lambda t} \partial_y\partial_\lambda\psi_{k,\nu}^\ast(y,\Lambda)\,d\lambda,
\end{equation}
and 
\begin{equation}\label{pmt0.14}
    \omega_k(t,y)=-\frac{1}{2\pi }e^{\alpha |k|t} e^{-\nu k^2 t}\int_\R e^{-ik\lambda t} \omega_{k,\nu}(y,\Lambda)\,d\lambda.
\end{equation}

To prove Theorem \ref{thm}, it suffices to show that for $k\in\Z\cap[1,\infty)$ and $t\ge1$, 
\begin{equation}\label{pmt0.15}
    |\psi_k(t,y)|\lesssim \frac{|y-y_{1\ast}|^{\gamma-2}+|y-y_{2\ast}|^{\gamma-2}}{\langle t\rangle^2}e^{-\sigma_\sharp\nu^{1/2}t}|k|^{-7/4}M_k,
\end{equation}
\begin{equation}\label{pmt0.16}
    |\partial_y\psi_k(t,y)|\lesssim e^{-\sigma_\sharp\nu^{1/2}t}\langle t\rangle^{-1}|k|^{-3/4}M_k,
\end{equation}
and
\begin{equation}\label{pmt0.17}
\big|\omega_k(t,y)\big|\lesssim \Big[\big(|y-y_{j\ast}|+|\nu|^{1/4}\big)^{\gamma}+t^{-\gamma/2}\Big]e^{-\sigma_\sharp\nu^{1/2}t}|k|^{-1/2-(2-\gamma)/10}M_k.
\end{equation}
We shall give the detailed proofs for \eqref{pmt0.15} and \eqref{pmt0.17} only, as the proof of \eqref{pmt0.16} is identical to that of \eqref{pmt0.13}. We only remark that $\partial_\lambda\partial_y\psi_{k,\nu}^\ast(y,\Lambda)$ is {\it less singular} than $\partial_\lambda^2\psi_{k,\nu}^\ast(y,\Lambda)$ in the region $\lambda\in\Sigma_{j,\delta_0}, j\in\{1,2\}$. Roughly speaking, when $\lambda\in\Sigma_{j,\delta_0}$, taking one derivative in $y$ costs a factor $\varrho_{j,k}^{-1}$ while taking a derivative in $\lambda$ costs a larger factor of $\varrho_{j,k}^{-1}\varrho_j^{-1}$.


Since the desired bounds \eqref{pmt0.15} and \eqref{pmt0.17} are pointwise, in view of the nature of the singularities of the integrand in \eqref{pmt0.13} and \eqref{pmt0.14}, it is convenient to divide the integrals \eqref{pmt0.13} and \eqref{pmt0.14} into the sum of a local integral and a nonlocal integral, as follows. For $y\in\T_\tp$, we define 
\begin{equation}\label{pmt0.18}\begin{split}
     &\psi_{k{\rm loc}}(t,y)\\
     &:=\frac{1}{2\pi k^2t^2}e^{\alpha |k|t} e^{-\nu k^2 t}\int_{\{\lambda \in\R:\,|\lambda-b(y)|\ll \,\min_{j\in\{1,2\}}|b(y)-b(y_{j\ast})|+|\nu/k|^{1/2}\}} e^{-ik\lambda t} \partial_\lambda^2\psi_{k,\nu}^\ast(y,\Lambda)\,d\lambda,
     \end{split}
\end{equation}
\begin{equation}\label{pmt0.19}
\begin{split}
     &\psi_{k{\rm nloc}}(t,y)\\
     &:=\frac{1}{2\pi k^2t^2}e^{\alpha |k|t} e^{-\nu k^2 t}\int_{\{\lambda \in\R:\,|\lambda-b(y)|\gtrsim \,\min_{j\in\{1,2\}}|b(y)-b(y_{j\ast})|+|\nu/k|^{1/2}\}} e^{-ik\lambda t} \partial_\lambda^2\psi_{k,\nu}^\ast(y,\Lambda)\,d\lambda.
     \end{split}
\end{equation}
Similarly, we define, for $y\in\T_\tp$, 
\begin{equation}\label{pmt0.195}
\begin{split}
        &\omega_{k{\rm loc}}(t,y)\\
        &:=-\frac{1}{2\pi }e^{\alpha |k|t} e^{-\nu k^2 t}\int_{\{\lambda \in\R:\,|\lambda-b(y)|\ll\, \min_{j\in\{1,2\}}|b(y)-b(y_{j\ast})|+|\nu/k|^{1/2}\}} e^{-ik\lambda t} \omega_{k,\nu}(y,\Lambda)\,d\lambda,
        \end{split}
\end{equation} 
\begin{equation}\label{pmt0.196}
\begin{split}
        &\omega_{k{\rm nloc}}(t,y)\\
        &:=-\frac{1}{2\pi }e^{\alpha |k|t} e^{-\nu k^2 t}\int_{\{\lambda \in\R:\,|\lambda-b(y)|\gtrsim\, \min_{j\in\{1,2\}}|b(y)-b(y_{j\ast})|+|\nu/k|^{1/2}\}} e^{-ik\lambda t} \omega_{k,\nu}(y,\Lambda)\,d\lambda.
        \end{split}
\end{equation}
To establish \eqref{pmt0.15} and \eqref{pmt0.17}, it suffices to prove that for $h\in\{\psi_{k{\rm loc}}(t,y),\psi_{k{\rm nloc}}(t,y)\}$ and $y\in\T_\tp, t\ge1$, we have the bounds 
\begin{equation}\label{pmt0.197}
    |h(t,y)|\lesssim \frac{|y-y_{1\ast}|^{\gamma-2}+|y-y_{2\ast}|^{\gamma-2}}{\langle t\rangle^2}e^{-\sigma_\sharp\nu^{1/2}t}|k|^{-7/4}M_k,
\end{equation}
and that $h\in\{\omega_{k{\rm loc}}(t,y),\omega_{k{\rm nloc}}(t,y)\}$ and $y\in\T_\tp, t\ge1$, we have the bounds for $j\in\{1,2\}$,
\begin{equation}\label{pmt0.198}
    \big|h(t,y)\big|\lesssim \Big[\big(|y-y_{j\ast}|+|\nu|^{1/4}\big)^\gamma+t^{-\gamma/2}\Big]e^{-\sigma_\sharp\nu^{1/2}t}|k|^{-1/2-(2-\gamma)/10}M_k.
\end{equation}
We divide the proof of \eqref{pmt0.197} and \eqref{pmt0.198} into several cases, depending on the location of $y\in\T_\tp$ relative to the critical points $y_{1\ast},y_{2\ast}$.

\subsection{The region $\min\{|y-y_{1\ast}|, |y-y_{2\ast}|\}\gtrsim 1$.}

 We firstly summarize the following bounds for $\psi^\ast_{k,\nu}(y,\Lambda)$, which follow from Proposition \ref{mainprop} and Corollary \ref{RDS1}. For $\min\{|y-y_{1\ast}|, |y-y_{2\ast}|\}\gtrsim 1$ and $\lambda\in \R\backslash \cup_{j=1}^2 \Sigma_{j,\delta_0}$ and $\sigma\in\{0,1,2\}$, we have the bounds 
\begin{equation}\label{pmt1}
    \|D^\sigma_\lambda\psi^\ast_{k,\nu}(\cdot,\Lambda)\|_{H^1_k(\T_\tp)}\lesssim |k|^{-5/2+\sigma}\langle\lambda\rangle^{-1-\sigma}M_k.
\end{equation}
Additionally, for $\lambda\in\Sigma_{j,\delta_0}$ with $j\in\{1,2\}$, $\beta\in\{0,1\}$ and $\min\{|y-y_{1\ast}|, |y-y_{2\ast}|\}\gtrsim 1$, we have the bounds 
\begin{equation}\label{pmt2}
\begin{array}{lr}
|k|^{-\beta}|\partial_y^\beta D_\lambda^\sigma \psi^\ast_{k,\nu}(y,\Lambda)|\lesssim |k|^{-\gamma_\ast-1/2+\sigma}M_k,&\quad{\rm if}\,\,\sigma\in\{0,1\},\\
|\partial_y^\beta D_\lambda^2 \psi^\ast_{k,\nu}(y,\Lambda)|\lesssim |k|^{-\gamma_\ast+3/2+\beta}\big(\delta(\lambda)\big)^{-1-2(2-\gamma_\ast)}M_k, &\quad{\rm if}\,\,\sigma=2.
\end{array}
\end{equation}

The bounds \eqref{pmt1}-\eqref{pmt2}, together with \eqref{RDS5}-\eqref{RDS6} and the identities \eqref{pmt0.12} are sufficient for proving the desired estimates bounds \eqref{pmt0.197} for $h=\psi_{k{\rm nloc}}$ and \eqref{pmt2} for $\omega_{k{\rm nloc}}$.

To prove the bounds \eqref{pmt0.197} for $h=\psi_{k{\rm loc}}$, we use \eqref{RDS6}, the identity \eqref{pmt0.12}, and with a slight abuse of notation denoting $\psi_{k,\nu}^\ast(y,\lambda)$ as $\psi_{k,\nu}^\ast(y,\Lambda)$ to emphasize the dependence on $\lambda$, to obtain that
\begin{equation}\label{pmt2.001}
   \psi^\ast_{k,\nu}(y,\lambda)-\psi^\ast_{k,\nu}(y,b(y)) =\int_{b(y)}^\lambda \Big[D_\tau \psi^\ast_{k,\nu}(y,\tau)-a(y,\tau)\partial_y\psi^\ast_{k,\nu}(y,\tau)\Big]\,d\tau
\end{equation}
and for $\lambda\in\R$ with $|\lambda-b(y)|\ll1$ that
\begin{equation}\label{pmt2.1}
     \begin{split}
   & \bigg|\omega_{k,\nu}(y,\Lambda)-\frac{\omega_{0k}(y)-ib''(y)\psi^\ast_{k,\nu}(y,b(y))}{i(\lambda-b(y))+|\nu/k|^{1/3}}\bigg|\\
   &\lesssim \frac{|\nu/k|^{-1/3}}{1+|\nu/k|^{-2/3}|\lambda-b(y)|^2}|k|^{-5/2}M_k+\frac{1}{|\lambda-b(y)|^{1/2}}|k|^{-3/2}M_k+|k|^{-1}M_k.
    \end{split}
 \end{equation}
The desired bounds then follow from \eqref{pmt2.1} and the identity
\begin{equation}\label{pmt2.2}
(\partial_y^2-k^2)\psi_{k,\nu}^\ast(y,\Lambda)=\omega^\ast_{k,\nu}(y,\Lambda).
\end{equation}
The bounds \eqref{pmt2} for $\omega_{k{\rm loc}}$ follow from \eqref{pmt2.1}.

\subsection{The region $|\nu/k|^{1/4}\ll\min\{|y-y_{1\ast}|, |y-y_{2\ast}|\}\ll 1$.}\label{sec:intM} For the sake of concreteness we assume that $|y-y_{1\ast}|\in[m/2,m]$ and $ |\nu/k|^{1/4}\ll m\ll 1$. It follows from proposition \ref{mainprop} that the following inequalities hold. For $\sigma\in\{0,1\}, \beta\in\{0,1\}$,

\begin{itemize}

\item  If $|\lambda-b(y_{1\ast})|^{1/2}\ll m$, then
\begin{equation}\label{pmt3.1}   
|\partial_y^\beta D^\sigma_\lambda \psi^\ast_{k,\nu}(y,\Lambda)| \lesssim |y-y_{1\ast}|^{-\sigma} \big(|y-y_{1\ast}|\wedge \frac{1}{|k|}\big)^{\gamma_\ast-\sigma-\beta}|k|^{-1/2}M_k; 
\end{equation}

\item If $\lambda\in \R\backslash\Sigma_{2,\delta_0}$ and $|\lambda-b(y_{1\ast})|^{1/2}\gtrsim m$, then
\begin{equation}\label{pmt3.3}
    \|\partial_y^\beta D^\sigma_\lambda \psi^\ast_{k,\nu}(y,\Lambda)\|_{L^2(m/4,4m)}
    \lesssim \delta(\lambda)^{1/2-\sigma} \big(\delta(\lambda)\wedge \frac{1}{|k|}\big)^{\gamma_\ast-\sigma-\beta}\frac{1}{\langle\lambda\rangle^{1+\sigma}}|k|^{-1/2}M_k;
\end{equation}    

\item If $\lambda\in \Sigma_{2,\delta_0}$, then
\begin{equation}\label{pmt3.35}
   |\partial_y^\beta D^\sigma_\lambda \psi^\ast_{k,\nu}(y,\Lambda)|\lesssim |k|^{-\gamma_\ast-1/2+\sigma+\beta}M_k.     
\end{equation}

\end{itemize} 

For $\sigma=2, \beta\in\{0,1\}$, we have the following bounds. 

\begin{itemize}

\item If $|\lambda-b(y_{1\ast})|^{1/2}\ll m$, then
    \begin{equation}\label{pmt3.4}
    \begin{split}
    &|\partial_y^\beta D^2_\lambda \psi^\ast_{k,\nu}(y,\Lambda)|\\
    &\lesssim (\delta(\lambda))^{-1-2(2-\gamma_\ast)}|y-y_{1\ast}|^{-1+2(2-\gamma_\ast)} \big(|y-y_{1\ast}|\wedge \frac{1}{|k|}\big)^{\gamma_\ast-2-\beta}|k|^{-1/2}M_k;
    \end{split}
    \end{equation}

\item If $\lambda\in \R\backslash\Sigma_{2,\delta_0}, |\lambda-b(y_{1\ast})|^{1/2}\gtrsim m$, then
   \begin{equation}\label{pmt3.6}
    \|\partial_y^\beta D^2_\lambda \psi^\ast_{k,\nu}(y,\Lambda)\|_{L^2(m/4,4m)}
    \lesssim \delta(\lambda)^{-3/2} \big(\delta(\lambda)\wedge \frac{1}{|k|}\big)^{\gamma_\ast-2-\beta}\frac{1}{\langle\lambda\rangle^{3}}|k|^{-1/2}M_k;   
    \end{equation}

\item If $\lambda\in \Sigma_{2,\delta_0}$, then
    \begin{equation}\label{pmt3.7}
   |\partial_y^\beta D^2_\lambda \psi^\ast_{k,\nu}(y,\Lambda)|\lesssim (\delta(\lambda))^{-1-2(2-\gamma_\ast)}|k|^{-\gamma_\ast+3/2+\beta}M_k.            
   \end{equation}

\end{itemize}

The desired bounds \eqref{pmt0.197} for $h=\psi_{k{\rm nloc}}(t,y)$ follow from the bounds \eqref{pmt3.1}-\eqref{pmt3.7}, by straightforward calculations, using the formula \eqref{pmt0.19}. 

To prove \eqref{pmt0.197} for $h=\psi_{k{\rm loc}}(t,y)$ and \eqref{pmt0.198}, we use the identity \eqref{pmt2.2}, the bound \eqref{RDS4} which we rewrite as follows. Denoting $\psi^\ast_{k,\nu}(y,\Lambda)$ as $\psi^\ast_{k,\nu}(y,\lambda)$ with a slight abuse of notation to emphasize the dependence on $\lambda$, for $\lambda\in\Sigma_{1,\delta_0}$ with $|\lambda-b(y)|\ll\, |b(y)-b(y_{1\ast})|$, we have the bounds
\begin{equation}\label{pmt3.701}
\begin{split}
   & \bigg|\omega_{k,\nu}(y,\Lambda)+\frac{ib''(y)\psi^\ast_{k,\nu}(y,b(y))}{i(\lambda-b(y))+|\nu/k|^{1/3}|y-y_{1\ast}|^{2/3}}\bigg|\\
   &\lesssim \varrho_{1,k}^{\gamma_\ast-1/2}(y ; \Lambda)|y-y_{1\ast}|^{-1/2}\frac{|\nu/k|^{-1/3}|y-y_{1\ast}|^{1/3}}{1+(|\nu/k|^{1/3}|y-y_{1\ast}|^{2/3})^{-2}|\lambda-b(y)|^2}|k|^{-1/2}M_k\\
   &\quad+\varrho_{1,k}^{\gamma_\ast-3/2}(y ; \Lambda)\varrho_1^{1/2}(y ; \Lambda)|\lambda-b(y)|^{-1/2}|k|^{-1/2}M_k.
    \end{split}
\end{equation}

The bounds \eqref{pmt0.198} for $h=\omega_{k{\rm loc}}(t,y)$ follow from \eqref{pmt3.701}. 

To prove \eqref{pmt0.198} with $h=\omega_{k{\rm nloc}}(t,y)$, we use also the following bounds. In view of \eqref{refb1} and Proposition \ref{mainprop}, we have the pointwise estimates for $\lambda\in \Sigma_{1,\delta_0}$ with $|\lambda-b(y)|\gtrsim |b(y)-b(y_{1\ast})|\}$,
\begin{equation}\label{pmt3.702}
\begin{split}
    \big|\omega_{k,\nu}(y,\Lambda)\big|&=\Big|\omega_{k,\nu}^\ast(y,\Lambda)+(\partial_y^2-k^2)\Big\{\sum_{j=1}^2\varphi\big((y-y_{j\ast})/\delta_0\big)\frac{\omega_{0k}(y)}{ib''(y)}\Big\}\Big|\\
    &\lesssim \varrho_{j,k}^{\gamma_\ast}(y ; \Lambda)\varrho_j^{-2}(y ; \Lambda)|k|^{-1/2}M_k,
    \end{split}
\end{equation}
\begin{equation}\label{pmt3.72}
    |\partial_\lambda \omega_{k,\nu}^\ast(y,\Lambda)|\lesssim \varrho_{j,k}^{\gamma_\ast-3/2}(y ; \Lambda)\varrho_j^{-5/2}(y ; \Lambda)|k|^{-1/2}M_k.
\end{equation}
Using the identity 
\begin{equation}\label{pmt3.721}
    \partial_\lambda \omega_{k,\nu}^\ast(y,\Lambda)=\partial_\lambda \omega_{k,\nu}(y,\Lambda)
\end{equation}
and equation \eqref{intP5} we get that
for $\lambda\in\R\backslash\big(\cup_{j = 1}^2 \Sigma_{j, \delta_0} \big)$,
\begin{equation}\label{pmt3.72s}
     |\partial_\lambda \omega_{k,\nu}^\ast(y,\Lambda)|\lesssim \frac{1}{\langle\lambda\rangle^{2}}|k|^{-3/2}M_k.
\end{equation}
For $\lambda\in\Sigma_{2,\delta_0}$, using  \eqref{refb1} and Proposition \ref{mainprop} we get that
\begin{equation}\label{pmt3.72s1}
     |\partial_\lambda \omega_{k,\nu}^\ast(y,\Lambda)|\lesssim |k|^{-1/2}M_k.
\end{equation}
Fix $\tau\in(1,\infty)$ as a parameter to be chosen below. We divide the integral \eqref{pmt0.196} into three regions for $\lambda\in\R$: (I) $\lambda\in\Sigma_{1,\delta_0}$ with $|b(y)-b(y_{1\ast})|\lesssim|\lambda-b(y)|<\tau^2|b(y)-b(y_{1\ast})|$; (II) $\lambda\in\Sigma_{1,\delta_0}$ with $|\lambda-b(y)|\ge \tau^2|b(y)-b(y_{1\ast})|$; (III) $\lambda\in\R\backslash\big[\Sigma_{1,\delta_0}\cup\Sigma_{2,\delta_0}\big]$; (IV) $\lambda\in\Sigma_{2,\delta_0}$. 

Using the bounds \eqref{pmt3.702}-\eqref{pmt3.72s1}, and integrating by parts in $\lambda$ in the regions (II)-(IV), we obtain the bounds for $t\ge1$,
\begin{equation}\label{pmt3.8}
\begin{split}
    &|\omega_{k{\rm nloc}}(t,y)|\lesssim\bigg[|y-y_{1\ast}|^\gamma +\tau^\gamma|y-y_{1\ast}|^\gamma+\frac{1}{|k|t}\tau^{\gamma-2}|y-y_{1\ast}|^{\gamma-2} \bigg]|k|^{-1/2-(2-\gamma)/10}M_k.
\end{split}
\end{equation}
Optimizing in $\tau\in(1,\infty)$, the desired bounds for \eqref{pmt0.198} with $h=\omega_{k{\rm nloc}}(t,y)$ follows.

\subsection{The region $\min\{|y-y_{1\ast}|, |y-y_{2\ast}|\}\lesssim |\nu/k|^{1/4}$.} For the sake of concreteness, we assume that $|y-y_{1\ast}|\lesssim|\nu/k|^{1/4}$. In this case we shall use the following bounds, which are consequences of Proposition \ref{mainprop}. 
\begin{itemize}
    \item If $\lambda\in\Sigma_{1,\delta_0}$, $\sigma\in\{0,1,2\}, \beta\in\{0,1\}$ and $\lambda\in\Sigma_{1,\delta_0}$, 
    \begin{equation}\label{pmt5.1}
    \begin{split}
       & \|\partial_y^\beta D^\sigma_\lambda \psi^\ast_{k,\nu}(y,\Lambda)\|_{L^2(|y-y_{1\ast}|\leq 3\delta(\Lambda)}\lesssim (\delta(\Lambda))^{1/2-\sigma}\Big[\delta(\Lambda)\wedge \frac{1}{|k|}\Big]^{\gamma_\ast-\sigma-\beta}|k|^{-1/2}M_k;
        \end{split}
    \end{equation}

    \item If $\lambda\in\R\backslash\cup_{j\in\{1,2\}}\Sigma_{j,\delta_0}$, $\sigma\in\{0,1,2\}$ and $\beta\in\{0,1\}$, 
    \begin{equation}\label{pmt5.2}
        |\partial_y^\beta D^\sigma_\lambda \psi^\ast_{k,\nu}(y,\Lambda)|\lesssim \langle\lambda\rangle^{-1-\sigma}|k|^{-5/2+\sigma+\beta}M_k;
    \end{equation}

     \item If $\lambda\in\Sigma_{2,\delta_0}$, $\sigma\in\{0,1\}$ and $\beta\in\{0,1\}$,
     \begin{equation}\label{pmt5.3}
   |\partial_y^\beta D^\sigma_\lambda \psi^\ast_{k,\nu}(y,\Lambda)|\lesssim  |k|^{-\gamma_\ast+\sigma+\beta}|k|^{-1/2}M_k;
     \end{equation}

     \item If $\lambda\in\Sigma_{2,\delta_0}$, $\sigma=2$ and $\beta\in\{0,1\}$,
     \begin{equation}\label{pmt5.35}
   |\partial_y^\beta D^2_\lambda \psi^\ast_{k,\nu}(y,\Lambda)|\lesssim \big(\delta(\Lambda)\big)^{-1-2(2-\gamma_\ast)} |k|^{-\gamma_\ast+2+\beta}.
     \end{equation}
    
\end{itemize}
This case is somewhat simpler in fact, since, roughly speaking, we can use the ellipticity of the Orr-Sommerfeld equation to bound derivatives without much loss, using Lemma \ref{ake1}. Indeed, the desired bounds \eqref{pmt0.197}-\eqref{pmt0.198} follow from an analogous argument as in section \ref{sec:intM}, using \eqref{pmt5.1}-\eqref{pmt5.35}, but somewhat simpler since there is no need to separately treat the region $|\lambda-b(y)|\ll|\nu/k|^{1/2}$ which is no longer singular. We omit the details. 
Theorem \ref{thm} is now proved. \\

\section{Proof of the limiting absorption principle (I): non-degenerate region}\label{sec:nondeg2}
In this section we give the proof of the limiting absorption principle in the non-degenerate region when $\lambda\in\R\backslash\big(\Sigma_{1,\delta_0}\cup \Sigma_{2,\delta_0}\big)$ or $\alpha\ge1$. We begin with some technical bounds on various operators that are useful below, and then prove the limiting absorption principle using a compactness argument. 

Denote $\epsilon:=\nu/k$. Recall that $\epsilon \in (0, 1/8]$, $k \in \Z \cap [1, \infty)$, and 
$$(\alpha, \lambda) \in \Big[ (-\sigma_0\epsilon^{1/2}, \infty) \times  \big(\R \setminus (\Sigma_{1, \delta_0}   \cup \Sigma_{2, \delta_0}) \big)\Big]  \cup \Big[ (1, \infty) \times \big(\Sigma_{1, \delta_0} \cup \Sigma_{2, \delta_0} \big)\Big] .$$ 
We also recall the definition \eqref{lapj1} for $T_\Theta$ and set for simplicity of notations $\Delta_k:=\partial_y^2-k^2$.

\subsection{Preliminary bounds}
We define two regions in the  $(\alpha, \lambda )$ plane
\begin{align}
    R_{nd} &:=  \big\{ (\alpha, \lambda) \, | \, \alpha\ge-\sigma_0|\epsilon|^{1/2},  \lambda \in  \R \setminus (\Sigma_{1, \delta_0}  \cup \Sigma_{2, \delta_0}) \big\}, \\
    R_{\alpha} &:= \big\{ (\alpha, \lambda) \, | \, \alpha \geq 1,   \lambda \in \Sigma_{1, \delta_0} \cup \Sigma_{2, \delta_0}     \big\}. 
\end{align}

In $R_{nd}$, which we call the ``non-degenerate region", $\lambda$ is separated from the critical points; in $R_{\alpha}$, which we call the ``$\alpha$ dominated region", $\alpha\geq 1$. Therefore, there is no singularity around the critical points $y_{1\ast}, y_{2\ast}$ in either case when we apply the operator $A_\Theta^{-1}$. 

Throughout the section we assume that $(\alpha,\lambda)\in R_{nd}\cup R_{\alpha}$.
We begin with the following pointwise bounds. 
\begin{lemma}
\label{lem:T:regular}
Suppose that $$m:=\inf_{y\in\T_\tp}(|b(y) - \lambda| + |\alpha| + \epsilon^{\frac{1}{3}}) \gtrsim1.$$
Then for any bounded function $f$ on $\T_\tp$,
\begin{align}
    \sup_{y\in\T_\tp}\Big|\frac{1}{k^\beta}\p_y^{\beta}  T_\Theta f(y) \Big| \lesssim \frac{\norm{f}_{L^{\infty}(\T_\tp)}}{k^2m}, \quad{\rm with}\,\, \beta \in \{0,1 \}.
\end{align}
\end{lemma}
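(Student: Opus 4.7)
The plan is to decompose $T_\Theta = \Delta_k^{-1} \circ A_\Theta^{-1} \circ (ib'' \cdot)$ and estimate each factor separately, exploiting the fact that the hypothesis $m \gtrsim 1$ removes all singularities from the Airy operator $A_\Theta$.

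First, I would bound $A_\Theta^{-1}(ib''f)$ in $L^\infty$. In the non-degenerate region $R_{nd}$, Lemma \ref{ake1}(ii) with $p = \infty$ and $m = 0$ gives
\[
\| A_\Theta^{-1}(ib''f)\|_{L^\infty} \lesssim \Big\| \frac{ib''f}{i(\lambda - b(y)) + |\alpha| + \epsilon^{1/3}} \Big\|_{L^\infty} \lesssim \frac{\|f\|_{L^\infty}}{m},
\]
using $\|b''\|_\infty \lesssim 1$ and the assumption $\inf_y(|b(y)-\lambda|+|\alpha|+\epsilon^{1/3}) \gtrsim m$. In the $\alpha$-dominated region $R_\alpha$ with $\alpha \geq 1$, Lemma \ref{ake1}(i) with $\sigma_1 = \sigma_2 = \sigma_3 = 0$ gives a denominator $\gtrsim |\alpha| \geq 1 \gtrsim m$, yielding the same bound.

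Second, I would use the standard Green's function estimates \eqref{Helm1.0} for $\Delta_k^{-1}$, which give the kernel pointwise bounds $|G_k(y,z)| \lesssim |k|^{-1}e^{-|k||y-z|}$ and $|\partial_y G_k(y,z)| \lesssim e^{-|k||y-z|}$. By Young's inequality applied on $\T_\tp$ (equivalently, direct integration of the exponential kernel which gives $\int_{\T_\tp} e^{-|k||y-z|}\,dz \lesssim 1/|k|$), we obtain for $\beta \in \{0,1\}$
\[
\sup_{y \in \T_\tp} \Big| \tfrac{1}{k^\beta}\partial_y^\beta \Delta_k^{-1} g(y)\Big| \lesssim \frac{\|g\|_{L^\infty}}{k^2}.
\]

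Composing these two estimates with $g = A_\Theta^{-1}(ib''f)$ yields the claimed bound $k^{-\beta}\|\partial_y^\beta T_\Theta f\|_{L^\infty} \lesssim \|f\|_{L^\infty}/(k^2 m)$. This proof is essentially routine given Lemma \ref{ake1} and \eqref{Helm1.0}; the only mild subtlety is to verify that the non-degeneracy hypothesis $m \gtrsim 1$ correctly absorbs the two different regimes $R_{nd}$ and $R_\alpha$ into a single $L^\infty$ bound on $A_\Theta^{-1}$, which is straightforward from the denominators appearing in Lemma \ref{ake1}(i)--(ii).
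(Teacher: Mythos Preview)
Your approach is correct and essentially the same as the paper's, which cites \eqref{Helm1.0}, Proposition \ref{Airy_main}, and the approximation-of-identity property of $ke^{-k|y|}$. One minor technical point: Lemma \ref{ake1} as stated carries the restriction $\alpha \leq 1/4$, so it does not literally cover the region $R_\alpha$ where $\alpha \geq 1$ (which is certainly part of the hypothesis $m\gtrsim 1$). You should either remark that the proof of Lemma \ref{ake1} extends verbatim to large $\alpha$ (Proposition \ref{Airy_main} itself has no upper bound on $\alpha$), or---as the paper does---invoke the kernel bounds of Proposition \ref{Airy_main} directly and integrate against them.
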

\begin{proof}
   The proof is a consequence \eqref{Helm1.0},  Proposition \ref{Airy_main} in the non-degenerate regime, and the fact that $k e^{-k|y|}$ is an approximation of the identity.   
\end{proof}
The bounds from Lemma \ref{lem:T:regular} are sufficient for our purposes when $(\alpha, \lambda) \in R_{\alpha}$. 
For $y \in \T_\tp$  such that $$|b(y) - \lambda| + |\alpha| + \epsilon^{\frac{1}{3}} \ll 1,$$ we need to carry out a finer analysis of the behavior near the critical layer.
To begin with, we decompose $T_\Theta$ as follows. Let $\varphi_0\in C_c^\infty(-1,1)$ with $\varphi_0\equiv 1$ on $[-1/2,1/2]$.
Then for any number $L>0$, define for $y\in\R$,
\begin{equation}
\label{cutoff:scaled}
    \varphi_L(y) := \varphi_0 \left( \frac{y}{L} \right).
\end{equation}
We now decompose for any $f\in H^1(\T_\tp)$,
\begin{align}
\label{Tnd:decomp}
    T_{\Theta }f &= \sum_{\ell=0}^2 T_{\Theta, \ell} f 
    \end{align}
where 
\begin{align}
    T_{\Theta,  0} f(y) := \int_{\T_\tp} G_k(y,z) \ai(f)(z) \Big[1  - \sum_{\ell=1}^2 \varphi_{\theta }(z - y_\ell)  \Big] \, dz  
\end{align}
and
 \begin{align}
         T_{\Theta,  \ell }f(y) := \int_{\T_\tp} G_k(y,z)  \varphi_{\theta }(z - y_\ell) \ai(f)(z) \, dz 
    \end{align}
where $b(y_\ell) = \lambda, \ell \in \{ 1, 2 \}$ and $\theta = \theta(\delta_0) > 0$ is a sufficiently small constant. 
We note that for $\lambda$ in the interior of $b(\T_\tp)$, the set $b^{-1}(\lambda)$ has cardinality 2. 

Since the argument of $T_{\Theta, 0}$ is supported away from the critical layer we have the following pointwise bounds.
\begin{lemma}
\label{lem:to}
  There exists $\theta_0 \in (0,1)$, depending only on $b(y)$, such that for $\theta \in (0, \theta_0]$ and any $f \in L^{\infty}(\T_\tp)$ we have the following bounds:
    \begin{equation}
        \frac{1}{k^\beta}\big|\p_y^\beta T_{\Theta, 0} f(y)\big| \lesssim \frac{\norm{f}_{L^{\infty}(\T_\tp )}}{k^2},  \quad \beta \in \{0, 1, 2 \}.
    \end{equation}
\end{lemma}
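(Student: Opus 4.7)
The plan is to control $T_{\Theta,0}$ by exploiting the separation of the integrand from the critical layer built into the cutoff $1 - \sum_{\ell=1}^2 \varphi_\theta(\cdot - y_\ell)$, and then apply the pointwise bounds on the Helmholtz kernel $G_k$ from Section \ref{sub1}.

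I would first fix $\theta_0 \in (0,1)$ small enough (depending only on $b$) so that on the support of $1 - \sum_{\ell=1}^2 \varphi_\theta(z - y_\ell)$ one has the lower bound
\begin{equation*}
    |b(z) - \lambda| + |\alpha| + \epsilon^{1/3} \gtrsim \theta.
\end{equation*}
In the non-degenerate region $R_{nd}$ this follows because the zeros $y_\ell$ of $b(\cdot) - \lambda$ are separated from the critical set $\{y_{1*}, y_{2*}\}$ (as $\lambda \notin \Sigma_{1,\delta_0}\cup\Sigma_{2,\delta_0}$), so $|b'(y_\ell)| \gtrsim 1$ and therefore $|b(z) - \lambda| \gtrsim |z - y_\ell| \gtrsim \theta$ on the support. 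In the $\alpha$-dominated region $R_\alpha$ the bound is immediate since $\alpha \geq 1$. Combined with the non-degenerate kernel estimate \eqref{eq:kernel:bounds:3} (or equivalently Lemma \ref{ake1}(ii) with $p=\infty$), this yields the pointwise estimate, for every $z$ in the support of the cutoff,
\begin{equation*}
    \big| A_\Theta^{-1} f(z) \big| \lesssim \frac{\|f\|_{L^\infty(\T_\tp)}}{|b(z) - \lambda| + |\alpha| + \epsilon^{1/3}} \lesssim \theta^{-1}\|f\|_{L^\infty(\T_\tp)}.
\end{equation*}

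For $\beta \in \{0,1\}$, I would insert this pointwise bound together with the Helmholtz kernel estimates \eqref{Helm1.0} into the integral defining $T_{\Theta,0}f$ and compute
\begin{equation*}
    \big|\partial_y^\beta T_{\Theta,0} f(y)\big| \lesssim \theta^{-1}\|f\|_{L^\infty(\T_\tp)} \int_{\T_\tp} k^{\beta-1} e^{-k|y-z|}\,dz \lesssim \theta^{-1} k^{\beta - 2} \|f\|_{L^\infty(\T_\tp)},
\end{equation*}
which gives the stated bound after dividing by $k^\beta$ and absorbing $\theta^{-1}$ into the implicit constant.

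The case $\beta = 2$ is the only delicate point since differentiating $G_k$ twice produces a delta. I would bypass this using the defining equation \eqref{eq:Helmoltz} to write
\begin{equation*}
    \partial_y^2 T_{\Theta,0}f(y) = k^2 T_{\Theta,0} f(y) - A_\Theta^{-1}(f)(y)\Big[1 - \sum_{\ell=1}^2 \varphi_\theta(y - y_\ell)\Big].
\end{equation*}
The first term on the right is controlled by the $\beta=0$ estimate above (giving a contribution of size $\|f\|_{L^\infty}$ after multiplication by $k^2$), and the second either vanishes (when $y$ lies in $\varphi_\theta(\cdot - y_\ell)=1$ for some $\ell$) or is bounded by $\theta^{-1}\|f\|_{L^\infty}$ via the same pointwise estimate. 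Dividing by $k^2$ yields the desired bound. The main (really, only) obstacle is the geometric statement at the start---uniform lower bound $|b'(y_\ell)|\gtrsim 1$ at the zeros of $b-\lambda$, which is a direct consequence of $\lambda \in \R\setminus(\Sigma_{1,\delta_0}\cup\Sigma_{2,\delta_0})$ together with $b\in C^4(\T_\tp)$ satisfying Assumption \ref{MaAs}.
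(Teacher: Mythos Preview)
Your proposal is correct and follows essentially the same approach as the paper: bound $A_\Theta^{-1}f$ pointwise on the support of the cutoff using Proposition~\ref{Airy_main}, then integrate against the Green's function kernel. The paper's proof is terser (it omits the $\beta=2$ argument via the Helmholtz equation that you spell out), and it cites Lemma~\ref{mGk50} where it should cite the standard bound \eqref{Helm1.0}, so your version is in fact a slight improvement in precision.
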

\begin{proof}
Proposition \ref{Airy_main} implies that 
\begin{equation}
   \Big| \ai(f)(z) \big(1  - \sum_{\ell=1}^2 \varphi_{\theta }^2(z - y_\ell) \big)\Big| \lesssim_{\theta_0} \norm{f}_{L^{\infty}(\T_\tp)}.
\end{equation}
The proof is then an immediate consequence of Lemma \ref{mGk50} and the fact that $k e^{-k|y|}$ is an approximation of identity.   
\end{proof}

We now turn our attention to $T_{\Theta, \ell }, \ell \in \{ 1,2 \}$. The following result is useful in characterizing the singular behavior near the critical layer. 

\begin{lemma}[]
\label{lem:Airybounds}
 There exists $\theta_0 \in (0,1)$,  such that for $\theta \in (0, \theta_0], \,\ell\in\{1,2\}$, the following statement holds.
Suppose that $f\in H^1_k(\T_\tp)$ with
\begin{equation}
\label{eq:f:H1k:regularity}
  M:= \norm{f}_{H_k^1(\T_\tp)}. 
\end{equation}
We view the function $\varphi_{ \theta }( y - y_\ell) \ai( f)(y)$
 as a function on $\R$.
Then
\begin{equation}
\label{eq:Airy:freq:pw:non}
\sup_{\xi \in \R}\big| \mathcal{F}\left(\varphi_{ \theta }( \cdot - y_\ell) \ai( f) \right)(\xi)\big|  \lesssim  M,
\end{equation}
where $\mathcal{F}(h)$ denotes the Fourier transform of $h$ and $y_\ell$ satisfies $b(y_\ell) = \lambda$.
\end{lemma}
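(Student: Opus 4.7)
The bound $\sup_\xi |\hat g(\xi)| \lesssim M$ with $g = \varphi_\theta(\cdot - y_\ell)\,A_\Theta^{-1}f$ will be established by exploiting the structure of $A_\Theta^{-1}$ near the critical layer $\{y : b(y) = \lambda\}$. In the $\alpha$-dominated regime $(\alpha,\lambda)\in R_\alpha$ (so $\alpha\geq 1$), Proposition~\ref{Airy_main} yields $\|A_\Theta^{-1}\|_{L^\infty\to L^\infty}\lesssim \alpha^{-1}\lesssim 1$ (after integrating the kernel in $z$ and noting that $\alpha$ dominates the $\langle\cdot\rangle$-weights regardless of which regime of the proposition applies). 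Combined with the Sobolev bound $\|f\|_{L^\infty}\lesssim k^{-1/2}M$, this gives $\|g\|_{L^1}\lesssim \theta\,k^{-1/2}M\lesssim M$, whence $|\hat g(\xi)|\leq \|g\|_{L^1}\lesssim M$ uniformly.

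The main work is in the non-degenerate regime $(\alpha,\lambda)\in R_{nd}$, where $|b'(y_\ell)|\gtrsim 1$ since $\lambda$ is bounded away from the critical values by $\delta_0$. The plan is to decompose $u:=A_\Theta^{-1}f = u_{\mathrm{main}} + u_{\mathrm{err}}$ with
\begin{equation*}
u_{\mathrm{main}}(y) = \frac{f(y_\ell)\,\chi(y - y_\ell)}{i\,b'(y_\ell)(y - y_\ell) + c(\alpha + \epsilon^{1/3})},
\end{equation*}
where $\chi\in C_c^\infty(-\theta/2,\theta/2)$ and $c$ is a suitable universal positive constant. When $\alpha\ll \epsilon^{1/3}$ this decomposition is furnished (up to the choice of $c$) by Lemma~\ref{ake6}(ii); in the intermediate range $\epsilon^{1/3}\lesssim\alpha<1$, where Lemma~\ref{ake6}(ii) does not apply, the analogous decomposition will be built by hand from the rescaled model operator $\partial_s^2-is-\tilde\alpha$ on $\R$ (with $s=\epsilon^{-1/3}|b'(y_\ell)|^{1/3}(y-y_\ell)$ and $\tilde\alpha=\alpha\,\epsilon^{-1/3}|b'(y_\ell)|^{-2/3}$), using the pointwise kernel bounds of Proposition~\ref{Airy_main} case~(1) to control the residuals from freezing $b'(y)\approx b'(y_\ell)$ and from $\tilde\alpha\neq 0$.

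The two parts are then estimated separately. For any $\rho>0$ and $\psi\in C_c^\infty$, the Fourier transform of $\psi(y)/(iy+\rho)$ is uniformly bounded in $\xi$ and in $\rho$, because by residue calculus $\mathcal{F}(1/(iy+\rho))$ is a bounded exponential supported on a half-line, and convolution with $\hat\psi\in\mathcal{S}$ preserves $L^\infty$. Combined with $|f(y_\ell)|\leq \|f\|_{L^\infty}\lesssim k^{-1/2}M$, this yields $|\widehat{\varphi_\theta u_{\mathrm{main}}}(\xi)|\lesssim M$ uniformly in $\xi$. For the error, the pointwise bounds $|w_1(y)|\lesssim |y-y_\ell|^{-1/2}M$ and $|u_{\mathrm{err}}-w_1|(y)\lesssim M\,\epsilon^{-1/3}(1+\epsilon^{-2/3}|y-y_\ell|^2)^{-1}$ (recalling the normalization $M_{\ref{ake6}}=\|f\|_{L^2}+k^{-1}\|\partial_y f\|_{L^2}\sim k^{-1}M$) are both integrable on $\mathrm{supp}\,\varphi_\theta$ with $L^1$-norm $\lesssim M$, giving $|\widehat{\varphi_\theta u_{\mathrm{err}}}(\xi)|\leq \|\varphi_\theta u_{\mathrm{err}}\|_{L^1}\lesssim M$. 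The main obstacle will be carrying out the decomposition uniformly through the intermediate range $\alpha\in[\epsilon^{1/3},1)$, where the layer width transitions from $\epsilon^{1/3}$ to $(\epsilon/\alpha)^{1/2}$ and the explicit Airy Green's function analysis must be tracked with care so that both the $L^\infty$ bound on the main part and the $L^1$ bound on the error remain uniform in $\alpha$, $\epsilon$, and $k$.
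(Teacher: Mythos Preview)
Your approach is correct, and its ingredients overlap substantially with the paper's, but the paper's proof is more direct and avoids your case distinction on the size of $\alpha$. Instead of splitting into $\alpha\ll\epsilon^{1/3}$ (where you invoke Lemma~\ref{ake6}(ii)) and $\epsilon^{1/3}\lesssim\alpha<1$ (where you rebuild the model by hand), the paper uses a single model profile: with $\tilde\alpha=\alpha\epsilon^{-1/3}|b'(y_\ell)|^{-2/3}$, define $w_{c,\ell}$ as the solution on $\R$ of $\epsilon\partial_y^2 w-\alpha w+ib'(y_\ell)(y_\ell-y)w=f(y_\ell)$, which has the explicit Fourier representation
\[
\widehat{W}(\xi)=-\sqrt{2\pi}\exp\!\Big(\tfrac{\xi^3}{3}+\tilde\alpha\,\xi\Big)\mathbbm{1}_{(-\infty,0]}(\xi),
\]
manifestly bounded by $\sqrt{2\pi}$ uniformly in $\tilde\alpha\ge -\sigma_0\epsilon^{1/6}$ (and hence for all $\alpha$ in the allowed range). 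The remainder $w_R:=A_\Theta^{-1}f-\sum_\ell\varphi_\ell w_{c,\ell}$ is then controlled pointwise by Proposition~\ref{Airy_main}(1) as $|w_R(y)|\lesssim M\epsilon^{-1/6}\langle\epsilon^{-1/3}\alpha,\epsilon^{-1/3}(b(y)-\lambda)\rangle^{-1/2}$, which integrates to $O(M)$ over $|y-y_\ell|<\theta$. Note that Lemma~\ref{ake6} itself is proved using this very function $W$ (see~\eqref{ake104}--\eqref{ake106}), so your invocation of it for small $\alpha$ is essentially the paper's argument run through an intermediate packaging. The ``main obstacle'' you flag---uniformity through $\alpha\in[\epsilon^{1/3},1)$---is a non-issue once the explicit formula for $\widehat{W}$ is in hand; your rational-function main part $f(y_\ell)\chi/(ib'(y_\ell)(y-y_\ell)+c(\alpha+\epsilon^{1/3}))$ is the leading asymptotic of $w_{c,\ell}$, and replacing the asymptotic by the exact model solution eliminates the need to track the transition in layer width from $\epsilon^{1/3}$ to $(\epsilon/\alpha)^{1/2}$.
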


\begin{proof}
 With a slight abuse of notation, denote $\varphi_\ell(y) := \varphi\big( \frac{2(y- y_\ell)}{\theta } \big)$, for $\ell \in \{ 1,2\}, y\in\R$. Define also $w_{c,\ell}$ as the solution to 
\begin{equation}\label{Haoadd1}
    \epsilon \p_y^2 w_{c,\ell}(y) -\alpha w_{c,\ell}(y)   + ib'(y_{\ell})(y_{\ell} - y)w_{c , \ell}(y)  = f(y_{\ell}), \quad y \in \R.
\end{equation}
We can solve $w_{c,\ell}$ explicitly as
\begin{equation}
\label{eq:wc:formula}
    w_{c,\ell}(y,y_\ell) = \frac{\ f(y_\ell)  }{b'(y_\ell)} \left(\frac{b'(y_\ell)}{\epsilon} \right)^{\frac{1}{3}}
    W_\ell \left(\left( \frac{ b'(y_\ell)}{\epsilon} \right)^{\frac{1}{3}}   (y-y_\ell )\right),
\end{equation}
where 
\begin{equation}
    \widehat{W}_\ell(\xi) = -\sqrt{2\pi} \exp \left( \frac{\xi^3}{3} + \frac{\ept \alpha \xi }{(b'(y_\ell))^{\frac{2}{3}}}  \right) \mathbbm{1}_{(-\infty, 0]}(\xi),  \quad \xi \in \R.
\end{equation}
Define $w_R: \T_\tp \to \C$ as 
\begin{align}
\label{eq:Airy:decomp}
    w_R := \ai(f) - \sum_{\ell \in \{ 1,2\} }\varphi_\ell w_{c,\ell}, 
\end{align}
where we view $\varphi_{\ell} w_{c,\ell}$ as functions on $\T_\tp$. 
It follows from \eqref{Haoadd1} and \eqref{eq:Airy:decomp} that $w_R$ satisfies the equation for $y\in\T_\tp$,
\begin{align}
    &\epsilon\p_y^2 w_R(y) - \alpha w_R(y) + i(\lambda -b(y))w_R(y)\notag\\
    &= f(y) -  \sum_{\ell \in \{1,2 \}} \varphi_{\ell}(y)f(y_\ell) - \epsilon\sum_{\ell \in \{1,2 \}} \Big[ \p_y \varphi_{\ell}(y) \p_y w_{c,\ell}(y) + \p_y^2\varphi_{\ell}(y) w_{c,\ell}(y) \Big]      \notag\\
    &\quad+i\sum_{\ell \in \{1,2 \}} \varphi_{\ell}(y)w_{c,\ell}(y)\Big[ b(y_\ell) + b'(y_\ell)( y - y_\ell) 
  - b(y) 
    \Big].
\end{align}
In view of Proposition \ref{Airy_main}, we conclude that for
$y\in\T_\tp$,
\begin{equation}
\label{eq:wR:bdd}
    |w_R(y)| \lesssim   \frac{M\epsilon^{-\frac{1}{6}} }{\lb \ept|\alpha|,  \ept (b(y)-\lambda) \rb^{\frac{1}{2}} }.
\end{equation}
Therefore,
\begin{equation}
   \big|\mathcal{F}(\varphi_{ \theta } (\cdot - y_\ell) \ai(f))  (\xi)\big|\lesssim  M,  \quad \ell \in \{ 1, 2\},
\end{equation}
which completes the proof.
\end{proof}
\begin{remark}
    Note that $\varphi_{ \theta } (y - y_\ell) \ai(f)(y)$ can also be viewed as a periodic function on $\T_\tp$. Then the Fourier coefficients $\big[\varphi_\theta(\cdot - y_\ell) \ai(f)\big]_l$ are bounded up to an implicit constant by $M$, 
    \begin{equation}
        \sup_{l \in  \frac{2\pi\mathbb{Z}}{\tp}} \Big|\big[\varphi_\theta(\cdot - y_\ell) \ai(f)\big]_l\Big| \lesssim M.
    \end{equation}
\end{remark}
Recalling the definition of $T_{ \Theta, \ell }f, \ell \in \{1 , 2 \} $ as $\Delta_k^{-1}\varphi_{\theta }(\cdot - y_\ell) \ai(i b''f)$ and the Fourier multiplier representation of $\Delta_k^{-1}$, we have as a consequence of Lemma \ref{lem:Airybounds} the following bounds for Fourier coefficients of $T_{\Theta, \ell }f$:
\begin{corollary}
\label{coro:t:freq}
Let $\norm{f}_{H_k^1} = M$. Then we have  
    \begin{equation}
    \big|[T_{ \Theta, \ell } f]_l\big| \lesssim \frac{M}{l^2 + k^2}, \quad k \in \Z ,\, l \in  \frac{2\pi\mathbb{Z}}{\tp}.
\end{equation}
\end{corollary}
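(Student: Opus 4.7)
The plan is to combine Lemma \ref{lem:Airybounds} with the Fourier multiplier representation of $\Delta_k^{-1}$. Recalling that $T_{\Theta,\ell} f = \Delta_k^{-1}\bigl[\varphi_\theta(\cdot - y_\ell)\, A_\Theta^{-1}(ib''f)\bigr]$, and that $\Delta_k^{-1}$ acts on the $l$-th Fourier mode of any periodic function by multiplication by $-(l^2+k^2)^{-1}$, the Fourier coefficient at frequency $l$ is
\begin{equation*}
[T_{\Theta,\ell} f]_l = -\frac{1}{l^2+k^2}\bigl[\varphi_\theta(\cdot - y_\ell)\, A_\Theta^{-1}(ib''f)\bigr]_l.
\end{equation*}
Thus the entire problem reduces to showing that the bracketed Fourier coefficients are bounded in absolute value by a constant multiple of $M$, uniformly in $l$.

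First I would verify that $ib'' f$ has $H^1_k$ norm comparable to $M$. Since $b \in C^4(\T_\tp)$ by Assumption \ref{MaAs}, we have $\|b''\|_{C^1} \lesssim 1/\varkappa$, and therefore
\begin{equation*}
\|ib''f\|_{H^1_k(\T_\tp)} \lesssim \|f\|_{H^1_k(\T_\tp)} = M
\end{equation*}
by the product rule together with the definition \eqref{Hmk} of the $H^1_k$ norm. Next I would apply Lemma \ref{lem:Airybounds} with $f$ replaced by $ib''f$. The lemma, combined with the remark immediately following it (which records that the Fourier coefficient bound on $\T_\tp$ follows from the bound on $\R$ for a compactly supported function), then yields
\begin{equation*}
\sup_{l \in \frac{2\pi\Z}{\tp}} \bigl|\bigl[\varphi_\theta(\cdot - y_\ell)\, A_\Theta^{-1}(ib''f)\bigr]_l\bigr| \lesssim M.
\end{equation*}

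Finally I would combine the two displays above to conclude $|[T_{\Theta,\ell}f]_l| \lesssim M/(l^2+k^2)$. I do not anticipate any genuine obstacle here: the only subtle point is the passage from the real-line Fourier transform bound in Lemma \ref{lem:Airybounds} to the periodic Fourier coefficient bound, but this is justified by the compact support of $\varphi_\theta(\cdot - y_\ell)$ inside a single fundamental domain of $\T_\tp$ (for $\theta \leq \theta_0$ sufficiently small), which means the periodic Fourier coefficients can be read off as samples of the real-line Fourier transform up to a fixed multiplicative constant depending only on $\tp$.
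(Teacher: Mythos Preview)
Your proposal is correct and follows exactly the approach the paper indicates: the paper simply states that the corollary follows from Lemma \ref{lem:Airybounds} together with the Fourier multiplier representation of $\Delta_k^{-1}$, and you have spelled out precisely these two ingredients plus the routine observation that $\|ib''f\|_{H^1_k}\lesssim M$.
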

Finally, combining  Lemma \ref{lem:T:regular}, Lemma \ref{lem:to} and Corollary \ref{coro:t:freq}, we can conclude the following bounds for $T_\Theta$  on $H_k^1(\T_\tp)$.
\begin{lemma}
\label{lem:T:bdd}
 $T_\Theta: H_k^1(\T_\tp) \to H_k^1(\T_\tp)$ satisfies the bounds
    \begin{equation}
\label{eq:T:k:bdd}
\norm{T_\Theta}_{ H_k^1(\T_\tp)\to H^1_k(\T_\tp)} \lesssim k^{-1/2}.
\end{equation}

 Moreover, the family of operators $T_\Theta$ has the following ``uniform compactness" property. More precisely, assume that $ \ell, k_\ell \in  \Z \cap [1, \infty) $ and that $f_\ell$ is a sequence of functions satisfying $\norm{f_\ell}_{H_{k_\ell}^1(\T_\tp)} = 1$. Let  $$\Theta_\ell:=(\epsilon_\ell, \alpha_\ell, k_\ell, \lambda_\ell) \in \big(0, 1/8 \big)\times (-\sigma_0 \epsilon_\ell^{1/2}, 1) \times \big (\Z \cap [1, \infty)  \big ) \times (\Sigma_0\setminus \cup_{j \in \{ 1,2\}   }   \Sigma_{j, \delta_0}).$$ Then for any $k_0\in\Z\cap[1,\infty)$, the sequence  
\begin{equation}
    T_{\Theta_\ell} (f_\ell) 
\end{equation}
has a convergent subsequnece in $H_{k_0}^1(\T_\tp)$.
\end{lemma}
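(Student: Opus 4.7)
The plan is to prove the operator norm bound and the uniform compactness property by exploiting the decomposition $T_\Theta = T_{\Theta,0}+T_{\Theta,1}+T_{\Theta,2}$ introduced in \eqref{Tnd:decomp}. The term $T_{\Theta,0}$ is pointwise-regular (Lemma \ref{lem:to}) so it behaves essentially like a smoothing operator, whereas the terms $T_{\Theta,\ell}$ for $\ell\in\{1,2\}$, which capture the singular behavior near the critical layer, must be analyzed on the Fourier side via Corollary \ref{coro:t:freq}.

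For the operator norm bound \eqref{eq:T:k:bdd}, I would first handle $T_{\Theta,0}$. Since $\|f\|_{L^\infty(\T_\tp)}\lesssim k^{-1/2}\|f\|_{H^1_k(\T_\tp)}$ by a standard Sobolev inequality, Lemma \ref{lem:to} gives $|\partial_y^\beta T_{\Theta,0}f(y)|\lesssim k^{\beta-2}\|f\|_{L^\infty}\lesssim k^{\beta-5/2}\|f\|_{H^1_k}$ for $\beta\in\{0,1\}$, which is even better than the claimed bound. For $T_{\Theta,\ell}$ with $\ell\in\{1,2\}$, I would use Corollary \ref{coro:t:freq} to write, with $M=\|f\|_{H^1_k}$,
\begin{equation*}
\|T_{\Theta,\ell}f\|_{H^1_k(\T_\tp)}^2=\sum_{l\in 2\pi\Z/\tp}(k^2+l^2)|[T_{\Theta,\ell}f]_l|^2\lesssim M^2\sum_{l\in 2\pi\Z/\tp}\frac{1}{l^2+k^2}\lesssim \frac{M^2}{k},
\end{equation*}
where the last inequality follows from comparison with $\int_\R (x^2+k^2)^{-1}\,dx\lesssim k^{-1}$. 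Summing the three contributions yields the desired $k^{-1/2}$ bound.

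For the uniform compactness property I would distinguish two cases. If the sequence $k_\ell$ is unbounded, then along a subsequence $k_\ell\to\infty$, and the operator norm bound directly yields
\begin{equation*}
\|T_{\Theta_\ell}(f_\ell)\|_{H^1_{k_0}(\T_\tp)}^2 \leq k_0^2\|T_{\Theta_\ell}(f_\ell)\|_{L^2}^2+\|\partial_y T_{\Theta_\ell}(f_\ell)\|_{L^2}^2\lesssim k_0^2 k_\ell^{-3}+k_\ell^{-1}\longrightarrow 0,
\end{equation*}
so in this case $T_{\Theta_\ell}(f_\ell)\to 0$ strongly in $H^1_{k_0}(\T_\tp)$. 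If instead $k_\ell$ is bounded, I would pass to a subsequence along which $k_\ell$ is a constant $\tilde k$. For $T_{\Theta_\ell,0}(f_\ell)$, the uniform pointwise bounds on $\partial_y^\beta T_{\Theta_\ell,0}(f_\ell)$ for $\beta\in\{0,1,2\}$ from Lemma \ref{lem:to} (combined with Sobolev embedding for the argument $f_\ell$) yield a uniform $H^2(\T_\tp)$ bound, so the Rellich--Kondrachov embedding $H^2(\T_\tp)\hookrightarrow H^1(\T_\tp)$ produces a convergent subsequence in $H^1(\T_\tp)\cong H^1_{k_0}(\T_\tp)$. For $T_{\Theta_\ell,\ell^\prime}(f_\ell)$ with $\ell^\prime\in\{1,2\}$, the Fourier bound $|[T_{\Theta_\ell,\ell^\prime}f_\ell]_l|\lesssim (l^2+\tilde k^2)^{-1}$ yields, for any fixed $s<3/2$,
\begin{equation*}
\sum_{l\in 2\pi\Z/\tp}(l^2+\tilde k^2)^s|[T_{\Theta_\ell,\ell^\prime}f_\ell]_l|^2\lesssim \sum_{l\in 2\pi\Z/\tp}(l^2+\tilde k^2)^{s-2}<\infty,
\end{equation*}
uniformly in $\ell$. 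Choosing $s\in(1,3/2)$ and invoking the compact embedding $H^s(\T_\tp)\hookrightarrow H^1(\T_\tp)$ extracts a convergent subsequence in $H^1_{k_0}(\T_\tp)$.

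The main obstacle is the second part: since the Fourier bound from Corollary \ref{coro:t:freq} only decays like $(l^2+k^2)^{-1}$, one does \emph{not} obtain a uniform $H^2$ estimate on $T_{\Theta,\ell^\prime}f$ for $\ell^\prime\in\{1,2\}$, so the compactness cannot be extracted simply by inverting $\Delta_k$ and using the fact that $\Delta_k:H^2\to L^2$ is an isomorphism. The remedy, as sketched above, is the fractional-Sobolev regularity $H^s$ for some $s\in(1,3/2)$, which is just enough to beat the $H^1$ embedding while still being produced by the $(l^2+k^2)^{-1}$ decay. A secondary technical point is that one must verify that the constants implicit in Lemma \ref{lem:to} and Corollary \ref{coro:t:freq} are uniform in the whole parameter range $\Theta_\ell$ under consideration, which follows from the uniform kernel bounds in Proposition \ref{Airy_main} and Lemma \ref{lem:Airybounds}.
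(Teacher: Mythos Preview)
Your proposal is correct and follows the same route the paper indicates (the paper itself just writes ``combining Lemma \ref{lem:T:regular}, Lemma \ref{lem:to} and Corollary \ref{coro:t:freq}'' without giving details, so you have accurately reconstructed the intended argument). The Fourier-side computation $\sum_l(l^2+k^2)^{-1}\lesssim k^{-1}$ is exactly how the $k^{-1/2}$ bound emerges from Corollary \ref{coro:t:freq}, and your use of uniform $H^s$ control with $s\in(1,3/2)$ to extract compactness via Rellich is a clean way to handle the pieces $T_{\Theta,\ell'}$ whose Fourier decay is one order short of $H^2$.

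One small omission: the decomposition $T_\Theta=T_{\Theta,0}+T_{\Theta,1}+T_{\Theta,2}$ is only set up when $\lambda$ lies in the interior of $b(\T_\tp)$ and $\alpha$ is small, so that the two preimages $y_1,y_2$ exist and are separated. For the operator norm bound you still need to cover $(\alpha,\lambda)\in R_\alpha$ and the case $\lambda\notin b(\T_\tp)$; in both situations $m=\inf_y(|b(y)-\lambda|+|\alpha|+\epsilon^{1/3})\gtrsim 1$ (the latter because $\lambda\notin\cup_j\Sigma_{j,\delta_0}$ forces $\lambda$ to stay away from the extrema of $b$), so Lemma \ref{lem:T:regular} applies directly and gives the even stronger bound $\lesssim k^{-3/2}$. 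This is why the paper also cites Lemma \ref{lem:T:regular}. For the compactness statement this issue does not arise, since there $\lambda_\ell$ is restricted to the continuous spectrum away from the critical values and $\alpha_\ell<1$, so the decomposition is always available.
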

For applications below, we need to study the singular behavior of $\ai(f)$ as $\epsilon \to 0$, which is useful to connect the spectral assumption on the invisicd Rayleigh operator to the viscous Orr-Sommerfed operator.
\begin{lemma}
\label{lem:limiting:behavior}
Let $f \in H_k^1(\T_\tp)$. Then $\ai(f)$ has the following limiting behavior for $y\in\T_\tp$ in the sense of distributions.
\begin{itemize}
    \item If $\alpha_0 \neq 0$ or $\lambda_0 \notin b(\T_\tp)$,
    \begin{equation}
    \label{eq:alpha0}
         \lim_{\epsilon \to 0, \, \alpha \to \alpha_0, \, \lambda \to \lambda_0 } \ai(f)(y) = \frac{ -f(y)}{ \alpha_0 +   i(b(y) - \lambda_0)};
    \end{equation}

    \item If $\lambda_0 \in b(\T_\tp) \setminus \cup_{j \in \{1 ,2 \}}  \Sigma_{j, \delta_0}$,
    \begin{equation}
    \label{eq:alpha+}
    \lim_{\epsilon \to 0,\,  \alpha \to  0, \, \lambda \to \lambda_0} \ai(f)(y) = {\rm P.V.} \frac{ -f(y)}{ i(b(y) - \lambda_0)} -  \pi \sum_{b(z) = \lambda_0} \frac{f(z)}{|b'(z)|} \delta(y-z).
\end{equation}

\end{itemize}

\end{lemma}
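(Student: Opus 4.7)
The strategy is to reduce to the action of $A_\Theta^{-1}$ on smooth test functions via duality. The operator $A_\Theta$ is symmetric with respect to the bilinear pairing $\int_{\T_\tp} u(y)v(y)\,dy$ (without conjugation), so the Green's function satisfies $k(y,z;\Lambda) = k(z,y;\Lambda)$, and consequently
\begin{equation*}
\int_{\T_\tp} A_\Theta^{-1}(f)(y)\,\phi(y)\,dy = \int_{\T_\tp} f(y)\,A_\Theta^{-1}(\phi)(y)\,dy
\end{equation*}
for every $\phi \in C^\infty(\T_\tp)$. Since the claimed distributional convergence is tested against smooth $\phi$, it suffices to identify the limit of $A_\Theta^{-1}(\phi)$ when $\phi$ is smooth.

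For Case 1, when either $\alpha_0 \neq 0$ or $\lambda_0 \notin b(\T_\tp)$, the candidate limit $u_0(y) := -\phi(y)/(\alpha_0 + i(b(y) - \lambda_0))$ is a bona fide smooth function, since the denominator is bounded away from zero by hypothesis. A direct computation gives
\begin{equation*}
A_\Theta u_0 - \phi = \epsilon\,u_0'' + \phi \cdot \Big( \tfrac{-\alpha + i(\lambda - b)}{-\alpha_0 - i(b - \lambda_0)} \cdot (-1) - 1\Big),
\end{equation*}
which tends to zero uniformly in $y$ as $(\epsilon, \alpha, \lambda) \to (0, \alpha_0, \lambda_0)$. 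Applying the quantitative $L^\infty$-bound from Lemma \ref{ake1} (the non-degenerate case \eqref{ake4}) then yields $\|A_\Theta^{-1}(\phi) - u_0\|_{L^\infty(\T_\tp)} \to 0$, which implies the distributional limit \eqref{eq:alpha0}.

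Case 2 is more delicate, since the naive pointwise limit fails at $\{y : b(y) = \lambda_0\}$. The plan is to invoke the Airy-scaled decomposition from the proof of Lemma \ref{lem:Airybounds} applied to the smooth input $\phi$, namely $A_\Theta^{-1}(\phi) = \sum_\ell \varphi_\ell w_{c,\ell}^\phi + w_R^\phi$. For the concentrated pieces $\varphi_\ell w_{c,\ell}^\phi$, the explicit Airy-type formula \eqref{eq:wc:formula} together with the rescaling $\xi = (b'(y_\ell)/\epsilon)^{1/3}(y - y_\ell)$ reduces the computation to evaluating the (improper) integral $\int_\R W_\ell(\xi)\,d\xi$ in the limit $\tilde\alpha = \epsilon^{-1/3}\alpha/(b'(y_\ell))^{2/3} \to 0$. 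Crucially, the Fourier-side expression $\widehat{W}_\ell$ has a jump discontinuity at $\zeta = 0$ as $\tilde\alpha\to 0$, and interpreting $\int W_\ell\,d\xi$ via the symmetric average of the one-sided limits $\widehat{W}_\ell(0^\pm)$ produces the coefficient $-\pi/|b'(y_\ell)|$. This identifies $\varphi_\ell w_{c,\ell}^\phi \to -\pi\,\phi(y_\ell)\,\delta(y - y_\ell)/|b'(y_\ell)|$ in distributions. For the smooth remainder $w_R^\phi$, the right-hand side of its equation vanishes at the points $y_\ell$, so dividing by $i(b - \lambda_0)$ produces a bounded function; the Case 1 argument applied on compact subsets of $\T_\tp \setminus \{y_\ell\}_\ell$ shows $w_R^\phi$ converges pointwise there, and after regrouping with the delta contributions this reconstitutes the principal value $\mathrm{P.V.}(-\phi/(i(b-\lambda_0)))$.

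The main obstacle is the careful identification of the coefficient $-\pi/|b'(y_\ell)|$ in the delta term: unlike Case 1, the distribution limit has no pointwise meaning at the critical layer, and the $-\pi$ (as opposed to $-2\pi$) reflects the symmetric nature of the $\alpha \to 0^+$ limit and matches precisely the Plemelj--Sokhotski formula $(\alpha + iu)^{-1} \to \pi\delta(u) - i\,\mathrm{P.V.}(1/u)$ combined with the change of variables $\delta(b(y) - \lambda_0) = \sum_\ell \delta(y-y_\ell)/|b'(y_\ell)|$. As an internal consistency check, the direct Plemelj--Sokhotski limit of the naive expression $-\phi/(\alpha + i(b-\lambda))$ yields the same distribution, so the rigorous proof via the Airy decomposition above must (and does) reproduce it.
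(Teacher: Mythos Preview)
Your duality reduction is sound and Case~1 is fine; the gap is in Case~2, specifically the claim that $\varphi_\ell w_{c,\ell}^\phi \to -\pi\,\phi(y_\ell)\,\delta(y-y_\ell)/|b'(y_\ell)|$ in the sense of distributions. This is false as stated. The cutoff $\varphi_\ell$ from Lemma~\ref{lem:Airybounds} has \emph{fixed} width~$\theta$, and on that fixed neighborhood $w_{c,\ell}^\phi(y)$ behaves like $-\phi(y_\ell)/\bigl(i b'(y_\ell)(y-y_\ell)\bigr)$ for $|y-y_\ell|\gg\epsilon^{1/3}$. These $1/(y-y_\ell)$ tails do not vanish in the limit: the correct distributional limit of $\varphi_\ell w_{c,\ell}^\phi$ is
\[
\mathrm{P.V.}\,\frac{-\varphi_\ell(y)\,\phi(y_\ell)}{i b'(y_\ell)(y-y_\ell)} \;-\; \frac{\pi\,\phi(y_\ell)}{|b'(y_\ell)|}\,\delta(y-y_\ell),
\]
i.e.\ a principal-value piece \emph{plus} the delta. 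Correspondingly, $w_R^\phi$ converges to the bounded function $-\phi/(i(b-\lambda_0)) + \sum_\ell \varphi_\ell\phi(y_\ell)/(i b'(y_\ell)(y-y_\ell))$, and it is only after summing this bounded limit with the P.V.\ pieces coming from \emph{each} $w_{c,\ell}^\phi$ that one recovers $\mathrm{P.V.}\bigl(-\phi/(i(b-\lambda_0))\bigr)$. Your description (``regrouping with the delta contributions reconstitutes the P.V.'') has the bookkeeping backwards. Relatedly, the computation of $\int_\R W_\ell\,d\xi$ via ``symmetric average of $\widehat W_\ell(0^\pm)$'' is only heuristic: $W_\ell\notin L^1$, and this quantity only acquires meaning after pairing against a symmetric cutoff.

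The paper sidesteps this entirely by introducing an additional \emph{shrinking} cutoff $\Phi\bigl((y-y_\ell)/(|\alpha|+\epsilon)^{1/4}\bigr)$ when splitting the integral $\int A_\Theta^{-1}(f)\,\phi\,dy$ (no duality). On the far region $\{1-\Phi\}$ one divides the equation by $i(b(y)-\lambda)$ and the error terms vanish by Proposition~\ref{Airy_main}, giving the P.V.\ directly. On the shrinking near region $\{\Phi\}$, the $w_R$ contribution vanishes by \eqref{eq:wR:bdd}, and for $w_{c,\ell}$ one freezes $\phi$ at $y_\ell$ and computes the remaining integral by Parseval against the even cutoff $\Phi$, picking up exactly $\int_{-\infty}^0\widehat\Phi = \tfrac12\int_\R\widehat\Phi$ and hence the coefficient~$-\pi$. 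Because the near region shrinks, the $1/(y-y_\ell)$ tails of $w_{c,\ell}$ never contribute, and the P.V.\ comes cleanly from the far region alone. Your approach can be repaired by either tracking the P.V.\ piece of $w_{c,\ell}^\phi$ correctly, or by inserting the same shrinking cutoff before invoking the decomposition.
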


\begin{proof}
 We only give the detailed proof of \eqref{eq:alpha+}, since the other case is easier, and indicate the necessary changes for the proof of \eqref{eq:alpha0} at the end of the proof. Fix an even cutoff function $\Phi \in C_0^{\infty}(-2,2)$ with $\Phi \equiv 1$ on $[-1,1]$. Let $\phi \in C_0^{\infty}(\T_\tp)$ be a test function. We have
    \begin{align}
    \label{lim1}
        &\lim_{\epsilon \to 0, \, \alpha \to 0, \, \lambda \to \lambda_0} \int_{\T_\tp} \ai(f)(y) \phi(y) \, dy \notag\\
        &=  \lim_{\epsilon \to 0, \, \alpha \to 0, \, \lambda \to \lambda_0}  \int_{\T_\tp} \sum_{b(z) = \lambda} \Phi \left(\frac{y-z}{(|\alpha| + \epsilon)^{\frac{1}{4}}} \right) \ai(f)(y) \phi(y) \, dy  \notag\\
        &\quad+ \int_{\T_\tp} \sum_{b(z) = \lambda} \left(1 - \Phi \left(\frac{y-z}{(|\alpha| + \epsilon)^{\frac{1}{4}}} \right) \right) \ai(f)(y) \phi(y) \, dy .
    \end{align}
For $y$ in the support of $1 - \Phi$, we divide \eqref{eqAt} by $i(b(y) -\lambda) $  to get that
\begin{equation}
\label{lim2}
    \ai(f)(y) = -\frac{f(y)}{i(b(y) - \lambda)} - \frac{\alpha \ai(f)(y)}{i(b(y) - \lambda)} + \frac{\epsilon\p_y^2 \ai(f)(y) }{i(b(y) - \lambda)}.
\end{equation}
Proposition \ref{Airy_main} then implies that 
\begin{align}
\label{lim3}
   \lim_{\epsilon \to 0,\, \alpha \to 0, \,  \lambda \to \lambda_0} \int_{\T_\tp} \sum_{b(z) = \lambda}\left(1 - \Phi \left(\frac{y-z}{(|\alpha| + \epsilon)^{\frac{1}{4}}} \right) \right) \ai(f)(y) \phi(y) \, dy  = -\text{P.V.} \int_{\T_\tp} \frac{f(y) \phi(y)}{i(b(y) - \lambda_0)} \, dy.
\end{align}
On the other hand, recalling the decomposition  \eqref{eq:Airy:decomp} in Lemma \ref{lem:Airybounds}, we obtain that 
\begin{align}
\label{lim4}
    & \int_{\T_\tp} \sum_{b(z) = \lambda}\Phi \left(\frac{y-z}{(|\alpha| + \epsilon)^{\frac{1}{4}}} \right)  
   \ai(f)(y)\phi(y) \, dy   \notag \\
    &=  \int_{\T_\tp} \sum_{\ell \in \{1, 2\}} \Phi \left(\frac{y-z_\ell}{(|\alpha| + \epsilon)^{\frac{1}{4}}} \right) w_{c,\ell}(y) \phi(y) \, dy  + \int_{\T_\tp}  \sum_{\ell \in \{1, 2\}}\Phi \left(\frac{y-z_\ell}{(|\alpha| + \epsilon)^{\frac{1}{4}}} \right) w_R(y) \phi(y) \, dy .
\end{align}
From \eqref{eq:wR:bdd}, it follows that 
\begin{equation}
\label{lim5}
     \lim_{\epsilon \to 0,\, \alpha \to 0, \,  \lambda \to \lambda_0} \int_{\T_\tp}  \Phi \left(\frac{y-z_\ell}{(|\alpha| + \epsilon)^{\frac{1}{4}}} \right) w_R(y) \phi(y) \, dy  = 0.
\end{equation}
We now focus our attention on $w_{c, 1}$ with the treatment of $w_{c,2}$ being identical. Since $w_{c,1}$ is defined on $\R$, we have
\begin{align}
\label{lim6}
    &\int_{\mathbb{R}} \Phi \left(\frac{y-z_1}{(|\alpha| + \epsilon)^{\frac{1}{4}}} \right) w_{c,1}(y) \phi(y) \, dy \notag\\
    &=  \phi(z_{1})\int_{\mathbb{R}} \Phi \left(\frac{y-z_1}{(|\alpha| + \epsilon)^{\frac{1}{4}}} \right) w_{c,1}(y)  \, dy + \int_{\mathbb{R}} \Phi \left(\frac{y-z_1}{(|\alpha| + \epsilon)^{\frac{1}{4}}} \right) w_{c,1}(y)(\phi(y) -\phi(z_{1}) )  \, dy. 
\end{align}
Since $ w_{c,1}(y)(\phi(y) -\phi(z_{1}) )$ is bounded, the second term converges to 0 as $\alpha, \epsilon \to 0$. Therefore, it suffices to consider the first term. In view of \eqref{eq:wc:formula}, we get that
\begin{equation}
\label{lim7}
     \int_{\mathbb{R}} \Phi \left(\frac{y-z_1}{(|\alpha| + \epsilon)^{\frac{1}{4}}} \right) w_{c,1}(y)  \, dy = \int_{\mathbb{R}} \Phi \left(\frac{y-z_1}{(|\alpha| + \epsilon)^{\frac{1}{4}}} \right) \frac{\ept f(z_1)  }{(b'(z_1))^{\frac{2}{3}}} W(\ept b'(z_1)(y - z_1)) \, dy.
\end{equation}
By rescaling  and Parseval's identity, we obtain that 
\begin{align}
\label{lim7.5}
     &\lim_{\epsilon \to 0, \, \alpha \to  0, \,  \lambda \to \lambda_0}\int_{\mathbb{R}} \Phi \left(\frac{y-z_1}{(|\alpha| + \epsilon)^{\frac{1}{4}}} \right) \frac{\ept f(z_1)  }{(b'(z_1))^{\frac{2}{3}}} W(\ept b'(z_1)(y - z_1)) \, dy \notag\\
    & = - \sqrt{2 \pi} \varphi(z_1)\frac{f(z_1)}{|b'(z_1)|} \int_{-\infty}^0\widehat{\Phi}(\xi) d \xi  
     = -\pi \varphi(z_1) \frac{f(z_1)}{|b'(z_1)|}.
\end{align}
We now turn to the case where $\lambda_0 \notin b(\T_\tp)$ and $\alpha_0 = 0$. For $\lambda$ sufficiently close to $\lambda_0$, $b(y) - \lambda$ does not vanish on $\T_\tp$. Hence, assuming that $\lambda \notin b(\T_\tp)$, we can write that 
\begin{equation}
\label{lim8}
    \ai(f)(y) = -\frac{f(y)}{ \alpha + i(b(y) - \lambda)} + \frac{\epsilon\p_y^2 \ai(f)(y) }{ \alpha + i(b(y) - \lambda)}. 
\end{equation}
Proposition \ref{Airy_main} implies 
\begin{align}
\label{lim9}
   \lim_{\epsilon \to 0,\, \alpha \to 0, \,  \lambda \to \lambda_0} \int_{\T_\tp}   \ai(f)(y) \phi(y) \, dy  =  \int_{\T_\tp} \frac{f(y) \phi(y)}{i(b(y) - \lambda_0)} \, dy.
\end{align}
Finally, assume that $\alpha_0 \neq 0$ and $\lambda \in b(\T_\tp)$. In this case we again write $\ai(f)$ as in \eqref{lim8} by taking $\alpha$ sufficiently close to $\alpha_0$ so that $\alpha \neq 0$. It then follows from Proposition \ref{Airy_main} that
\begin{align}
\label{lim10}
   \lim_{\epsilon \to 0,\, \alpha \to \alpha_0, \,  \lambda \to \lambda_0} \int_{\T_\tp}   \ai(f)(y) \phi(y) \, dy  =  \int_{\T_\tp} \frac{f(y) \phi(y)}{ \alpha_0 + i(b(y) - \lambda_0)} \, dy,
\end{align}
which completes the proof of Lemma \ref{lem:limiting:behavior}. 
\end{proof}

\subsection{The compactness argument}\label{sec:nondeg}
We are now ready to prove the limiting absorption in the non-degenerate case. 
\begin{proposition}
\label{prop:lap:non}
There exist $\kappa  >0$ and $\epsilon_0 \in (0, 1/8)$ such that for all $\epsilon \in (0, \epsilon_0)$, $k \in \Z \cap [1, \infty)$, and $(\alpha ,\lambda) \in R_{nd} \cup R_{\alpha}$, 
we have the bounds 
\begin{equation}
\label{eq:lapnondeg}
 \norm{ \psi + T_{\Theta} (i b'' \psi )}_{H_k^1(\T_\tp)} \geq \kappa \norm{\psi}_{H_k^1(\T_\tp)},
\end{equation}
for any $\psi \in H_k^1(\T_\tp)$.
\end{proposition}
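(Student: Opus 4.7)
The plan is a compactness/contradiction argument that converts failure of the inequality into the existence of either a genuine $L^2$-eigenfunction or a generalized embedded eigenfunction of $L_{k_0}$, which Assumption \ref{MaAs} forbids. Suppose \eqref{eq:lapnondeg} fails. Then there exist sequences $\Theta_\ell = (\epsilon_\ell, \alpha_\ell, k_\ell, \lambda_\ell)$ with $(\alpha_\ell, \lambda_\ell) \in R_{nd} \cup R_\alpha$ and $\psi_\ell \in H^1_{k_\ell}(\T_\tp)$ with $\|\psi_\ell\|_{H^1_{k_\ell}} = 1$ such that $\|\psi_\ell + T_{\Theta_\ell}(ib''\psi_\ell)\|_{H^1_{k_\ell}} \to 0$.

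First I would dispose of the large-parameter subcases. If $k_\ell \to \infty$, Lemma \ref{lem:T:bdd} gives $\|T_{\Theta_\ell}(ib''\psi_\ell)\|_{H^1_{k_\ell}} \lesssim k_\ell^{-1/2}\|\psi_\ell\|_{H^1_{k_\ell}} \to 0$, forcing $\|\psi_\ell\| \to 0$ and contradicting the unit-norm normalization. Likewise, in the $R_\alpha$ regime with $\alpha_\ell \to \infty$, or in $R_{nd}$ with $|\lambda_\ell| \to \infty$ so that $m_\ell := \inf_{y}(|b(y)-\lambda_\ell| + |\alpha_\ell| + \epsilon_\ell^{1/3}) \to \infty$, Lemma \ref{lem:T:regular} applied pointwise yields $\|T_{\Theta_\ell}(ib''\psi_\ell)\|_{H^1_{k_\ell}} \to 0$. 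Passing to subsequences, we may thus assume $k_\ell \equiv k_0$, $\alpha_\ell \to \alpha_0 \in [0,1]$, $\lambda_\ell \to \lambda_0$, and $\epsilon_\ell \to 0$; the residual case in which $\epsilon_\ell$ is bounded below is elliptic and strictly easier.

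Next I would extract a strong limit. Since $\{ib''\psi_\ell\}$ is uniformly bounded in $H^1_{k_0}$, the uniform compactness in Lemma \ref{lem:T:bdd} produces a subsequence along which $T_{\Theta_\ell}(ib''\psi_\ell)$ converges in $H^1_{k_0}(\T_\tp)$. Combined with the hypothesis $\psi_\ell + T_{\Theta_\ell}(ib''\psi_\ell) \to 0$, this gives a strong limit $\psi_\infty \in H^1_{k_0}(\T_\tp)$ with $\|\psi_\infty\|_{H^1_{k_0}} = 1$. Writing the equation as $-\Delta_{k_0}\psi_\infty = \lim_{\ell\to\infty} A_{\Theta_\ell}^{-1}(ib''\psi_\ell)$, Lemma \ref{lem:limiting:behavior} identifies the right-hand side distributionally: in the regime $\alpha_0 > 0$ or $\lambda_0 \notin b(\T_\tp)$ the limit is the bounded function $-ib''\psi_\infty/(\alpha_0 + i(b-\lambda_0))$, so $\psi_\infty$ is an $L^2$-eigenfunction of $L_{k_0}$ with eigenvalue $\lambda_0 - i\alpha_0$; in the remaining regime $\alpha_0 = 0$, $\lambda_0 \in b(\T_\tp) \setminus \bigcup_j \Sigma_{j,\delta_0}$ the Plemelj-type formula \eqref{eq:alpha+} produces principal-value plus imaginary delta-mass terms, so $\psi_\infty$ solves \eqref{emb2}, i.e.\ $\lambda_0$ is a generalized embedded eigenvalue. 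Either conclusion contradicts Assumption \ref{MaAs}, completing the argument.

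The hard part will be the final passage to the limit in the regime $\alpha_0 = 0$, $\lambda_0 \in b(\T_\tp)$: the kernel of $A_{\Theta_\ell}^{-1}$ develops a narrow critical layer of width $\epsilon_\ell^{1/3}$ about $\{b = \lambda_\ell\}$ which must converge as a distribution to precisely the imaginary delta-mass sum in \eqref{emb2}. This is what the local decomposition $A_{\Theta_\ell}^{-1}(f) = \sum_{\ell} \varphi_\ell w_{c,\ell} + w_R$ from Lemma \ref{lem:Airybounds}, together with the Fourier-side uniform bounds in Corollary \ref{coro:t:freq}, are designed to accomplish: they peel off the layer into explicit Airy-function profiles whose Fourier transforms pass to the limit by direct inspection (reproducing the delta contribution), while the remainder $w_R$ vanishes in the limit by virtue of the pointwise bound \eqref{eq:wR:bdd}.
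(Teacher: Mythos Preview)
Your proposal is correct and follows essentially the same route as the paper's proof: a contradiction argument in which Lemmas \ref{lem:T:regular} and \ref{lem:T:bdd} force boundedness of $(k_\ell,\alpha_\ell,\lambda_\ell)$, the uniform compactness in Lemma \ref{lem:T:bdd} yields a strong $H^1_{k_0}$ limit $\psi_\infty$, and Lemma \ref{lem:limiting:behavior} identifies the limiting equation as either a Rayleigh eigenvalue equation (when $\alpha_0\neq 0$ or $\lambda_0\notin b(\T_\tp)$) or the generalized embedded-eigenvalue equation \eqref{emb2}, contradicting Assumption \ref{MaAs}. Two small remarks: your bound $\alpha_0\in[0,1]$ is not quite the right interval (in $R_{nd}$ one only gets $\alpha_0\ge 0$ bounded, and in $R_\alpha$ one gets $\alpha_0\ge 1$), though this is immaterial; and when you write $\lim_\ell A_{\Theta_\ell}^{-1}(ib''\psi_\ell)$, note that Lemma \ref{lem:limiting:behavior} is stated for a \emph{fixed} argument, so one first replaces $\psi_\ell$ by $\psi_\infty$ via the uniform $H^1_k\to H^1_k$ bound on $T_{\Theta_\ell}$, exactly as the paper does.
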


\begin{proof}
Suppose \eqref{eq:lapnondeg} does not hold. Then there exists a sequence $(\psi_\ell, \kappa_\ell, \Theta_\ell)$ for $\ell\in\Z\cap[1,\infty)$,
with $(\kappa_\ell, \epsilon_\ell) \to 0+$, $\norm{\psi_\ell }_{H_{k_{\ell} }^1(\T_\tp) } = 1$, such that 
    \begin{equation}
    \label{eq:lapnon0}
        \norm{\psi_\ell + T_{\Theta_\ell} \psi_\ell}_{H_{k_\ell}^1(\T_\tp) } \leq \kappa_{\ell}.
    \end{equation}
  We first assume that $(\alpha_\ell, \lambda_\ell) \in R_{nd}$. 
  By Lemma  \ref{lem:T:regular}  and Lemma \ref{lem:T:bdd} we see that $(k_\ell, \alpha_\ell, \lambda_\ell)$ are bounded. Therefore, passing to a  subsequence if necessary, we can assume that 
  $$ (\alpha_\ell, k_\ell , \lambda_\ell) \to ( \alpha_0, k_0, \lambda_0) \in  [0, \infty) \times  \Z \cap [1, \infty) \times \overline{\R \setminus (\cup_{j \in \{ 1,2\}}\Sigma_{j, \delta_0}} ) .$$  It also follows from Lemma \ref{lem:T:bdd} that $T_{\Theta_\ell}(i b''\psi_\ell)$ is relatively compact in $H_{k_0}^1(\T_\tp)$. Writing $\psi_\ell$ as  
    \begin{equation}
        \psi_\ell = \psi_\ell + T_{\Theta_\ell}(ib''\psi_\ell) -T_{\Theta_\ell}(ib'' \psi_\ell) 
    \end{equation}
    and passing to a subsequence if necessary, we can infer that $\psi_\ell$ converges \emph{strongly} in $H_{k_0}^1(\T_\tp)$ to $\Psi\in H_{k_0}^1(\T_\tp)$ with $\|\Psi\|_{H_{k_0}^1(\T_\tp)}=1$.
    
    As a consequence of \eqref{eq:lapnon0}, we conclude that $\Psi$ satisfies the equation that
    \begin{equation}
     \Psi + \lim_{\ell \to \infty} T_{\Theta_\ell}(ib'' \Psi) = 0.   
    \end{equation}
Since
\begin{equation}
    \lim_{\ell \to \infty} T_{\Theta_{\ell}} (ib'' \Psi) =  \lim_{\ell \to \infty}\Delta_{k_0}^{-1} A_{\Theta_\ell} (ib''\Psi),
\end{equation}
applying  $-\Delta_{k_0}$ yields
    \begin{equation}
        -\Delta_{k_0} \Psi - \lim_{\ell \to \infty} A_{\epsilon_\ell}^{-1} (ib''\Psi) = 0,
    \end{equation}
    in the sense of distributions.
    Assuming that $\alpha_0 = 0$, and using Lemma \ref{lem:limiting:behavior}, we obtain that for $y\in\T_\tp$,
    \begin{equation}
        -\Delta_{k_0} \Psi(y) + \text{P.V.}\frac{b''(y)\Psi(y)}{b(y) - \lambda_0} + i\pi \sum_{b(z) = \lambda_0} \frac{b''(z) \Psi(z)}{|b'(z)|} \delta(y - z) = 0,
    \end{equation}
    which contradicts the main spectral assumption \eqref{MaAs}. 
    
    In the case when $\alpha_0 \neq 0$, we have 
    \begin{equation}
    \label{not:singular}
        -\Delta_{k_0} \Psi(y) + \frac{b''(y)\Psi(y)}{b(y) - \lambda_0- i \alpha_0}  = 0,
    \end{equation}
   which would imply the existence of a discrete eigenvalue for the Rayleigh operator which also contradicts the main spectral Assumption \ref{MaAs}. 
   
   Finally, assume that  $(\alpha_\ell, \lambda_\ell) \in R_\alpha$ and therefore $(\alpha_0, \lambda_0) \in [1, \infty) \times \overline{ (\cup_{j \in \{ 1,2\}}\Sigma_{j, \delta_0} )} $. Since $\alpha_0 \neq 0$, we can use Lemma \ref{lem:limiting:behavior} to once again obtain \eqref{not:singular}, which leads to another contradiction. The  proof of Proposition \ref{prop:lap:non} is now complete.
\end{proof}

\section{Proof of the limiting absorption principle (II): the degenerate region}\label{sec:deg2}
In this section, we consider the case where $\lambda \in \cup_{j \in \{ 1,2\}}\Sigma_{j, \delta_0} $ and $\alpha$ is small. More precisely, 
we define a region $R_{dj}$ in the $(\alpha, \lambda )$ plane as 
\begin{equation}\label{defdjJ}
  \ R_{dj} : = \big\{ (\alpha, \lambda) \in \R ^2\, |\, \alpha \in (-\sigma_0\epsilon^{1/2}, 1) , \,  \lambda \in \Sigma_{j, \delta_0}  \big\}, \quad j \in \{1, 2\},
\end{equation}
where we recall  that $\sigma_0$ is chosen in Proposition \ref{Airy_main}. In contrast to the case when $(\alpha, \lambda) \in R_{nd} \cup R_{\alpha}$, $\ai $ is now ``quadratically" singular near the critical layers, $b(y) = \lambda$,  using the heuristic in \eqref{oop4}. 
Therefore  $\ai$ is of the same  order as $\Delta_k$ and we can no longer treat $T_{\Theta} = \Delta_k^{-1} \ai$ perturbatively. To address this issue, we need to incorporate part of the nonlocal term in the main terms and prove various weighted estimates, before we can carry out an analogous compactness argument as in the non-degenerate case to prove the limiting absorption argument. 
\subsection{Bounds on the operator $T_\Theta$.}
While the definition of $T_{\Theta}$ given in \eqref{lapj2} is concise, it obscures the  important cancellation between $A_{\Theta}^{-1} (i b'' \psi )$ and the  potential $b''(y) (b(y) - \lambda - i \alpha)^{-1}$. Therefore, we expand $A_{\Theta}^{-1} (i b'' \psi )$ further which leads to the following decomposition for $T_{\Theta} $.
\begin{lemma}
Assume that $\epsilon \in (0, 1/8)$, $(\alpha, \lambda) \in (-\sigma_0\epsilon^{1/2}, 1) \times \Sigma_{j , \delta_0}, \, j \in \{1 ,2  \}$, and $k \in \Z \cap [1, \infty)$. For  $f \in H^1(\T_\tp)$  we can decompose for  $y \in \T_\tp$, 
    \begin{equation}
        T_{\Theta}f(y) = -i\tc{I}{1}f(y) - i\tc{I}{2}f(y) - i\tc{v}{1}f(y) - \tc{v}{2}f(y),
    \end{equation}
    where 
    \begin{subequations}
         \begin{align}
         \label{eq:ti1}
        &\tc{I}{1}f(y) :=   \int_{\T_\tp} \mathcal{G}_k^j(y,z;\Lambda) \ai \left[ \Big(1- \varphi_0\Big(  \frac{\cdot- y_{j*}}{\delta_0}\Big)
 \Big)  b'' f   \right](z) \, dz ,  \\
 \label{eq:ti2}
    &\tc{I}{2}f(y) :=  \int_{\T_\tp} \mathcal{G}_k^j(y,z;\Lambda) \ai \Big( \varphi_0\Big( \frac{\cdot - y_{j*}}{\delta(\Lambda)} \Big) b'' f \Big)(z) \, dz,  \\
    \label{eq:tv1}
        &\tc{v}{1}f(y) \notag\\
        &:= \int_{\T_\tp} \mathcal{G}_k^j(y,z;\Lambda)  
  \varphi_0\left(  \frac{(z- y_{j*}) }{\delta(\Lambda)/3}\right)
   \ai\left[ \Big(\varphi_0\Big( \frac{\cdot - y_{j*}}{\delta_0} \Big) - \varphi_0 \Big(  \frac{\cdot - y_{j*}}{\delta(\Lambda)}\Big) \Big) b'' f \right](z)\, dz, \\
        \label{eq:tv2}
        &\tc{v}{2}f(y) \notag \\
        &:= \int_{\T_\tp} \mathcal{G}_k^j(y,z;\Lambda) \left(1 - \varphi_0\left(  \frac{(z- y_{j*}) }{\delta(\Lambda)/3}\right)   \right)\frac{\epsilon \p_z^2\ai\left[ \left(\varphi_0\left( \frac{\cdot - y_{j*}}{\delta_0} \right) - \varphi_0 \left(  \frac{\cdot - y_{j*}}{\delta(\Lambda)}\right) \right)  b'' f \right](z)}{b(z) - \lambda - i  \alpha }\, dz .
    \end{align}
    \end{subequations}
   \end{lemma}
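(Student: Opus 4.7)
The proof is a bookkeeping exercise resting on two observations. First, $A_\Theta^{-1}(ib''f)$ can be split by a partition of unity into three pieces corresponding to the multipliers $1-\varphi_0((\cdot-y_{j\ast})/\delta_0)$, $\varphi_0((\cdot-y_{j\ast})/\delta(\Lambda))$, and the annular remainder $\varphi_0((\cdot-y_{j\ast})/\delta_0)-\varphi_0((\cdot-y_{j\ast})/\delta(\Lambda))$. By complex linearity of $A_\Theta^{-1}$ and inspection of \eqref{eq:ti1}--\eqref{eq:ti2}, the first two pieces contribute exactly $-iT_{I_1}f$ and $-iT_{I_2}f$ when inserted into \eqref{lapj2}. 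Second, pointwise wherever $b(z)-\lambda-i\alpha\ne 0$, the defining equation $A_\Theta u=g$ (with $u=A_\Theta^{-1}(g)$) rearranges to the algebraic identity
\begin{equation*}
iA_\Theta^{-1}(g)(z) \;=\; \frac{\epsilon\,\partial_z^2 A_\Theta^{-1}(g)(z)}{b(z)-\lambda-i\alpha} \;-\; \frac{g(z)}{b(z)-\lambda-i\alpha}.
\end{equation*}

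The plan is to apply this identity to the remaining annular piece, with $g(z):=[\varphi_0((z-y_{j\ast})/\delta_0)-\varphi_0((z-y_{j\ast})/\delta(\Lambda))]b''(z)f(z)$, after first localizing using the auxiliary partition $1=\varphi_0((z-y_{j\ast})/(\delta(\Lambda)/3))+[1-\varphi_0((z-y_{j\ast})/(\delta(\Lambda)/3))]$. On the support of the inner cutoff one has $|z-y_{j\ast}|<2\delta(\Lambda)/3<\delta(\Lambda)\le\delta_0$, so by the convention $\varphi_0\equiv 1$ on $[-1,1]$ both $\varphi_0((z-y_{j\ast})/\delta_0)$ and $\varphi_0((z-y_{j\ast})/\delta(\Lambda))$ equal $1$, making $g$ itself vanish there; consequently the potential term in \eqref{lapj2} also vanishes, and what survives is $-i\int \mathcal{G}_k^j(y,z;\Lambda)\varphi_0((z-y_{j\ast})/(\delta(\Lambda)/3))\,A_\Theta^{-1}(g)(z)\,dz=-iT_{v_1}f(y)$. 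On the complementary support the potential term is nontrivial, but because $|z-y_{j\ast}|\gtrsim\delta(\Lambda)\gtrsim |\lambda-b(y_{j\ast})|^{1/2}$ one has $b(z)-\lambda$ bounded below by a positive multiple of $(z-y_{j\ast})^2$ (quadratic nondegeneracy via \eqref{IntN4.0}), so the denominator $b(z)-\lambda-i\alpha$ is nonzero and the algebraic identity above applies. The algebraic cancellation between the ``$-g/(b-\lambda-i\alpha)$'' arising from the identity and the potential term in \eqref{lapj2} is exact, leaving only the $\epsilon\,\partial_z^2 A_\Theta^{-1}(g)/(b-\lambda-i\alpha)$ contribution, which is by definition $-T_{v_2}f$. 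Summing gives $T_\Theta f=-iT_{I_1}f-iT_{I_2}f-iT_{v_1}f-T_{v_2}f$.

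There is essentially no analytic obstacle here; the result is purely formal. The only points requiring care are (a) verifying that $\delta(\Lambda)<\delta_0$ on the regime of interest so that the inner-cutoff support cancellation goes through (this follows from \eqref{Intd1}--\eqref{Intd2} for $\lambda\in\Sigma_{j,\delta_0}$ and $\alpha\in(-\sigma_0\epsilon^{1/2},1)$), and (b) justifying the pointwise division by $b(z)-\lambda-i\alpha$ on the support of $(1-\varphi_0((z-y_{j\ast})/(\delta(\Lambda)/3)))\cdot(\varphi_0(\cdot/\delta_0)-\varphi_0(\cdot/\delta(\Lambda)))$, which is clear from the lower bound on $|z-y_{j\ast}|$ indicated above. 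No estimates are needed at this stage—the quantitative content of the decomposition is postponed to the subsequent weighted bounds on each $T_{I_1}, T_{I_2}, T_{v_1}, T_{v_2}$.
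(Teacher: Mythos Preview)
Your proof is correct and follows essentially the same route as the paper's: both arguments split $A_\Theta^{-1}(ib''f)$ via the partition $1=(1-\varphi_0(\cdot/\delta_0))+\varphi_0(\cdot/\delta(\Lambda))+\eta_{j,\delta}$, then handle the annular piece by combining the algebraic identity obtained from rearranging $A_\Theta u=g$ with the inner/outer localization $\varphi_0(\cdot/(\delta(\Lambda)/3))$, so that on the inner part $\eta_{j,\delta}$ (hence the potential term) vanishes, while on the outer part the $-g/(b-\lambda-i\alpha)$ term cancels the potential exactly. One small imprecision: in your point (b) the division by $b(z)-\lambda-i\alpha$ must be justified on the full support of $1-\varphi_0((z-y_{j\ast})/(\delta(\Lambda)/3))$, not merely on its intersection with $\mathrm{supp}\,\eta_{j,\delta}$, since $A_\Theta^{-1}(\eta_{j,\delta}b''f)$ is nonlocal; your lower bound $|z-y_{j\ast}|\gtrsim\delta(\Lambda)$ already covers this larger set, so the argument goes through unchanged.
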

where $\varphi_0$ is the same as in the modified Green's function, see \eqref{mGk1}. 
   
   \begin{remark}
      The motivation for the subscripts $I$ and $v$ are ``inviscid" and ``viscous" respectively. Both $T_{I1}$ and $T_{I2}$ have analogs that appear in the inviscid case (see \cite{Iyer}) but $T_{v1}$ and $T_{v2}$ only appear when viscosity is present.   
   \end{remark}
\begin{proof}
 For $y \in \T_\tp$ we write
\begin{align}
    \ai(ib'' f) = \ai\big[(1 - \eta_{j,\delta}) ib''g \big] + \ai\big[\eta_{j,\delta} ib'' f \big],
\end{align}
where for $y\in\T_\tp$,
\begin{equation}
    \eta_{j, \delta}(y) := \varphi\Big(\frac{y-y_{j\ast}}{\delta_0}\Big)-\varphi\Big(\frac{y-y_{j\ast}}{\delta(\Lambda)}\Big).
\end{equation}
It follows from \eqref{eqAt} that for $y$ away from the critical layer we have 
\begin{equation}
\label{decomp1}
   \ai(\eta_{j, \delta} ib''f )(y) = -\frac{ \eta_{j, \delta}(y) b''(y) f(y)  }{b(y) - \lambda - i \alpha} + \frac{\epsilon \p_y^2 \ai(\eta_{j, \delta} b'' f )(y)}{b(y) - \lambda - i\alpha },
\end{equation}
therefore, 
\begin{align}
\label{decomp2}
&\Big(1 - \varphi\Big(\frac{ y -y_{j*} }{\delta(\Lambda) /3 } \Big) \Big) \left[\ai \big( \eta_{j, \delta} ib''f \big)(z)
+\frac{b''(y)}{b(y)-\lambda -i\alpha}\Big[\varphi\Big(\frac{z-y_{j\ast}}{\delta_0}\Big)-\varphi\Big(\frac{z-y_{j\ast}}{\delta(\Lambda)}\Big)\Big]f(z) \right] \notag\\
&= \left(1 - \varphi\Big(\frac{ y -y_{j*} }{\delta(\Lambda)/3} \Big) \right)\frac{\epsilon \p_y^2 \ai(\eta_{j, \delta} b'' f )(y)}{b(y) - \lambda - i\alpha },
\end{align}
and 
\begin{align}
\label{decomp3}
& \varphi\Big(\frac{ y -y_{j*} }{\delta(\Lambda)/3} \Big)  \left[\ai \big( \eta_{j, \delta} ib''f \big)(y)
+\frac{b''(y)}{b(y)-\lambda -i\alpha}\Big[\varphi\left(\frac{y-y_{j\ast}}{\delta_0}\right)-\varphi\left(\frac{y-y_{j\ast}}{\delta(\Lambda)}\right)\Big]f(y) \right]  \notag\\
&=\varphi\Big(\frac{ y -y_{j*} }{\delta(\Lambda)/3} \Big)\ai \big( \eta_{j, \delta} ib''f\big)(y).
\end{align}
Finally, we can write 
\begin{equation}
\label{decomp4}
    \ai\big[(1 - \eta_{j, \delta}) ib''f \big] = \ai\left[ 1- \varphi \Big ( \frac{ \cdot - y_{j*}}{\delta_0} \Big ) i b'' f   \right] + \ai\left[ \varphi\Big( \frac{ \cdot - y_{j*}}{\delta(\Lambda)} \Big) i b'' f  \right].
\end{equation}
Combining \eqref{decomp2}, \eqref{decomp3}, and \eqref{decomp4} concludes the proof.
\end{proof}
In order to prove the limiting absorption principle for $(\alpha, \lambda) \in R_{dj}$ we will need to prove  bounds for $T_{\Theta}$ on $X^{\sigma_1, \sigma_2}(\mathfrak M)$ for $(\sigma_1, \sigma_2)$ given by \eqref{eqRan1} as well as their properties in higher regularity spaces. We summarize all the required results in the following propositions.

\begin{proposition}
\label{prop:k:bounds}
Assume that  $ (\alpha, \lambda)  \in (-\sigma_0\epsilon^{1/2}, 1) \times \Sigma_{j , \delta_0} ,\, j \in \{ 1, 2\}$,  and $k \in \Z \cap [1, \infty)$. Then for all $\epsilon \in (0, 1/8)$ and $\sigma_1 \in [0, 1]$,  $\sigma_2 \in [-2, 1/2]$ satisfying $\sigma_1 + \sigma_2 \in (-2, 1)$   we have the following bounds on $T_{\Theta}$ for $f \in X^{\sigma_1, \sigma_2}(\mathfrak M)$, 
\begin{align}
    \label{X1}
    \norm{T_{\Theta} f}_{ X^{\sigma_1, \sigma_2}(\mathfrak M) }  \lesssim_{\sigma_1, \sigma_2} 
    \frac{\norm{f}_{X^{\sigma_1, \sigma_2}(\mathfrak M)}}{\lb k\delta(\Lambda) \rb^{\frac{1}{4}}},
\end{align} 
and for $k\delta\lesssim1$ and $|y-y_{j\ast}|\gtrsim \delta_0$,
\begin{equation}\label{X1.5}
    \big|T_{\Theta}f(y)\big| \lesssim_{\sigma_1, \sigma_2}\varrho_j^{-\sigma_1-1/4}\varrho_{j,k}^{-\sigma_2+1/4}(y;\Lambda)\,\norm{f}_{X^{\sigma_1, \sigma_2}(\mathfrak M)}.
\end{equation}
Furthermore, 
\begin{align}
\label{X2}
     \norm{\tc{v}{1}f }_{X^{\sigma_1, \sigma_2}(\mathfrak M)} +
       \norm{\tc{v}{2} f}_{X^{\sigma_1, \sigma_2}(\mathfrak M)} \lesssim_{\sigma_1, \sigma_2} \frac{\epsilon^{1/2}}{\delta(\Lambda)^2} \norm{f}_{X^{\sigma_1, \sigma_2}(\mathfrak M)}.
\end{align}
In particular, in the definition \eqref{Intd1},  $C^{\dagger}$ is chosen  so that we have the bound
\begin{align}
\label{X3}
     \norm{\tc{v}{1} f}_{X^{\sigma_1, \sigma_2}(\mathfrak M)} +
       \norm{\tc{v}{2} f}_{X^{\sigma_1, \sigma_2}(\mathfrak M)} \leq \frac{1}{2} \norm{f}_{X^{\sigma_1, \sigma_2}(\mathfrak M)}.
\end{align}
\end{proposition}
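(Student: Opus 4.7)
My plan is to bound each of the four pieces $T_{I1}, T_{I2}, T_{v1}, T_{v2}$ separately in $X^{\sigma_1,\sigma_2}(\mathfrak{M})$, combining the pointwise kernel bounds for $A_\Theta^{-1}$ from Section \ref{sec:pbk} with the decay estimates for the modified Green's function $\mathcal{G}^j_k(y,z;\Lambda)$ from Lemma \ref{mGk50}. The weighted space $X^{\sigma_1,\sigma_2}$ is precisely calibrated to the singular structure of $A_\Theta^{-1}$ near the critical layer $\{b(y)=\lambda\}$ (via the $L^\infty$ part outside $S^j_{\delta(\Lambda)}$) and to the ellipticity gain from the viscosity (via the $L^2$ part inside $S^j_{\delta(\Lambda)}$), so all four estimates should reduce to verifying that the operator's kernel respects the weight hierarchy encoded in $\varrho_j$ and $\varrho_{j,k}$.

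For $T_{I1}$, the integrand $(1-\varphi_0(\cdot/\delta_0)) b'' f$ is supported away from $y_{j\ast}$, so $A_\Theta^{-1}$ is estimated by Lemma \ref{ake1} with no critical-layer singularity; the factor $\langle k\delta(\Lambda)\rangle^{-1/4}$ arises naturally from pairing the $L^2$ bound on the $\delta(\Lambda)$-scale against the modified Green's function decay $|\mathcal{G}^j_k| \lesssim \varrho_{j,k}(z;\Lambda)$ in Lemma \ref{mGk50}, since $\varrho_{j,k}(z;\Lambda)$ saturates at $1/k$ for $k\delta(\Lambda)\gtrsim 1$. For $T_{I2}$, the argument $\varphi_0(\cdot/\delta(\Lambda)) b''f$ is supported in $S^j_{2\delta(\Lambda)}$, so we invoke Lemma \ref{ake6} to split $A_\Theta^{-1}(\varphi_0 b''f) = w_1 + w_2$ with $w_2$ capturing the explicit $(ib'(y^\pm_\lambda)(y-y^\pm_\lambda)+|\epsilon|^{1/3}|\lambda-b(y_{j\ast})|^{1/3})^{-1}$ singularity; the $L^2$-based contribution over $S^j_{\delta(\Lambda)}$ is then controlled via \eqref{RMb0.1}, and on the complement we use \eqref{RMb0.2} together with Lemma \ref{LAMweight} to convert the integrated Green's function bound into the $L^\infty$-weighted estimate with the desired $\langle k\delta(\Lambda)\rangle^{-1/4}$ gain.

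The bounds on $T_{v1}$ and $T_{v2}$ are where the viscous smallness factor $\epsilon^{1/2}/\delta(\Lambda)^2$ must be extracted. For $T_{v1}$, the outer cut-off $\varphi_0((z-y_{j\ast})/(\delta(\Lambda)/3))$ localizes $z$ to the viscous layer, while the argument of $A_\Theta^{-1}$ vanishes there; the gain comes from the exponential decay of the Airy kernel \eqref{eq:kernel:bounds:2} across the annulus $\{\delta(\Lambda)/2 \le |z-y_{j\ast}| \le 2\delta_0\}$, which on the relevant scales produces a factor comparable to $\epsilon^{1/2}/\delta(\Lambda)^2$. For $T_{v2}$, the explicit $\epsilon\partial_z^2$ is traded using equation \eqref{eqAt} for $h - (-\alpha + i(\lambda-b))A_\Theta^{-1}h$, after which the $(b(z)-\lambda-i\alpha)^{-1}$ denominator is harmless (since $|z-y_{j\ast}|\gtrsim \delta(\Lambda)$ implies $|b(z)-\lambda|\gtrsim \delta(\Lambda)^2$), and a direct $L^\infty/L^2$ bound against $\mathcal{G}^j_k$ yields the same $\epsilon^{1/2}/\delta(\Lambda)^2$ prefactor. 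Bound \eqref{X3} then follows from \eqref{X2} once $C^\dagger$ is taken so large that $\delta(\Lambda) \ge C^\dagger \epsilon^{1/4}$ forces $\epsilon^{1/2}/\delta(\Lambda)^2 \le 1/(C^\dagger)^2 \le 1/2$, and \eqref{X1.5} follows from the pointwise version of the same estimates using \eqref{lem:mGK30:New} for large separations $|y-y_{j\ast}|\gtrsim \delta_0$.

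The principal obstacle is the $T_{I2}$ bound in the $L^\infty$-weighted component of the $X$-norm: the singular piece $w_2$ produced by Lemma \ref{ake6} behaves like $(i(\lambda-b(y))+|\epsilon|^{1/3}|\lambda-b(y_{j\ast})|^{1/3})^{-1}$, and after convolution with $\mathcal{G}^j_k$ one must verify that the logarithmic and non-integrable singularities at $\{b(y)=\lambda\}$ balance exactly against the weight $\varrho_j^{\sigma_1}\varrho_{j,k}^{\sigma_2}$. This is handled by integration by parts via identity \eqref{RMb1.001}, using the $\partial_z$-smoothness of $\mathcal{G}^j_k$ provided by \eqref{lem:mGK30:New} to trade one derivative of $w_2$ for a logarithm, which remains bounded in the weighted norm due to the admissible range of $(\sigma_1,\sigma_2)$ in the hypothesis and the constraint $\sigma_1+\sigma_2\in(-2,1)$ guaranteeing integrability at the critical layer.
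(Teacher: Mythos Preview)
The main gap is in your treatment of $T_{v2}$. Trading $\epsilon\partial_z^2 A_\Theta^{-1}h$ for $h - (-\alpha+i(\lambda-b))A_\Theta^{-1}h$ via equation \eqref{eqAt} is algebraically correct but circular: after dividing by $b(z)-\lambda-i\alpha$ you obtain exactly
\[
\frac{\epsilon\partial_z^2 A_\Theta^{-1}h}{b(z)-\lambda-i\alpha}
=\frac{h}{b(z)-\lambda-i\alpha}+iA_\Theta^{-1}h,
\]
which carries no residual factor of $\epsilon$ whatsoever. This identity is precisely \eqref{decomp1}, the relation used to \emph{define} $T_{v2}$ as the discrepancy between $A_\Theta^{-1}(\eta_\delta ib''f)$ and the inviscid potential $-\eta_\delta b''f/(b-\lambda-i\alpha)$; unwinding it cannot produce the smallness $\epsilon^{1/2}/\delta(\Lambda)^2$ you assert. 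The paper keeps $\epsilon$ explicit and instead integrates by parts \emph{once} in $z$, moving a single $\partial_z$ onto $\mathcal{G}^j_k$ (see \eqref{eq:tv2:noder}--\eqref{eq:tv2:derivativeJ}). Lemma~\ref{ake1} then bounds $\partial_z A_\Theta^{-1}(\eta_\delta b''g)$ at the cost of one inverse Airy length $L_j^{-1}\sim \varrho/\epsilon^{1/2}$, so the remaining integrand is of order $\epsilon\cdot\varrho^{-2}\cdot(\varrho/\epsilon^{1/2})\cdot|A_\Theta^{-1}h|=(\epsilon^{1/2}/\varrho)\,|A_\Theta^{-1}h|$, after which Lemmas~\ref{mGk50} and \ref{LAMweight} convert this into \eqref{X2}.

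Your route for $T_{I2}$ through Lemma~\ref{ake6} and Remark~\ref{RMb1} is a genuine alternative to the paper's frequency-space argument (Lemma~\ref{lem:Airybounds:intermediate} and Lemma~\ref{lem:tI2j:pw}), and the $L^2$ portion on $S^j_{\delta(\Lambda)}$ indeed comes out correctly with the $(d_{j,k}/\delta)^{1/2}\approx\langle k\delta\rangle^{-1/2}$ gain from \eqref{RMb0.1}. The paper obtains the analogous $L^\infty$-weighted gain on $\T_\tp\setminus S^j_{\delta(\Lambda)}$ by isolating the localized piece $d_k\varphi_{\theta d_k}(\cdot-y_\ell)\Xi_\ell$, whose Fourier transform is uniformly bounded by \eqref{eq:Airy:freq:pw} and whose $L^2$ mass on the $d_k$-scale carries the same factor; your integration-by-parts via \eqref{RMb1.001} against $(\partial_y+\partial_z)\mathcal{G}^j_k$ is related in spirit, but you should verify that the resulting commutator terms remain uniformly controlled in $k\delta(\Lambda)$ for the full admissible range of $(\sigma_1,\sigma_2)$. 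Your treatments of $T_{I1}$ and $T_{v1}$ are in line with the paper (Lemma~\ref{lem:ti1} and Lemma~\ref{lem:tv1:pw}, both resting on the exponential localization Lemma~\ref{lem:exp:decay}).
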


In order to prove the requisite compactness needed for the proof of the limiting absorption principle, we will need a more detailed understanding of the regularity of $T_{\Theta}$ than those implied by Proposition \ref{prop:k:bounds}. The type of regularity we can prove for $T_{\Theta }$ will depend on whether $\lambda$ is in the ``intermediate regime": $ \epsilon^{\frac{1}{2}}\ll |b(y_{j*}) - \lambda| \ll 1$, or the  ``viscous regime":  $|\lambda - b(y_{j*})| \lesssim  \epsilon^{\frac{1}{2}}$. 

We firstly consider the intermediate region.
 \begin{proposition}
 \label{prop:higher:regularity1}
Assume that   $ (\alpha, \lambda)  \in (-\sigma_0\epsilon^{1/2}, 1) \times \Sigma_{j , \delta_0} ,\, j \in \{ 1, 2\}$, $k \in \Z \cap [1, \infty)$, and  $k \delta  \leq N$, for some $N\geq 1$.  Then for all $\epsilon \in (0, 1/8)$, $ \epsilon^{\frac{1}{2}} \ll |\lambda - b(y_{j*})| \ll 1$, $\sigma_1 \in [0, 1]$,  $\sigma_2 \in [-2, 1/2]$ satisfying $\sigma_1+ \sigma_2 \in (-2, 1)$  we have the following bounds for $y \in \T_\tp$. 

\begin{itemize}
    \item (bounds on $\tc{I}{1}$)
    \begin{align}
\big|\varrho_{j,k }^{\beta}(y;\Lambda)\p_y^\beta \tc{I}{1} f(y) \big| \lesssim_N \norm{f}_{X^{\sigma_1, \sigma_2}(\mathfrak M) } \varrho_{j,k}^2(y;\Lambda) k^{\sigma_2},   \, \beta \in \{0, 1, 2 \};
\end{align}
\item (bounds on $\tc{I}{2}$) 
We can write $\p_y\tc{I}{2}f$ as 
\begin{align}
  \p_y \tc{I}{2} f &=    \Upsilon_{I2} +  R_{I2},
\end{align}
where $\Upsilon_{I2}: \T_\tp \to \C$ is compactly supported in $\{y:\,|y-y_{j\ast}|\leq \delta(\Lambda)\}$, $R_{I2}: \T_\tp \to \C$, and satisfy
\begin{align}
& \sup_{\xi \in \R}\Big[(|\xi|+1)|\mathcal{F}(\Upsilon_{I2})(\xi)|\Big] \lesssim_N \frac{\norm{f}_{X^{\sigma_1, \sigma_2}(\mathfrak M)}}{\delta(\Lambda)^{ 1 + \sigma_1 + \sigma_2 }},\\
   &  \big|\tc{I}{2}f(y) \big| + \varrho_{j,k}(y;\Lambda) \big |R_{I2}(y) \big| + \varrho_{j,k}^2(y;\Lambda) \big |\p_y R_{I2}(y) + d_{j,k}^{-1}\Upsilon_{I2}  \big|  \notag\\
   &\lesssim_N \frac{\norm{f}_{X^{\sigma_1, \sigma_2}(\mathfrak M)} }{\delta(\Lambda)^{\sigma_1 + \sigma_2}}   \frac{ \delta(\Lambda) }{\varrho_j(y;\Lambda)};  
\end{align}
\item (bounds on $\tc{v}{1}$)
 \begin{align}
\varrho_{j, k}^\beta(y;\Lambda)|\p_y^{\beta}\tc{v}{1}f(y) | \lesssim_N  \frac{\exp(-c_1 \delta(\Lambda)^2 \epsilon^{-\frac{1}{2}})\delta(\Lambda) \norm{f}_{X^{\sigma_1, \sigma_2}(\mathfrak M) } }{\varrho_{j}(y;\Lambda) \delta(\Lambda)^{\sigma_1 + \sigma_2}} ,\quad \beta \in \{ 0,1 ,2\}, 
    \end{align}
where $c_1>0$ is sufficiently small (and more precisely is given by Lemma \ref{lem:exp:decay} below);

\item  (bounds on $\tc{v}{2}$)

\begin{align}
      |\varrho_{j,k}^\beta(y;\Lambda) \p_y^\beta\tc{v}{2}f(y)| &\lesssim_{N,\sigma_1, \sigma_2} 
 \frac{\epsilon \delta(\Lambda) \norm{f}_{X^{\sigma_1, \sigma_2} 
 (\mathfrak M)}}{\delta(\Lambda)^4\varrho_j(y;\Lambda) \delta(\Lambda)^{\sigma_1 + \sigma_2}}, \quad \beta \in \{0,1 \}.
\end{align}

\end{itemize}
\end{proposition}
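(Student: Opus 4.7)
\emph{Plan of proof for Proposition \ref{prop:higher:regularity1}.} The four operators $\tc{I}{1}, \tc{I}{2}, \tc{v}{1}, \tc{v}{2}$ are treated separately; in every case the strategy is the same two-step procedure: first estimate $\ai$ applied to the cut-off input using Proposition \ref{Airy_main}, Lemma \ref{ake1}, or the refined decomposition of Lemma \ref{ake6}; then convolve against the modified Green's function $\mathcal{G}_k^j$ using the decay bounds of Lemma \ref{mGk50}. Throughout we rely on the standing assumption $\epsilon^{1/2}\ll|\lambda-b(y_{j*})|\ll 1$, so that $\delta(\Lambda)\approx|\lambda-b(y_{j*})|^{1/2}$ and the two critical points $y_\lambda^{\pm}\in S^j_{3\delta(\Lambda)}$ with $b(y_\lambda^\pm)=\lambda$ lie well inside the inner cutoff region.

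For $\tc{I}{1}$, the input $(1-\varphi_0((\cdot-y_{j*})/\delta_0))b''f$ is supported where $|b(z)-\lambda|\gtrsim 1$, so we are in the non-degenerate regime of Proposition \ref{Airy_main}(1). Lemma \ref{ake1} therefore yields an $L^\infty$ bound $\|\ai((1-\varphi_0)b''f)\|_{L^\infty}\lesssim k^{\sigma_2}\|f\|_{X^{\sigma_1,\sigma_2}(\mathfrak M)}$ after unpacking the weighted norm. Multiplying by the modified Green's function and using the first bound of Lemma \ref{mGk50} (namely $|\mathcal{G}_k^j(y,z;\Lambda)|\lesssim \varrho_{j,k}(z;\Lambda)\varrho_j^2(y;\Lambda)/\varrho_j^2(z;\Lambda)$ for $|y-y_{j*}|$ small) produces the claimed $\varrho_{j,k}^2(y;\Lambda)\,k^{\sigma_2}$ prefactor. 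Derivatives in $y$ cost the corresponding inverse power of $\varrho_{j,k}(y;\Lambda)$, which is absorbed into the weight on the left-hand side.

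For $\tc{I}{2}$ the input $\varphi_0((\cdot-y_{j*})/\delta(\Lambda))b''f$ is supported near $y_{j*}$ and contains the critical layer $\{y_\lambda^+,y_\lambda^-\}$. This is exactly the setting of Lemma \ref{ake6}(i), which gives $\ai(\varphi_0 b''f)=w_1+w_2$, where $w_2$ carries the explicit oscillatory profile
\[
\sum_{\dagger\in\{+,-\}} \frac{h(y_\lambda^\dagger)\,\varphi_0\!\bigl((z-y_\lambda^\dagger)/|\lambda-b(y_{j*})|^{1/2}\bigr)}{ib'(y_\lambda^\dagger)(z-y_\lambda^\dagger)+\epsilon^{1/3}|\lambda-b(y_{j*})|^{1/3}}
\]
and $w_1$ satisfies the milder pointwise bound in \eqref{ake6.4}. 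Convolving $w_2$ with $\mathcal{G}_k^j$ and then differentiating in $y$ produces the compactly supported profile $\Upsilon_{I2}$; its Fourier bound is obtained from the explicit Fourier transform of $(ix+c)^{-1}$ multiplied by a cutoff, mimicking the argument of Lemma \ref{lem:Airybounds} (and using $\widehat{\mathcal{G}_k^j}\lesssim(k^2+\xi^2)^{-1}$-type smoothing on the complementary piece). The piece coming from $w_1$ and from integration by parts against $\mathcal{G}_k^j$ defines $R_{I2}$, whose pointwise bounds follow directly from \eqref{ake6.4} together with Lemma \ref{mGk50}. The combination $\p_yR_{I2}+d_{j,k}^{-1}\Upsilon_{I2}$ enjoys the improved bound because the two leading boundary contributions of integration by parts cancel, leaving a lower-order remainder. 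This is the main step of the proof and the principal technical obstacle, since one must track the precise cancellation between differentiation of $\mathcal{G}_k^j$ and differentiation of the Airy profile.

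For $\tc{v}{1}$ the input of $\ai$ is supported in the annulus $\{\delta(\Lambda)/2\leq|z-y_{j*}|\leq 2\delta_0\}$ (from $\eta_{j,\delta}$) while the outer weight $\varphi_0((\cdot-y_{j*})/(\delta(\Lambda)/3))$ restricts $z$ to $|z-y_{j*}|\leq 2\delta(\Lambda)/3$. This disjointness forces the kernel of $\ai$ to be evaluated across a gap of size $\gtrsim\delta(\Lambda)$; Proposition \ref{Airy_main}(2) then supplies the exponential factor $\exp(-c_1\delta(\Lambda)^2\epsilon^{-1/2})$ (a lemma such as \ref{lem:exp:decay} giving a clean exponent will be invoked), which is the stated gain. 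For $\tc{v}{2}$, the explicit prefactor $\epsilon\p_z^2$ is rewritten algebraically using the Airy equation $\epsilon\p_z^2 u = u\cdot\mathrm{input}+(\alpha-i(\lambda-b))u$; on the support of $1-\varphi_0((\cdot-y_{j*})/(\delta(\Lambda)/3))$ we have $|b(z)-\lambda-i\alpha|\gtrsim\delta(\Lambda)^2$, so the denominator contributes $\delta(\Lambda)^{-2}$, and combined with the $\epsilon$ prefactor and the $L^\infty$ bound from Lemma \ref{ake1} one obtains the $\epsilon\delta(\Lambda)^{-3}$ size. The weighted-$\varrho_j^{-1}$ factor comes from convolving against $\mathcal{G}_k^j$ exactly as in the treatment of $\tc{I}{1}$.
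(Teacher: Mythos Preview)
Your outlines for $\tc{I}{1}$ and $\tc{v}{1}$ are correct and match the paper's arguments (Lemmas \ref{lem:ti1} and \ref{lem:tv1:pw}). There are, however, genuine gaps in your treatment of $\tc{v}{2}$ and $\tc{I}{2}$.

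For $\tc{v}{2}$: substituting the Airy identity $\epsilon\partial_z^2\ai(h) = h + (\alpha - i(\lambda-b))\ai(h)$ into the kernel destroys the smallness. Dividing by $b(z)-\lambda-i\alpha$ yields
\[
\frac{h(z)}{b(z)-\lambda-i\alpha} \;+\; i\,\ai(h)(z);
\]
the first piece is precisely the potential term that was already subtracted off in the definition of $T_\Theta$ (so re-inserting it is circular), and the second piece is $O(1)$, not small. The $\epsilon$ has disappeared and no $\epsilon\delta(\Lambda)^{-4}$ gain is produced. The paper instead integrates by parts once in $z$, transferring one $\partial_z$ onto $\mathcal{G}_k^j(y,z)(1-\varphi_{\delta/3})(b-\lambda-i\alpha)^{-1}$, and then bounds $\epsilon\,\partial_z\ai(\eta_\delta b'' g)$ pointwise via Lemma \ref{ake1}. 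Since the natural length scale of $\ai$ on the support in question is $L_j\sim\epsilon^{1/2}\varrho_j^{-1}$, this yields the factor $\epsilon/L_j\sim\epsilon^{1/2}\varrho_j$, hence the $\epsilon^{1/2}\delta(\Lambda)^{-2}$ smallness (which is stronger than the stated $\epsilon\delta(\Lambda)^{-4}$).

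For $\tc{I}{2}$: your claim that $\Upsilon_{I2}$ arises by ``convolving $w_2$ with $\mathcal{G}_k^j$ and then differentiating'' cannot be right as stated, since that operation does not produce a compactly supported function. The paper's mechanism is different and rests on a device you do not mention: one writes $\varphi_{\theta d_k}^2\,\ai(\varphi_\delta b'' g) = (1+d_k\partial_z)\bigl(\varphi_{\theta d_k}\Xi_\ell\bigr) + \text{l.o.t.}$ with $\Xi_\ell := (1+d_k\partial_y)^{-1}\bigl[\varphi_{\theta d_k}(\cdot-y_\ell)\,\ai(\varphi_\delta b'' g)\bigr]$, integrates by parts so that $(1-d_k\partial_z)$ acts on $\varphi_{\theta d_k}\mathcal{G}_k^j$, and then uses the equation for $\mathcal{G}_k^j$ to convert $d_k\partial_y\partial_z\mathcal{G}_k^j$ into $-d_k\delta(y-z) + \text{regular}$. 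The diagonal contribution $-d_k\,\varphi_{\theta d_k}(y-y_\ell)\,\Xi_\ell(y)$ is the compactly supported $\Upsilon_{I2}$, and its Fourier bound comes from Lemma \ref{lem:Airybounds:intermediate}, which relies on the explicit formula $\widehat{W}(\xi)=-\sqrt{2\pi}\,e^{\xi^3/3+\cdots}\,\mathbbm{1}_{(-\infty,0]}$ for the linearized Airy profile rather than on the cruder approximation $(ix+c)^{-1}$. The $(1+d_k\partial_y)^{-1}$ smoothing is the key step; without it one cannot simultaneously isolate a compactly supported singular piece and obtain the extra factor $(1+|\xi|)^{-1}$ in its Fourier transform.
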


We now turn to the viscous region.
\begin{proposition}
\label{prop:higher:reg:2}
Assume that  $ (\alpha, \lambda)  \in (-\sigma_0\epsilon^{1/2}, 1) \times \Sigma_{j , \delta_0} ,\, j \in \{ 1, 2\}$,  $k \in \Z \cap [1, \infty)$, and $k \delta  \leq N$. Then for all $\epsilon \in (0, 1/8)$,   $|\lambda - b(y_{j*})| \lesssim \epsilon^{\frac{1}{2}}$, and $f \in X^{\sigma_1, \sigma_2}(\mathfrak M)$ satisfying $\sigma_1 \in [0, 1]$, $\sigma_1 + \sigma_2 \in (-2, 1)$ the following bounds hold for $y \in \T_\tp$,

\begin{itemize} 
\item (bounds on $\tc{I}{1}$)
    \begin{align}
\big|\varrho_{j,k }^{\beta}(y;\Lambda)\p_y^\beta \tc{I}{1} f(y) \big| \lesssim_N \norm{f}_{X^{\sigma_1, \sigma_2}(\mathfrak M) } \varrho_{j,k}^2(y;\Lambda) k^{\sigma_2},   \, \beta \in \{0, 1, 2 \};
\end{align}

\item (bounds on $\tc{I}{2}$)
\begin{align}
\big|\varrho_{j,k}^\beta(y;\Lambda)\p_y^{\beta}T_{I2} f(y) \big| + \big|\varrho_{j,k}^\beta(y;\Lambda)\p_y^{\beta}T_{v1} f(y) \big|  &\lesssim_{N, \sigma_1, \sigma_2} \frac{\norm{f}_{X^{\sigma_1, \sigma_2}}}{\delta(\Lambda)^{\sigma_1 + \sigma_2} }  \frac{\delta(\Lambda)}{\varrho_j(y;\Lambda)}, \, \beta \in \{0, 1, 2 \};
\end{align}
\item (bounds on $\tc{v}{1}$ and $\tc{v}{2}$)
\begin{align}
      \big|\varrho_{j,k}^\beta(y;\Lambda) \p_y^\beta\tc{v}{2}f(y) \big| &\lesssim_{N, \sigma_1, \sigma_2} 
 \frac{\epsilon \delta(\Lambda) \norm{f}_{X^{\sigma_1, \sigma_2} 
 (\mathfrak M)}}{\delta(\Lambda)^4\varrho_j(y;\Lambda) \delta(\Lambda)^{\sigma_1 + \sigma_2}}, \quad \beta \in \{0,1 \},\\
  \big|\varrho_{j,k}^2(y;\Lambda) \p_y^2\tc{v}{2}f(y) \big| &\lesssim_{N, \sigma_1, \sigma_2} \frac{\norm{f}_{X^{\sigma_1, \sigma_2} (\mathfrak M) }}{ \varrho_j^{\sigma_1}(y;\Lambda)\varrho_{j,k}^{\sigma_2}(y;\Lambda)    }.
\end{align}

\end{itemize}

    
\end{proposition}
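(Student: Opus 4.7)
\textbf{Proof proposal for Proposition \ref{prop:higher:reg:2}.} In the viscous regime $|\lambda-b(y_{j\ast})|\lesssim \epsilon^{1/2}$ the weight $\delta(\Lambda)$ satisfies $\delta(\Lambda)\approx \epsilon^{1/4}$, so that $\delta(\Lambda)^2\approx\epsilon^{1/2}$. The consequence is that the Airy length scale given by the third case of Proposition \ref{Airy_main} is comparable to $\delta(\Lambda)$, and there is no separation of scales between the inviscid critical layer and the viscous layer. This is precisely why the exponential gain $\exp(-c_1\delta(\Lambda)^2\epsilon^{-1/2})$ that governed $T_{v1}$ in the intermediate region degenerates to $O(1)$, forcing $T_{v1}$ to be grouped with $T_{I2}$ rather than with $T_{v2}$. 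The plan is to follow the same four-term architecture as in Proposition \ref{prop:higher:regularity1}, but exploiting the viscous kernel bound \eqref{eq:kernel:bounds:1} uniformly in $y,z$ near $y_{j\ast}$.

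First, for $\tc{I}{1}$: the argument $(1-\varphi_0(\cdot/\delta_0))b''f$ is supported away from $y_{j\ast}$, so $\ai$ is non-degenerate on the support and Lemma \ref{ake1} (with bounds \eqref{ake4}, case (ii)) applies without any weight loss. Combining this with the modified Green's function bound \eqref{mGk51}, which gives the gain $\varrho_{j,k}^2(y;\Lambda)$ near the critical point, yields the desired estimate after three differentiations by the usual elliptic equation \eqref{mGk1} for $\mathcal{G}_k^j$.

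Second, for $\tc{I}{2}$ and $\tc{v}{1}$ together: both involve $\ai$ applied to a function localized in $S^j_{\delta_0}$, integrated against $\mathcal{G}_k^j$ (or $\mathcal{G}_k^j$ times a cutoff). Using the weighted $L^\infty$ bound \eqref{ake2} of Lemma \ref{ake1} in the form that applies when $|\lambda-b(y_{j\ast})|\lesssim \epsilon^{1/2}$, one obtains that $\ai(\varphi_0(\cdot/\delta(\Lambda))b''f)$ is bounded pointwise by $\delta(\Lambda)^{-\sigma_1-\sigma_2}\|f\|_{X^{\sigma_1,\sigma_2}(\mathfrak M)}$ on a $\delta(\Lambda)$-neighborhood of $y_{j\ast}$. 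Convolving against $\mathcal{G}_k^j$ and applying the $\varrho_j/\varrho_{j,k}$ decay from Lemma \ref{mGk50} produces the factor $\delta(\Lambda)/\varrho_j(y;\Lambda)$. The derivative bounds follow from equation \eqref{mGk1} using that $V(y)\varrho_j^2(y;\Lambda)\lesssim 1$. Crucially, unlike the intermediate regime where $T_{I2}$ had a distributional singular piece $\Upsilon_{I2}$ on the tiny scale $|\lambda-b(y_{j\ast})|^{1/2}\ll \delta(\Lambda)$, here that scale is absent and $T_{I2}$ is ``smooth'' at scale $\delta(\Lambda)\approx\epsilon^{1/4}$, so no decomposition $\Upsilon + R$ is needed.

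Third, for $\tc{v}{2}$: the factor $(1-\varphi_0((\cdot-y_{j\ast})/(\delta(\Lambda)/3)))$ in \eqref{eq:tv2} keeps the integrand away from the critical layer by a distance $\gtrsim \delta(\Lambda)$, where $|b(z)-\lambda-i\alpha|\gtrsim \delta(\Lambda)^2\approx \epsilon^{1/2}$. Integrating by parts twice in $z$ to move the $\epsilon\partial_z^2$ off $\ai$ gives $\epsilon$ times a boundary contribution and a bulk term involving lower derivatives of $\ai$, each controlled via Lemma \ref{ake1} and the weighted pointwise bounds. The $\beta\in\{0,1\}$ bound carries an $\epsilon/\delta(\Lambda)^4\sim 1$ factor balanced by the $\delta(\Lambda)/\varrho_j$ decay from the modified Green's function as in the previous step. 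The $\beta=2$ bound is more delicate because taking two $y$-derivatives of $\mathcal{G}_k^j$ produces a delta function via the equation \eqref{mGk1}; the strategy is to use \eqref{mGk1} to trade $\partial_y^2$ for the potential $V$ plus $k^2$, which is bounded on the support of the integrand by $\varrho_j^{-2}(y;\Lambda)$, giving the claimed $\varrho_{j,k}^{-2}(y;\Lambda)\varrho_j^{-\sigma_1}\varrho_{j,k}^{-\sigma_2}$ bound without the additional $\delta(\Lambda)/\varrho_j$ gain (since the derivative saturates the regularity).

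The main obstacle is the $\beta=2$ bound on $\tc{v}{2}$: one must match the power counting of $\epsilon$, $\delta(\Lambda)$, and the weights so that the distributional contribution from $\partial_y^2\mathcal{G}_k^j$ at $y=z$ gets properly absorbed, using that for $|z-y_{j\ast}|\gtrsim \delta(\Lambda)$ we have $\varrho_j(z;\Lambda)\approx |z-y_{j\ast}|$ and the potential $V(z)$ is non-negative and of order $\varrho_j^{-2}(z;\Lambda)$. All other bounds reduce to careful but routine bookkeeping of the three explicit ingredients: the kernel bound \eqref{eq:kernel:bounds:1}, the weighted $A_\Theta^{-1}$ bounds of Lemma \ref{ake1}, and the modified Green's function bounds of Lemma \ref{mGk50}.
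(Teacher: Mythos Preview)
Your proposal is essentially correct and follows the same architecture as the paper: the bounds on $\tc{I}{1}$, $\tc{I}{2}$, $\tc{v}{1}$, $\tc{v}{2}$ are proved separately in Lemmas~\ref{lem:ti1}, \ref{lem:ti2:regularized}, \ref{lem:tv1:pw}, and \ref{lem:tv2:pw}, and your key observation---that in the viscous regime $\delta(\Lambda)\approx\epsilon^{1/4}$ the scales collapse, so the ``regularly degenerate'' bound \eqref{eq:airy:delta} applies and no singular $\Upsilon+R$ splitting is needed for $\tc{I}{2}$---is exactly the content of Lemma~\ref{lem:ti2:regularized} and the discussion around \eqref{reg:degen}.

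Two small corrections. First, for $\tc{I}{1}$ you cite Lemma~\ref{ake1}~(ii), but that case requires $\lambda\notin\Sigma_{j,\delta_0}$, which fails here; the paper instead uses the kernel bounds of Proposition~\ref{Airy_main} directly together with the exponential localization Lemma~\ref{lem:exp:decay} (with $\delta=\delta_0$) to control the tail inside $S_{\delta_0/3}$. Second, for $\tc{v}{2}$ the paper integrates by parts \emph{once} in \eqref{eq:tv2:noder}--\eqref{eq:tv2:derivative} for the $\beta\in\{0,1\}$ bounds, and for the viscous-specific bounds \eqref{eq:tv2:kdom}--\eqref{eq:tv2:2der} does \emph{not} integrate by parts at all (since the Green's function scale is now comparable to that of $\ai$); it bounds $\epsilon\,\partial_y^2\ai(\eta_{\delta}b''g)$ directly via Lemma~\ref{ake1}, see \eqref{eq:Airy:derivative1}. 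Your ``integrate by parts twice'' variant would also work, but is not what the paper does.
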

\begin{remark}
\label{rem:kdelt}
    The implicit constants appearing in Propositions \ref{prop:k:bounds}, \ref{prop:higher:regularity1}  and \ref{prop:higher:reg:2} may grow with $N$. However, in the application of these propositions in the proof of the limiting absorption principle, we will only need to consider the case where $N$ is fixed.  
    We will only prove Propositions \ref{prop:higher:regularity1} and \ref{prop:higher:reg:2} under the assumption that $N =1$, but an inspection of the proofs shows  the general case follows with perhaps a larger constant. 
\end{remark}

\begin{remark}
    The role of higher regularity  is to capture the ``uniform compactness" of $T_{\Theta}$ (in the sense given in Lemma \ref{lem:T:bdd}). Some care is needed here as the function space $X^{\sigma_1, \sigma_2}(\mathfrak M)$ itself has non-trivial $\lambda$ dependence. The essential feature is that, upon rescaling by $\delta(\Lambda)$, the higher order regularity estimates encode compactness on the fixed space $H_{loc}^1(\R)$. This is made precise in section \ref{sec:concentrated}.  
\end{remark}
The proofs of Propositions \ref{prop:k:bounds}, \ref{prop:higher:regularity1}, and \ref{prop:higher:reg:2} are very important but are somewhat technical. The full proofs will be given in Section \ref{sec:T}.

\subsection{The compactness argument}\label{sec:deg}
Using Propositions \ref{prop:k:bounds}, \ref{prop:higher:regularity1}, and \ref{prop:higher:reg:2} we are now ready to prove the limiting absorption in the degenerate case. For the ease of notations, we assume that $y_{j\ast}=0$ and also drop the dependence of various quantities on $j\in\{1,2\}$ which is fixed, when there is no possibility of confusion. 

We firstly consider the intermediate region where the spectral parameter is $|\nu/k|^{1/2}$ away from the critical value. 
\begin{proposition}
    \label{prop:lap:intermediate} 
     Fix $\gamma \in [7/4, 2)$. 
     Assume that 
     \begin{equation}
         (\sigma_1, \sigma_2) \in \{ (0, -\gamma), (1, -\gamma + 1), (1 - 2(2 - \gamma), -\gamma + 2)    \},
     \end{equation}
     and let $\sigma_0$ be as in Proposition \ref{Airy_main}. 
    There exists $\epsilon_{0,I} \in (0, 1/8), \kappa_I > 0, M>0$, depending on $\gamma$, such that for all $\epsilon \in (0, \epsilon_{0, I})$, $k \in \Z \cap [1, \infty)$, $\alpha \in [-\sigma_{0}\epsilon^{1/2} , 1]$ , and $\lambda \in \Sigma_{j,\delta_0} \setminus [-M \epsilon^{1/2}, M \epsilon^{1/2}   ]$ with $j\in\{1,2\}$, we have 
\begin{equation}\label{eq:lap:intermediate}
    \norm{f + T_{\Theta}f}_{X^{\sigma_1, \sigma_2}(\mathfrak{M}) } \geq \kappa_I \norm{f}_{X^{\sigma_1, \sigma_2}(\mathfrak{M})  } 
\end{equation}
for all $f \in X^{\sigma_1, \sigma_2}(\mathfrak M) $.
\end{proposition}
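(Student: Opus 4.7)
The plan is to run a contradiction argument parallel to the one used in the non-degenerate case (Proposition \ref{prop:lap:non}), but adapted to the weighted spaces $X^{\sigma_1,\sigma_2}(\mathfrak{M})$ whose weights themselves depend on $\Lambda$. Specifically, I would assume that \eqref{eq:lap:intermediate} fails and extract a sequence $(f_\ell,\Theta_\ell)$ with $\|f_\ell\|_{X^{\sigma_1,\sigma_2}(\mathfrak{M}_\ell)}=1$ and $\|f_\ell+T_{\Theta_\ell}f_\ell\|_{X^{\sigma_1,\sigma_2}(\mathfrak{M}_\ell)}\to 0$. The bound \eqref{X1} from Proposition \ref{prop:k:bounds} provides the decay factor $\langle k_\ell\delta(\Lambda_\ell)\rangle^{-1/4}$, so up to extracting a subsequence one may assume $k_\ell\delta(\Lambda_\ell)\lesssim 1$, since otherwise $T_{\Theta_\ell}$ is a contraction and $f_\ell$ cannot be normalized. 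The assumption that $\lambda_\ell\in\Sigma_{j,\delta_0}\setminus[-M\epsilon_\ell^{1/2},M\epsilon_\ell^{1/2}]$ with $M$ large, combined with $\alpha_\ell\leq 1$, forces the intermediate regime $\delta(\Lambda_\ell)\gg \epsilon_\ell^{1/4}$; in particular the viscous pieces $T_{v1}$ and $T_{v2}$ are negligible by the fourth and fifth bullets of Proposition \ref{prop:higher:regularity1}.

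The second step is a rescaling. I would introduce the natural local variable $\tilde y:=(y-y_{j\ast})/\delta(\Lambda_\ell)$ and the rescaled functions $\widetilde f_\ell(\tilde y):=\delta(\Lambda_\ell)^{\sigma_1+\sigma_2}f_\ell(y_{j\ast}+\delta(\Lambda_\ell)\tilde y)$, chosen so that the $X^{\sigma_1,\sigma_2}$ norm on $S^j_{\delta(\Lambda_\ell)}$ translates, modulo the $\delta(\Lambda_\ell)^{-1/2+\sigma_1}$ prefactor and the $d_{j,k}^{\sigma_2+\beta}$ factor, to a standard $H^1$-type norm of $\widetilde f_\ell$ on $[-1,1]$. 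The decomposition of $T_{I2}$ as $\p_y T_{I2}f=\Upsilon_{I2}+R_{I2}$ furnished by Proposition \ref{prop:higher:regularity1}, where $\Upsilon_{I2}$ has uniformly bounded Fourier transform at scale $\delta(\Lambda_\ell)$ and $R_{I2}$ is controlled pointwise, provides precompactness of $T_{I2}f_\ell$ in the rescaled picture in $H^1_{\rm loc}(\R)$. The regularity bound on $T_{I1}$ gives precompactness away from the critical layer. Passing to a subsequence, $(\tilde\lambda_\ell,\tilde\alpha_\ell):=\big((\lambda_\ell-b(y_{j\ast}))/(b''(y_{j\ast})\delta(\Lambda_\ell)^2),\alpha_\ell/\delta(\Lambda_\ell)^2\big)$ converge to $(\tilde\lambda_\infty,\tilde\alpha_\infty)$, and $\widetilde f_\ell\to \widetilde f_\infty$ strongly in $H^1_{\rm loc}(\R)$ with $\widetilde f_\infty\not\equiv 0$ (the non-triviality being enforced by the normalization and the fact that the bulk of the norm concentrates in $S^j_{C\delta(\Lambda_\ell)}$ after shifting away the easy $T_{I1}$ part).

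The third step is to identify the limit equation. Using the local expansion $b(y)=b(y_{j\ast})+\tfrac{1}{2}b''(y_{j\ast})(y-y_{j\ast})^2+O(|y-y_{j\ast}|^3)$ together with the limiting behavior of $A_\Theta^{-1}$ from Lemma \ref{lem:limiting:behavior} transplanted into the rescaled variables, and using Lemma \ref{mGk50} to pass to the limit in the modified Green's function (the scaling of $\mathcal{G}_k^j$ matches that of $T_{I2}$ via $\delta(\Lambda)\to 0$), I would show that $\widetilde f_\infty$ satisfies on $\R$ a limit equation of the form
\begin{equation*}
-\p_{\tilde y}^2\widetilde f_\infty(\tilde y)+\mathrm{P.V.}\,\frac{b''(y_{j\ast})\,\widetilde f_\infty(\tilde y)}{\tfrac{1}{2}b''(y_{j\ast})\tilde y^2-\tilde\lambda_\infty b''(y_{j\ast})-i\tilde\alpha_\infty}=0,
\end{equation*}
with appropriate delta-function contributions at the zeros of the denominator if $\tilde\alpha_\infty=0$, and with the far-field decay dictated by the $L^\infty$-part of the weighted norm. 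This is exactly the Rayleigh-type limit whose non-trivial solutions are ruled out by the no-generalized-embedded-eigenvalue hypothesis in Assumption \ref{MaAs}, after translating to a generalized embedded eigenvalue of $L_k$ at $\lambda=b(y_{j\ast})$ (essentially by noticing that $\widetilde f_\infty$, extended periodically/by zero in the original variable and paired with the stream-function equation, produces a candidate $\psi$ as in Definition \ref{emb1}). This yields the contradiction.

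The main obstacle will be the third step, namely making precise the passage to the limit equation and especially the identification of the delta-function contributions when $\tilde\alpha_\infty=0$. This is delicate for two reasons: first, the weighted space $X^{\sigma_1,\sigma_2}(\mathfrak{M})$ has its weight centered exactly at the scale on which we rescale, so one must be careful that the limiting non-trivial solution $\widetilde f_\infty$ actually lies in a space that corresponds to an admissible generalized embedded eigenfunction (this is where the precise endpoint exponents $\sigma_1+\sigma_2\in(-2,1)$ in Proposition \ref{prop:k:bounds} and the choice of $\gamma\in[7/4,2)$ matter); second, one must ensure that the error from using the modified Green's function $\mathcal{G}_k^j$ in place of the unmodified $G_k$ is correctly accounted for, since the nonlocal potential $b''/(b-\lambda-i\alpha)\cdot[\varphi_0(\cdot/\delta_0)-\varphi_0(\cdot/\delta(\Lambda))]$ inside $\mathcal{G}_k^j$ is precisely what captures the quadratic singularity near $y_{j\ast}$ and converts $I+T_\Theta$ into a compact perturbation of the identity on $X^{\sigma_1,\sigma_2}(\mathfrak{M})$. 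Once these issues are handled, the contradiction with Assumption \ref{MaAs} is straightforward.
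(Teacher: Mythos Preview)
Your overall strategy---contradiction, extraction of a sequence with $k_\ell\delta(\Lambda_\ell)\lesssim 1$, rescaling by $\delta(\Lambda_\ell)$, compactness via Proposition~\ref{prop:higher:regularity1}, and identification of a limit Rayleigh-type equation---is correct and matches the paper. However, two genuine gaps remain.

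First, you implicitly assume the sequence $f_\ell$ concentrates near $y_{j\ast}$ at scale $\delta(\Lambda_\ell)$, asserting that ``the bulk of the norm concentrates in $S^j_{C\delta(\Lambda_\ell)}$ after shifting away the easy $T_{I1}$ part.'' This is not automatic. The paper splits explicitly into a \emph{non-concentrated} case, where $\limsup_\ell\|\varrho^{-\sigma_1}\varrho_{k_\ell}^{-\sigma_2}f_\ell\|_{L^\infty(\T_\tp\setminus S_{\delta_0/2})}>0$, and a \emph{concentrated} case. In the non-concentrated case one works in the original variables (no rescaling), uses \eqref{X1.5} to bound $k_\ell$, and passes to a limit $f_\ast$ solving $(-\partial_y^2+k_\ast^2+b''(y)/b(y))f_\ast=0$ on $\T_\tp$; this is where Assumption~\ref{MaAs} (an embedded eigenvalue at $\lambda=b(y_{j\ast})$) actually enters. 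Your rescaling would send any such non-concentrated mass to infinity in $\tilde y$, so $\widetilde f_\infty$ could vanish and you would lose the contradiction.

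Second, in the concentrated case, the limit equation you write down is correct---it is the Rayleigh equation for the parabolic (Poiseuille) profile $\tfrac12 b''(y_{j\ast})\tilde y^2$ on $\R$---but the contradiction does \emph{not} come from Assumption~\ref{MaAs}. That assumption concerns the specific flow $b$ on $\T_\tp$; after rescaling $b$ has been replaced by its quadratic Taylor expansion and the domain by $\R$, so there is no way to ``translate back'' to a generalized embedded eigenvalue of $L_k$. The paper instead invokes the separate (and well-known) fact that the linearized operator around the Poiseuille flow on $\R$ has no discrete eigenvalues or singular modes; in the borderline case $\tilde\alpha_\infty=0$ with $\tilde\lambda_\infty/b''(y_{j\ast})>0$, one first multiplies by $\overline{\widetilde f_\infty}$ and takes the imaginary part to kill the delta contributions, reducing to the eigenvalue-free case.
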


We now turn to the viscous region where $|\lambda|\lesssim |\nu/k|^{1/2}$. 
\begin{proposition}
    \label{prop:lap:viscous}
    Let $\gamma, (\sigma_1, \sigma_2), \text{ and } M$ be as in Proposition \ref{prop:lap:intermediate}.
    There exist $\epsilon_{0, v} \in (0, 1/8)$, $\kappa_v > 0 $, and $\sigma_{0}(M) \in (0,1)$ such that for all $\epsilon \in (0, \epsilon_{0, v})$, $k \in \Z \cap [1, \infty)$, $\alpha \in [-\sigma_{0}(M)\epsilon^{1/2}, 1]$, and $\lambda \in  [-M \epsilon^{1/2}, M \epsilon^{1/2}   ]$ we have 
\begin{equation}\label{eq:lap:viscous}
    \norm{f + T_{\Theta}f}_{X^{\sigma_1, \sigma_2}(\mathfrak M)  } \geq \kappa_v \norm{f}_{X^{\sigma_1, \sigma_2} (\mathfrak M)} 
\end{equation}
for all $f \in X^{\sigma_1, \sigma_2}(\mathfrak M) $.
\end{proposition}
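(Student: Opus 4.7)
The plan is to prove \eqref{eq:lap:viscous} by the same contradiction-compactness scheme used in Propositions \ref{prop:lap:non} and \ref{prop:lap:intermediate}. Assume the estimate fails: then there exist sequences $\epsilon_\ell \to 0^+$, $k_\ell \in \Z\cap[1,\infty)$, $\alpha_\ell \in [-\sigma_0(M)\epsilon_\ell^{1/2},1]$, $\lambda_\ell \in [-M\epsilon_\ell^{1/2}, M\epsilon_\ell^{1/2}]$, $\kappa_\ell \to 0^+$ and $f_\ell$ with $\|f_\ell\|_{X^{\sigma_1,\sigma_2}(\mathfrak{M}_\ell)}=1$ and $\|f_\ell + T_{\Theta_\ell}f_\ell\|_{X^{\sigma_1,\sigma_2}(\mathfrak{M}_\ell)}\leq \kappa_\ell$. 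The bound \eqref{X3} lets me absorb $\tc{v}{1}f_\ell + \tc{v}{2}f_\ell$ into the left-hand side, reducing to $f_\ell - iT_{I1,\ell}f_\ell - iT_{I2,\ell}f_\ell \to 0$. Since $|\lambda_\ell|\lesssim \epsilon_\ell^{1/2}$, the viscous length scale $\delta(\Lambda_\ell) \sim C^\dagger\epsilon_\ell^{1/4} + |\alpha_\ell|^{1/2}$ tends to zero.

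Next I would dichotomize on $k_\ell\delta(\Lambda_\ell)$. If $k_\ell\delta(\Lambda_\ell)\to\infty$ along a subsequence, the gain $\langle k\delta\rangle^{-1/4}$ in \eqref{X1} yields $\|T_{\Theta_\ell}\|_{X^{\sigma_1,\sigma_2}\to X^{\sigma_1,\sigma_2}}\to 0$, which together with the near-identity relation forces $\|f_\ell\|_{X^{\sigma_1,\sigma_2}(\mathfrak{M}_\ell)}\to 0$, contradicting the normalization. So, passing to a further subsequence, $k_\ell\delta(\Lambda_\ell)\leq N$, which is precisely the regime where Proposition \ref{prop:higher:reg:2} applies.

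The compactness step proceeds by rescaling at the viscous scale: set $\tilde y := (y-y_{j\ast})/\delta(\Lambda_\ell)$, $\tilde f_\ell(\tilde y) := f_\ell(y_{j\ast}+\delta(\Lambda_\ell)\tilde y)$, and rescale the spectral parameters by $\tilde k_\ell := k_\ell\delta(\Lambda_\ell)$, $\tilde\lambda_\ell := \lambda_\ell\delta(\Lambda_\ell)^{-2}$, $\tilde\alpha_\ell := \alpha_\ell\delta(\Lambda_\ell)^{-2}$, $\tilde \epsilon_\ell := \epsilon_\ell\delta(\Lambda_\ell)^{-4}$; all of these are bounded and, after passing to subsequences, converge to finite limits. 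The pointwise estimates in Proposition \ref{prop:higher:reg:2} translate, after rescaling, into uniform $H^1_{\mathrm{loc}}(\R)$ bounds on $\tilde T_{I1,\ell}\tilde f_\ell$ and $\tilde T_{I2,\ell}\tilde f_\ell$, together with polynomial decay at infinity inherited from the weights in $X^{\sigma_1,\sigma_2}$. A Rellich--Kondrachov extraction, combined with \eqref{X1.5} to control the noncompact region $|y-y_{j\ast}|\gtrsim\delta_0$ (which rescales to $|\tilde y|\to\infty$), produces a nontrivial limit $\tilde f_\infty$ satisfying a limiting equation on $\R$ of schematic form
\begin{equation*}
\tilde f_\infty + (\partial_{\tilde y}^2 - \tilde k_\infty^2)^{-1}\tilde A_\infty^{-1}\big(ib''(y_{j\ast})\tilde f_\infty\big) = 0,
\end{equation*}
where $\tilde A_\infty := \tilde\epsilon_\infty\partial_{\tilde y}^2 - \tilde\alpha_\infty + i\big(\tilde\lambda_\infty - \tfrac12 b''(y_{j\ast})\tilde y^2\big)$ is the rescaled Airy operator, obtained using the Taylor expansion $b(y) = \tfrac12 b''(y_{j\ast})(y-y_{j\ast})^2 + O(|y-y_{j\ast}|^3)$.

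The hard part will be ruling out such a nontrivial $\tilde f_\infty$, since in the viscous regime the limit equation is not the singular Rayleigh equation but a genuinely fourth-order regularized problem on $\R$. The key mechanism is that $\tilde A_\infty$ is strictly accretive provided $\tilde\alpha_\infty > -c$ for a fixed dissipative margin $c(M)>0$, which is precisely the role of the smallness of $\sigma_0(M)$: choosing $\sigma_0(M) < c(M)$ guarantees that the rescaled limiting Airy operator admits a coercive energy estimate of the type in Lemma \ref{entangle}. Multiplying the limit equation by $\overline{\tilde f_\infty}$ against the natural polynomial weight and integrating on $\R$, then exploiting the positivity of the real part of $\tilde A_\infty^{-1}$ together with the decay of the limit modified-Green's-function kernel inherited from Lemma \ref{mGk50} (which behaves like $\tilde y^2$ near $\tilde y = 0$), produces an energy inequality forcing $\tilde f_\infty \equiv 0$ and contradicting its nontriviality. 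The most delicate synchronization is that between the rescalings of $\mathcal{G}_k^j$, $A_\Theta^{-1}$ and the $X^{\sigma_1,\sigma_2}$ norm, which must combine so that the weight $\varrho_j^{\sigma_1}\varrho_{j,k}^{\sigma_2+\beta}$ becomes an explicit polynomial weight $\langle\tilde y\rangle^{\sigma_1+\sigma_2+\beta}$ in the limit; this is where the integrability range $\sigma_1 + \sigma_2 \in (-2,1)$ from \eqref{eqRan1} is used.
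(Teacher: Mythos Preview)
Your overall architecture---contradiction, reduction to $k_\ell\delta(\Lambda_\ell)$ bounded via \eqref{X1}, rescaling at scale $\delta(\Lambda_\ell)$, extraction of a nontrivial limit solving a model problem on $\R$---coincides with the paper's. But two points are off.

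First, a minor inconsistency: the claim that \eqref{X3} lets you ``absorb $T_{v1}f_\ell+T_{v2}f_\ell$'' and reduce to $f_\ell-iT_{I1,\ell}f_\ell-iT_{I2,\ell}f_\ell\to 0$ is wrong. The bound \eqref{X3} is a fixed $\tfrac12$, not $o(1)$, and in the viscous regime the quantity $\epsilon_\ell/\delta(\Lambda_\ell)^4$ need \emph{not} vanish (this is exactly what distinguishes the viscous from the intermediate case). Hence $T_{v1},T_{v2}$ survive in the limit, and the limiting $K_*$ in the paper retains the contributions of $K_{2\ell}+K_{3\ell}+K_{4\ell}$. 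Your later limit equation with the full $\tilde A_\infty$ is the correct target, so this looks like a slip rather than a structural error.

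The real gap is the final energy step. You propose to test by $\overline{\tilde f_\infty}$ and invoke ``positivity of the real part of $\tilde A_\infty^{-1}$'' together with an entanglement-type inequality. But in the only nontrivial case one has $\tilde\alpha_\infty<0$, and then $\tilde A_\infty$ is \emph{not} accretive; neither Lemma~\ref{entangle} nor a naive coercivity argument closes. The paper instead tests the limiting Orr--Sommerfeld--Poiseuille system by the \emph{vorticity} $\overline{W_*}$, where $W_*=(\partial_Y^2-|k_*|^2)F_*$, and reads off from the real and imaginary parts
\[
\epsilon_*\|\partial_Y W_*\|_{L^2}^2+\alpha_*\|W_*\|_{L^2}^2=0,\qquad
\int_{\R}\Big(\tfrac{Y^2}{2}-\tfrac{\lambda_*}{b''(0)}\Big)|W_*|^2\,dY+\|\partial_Y F_*\|_{L^2}^2+k_*^2\|F_*\|_{L^2}^2=0.
\]
The cases $\lambda_*/b''(0)\le 0$ or $\alpha_*\ge 0$ are then immediate; otherwise these two identities feed into the uncertainty inequality $\|W_*\|_{L^2}^2\le 2\|YW_*\|_{L^2}\|\partial_YW_*\|_{L^2}$ to give
\[
\|W_*\|_{L^2}^2\le 2\Big(\tfrac{\lambda_*|\alpha_*|}{\epsilon_*\,b''(0)}\Big)^{1/2}\|W_*\|_{L^2}^2.
\]
Since $\lambda_*|\alpha_*|/\epsilon_*=\lim_\ell \lambda_\ell\alpha_\ell/\epsilon_\ell$ and $|\lambda_\ell|\le M\epsilon_\ell^{1/2}$, $|\alpha_\ell|\le\sigma_0(M)\epsilon_\ell^{1/2}$, one gets $\lambda_*|\alpha_*|/\epsilon_*\le M\sigma_0(M)$; choosing $\sigma_0(M)<|b''(0)|/(4M)$ forces $W_*=0$ and hence $F_*=0$. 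This is the precise mechanism behind the smallness of $\sigma_0(M)$---it is not an ``accretivity margin'' but the constant in an uncertainty-principle estimate on the limiting vorticity. Your proposed argument does not identify this step and, as written, would not close when $\tilde\alpha_\infty<0$.
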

\begin{remark}
    In order to prove Proposition \ref{prop:lap:viscous} we will need to choose $\sigma_{0}(M)$ small based on  $M$ (see \eqref{viscous:lap:key}). However,  the proof of Proposition \ref{prop:lap:intermediate} only requires  smallness of $\sigma_{0}$ in a way such that Proposition \ref{Airy_main}  holds.
\end{remark}
If we set $\kappa := \min(\kappa_I, \kappa_v)$, $\epsilon_0 = \min(\epsilon_{0,I}, \epsilon_{0,v}   )$, and $\sigma_\sharp := \min (\sigma_0, \sigma_0(M))$  then Propositions \ref{prop:lap:intermediate} and \ref{prop:lap:viscous} immediately imply the following result.
\begin{proposition}
     \label{prop:lap:final}
     Fix $\gamma \in [7/4, 2)$. 
     Assume that 
     \begin{equation}
         (\sigma_1, \sigma_2) \in \{ (0, -\gamma), (1, -\gamma + 1), (1 - 2(2 - \gamma)), -\gamma + 2)    \}.
     \end{equation}
    Then there exists $\epsilon_{0}, \kappa, \sigma_\sharp > 0 $, depending on $\gamma$, such that for all $\epsilon \in (0, \epsilon_{0, })$, $k \in \Z \cap [1, \infty)$, $\alpha \in [-\sigma_\sharp \epsilon^{1/2}, 1]$, and $\lambda \in \Sigma_{j,\delta_0}$ with $j\in\{1,2\}$, we have 
\begin{equation}\label{eq:lap:final}
    \norm{f + T_{\Theta}f}_{X^{\sigma_1, \sigma_2}(\mathfrak M)} \geq \kappa \norm{f}_{X^{\sigma_1, \sigma_2}(\mathfrak M)} 
\end{equation}
for all $f\in X^{\sigma_1, \sigma_2}(\mathfrak M)$.
\end{proposition}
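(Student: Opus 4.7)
Since Proposition \ref{prop:lap:final} follows immediately by combining Propositions \ref{prop:lap:intermediate} and \ref{prop:lap:viscous}, the plan is to establish these two results by a contradiction/compactness argument modeled on the proof of Proposition \ref{prop:lap:non}. In both cases, the plan is to suppose toward a contradiction that there exists a sequence $(f_\ell, \Theta_\ell, \kappa_\ell)$ with $\|f_\ell\|_{X^{\sigma_1,\sigma_2}(\mathfrak{M}_\ell)} = 1$, $\kappa_\ell \to 0^+$, $\epsilon_\ell \to 0^+$, and $\|f_\ell + T_{\Theta_\ell} f_\ell\|_{X^{\sigma_1,\sigma_2}(\mathfrak{M}_\ell)} \leq \kappa_\ell$, and then to extract a limiting object that contradicts the spectral assumption \ref{MaAs}.

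The first reduction is to split on the size of the product $k_\ell \delta(\Lambda_\ell)$. If $k_\ell \delta(\Lambda_\ell) \to \infty$ along a subsequence, the bound \eqref{X1} gives $\|T_{\Theta_\ell} f_\ell\|_{X^{\sigma_1,\sigma_2}} \lesssim \langle k_\ell \delta(\Lambda_\ell)\rangle^{-1/4}$, so $\|f_\ell + T_{\Theta_\ell} f_\ell\|_{X^{\sigma_1,\sigma_2}} \geq \tfrac12$ for $\ell$ large, which is already a contradiction. We may therefore assume $k_\ell \delta(\Lambda_\ell) \leq N$ for some fixed $N$, at which point Propositions \ref{prop:higher:regularity1}--\ref{prop:higher:reg:2} (in the fixed-$N$ regime noted in Remark \ref{rem:kdelt}) become available.

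The heart of the argument lies in the bounded regime. Rescale around the critical point $y_{j\ast}$ via $y = y_{j\ast} + \delta(\Lambda_\ell) z$, setting $F_\ell(z) := f_\ell(y_{j\ast} + \delta(\Lambda_\ell) z)$ and renormalizing so that $F_\ell$ has unit norm in a fixed scale-invariant space on $\R$. The point of the weighted norm $X^{\sigma_1,\sigma_2}$ in \eqref{Intd4} is exactly that, after this rescaling, the inner $L^2$ piece over $S^j_{\delta(\Lambda_\ell)}$ becomes a uniform $H^1_{\rm loc}$ bound on a fixed unit ball, while the outer $L^\infty$ piece gives uniform pointwise control with a quantitative decay/growth rate away from the origin. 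The higher regularity bounds in Propositions \ref{prop:higher:regularity1}--\ref{prop:higher:reg:2}, applied term by term to the decomposition $T_\Theta = -iT_{I1} - iT_{I2} - iT_{v1} - T_{v2}$, then give uniform $H^2_{\rm loc}$ control of $T_{\Theta_\ell} F_\ell$ after rescaling (with $\Upsilon_{I2}$ providing an oscillatory piece controlled through its Fourier transform). By Rellich--Kondrachov, one extracts a subsequence such that $T_{\Theta_\ell} F_\ell$ converges strongly in $H^1_{\rm loc}$; writing $F_\ell = -T_{\Theta_\ell} F_\ell + (F_\ell + T_{\Theta_\ell} F_\ell)$ and using $\|f_\ell + T_{\Theta_\ell} f_\ell\|_{X^{\sigma_1,\sigma_2}} \to 0$ yields strong convergence $F_\ell \to F_\infty$ locally, with $F_\infty$ nonzero because the norm is concentrated on bounded scales after rescaling (combining \eqref{X1} and \eqref{X1.5} rules out escape of mass to infinity).

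The main obstacle, and where the two propositions diverge, is identifying the limiting equation $F_\infty + T_\infty F_\infty = 0$ and ruling out nontrivial solutions. In the intermediate regime (Proposition \ref{prop:lap:intermediate}), along the subsequence we may assume $|\lambda_\ell - b(y_{j\ast})|/\epsilon_\ell^{1/2} \to \infty$, so the viscous pieces $T_{v1}, T_{v2}$ vanish in the limit by the exponentially small bound and the $\epsilon/\delta^4$ factor in Proposition \ref{prop:higher:regularity1}, while Lemma \ref{lem:limiting:behavior} (together with the rescaled Green's function bounds of Lemma \ref{mGk50}, whose normalized limit is the Green's function for the modified Rayleigh operator with the $-2/z^2$ potential from the heuristic \eqref{oop7}) identifies $T_\infty$ as the inviscid modified-Green operator; the resulting equation is precisely a rescaled Rayleigh equation, and a nontrivial $F_\infty$ produces either an embedded eigenvalue at some $\lambda_\infty \in \Sigma$ or, via the usual Frobenius analysis after possible further rescaling when $\lambda_\ell \to b(y_{j\ast})$, a generalized embedded eigenvalue in the sense of Definition \ref{emb1}, contradicting Assumption \ref{MaAs}. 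In the viscous regime (Proposition \ref{prop:lap:viscous}), where $|\lambda_\ell| \lesssim \epsilon_\ell^{1/2}$, the rescaling is by $\epsilon_\ell^{1/4}$ instead of $\delta(\Lambda_\ell)$ and the Airy term survives; here the limit is a genuinely viscous problem, and the required smallness $\sigma_0(M)$ must be chosen to exclude eigenvalues of this limiting viscous operator on the rescaled line (this is the role of the parameter $M$ in the statement and is the most delicate step, since one cannot appeal directly to \ref{MaAs}; instead one combines the entanglement inequality of Lemma \ref{entangle} with the spectral assumption applied after $\lambda \to 0$ to propagate absence of eigenvalues from the inviscid limit into the viscous layer). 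Once both cases yield contradictions, Proposition \ref{prop:lap:final} follows by taking $\kappa := \min(\kappa_I, \kappa_v)$ and $\sigma_\sharp := \min(\sigma_0, \sigma_0(M))$.
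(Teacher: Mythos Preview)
Your high-level skeleton is right --- contradiction sequences, the $k_\ell\delta(\Lambda_\ell)$ dichotomy, rescaling, extraction via higher regularity --- but two genuine ideas from the paper are missing, and without them the argument does not close.

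First, you skip the \emph{non-concentrated vs.\ concentrated} dichotomy. In the paper's proof of Proposition~\ref{prop:lap:intermediate}, once $k_\ell\delta(\Lambda_\ell)\lesssim 1$, one must split on whether a uniform amount of $\|f_\ell\|_{X^{\sigma_1,\sigma_2}}$ stays at distance $\gtrsim\delta_0$ from $y_{j\ast}$ (non-concentrated, \eqref{eq:nonconentration}) or not (concentrated, \eqref{eq:concentration}). Only in the non-concentrated case does Assumption~\ref{MaAs} for the original flow $b$ give the contradiction, via \eqref{eq:lap7}. In the concentrated case, after rescaling by $\delta_\ell$ the limiting equation on $\R$ (see \eqref{eq:lap12}--\eqref{eq:lap13}) involves only $b''(0)$ and is precisely the Rayleigh equation for the \emph{Poiseuille} profile $\tfrac12 b''(0)Y^2$; the contradiction there comes from the (separate, well-known) fact that linearized Poiseuille flow on $\R$ has no discrete or generalized embedded eigenvalues, not from Assumption~\ref{MaAs}. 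Your proposal collapses both cases into one rescaling and claims the contradiction comes from Assumption~\ref{MaAs}, but after blowing up by $\delta_\ell\to 0$ the original background $b$ is lost and only its quadratic germ survives, so you cannot invoke Assumption~\ref{MaAs} there. Conversely, if the mass stays at macroscopic distance, your rescaled $F_\infty$ would be zero.

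Second, in the viscous regime (Proposition~\ref{prop:lap:viscous}) your proposed mechanism --- the entanglement inequality plus the inviscid spectral assumption --- is not what works. The paper's limit is the genuine Orr--Sommerfeld system \eqref{eq:os:poiseuille} on $\R$, and the contradiction is obtained by an elementary energy computation: multiply by $\overline{W_*}$, integrate, and combine the resulting identities with the uncertainty-type inequality $\|f\|_{L^2}^2\le 2\|Yf\|_{L^2}\|\partial_Y f\|_{L^2}$ to get $\|W_*\|_{L^2}^2 \le 2(\lambda_*|\alpha_*|/(\epsilon_* b''(0)))^{1/2}\|W_*\|_{L^2}^2$. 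This is where the freedom to choose $\sigma_0(M)$ small depending on $M$ is actually used, via \eqref{viscous:lap:key}. The entanglement inequality (Lemma~\ref{entangle}) plays no role in this step, and Assumption~\ref{MaAs} again cannot be invoked since the limit only sees the Poiseuille germ.
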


In the rest of the section we give the proof of Proposition \ref{prop:lap:intermediate} and Proposition \ref{prop:lap:viscous}, beginning with the proof of Proposition \ref{prop:lap:intermediate}.
\begin{proof}[Proof of Proposition \ref{prop:lap:intermediate}]
Suppose \eqref{eq:lap:intermediate} does not hold. Then there exist 
$$(f_\ell, \kappa_{I, \ell}, \epsilon_\ell, \lambda_\ell, k_\ell, \alpha_{\ell}, M_\ell),\quad {\rm with}\,\,(\kappa_{I,\ell}, \epsilon_\ell, M_\ell^{-1}) \to 0+, \lambda_\ell \in \Sigma_{j,\delta_0}\setminus \big[-M_\ell \epsilon_\ell^{1/2}, M_\ell \epsilon_\ell^{1/2}\big],$$
and  $\norm{f_\ell }_{X^{\sigma_1, \sigma_2}(\mathfrak M)} = 1$, such that  
    \begin{equation}
    \label{eq:lap00}
        \norm{f_\ell + T_{\Theta_\ell} f_\ell}_{X^{\sigma_1, \sigma_2}(\mathfrak M)} \leq \kappa_{I, \ell}.
    \end{equation}
 In \eqref{eq:lap00} we have introduced $\Theta_\ell$ to represent dependence on  $\epsilon_\ell, \lambda_\ell, k_\ell, \alpha_{\ell}$.
 
 We claim that we may assume $(\alpha_\ell, M_\ell\epsilon_\ell^{1/2}, \lambda_\ell) \to 0$. To see this, we first note that 
 $$\liminf_{\ell \to \infty}{M_\ell \epsilon_\ell^{1/2}} < \infty,$$ 
  otherwise there is nothing to prove by our assumption on the range of the spectral parameter $\lambda_\ell$. We may therefore assume, passing to subsequences if necessary, that  $(\alpha_\ell, M_\ell \epsilon_\ell^{1/2}, \lambda_\ell)$ converges. 
 If $\lim_{\ell \to \infty} \alpha_\ell$ or $\lim_{\ell \to \infty} \lambda_\ell$ are not zero, then $\lim_{\ell \to \infty} \delta(\Lambda_\ell) > 0$.
  In this case we may follow similar arguments as in the proof Proposition \ref{prop:lap:non}, using the higher order regularity proved for $\tc{I}{1}$, $\tc{I}{2}$ in Propositions \ref{prop:higher:regularity1} and \ref{prop:higher:reg:2}, and the bounds proved for $\tc{v}{1}$ and $\tc{v}{2}$ on $X^{\sigma_1, \sigma_2}(\mathfrak M)$ in Proposition \ref{prop:k:bounds}, which ensure the needed convergence for $T_{\Theta_\ell} f_\ell $. 
  
  We can therefore assume that  $(\alpha_\ell,\epsilon_\ell, M_\ell \epsilon_\ell^{1/2}, \lambda_\ell)$ converges to 0, which implies that $\delta(\Lambda_\ell) $ converges to 0. Proposition \ref{prop:k:bounds}  also implies that there exists a subsequence such that $$k_\ell \delta(\Lambda_\ell) \lesssim 1.$$ 

There are two main cases to consider: in the first case, a uniform-in-$\ell$ amount of $f_{\ell}$ stays $\delta_0$ distance away from the critical point (the ``Non-concentrated" case); in the second case, the sequence $f_\ell$ concentrates near the critical point (the ``Concentrated" case). 
More precisely, we distinguish two scenarios: 
\begin{enumerate}
    \item The Non-Concentrated case
     \begin{equation}
     \label{eq:nonconentration}
        \limsup_{\ell \to \infty}\big\|\varrho^{-\sigma_1}\varrho_{k_\ell}^{-\sigma_2}(\cdot;\Lambda)f_{\ell}\big\|_{L^\infty(\T_{\tp} \backslash S_{ \frac{\delta_0}{2}})} > 0,
    \end{equation}

    \item  Concentrated case
     \begin{equation}
     \label{eq:concentration}
        \limsup_{\ell \to \infty}\big\|\varrho^{-\sigma_1}\varrho_{k_\ell}^{-\sigma_2}(\cdot;\Lambda)f_{\ell}\big\|_{L^\infty(\T_{\tp} \backslash S_{ \frac{\delta_0}{2}})} = 0.
    \end{equation}

\end{enumerate}

\subsubsection{The non-concentrated case}
\label{sec:noncon}
It follows from \eqref{X1.5}, \eqref{eq:lap00} and \eqref{eq:nonconentration} that $|k_\ell|\lesssim1$. 
We introduce the notation
   \begin{subequations}
   \label{def:T}
       \begin{align}
    \tc{I}{1\ell}f(y) &:=  \int_{\T_\tp} \grl \ail ((1- \varphi_{\delta_0}) b'' f ) \, dz, \\
    \tc{I}{2\ell}f(y) &:=  \int_{\T_\tp} \grl \ail ( \varphi_{\delta(\Lambda_\ell)} b'' f) \, dz, \\
    \tc{v}{1 \ell}f(y) &:=  \int_{\T_\tp} \grl \varphi_{ \frac{\delta(\Lambda_\ell)}{{3}}  }(z) \ail( \eta_{\delta(\Lambda_\ell)} b'' f)\, dz,  \\
    \tc{v}{2\ell}f(y) &:= \int_{\T_\tp} \grl (1 -\varphi_{\frac{\delta(\Lambda_\ell)}{{3}}  }(z) )\frac{\epsilon \p_z^2\ail( \eta_{\delta(\Lambda_\ell)} b'' f)}{b(z) - \lambda_\ell - i  \alpha_\ell }\, dz .
\end{align}
\end{subequations}

By Propositions   \ref{prop:higher:regularity1} and \ref{prop:higher:reg:2}, 
it follows that for all $\lambda \in \Sigma_{j,\delta_0}$ with $k_\ell \delta(\Lambda_\ell) \lesssim 1$  and $y \in \T_\tp$, we have
    \begin{align}
    \label{eq:lap1}
      |\tc{I}{2 }f(y)| +  |\tc{v}{1}f(y)| + |\tc{v}{2}f(y)| &\lesssim \frac{\delta(\Lambda_\ell)  \norm{f}_{X^{\sigma_1, \sigma_2}(\mathfrak M) } }{\varrho(y; \Lambda_\ell) \delta(\Lambda_\ell)^{\sigma_1 + \sigma_2} }, 
      \end{align}
      where we used the fact that   $\epsilon_\ell \lesssim \delta^4(\Lambda_\ell)$. 
      
      The pointwise inequality \eqref{eq:lap1} directly implies that   
  \begin{equation}
\label{eq:lap0}
    \lim_{\ell \to \infty} \left(\norm{\tc{I}{2 \ell }f_\ell}_{L^p(\T_\tp)} + \norm{\tc{v}{1\ell}f_\ell}_{L^p(\T_\tp)} + \norm{\tc{v}{2\ell}f_\ell}_{L^p(\T_\tp)} \right) =  0,
\end{equation}
for $p>1$ with $(\gamma-1)p<1$. 

We now show that we can extract a convergent sequence whose limit satisfies Rayleigh's equation. By weak compactness of $L^p$, we can find $f_* \in L^p(\T_\tp)$ such that 
$f_\ell \rightharpoonup f_*$. This weak convergence can be upgraded to strong convergence using \eqref{eq:lap00}, \eqref{eq:lap0}, and the higher regularity of $\tc{I}{1\ell}$.  
Passing $\ell\to\infty$, we obtain that 
\begin{equation}
\label{eq:lap3}
    f_* + \lim_{\ell \to \infty}\tc{I}{1 \ell}f_{*}  = 0 \quad \text{in } L^p(\T_\tp). 
\end{equation}
Using Lemma \ref{LAMweight}, we obtain that $f_*$ is bounded and moreover, for $y\in\T_\tp$
\begin{equation}
\label{eq:lap4}
    |f_*(y)| = |\lim_{\ell \to \infty}\tc{I}{1\ell}f_{*}(y)| \lesssim y^2. 
\end{equation}

To compute $\lim_{\ell \to \infty} \tc{I}{1\ell}f_*$, we observe that  $\mathcal{G}_\ell$ and $A_\ell^{-1}( (1 - \varphi_{\delta_0}) b'' f_*)$ have convergent subsequences in $L^{\infty}(\T_\tp)$ from Arzela-Ascoli, Lemma \ref{mGk50}, and  Lemma \ref{lem:limiting:behavior}. Therefore 
\begin{equation}
    \label{eq:lap4.5}
    \lim_{\ell \to \infty} \tc{I}{1\ell}f_* = \int_{\T_\tp} \mathcal{G}_*(y,z)A_*^{-1} ((1 - \varphi_{\delta_0}) b'' f_*) \, dz
\end{equation}
where we have defined  $\mathcal{G}_*:= \lim_{\ell \to \infty} \mathcal{G}_\ell$ and $A_*^{-1} := \lim_{\ell \to \infty} A_\ell^{-1}$. By the proof of Lemma \ref{lem:limiting:behavior} we have 
\begin{equation}
\label{eq:lap6}
    A_*^{-1}((1 - \varphi_{\delta_0}) b'' f_*)  =  \frac{(1 - \varphi_{\delta_0}(y))b''(y) f_* (y)}{b(y)} 
\end{equation}
Additionally, for $z \neq 0$,
\begin{equation}
\label{eq:lap5}
    \mathcal{L}_* \mathcal{G}_*(y,z)   := \left(-\p_y^2 + k_*^2 + \frac{b''(y)}{b(y)}\varphi_{\delta_0}(y) \right) \mathcal{G}_*(y,z) = \delta(y -z) 
\end{equation}
in the sense of distributions.
Applying $\mathcal{L}_*$ to \eqref{eq:lap3} gives
\begin{equation}
\label{eq:lap7}
    \left(-\p_y^2 + k_*^2 + \frac{b''(y)}{b(y)}\right)f_*(y) = 0,
\end{equation}
for $y\in\T_\tp$, in the sense of distributions. This
 contradicts our main spectral assumption \ref{MaAs}.







\subsubsection{The concentrated case}
\label{sec:concentrated}
For the remainder of the proof we adopt the shorthand $\delta_\ell = \delta(\Lambda_\ell)$ with a slight abuse of notation when $\ell\in\{0,1,2\}$. Here our main concern is when $\ell\to\infty$ so there should be no possibility of confusion.

By \eqref{X3}, \eqref{eq:lap00}, \eqref{eq:concentration} and  Lemma \ref{LAMweight}, we can find an $N$ such that for $\ell \geq  N$ we have 
\begin{equation}
    \label{eq:lap8}
   \norm{f_\ell + T_{\Theta_\ell} f_{\ell}}_{X^{\sigma_1, \sigma_2}(\mathfrak M)} +  \norm{(\tc{v1}{\ell} + \tc{v}{2\ell})
 f_\ell}_{X^{\sigma_1, \sigma_2}(\mathfrak M)} + \norm{\tc{I}{1 \ell}f_\ell}_{X^{\sigma_1, \sigma_2}(\mathfrak M)} \leq \frac{2}{3},
\end{equation}
which implies that
\begin{equation}
    \label{eq:lap10}
    \norm{\tc{I}{2 \ell}f_\ell}_{X^{\sigma_1, \sigma_2}(\mathfrak M)}\ge  \frac{1}{3}. 
\end{equation}
Since $\tc{I}{2\ell}f_\ell = \tc{I}{2\ell}( \varphi_{2 \delta_\ell}f_\ell)$ and $\tc{I}{2\ell}$ are uniformly bounded on $X^{\sigma_1, \sigma_2}(\mathfrak M)$ by Proposition \ref{prop:k:bounds},
it follows from \eqref{eq:lap10} that there exists $\theta >0$ which is independent of $\ell$, such that 
\begin{align}
    \label{eq:lap11}
&  
 \delta_\ell^{\sigma_1 + \sigma_2  } \Big[ \delta_\ell^{-\frac{1}{2}}\Big(\norm{f_\ell}_{L^2(S_{\delta_\ell})} + \norm{\delta_\ell \p_yf_{\ell}}_{L^2(S_{\delta_\ell})} \Big) +  \norm{ f_\ell }_{L^{\infty}(S_{4\delta_\ell} \setminus S_{\delta_\ell }) } + \norm{ \delta_\ell\p_y f_\ell }_{L^{\infty}(S_{4 \delta_\ell}\setminus S_{\delta_\ell } )} \Big]\ge \theta. 
\end{align}

In view of the fact that a non-vanishing amount of $f_\ell$ concentrates near the critical point, we use a blow-up argument. 
To this end, define the re-scaled quantities
\begin{subequations}
\label{def:rescaled}
    \begin{align} 
    \label{eq:coordinates:rescaled}
    (Y,Z) &:= \left(\frac{y}{\delta_\ell}, \frac{z}{\delta_\ell}\right) \quad (y,z) \in \T_\tp^2,\\
    \label{eq:vel:rescaled}
    B_\ell(Y)&:= \delta_\ell^{-2}b(y), \\
    \label{eq:seq:rescaled}
    F_\ell(Y) &:= \delta_\ell^{\sigma_1 + \sigma_2} f_\ell(y), \\
    \label{eq:gr:rescaled}
    G_{\ell}(Y,Z) &:= \delta_\ell^{-1}\grl, \\
     \label{eq:ai:rescaled}
    \widetilde{A}_{\ell}^{-1}(Q_\ell)(Y) &:= \delta_\ell^{2 + \sigma_1 + \sigma_2 } \ail(q)(y), \quad \text{ where } Q_\ell(Y) := \delta_\ell^{\sigma_1 + \sigma_2}q(y).
\end{align}
\end{subequations}
It will be convenient to extend the rescaled functions so that they are defined on $\R$. For  the $Y$ values not in the range of  $\frac{y}{\delta_\ell}, y \in [-\tp/2,  \tp/2]$, we simply extend all of the functions as zero. In this way, we can use all of the previously obtained bounds to control the sequence. 

It follows from rescaling that $G_\ell$ satisfies for $|Y|< \tp/(2\delta_\ell)$ and $|Z|< \tp/(2\delta_\ell)$,
\begin{equation}
    -\p_Y^2 G_\ell(Y,Z) + (k_\ell \delta_\ell)^2 G_\ell(Y,Z) + \frac{B_\ell''(Y)(\varphi(\delta_0^{-1}\delta_\ell Y) - \varphi(Y)    )}{B_\ell(Y) - (\lambda_\ell + i \alpha_\ell)\delta_\ell^{-2}} G_\ell(Y,Z) = \delta(Y-Z),
\end{equation}
and that $\widetilde{A}_\ell^{-1}(Q_\ell)(Y)$ satisfies for $|Y|< \tp/(2\delta_\ell)$ and $|Z|< \tp/(2\delta_\ell)$,
\begin{equation}
\label{eq:airy:op:rescale}
    \frac{\epsilon_\ell}{\delta_\ell^4}\p_Y^2 \widetilde{A}_\ell^{-1}(Q_\ell)(Y) - i\left(B_\ell(Y) - \frac{\lambda_\ell - i\alpha_\ell }{\delta_\ell^2} \right) \widetilde{A}_{\ell}^{-1}(Q_\ell)(Y) = Q_\ell(Y).
\end{equation}
Crucially, \eqref{eq:lap11} and \eqref{def:rescaled}   give uniform-in-$\ell$ lower bounds on $F_\ell$ in $H^1[-1, 1]$ intersection with $$W^{1, \infty}\big([-4,4]\setminus [-1, 1]\big).$$
Finally, we define, with \eqref{eq:coordinates:rescaled},
\begin{subequations}
\begin{align}
\label{def:t:rescaled}
  K_{\ell}(Y) &:= \delta_\ell^{\sigma_1 + \sigma_2}T_{\Theta_\ell} f_{\ell}(y),  
    \\
     K_{1 \ell}(Y) &:= \delta_\ell^{\sigma_1 + \sigma_2}T_{I1 \ell} f_\ell(y),    \\
    K_{2 \ell}(Y) &:= \delta_\ell^{\sigma_1 + \sigma_2 }T_{I2 \ell}f_\ell(y), \\
    K_{3 \ell}(Y) &:= \delta_\ell^{\sigma_1 + \sigma_2 }T_{v1 \ell}f_\ell(y),\\
    K_{4 \ell}(Y) &:= \delta_\ell^{\sigma_1 + \sigma_2 }T_{v2 \ell}f_\ell(y).
\end{align}
\end{subequations}
Using \eqref{def:rescaled} and \eqref{def:T}, we obtain that
\begin{subequations}
\label{K:bounds}
   \begin{align}
   K_{1 \ell}(Y) &= \int_\R G_\ell(Y,Z) \widetilde{A}_{\ell}^{-1}\left(\left(1 -\varphi_{\frac{\delta_0}{\delta_\ell}} \right) B_\ell'' F_\ell\right)(Z) \, dZ,\\ 
   K_{2 \ell}(Y) &= \int_\R G_\ell(Y,Z) \widetilde{A}_{\ell}^{-1}\left(\varphi  B_\ell'' F_\ell\right)(Z) \, dZ, \\
   K_{3 \ell}(Y) &= \int_\R G_\ell(Y,Z) \varphi\left(  \frac{Z}{3}\right)
 \widetilde{A}_\ell^{-1}\left(  \Phi(\delta_\ell \cdot) B_\ell'' F_\ell  \right)(Z) \, dZ,\\ 
 K_{4 \ell}(Y) &= \int_\R G_\ell(Y,Z)  \left(1 -\varphi\left(  \frac{Z}{3}\right) \right) \frac{\epsilon_\ell}{\delta_\ell^4}   \frac{\p_Z^2 \widetilde{A}_\ell^{-1} (\eta_{\delta}(\delta_\ell \cdot) B_\ell'' F_\ell )(Z)   }{B_\ell(Z) - \lambda_{\ell}\delta_\ell^{-2} - i \alpha_\ell \delta_\ell^{-2} }   \, dZ.
\end{align}
\end{subequations}
Propositions \ref{prop:higher:regularity1} and \ref{prop:higher:reg:2} imply for $\beta \in \{0, 1 \}$ and $1\leq|Y|\ll \delta_\ell^{-1}$ that 
\begin{subequations}
\label{eq:rescaled:T:bounds}
    \begin{align}
    \min\{1 +|Y|, (k_\ell \delta_\ell)^{-1}  \}^\beta|\p_Y^{\beta}K_{1\ell}(Y)| &\lesssim \delta_\ell^{2  - |\sigma_2| }( 1 + |Y|^2), \\ 
    \min\{1 +|Y|, (k_\ell \delta_\ell)^{-1}  \}^\beta|\p_Y^{\beta}K_{2\ell}(Y)| &\lesssim \frac{1}{1 +|Y| },   \\
     \min\{1 +|Y|, (k_\ell \delta_\ell)^{-1}  \}^\beta|\p_Y^{\beta}K_{3\ell}(Y)| &\lesssim    \frac{\exp \left( -c_0 \delta_\ell^2 \epsilon_\ell^{-\frac{1}{2}} \right) }{1 + |Y|}, \\
     \min\{1 +|Y|, (k_\ell \delta_\ell)^{-1}  \}^\beta|\p_Y^{\beta}K_{4\ell}(Y)| &\lesssim \frac{\epsilon_\ell}{ \delta_\ell^4 (1 + |Y|)}. 
\end{align}
\end{subequations}
From the higher order regularity estimates proved in Propositions \ref{prop:higher:regularity1} and \ref{prop:higher:reg:2} 
it follows that $K_\ell$ is relatively compact in $H^1([-1, 1])$  and $W_{{loc}}^{1, \infty}(\R\setminus[-1, 1])$.
We conclude that $K_\ell$ has a convergent subsequence in $H_{\rm loc}^{1}(\R)$, the limit of which we call $K_*$. Combined with \eqref{eq:lap00} and \eqref{def:rescaled}, this implies that there exists a subsequence such that $F_\ell$ converges to $F_*$ in $H_{\rm loc}^{1}(\R)$.  From the convergence of $F_\ell, K_\ell$ and \eqref{eq:lap00} we have that 
\begin{equation}
\label{eq:lap12}
    F_* + K_* = 0.
\end{equation}
From the spatial decay of $K_\ell$ in \eqref{eq:rescaled:T:bounds}  we can deduce that $F_*$ is in $H^1(\R)$.
To further understand the limiting solution $F_*$, 
we observe that 
\begin{equation}
\label{eq:vanishing:viscosity}
   \lim_{\ell \to \infty} \frac{\epsilon_\ell}{\delta_\ell^4} \lesssim  \lim_{\ell \to \infty} \frac{1}{M_\ell^2}  = 0.
\end{equation}
Therefore, from \eqref{eq:rescaled:T:bounds}, for $Y,Z\in\R$,
\begin{equation}
    K_* = \lim_{\ell \to \infty} K_{2\ell} = \lim_{\ell \to \infty}\int_\R G_*(Y,Z) \widetilde{A}_{\ell}^{-1}\left(\varphi  b''(0) F_*\right) \, dZ, 
\end{equation}
 where $G_*$ satisfies for $Y,Z\in\R$,
\begin{align}
    \label{eq:lap13}
    \widetilde{\mathcal{L}_*}G_* := -\p_Y^2 G_*(Y,Z) + k_*^2 G_*(Y,Z) + \frac{b''(0) (1 - \varphi(Y))}{\frac{b''(0)}{2}Y^2 - \lambda_*  - i\alpha_*    }G_*(Y,Z) = \delta(Y-Z),
\end{align}
with $\lambda_* := \lim_{\ell \to \infty } \lambda_\ell\delta_\ell^{-2}$, $\alpha_* := \lim_{\ell \to \infty} \alpha_\ell \delta_\ell^{-2}$
and $k_* := \lim_{\ell \to \infty} k_\ell \delta_\ell $ (passing to  subsequences if necessary to ensure that all sequences converge). 
Given the definition of $\delta(\Lambda)$, one of the following must be true:
\begin{enumerate}
    \item $|\alpha_*| > 0 $,
    \item $ |\lambda_*| > 0$.
\end{enumerate}
By using the re-scaled bounds for the relevant functions, we can follow the proof of Lemma \ref{lem:limiting:behavior}, and obtain that $\widetilde{A}_\ell^{-1}(Q)(Y)$ has the following limiting behavior:
\begin{itemize}
    \item If $\alpha_* \neq 0$ or if $\frac{\lambda_*}{b''(0)} < 0 $, then in the sense of distributions for $Y\in\R$,
    \begin{equation}
    \label{eq:alpha+:rescale}
         \lim_{\delta_\ell^{-4}\epsilon_\ell \to 0, \, \delta_\ell^{-2}\alpha_\ell \to \alpha_*, \, \delta_\ell^{-2} \lambda_\ell \to \lambda_* } \widetilde{A}_{\ell}(Q)(Y) = \frac{ -Q(Y)}{ \alpha_* +   i( \frac{b''(0) Y^2}{2} - \lambda_*)}, 
    \end{equation}
    \item If $\alpha_* = 0$ and $\frac{\lambda_*}{b''(0)} > 0 $, then in the sense of distributions for $Y\in\R$,
    \begin{align}
    \label{eq:alpha0:rescale}
   \lim_{ \delta_\ell^{-4}\epsilon_\ell \to 0,\,  \delta_\ell^{-2}\alpha_\ell \to  0, \, \delta_\ell^{-2}\lambda_\ell \to \lambda_*} \widetilde{A}_\ell(Q)(Y) = \text{P.V.} \frac{ -Q(Y)}{ i(\frac{b''(0)Y^2}{2} - \lambda_*)} + \pi \sum_{\frac{b''(0)c^2}{2} = \lambda_*}  \frac{Q(c)}{|b''(0)c|}  \delta( Y - c).
\end{align}
\end{itemize}

If $\alpha_* \neq 0$ or if $\frac{\lambda_*}{b''(0)} < 0 $, then \eqref{eq:alpha+:rescale} implies that for $Y\in\R$,
\begin{equation}
    \lim_{\ell \to \infty}\int_\R G_*(Y,Z) \widetilde{A}_{\ell}^{-1}\left(\varphi  b''(0) F_*\right) \, dZ  = \int_\R G_*(Y,Z) \frac{\varphi(Z)  b''(0) F_*(Z)}{\frac{b''(0)Z^2}{2} -\lambda_* -i \alpha_* }  \, dZ. 
\end{equation}
Applying $\widetilde{\mathcal{L}}_*$ to \eqref{eq:lap12}
 we get that 
\begin{equation}
    -\p_Y^2 F_* + k_*^2 F_* + \frac{b''(0)}{\frac{b''(0)}{2}Y^2 - \lambda_*  -i\alpha_*    }F_* = 0,
\end{equation}
for $Y\in\R$ in the sense of distributions. This contradicts the well-known fact that the linearized operator associated with the Poiseuille flow has no discrete eigenvalues. 

Now assume that $\alpha_* = 0$ and $\frac{\lambda_*}{b''(0)} > 0 $.  Using \eqref{eq:alpha0:rescale}
we obtain that for $Y\in\R$,
\begin{equation}
 F_*(Y) + \int_\R \frac{G_*(Y,Z) (1 - \varphi(Z))b''(0)F(Z)}{\frac{b''(0)}{2}Z^2 - \lambda_*   }dZ  + i\pi\sum_{ \frac{b''(0)c^2}{2} = \lambda_*}  \frac{G_*(Y,c)  F_*(c) }{|b''(0)c|} = 0,
\end{equation}
 which implies that for $Y\in \R$,
\begin{equation}\label{rescaleJ1}
    -\p_Y^2 F_*(Y) + k_*^2 F_*(Y) + \text{P.V.}\frac{b''(0)}{\frac{b''(0)}{2}Y^2 - \lambda_*    }F_*(Y) +  i \pi \sum_{\frac{b''(0)c^2}{2} = \lambda_*}  \frac{ F_*(c) }{|b''(0)c|} \delta(Y - c) = 0,
\end{equation}
in the sense of distributions. Multiplying $\overline{F_\ast}$ with \eqref{rescaleJ1}, integrating over $\R$ and taking the imaginary part, we see that $F_\ast(c)=0$. This again contradicts with the fact that the linearized operator associated with the Poiseuille flow has no discrete eigenvalues. The proof is now complete.
\end{proof}

Now that we have quantified the boundary between the intermediate  viscous regime through $M$, we can give the proof of Proposition \ref{prop:lap:viscous}.
\begin{proof}[Proof of Proposition \ref{prop:lap:viscous}]
The proof proceeds identically to that of Proposition \ref{prop:lap:intermediate} except in the concentrated case in section \ref{sec:concentrated}. For $\lambda \in [-M\epsilon^{1/2}, M \epsilon^{1/2}]$ we no longer have \eqref{eq:vanishing:viscosity}. 
Since $\frac{\epsilon_\ell}{\delta_\ell^4}$ is bounded, it has a convergent subsequence, the limit of which we denote as $\epsilon_* $. If $\epsilon_* = 0$, then we argue as in Proposition \ref{prop:lap:intermediate}. Therefore, we assume that   $\epsilon_* >0 $. From \eqref{eq:lap12},  
 $K_*$ is now of the form
\begin{align}
     K_* &= \lim_{\ell \to \infty} (K_{2\ell} +K_{3\ell} + K_{4\ell})  \notag \\
     &= \int_\R G_*(Y,Z) \widetilde{A}_{*}^{-1}\left(\varphi  b''(0) F_*\right) dZ + \int_\R G_*(Y,Z) \varphi \left(\frac{Z}{3} \right)\widetilde{A}_{*}^{-1}\left( b''(0) F_*\right) dZ \notag\\
     &+  \int_\R G_*(Y,Z)  \left(1 -\varphi\left(  \frac{Z}{3}\right) \right) \epsilon_*   \frac{\p_Z^2 \widetilde{A}_*^{-1} ( b''(0) F_* )   }{\frac{b''(0)}{2}Z^2 - \lambda_{*} - i \alpha_*  }   \, dZ.
\end{align}
Using the regularity of $K_*$, we can bootstrap the regularity of $F_*$ to prove that $F_* \in H^2(\R)$ and satisfies for $Y\in\R$,
\begin{align}
\label{eq:os:poiseuille}
    &\epsilon_* \p_Y^2 W_* - \alpha_* W_* -i\left(\frac{b''(0)Y^2}{2}  - \lambda_* \right) W_*  +ib''(0)F_* = 0, \notag\\
   &W_* = (\p_Y^2 - |k_*|^2) F_* .
\end{align}
Our goal is to derive a contradiction by showing the solution of \eqref{eq:os:poiseuille} must be zero. To this end, we use a direct energy method.  
Multiplying by $\overline{W_*}$ and integrating over $\R$, we obtain that
    \begin{align}
    \label{eq:poiseuille:energy}
    \epsilon_*\int_\R |\p_Y W_*(Y)|^2dY + \alpha_*\int_\R |W_*(Y)|^2 dY = 0,\notag\\
    \int_\R \left(\frac{1}{2} Y^2 - \frac{\lambda_*}{b''(0)} \right) |W_*(Y)|^2 dY + \int_\R |\p_YF_*(Y)|^2 + |k_*F_*(Y)|^2 dY = 0. 
\end{align}
If $\frac{\lambda_*}{b''(0)}\leq0$ we have a contradiction so we assume that it is positive. Similarly, if $\alpha_* \geq 0$ we immediately have a contradiction  so we also assume that  $  -\sigma_0(M) \epsilon_*^{1/2} \leq \alpha_* < 0$. Combining  \eqref{eq:poiseuille:energy} with the inequality
\begin{equation}
    \norm{f}_{L^2(\R)}^2 \leq 2 \norm{Yf}_{L^2(\R)}  \norm{\p_Yf}_{L^2(\R)},
\end{equation}
we obtain that 
\begin{equation}
    \norm{W_*}_{L^2(\R)}^2 \leq 2 \left(\frac{\lambda_* |\alpha_*|}{\epsilon_* b''(0)} \right)^{\frac{1}{2}} \norm{W_*}_{L^2(\R)}^2.
\end{equation}
This is  a contradiction because 
\begin{equation}
\label{viscous:lap:key}
    \left|\frac{\lambda_* |\alpha_*|}{\epsilon_* b''(0)} \right| \leq \frac{M\sigma_0(M)}{|b''(0)|},
\end{equation}
and we can choose $\sigma_0(M)>0$ sufficiently small so that  $M\sigma_0(M) < \frac{|b''(0)|}{4}$.

\end{proof}

\section{Proof of Proposition \ref{prop:k:bounds} - Proposition \ref{prop:higher:reg:2} }
\label{sec:T}

In the section we provide detailed proofs for Proposition \ref{prop:k:bounds} - Proposition \ref{prop:higher:reg:2}. Recall that $\epsilon \in (0, 1/8), j\in\{1,2\}$, $(\alpha, \lambda) \in R_{dj}$ (see \eqref{defdjJ} for the definition of $R_{dj}$), and $k \in \Z \cap [1, \infty)$. Since $j$ and $y_{j\ast}$ are fixed, we often drop the dependence of various quantities on $j$ and assume that $y_{j\ast}=0$, when there is no possibility of confusion. We also allow the implied constants to depend on the indices $\sigma_1,\sigma_2$.



Before proving the bounds on $T_\Theta$, we first prove the following technical lemma which will be useful for quantifying the heuristic that $\ai(f)$ is  exponentially localized to the support of $f$. Recall the definition \eqref{DeC1.92} for $d_{j,k}$, which we denote as $d_k$ below. 
\begin{lemma}
\label{lem:exp:decay}
Assume that  $(\alpha, \lambda) \in (-\sigma_0 \epsilon^{1/2}, 1)  \times \Sigma_{j,\delta_0}$, and $k \in \Z \cap [1, \infty)$. Furthermore, suppose that  $c\in[2,4]$, and that $D \subset \T_\tp$ is an interval. Then for all $\epsilon \in (0, 1/8)$ and  $f \in X^{\sigma_1, \sigma_2}(\mathfrak{M})$  with $\text{supp}(f) \subset D$, 
$  \sigma_1\geq 0,\, \sigma_2\in \R, $ the following bounds hold. 
\begin{itemize}
    \item For
$y \in S_{\frac{\delta(\Lambda)}{c}} ,D \subset \T_\tp\setminus S_{\delta(\Lambda)}$, we have 
\begin{equation}
\label{eq:exp:close}
    \big|\ai(f)(y)\big|\lesssim_{\sigma_1, \sigma_2} 
     \norm{ f}_{X^{\sigma_1, \sigma_2}(\mathfrak M)}
    \frac{  \exp\big(-c_1\delta(\Lambda)^2 \epsilon^{-\frac{1}{2}}\big) }{ d_{ k}^{\sigma_2}   \delta(\Lambda)^{2 + \sigma_1 }};
\end{equation}

  \item For $y \in \T_\tp \setminus S_{c\delta(\Lambda)},  D \subset S_{\delta(\Lambda)}$, we have
\begin{equation}
\label{eq:exp:far}
\begin{split}
     |\ai(f)(y)| &\lesssim  \norm{f}_{L^{\infty}(S_{\delta(\Lambda)}   )}\epsilon^{-1/2} \exp(-c_1|y|\delta(\Lambda)\epsilon^{-\frac{1}{2}}),
     \end{split}
\end{equation}
\end{itemize}
where $c_1 > 0$ is a small constant independent of $(\epsilon, \alpha, k, \lambda)$. 
\end{lemma}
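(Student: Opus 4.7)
\smallskip

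\noindent\textbf{Proof proposal for Lemma \ref{lem:exp:decay}.} The starting point is the representation $A_\Theta^{-1}f(y)=\int_{D}k(y,z)f(z)\,dz$, where $k(y,z)$ is the fundamental solution of the generalized Airy operator studied in Proposition \ref{Airy_main}. My plan is to use those kernel bounds to convert the support separation between $y$ and $D$ into the claimed exponential factors, while absorbing the remaining polynomial prefactors into the $X^{\sigma_1,\sigma_2}(\mathfrak{M})$ norm or the surviving exponential.

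For \eqref{eq:exp:close}, I observe that $y\in S_{\delta(\Lambda)/c}$ and $D\subset \T_\tp\setminus S_{\delta(\Lambda)}$ with $c\ge 2$ give $|y-z|\ge \delta(\Lambda)/2$, and by definition of $\delta(\Lambda)$ one has $|b(y)-\lambda|\approx |\lambda-b(y_{j\ast})|$ on $S_{\delta(\Lambda)/c}$. Inserting these into the appropriate case of Proposition \ref{Airy_main} (intermediate if $\epsilon^{1/2}\ll |\lambda-b(y_{j\ast})|$, viscous otherwise) shows the local length scale $L_j(y;\Lambda)$ from \eqref{ake3}--\eqref{ake3.1} obeys $L_j(y;\Lambda)\lesssim \epsilon^{1/2}/\delta(\Lambda)$, so the decay exponent in the kernel satisfies $c_0 L_j^{-1}|y-z|\gtrsim \delta(\Lambda)^2\epsilon^{-1/2}$. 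I then split $\exp(-c_0 L_j^{-1}|y-z|)$ into two halves, retain one half to produce $\exp(-c_1\delta(\Lambda)^2\epsilon^{-1/2})$, and use the second half together with the pointwise bound $|f(z)|\le \|f\|_{X^{\sigma_1,\sigma_2}(\mathfrak{M})}\varrho_j^{-\sigma_1}(z;\Lambda)\varrho_{j,k}^{-\sigma_2}(z;\Lambda)$ to carry out the $z$-integral. Since $\varrho_j(z;\Lambda)\approx|z|\ge\delta(\Lambda)$ on $D$, and the kernel prefactor is of order $\epsilon^{-1}L_j(y;\Lambda)\approx \epsilon^{-1/2}\delta(\Lambda)^{-1}$, the integral yields at most $d_k^{-\sigma_2}\delta(\Lambda)^{-(2+\sigma_1)}$ after absorbing any residual $\epsilon^{-1/2}\delta(\Lambda)^{-1}$ factors into the surviving exponential via $(\delta^2\epsilon^{-1/2})^{m}\exp(-c_1\delta^2\epsilon^{-1/2})\lesssim \exp(-c_1'\delta^2\epsilon^{-1/2})$.

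For \eqref{eq:exp:far} the roles of $y$ and $z$ are reversed: now $|z|\le \delta(\Lambda)$ and $|y|\ge c\delta(\Lambda)$, giving $|y-z|\ge |y|/2$ and $|b(y)-\lambda|\approx b''(y_{j\ast})|y|^2/2\gg |\lambda-b(y_{j\ast})|$. The kernel length scale at $y$ is now $L_j(y;\Lambda)\lesssim \epsilon^{1/2}/|y|$, so the exponent is $\gtrsim |y|^2\epsilon^{-1/2}\ge |y|\delta(\Lambda)\epsilon^{-1/2}$, which gives the desired factor $\exp(-c_1|y|\delta(\Lambda)\epsilon^{-1/2})$. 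Integrating the kernel prefactor, which is of order $\epsilon^{-1/2}\delta(\Lambda)^{-2}$ on $S_{\delta(\Lambda)}$ in the intermediate regime (this is where the $\epsilon^{-1/2}$ in \eqref{eq:exp:far} originates), over $z\in S_{\delta(\Lambda)}$ contributes a factor $\delta(\Lambda)$, producing $\epsilon^{-1/2}\delta(\Lambda)^{-1}$. The stray $\delta(\Lambda)^{-1}$ is absorbed using the largeness $\delta(\Lambda)^2\epsilon^{-1/2}\ge (C^\dagger)^2\gg 1$ of the exponential argument and the trivial bound $|y|\delta(\Lambda)\epsilon^{-1/2}\ge \delta(\Lambda)^2\epsilon^{-1/2}$.

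The main obstacle is uniformly matching powers of $\epsilon$, $\delta(\Lambda)$ and $k$ across the three cases of Proposition \ref{Airy_main}, especially in the viscous regime $|\lambda-b(y_{j\ast})|\lesssim \epsilon^{1/2}$, where $\delta(\Lambda)^2\epsilon^{-1/2}\approx (C^\dagger)^2$ is only of order one, so the exponential ``decay'' degenerates into a constant. In this regime the estimates must come directly from the sizes $\delta(\Lambda)\approx \epsilon^{1/4}$ and the standard $L^\infty$ bounds from Lemma \ref{ake1}, rather than from genuine exponential improvement. A further technical point is that the bookkeeping of $\varrho_{j,k}$ versus $\varrho_j$ requires $d_k=\delta(\Lambda)\wedge k^{-1}$ to be compared with $\delta(\Lambda)$, so that the truncation at scale $k^{-1}$ never interferes with the $z$-integral over $D$; this is always compatible with the assumption $k\delta(\Lambda)\lesssim N$ that will be used in the applications.
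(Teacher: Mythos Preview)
Your approach is correct and essentially the same as the paper's: both invoke the kernel bounds of Proposition~\ref{Airy_main}, use the support separation between $y$ and $D$ to extract the exponential factor, and then integrate in $z$ against the weighted bound on $f$. The only cosmetic difference is that the paper splits $|y-z|=|y-\mathrm{sgn}(y)\delta(\Lambda)|+|\mathrm{sgn}(y)\delta(\Lambda)-z|$ additively (valid since $y$ and $z$ lie on opposite sides of $\{|y|=\delta(\Lambda)\}$), getting the uniform factor from the first piece and an approximation-of-identity kernel in $z$ from the second, whereas you halve the exponential multiplicatively; and the paper handles the $d_k$ bookkeeping by a direct case split on $k\delta(\Lambda)\gtrless 1$ rather than deferring to the later hypothesis $k\delta(\Lambda)\lesssim N$---note that the lemma is stated and proved for all $k$, so your deferral in the last paragraph is unnecessary.
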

\begin{proof}
We first prove \eqref{eq:exp:close}. 
   For $z \in \text{supp}(f)$ and $y \in S_{\frac{\delta(\Lambda)}{c}}$ we have   $|y - z| = |y - \text{sgn}(y) \delta(\Lambda) | + |\text{sgn}(y)\delta(\Lambda) - z|$. Then, we can write
   \begin{equation}
       \exp(-L(y,z)|y - z|) = \exp(-L(y,z)\left|y - \text{sgn}(y ) \delta(\Lambda) \right|) \times \exp(-L(y,z)\left|z - \text{sgn}(y ) \delta(\Lambda) \right|)    
       \end{equation}
   where $L(y,z)$ represents the coefficient of $|y-z|$ in the exponential in \eqref{eq:kernel:bounds:1} and \eqref{eq:kernel:bounds:2}. For $z \in \T_\tp \setminus S_{\delta(\Lambda)}$ and for  $y_0$ satisfying $b(y_0) = \lambda$, $|y_0| \leq \frac{\delta(\Lambda)}{c}$  
   we have $|b(z) - \lambda| \gtrsim |b(z)|$ and  $L(y,z) \gtrsim \frac{\delta(\Lambda)}{\epsilon^{\frac{1}{2}}} $, with an implicit constants only depending on $b$.  This implies there exists $c_1$, depending only on $b$ such that
\begin{align}
   \label{eq:exp1}
       \exp(-L(y,z)\left|y - \text{sgn}(y ) \delta(\Lambda) \right|) &\lesssim \exp(-c_1 \delta(\Lambda)^2 \epsilon^{-\frac{1}{2}}), \quad y \in S_{\frac{\delta(\Lambda)
       }{c}}, \notag\\
       \exp(-L(y,z)\left|z - \text{sgn}(y ) \delta(\Lambda) \right|) &\lesssim \exp(-c_1 \delta(\Lambda) \epsilon^{-\frac{1}{2}} \left|z - \text{sgn}(y ) \delta(\Lambda) \right|), \quad z \in \T_\tp \setminus S_\delta(\Lambda),
   \end{align} 
   and consequently,
   \begin{align}
   \label{eq:exp2}
       &|\ai (f) (y)| \lesssim \exp(-c_1 \delta(\Lambda)^2 \epsilon^{-\frac{1}{2}}) \int |f(z)| \epsilon^{-\frac{1}{2}} \delta(\Lambda)^{-1} \exp(-c_1 \delta(\Lambda) \epsilon^{-\frac{1}{2}} \left|z - \text{sgn}(y ) \delta(\Lambda) \right|) \, dz, \notag\\
       &\quad y \in \T_{\tp}.
   \end{align}
The bound \eqref{eq:exp:close} will follow from \eqref{eq:exp2} if
\begin{equation}
\label{exp3}
    \int_{\T_\tp} \frac{ \epsilon^{-\frac{1}{2}} \delta(\Lambda)}{ \varrho^{\sigma_1} (z; \Lambda) \varrho_k^{\sigma_2} (z; \Lambda) } \exp(-c_1 \delta(\Lambda) \epsilon^{-\frac{1}{2}} \left|z - \text{sgn}(y) \delta(\Lambda) \right|) \, dz \lesssim 
    \frac{1}{\delta(\Lambda)^{\sigma_1} d_k^{\sigma_2} } .
\end{equation}

To prove \eqref{exp3} we consider several cases. First, assume that $k\delta \geq 1$. Then $\varrho_k^{\sigma_2}(z; \Lambda) = \frac{1}{k^{\sigma_2}}$. Therefore, it is enough to only consider $\varrho^{\sigma_1}(z, \Lambda)$. Using that  $y \in S_{\frac{\delta(\Lambda)}{c}}$  and $\varrho(z, \Lambda) \geq \delta(\Lambda), $ \eqref{exp3} follows. Similar considerations hold for $k \delta(\Lambda) \leq 1$ and $ \sigma_2 \geq 0$. Now,  consider the case where $k\delta(\Lambda) \leq 1$  and  $\sigma_2 < 0$.  In this situation 
\begin{equation}
    \frac{1}{ \varrho^{\sigma_1}(y; \Lambda) \varrho_k^{\sigma_2}(y, \Lambda) } \lesssim \frac{\varrho^{|\sigma_2|} (y; \Lambda) }{\varrho^{\sigma_1}(y; \Lambda)} \lesssim \frac{\varrho^{|\sigma_2|}(y; \Lambda)} {\delta(\Lambda)^{\sigma_1}}.
\end{equation}
Therefore, it's sufficient to prove that 
\begin{equation}
\label{exp4}
    \int_{\T_\tp}  \epsilon^{-\frac{1}{2}} \delta(\Lambda) \varrho_k^{|\sigma_2|} (z; \Lambda)  \exp(-c_1 \delta(\Lambda) \epsilon^{-\frac{1}{2}} \left|z - \text{sgn}(y ) \delta(\Lambda) \right|) \, dz \lesssim \delta(\Lambda)^{|\sigma_2|}.
\end{equation}
Since $c_1 \delta(\Lambda) \epsilon^{-\frac{1}{2}} \exp (-c_1 \delta(\Lambda) \epsilon^{-\frac{1}{2}})$ is an approximation of the identity and $\frac{\epsilon^{\frac{1}{2}}}{\delta(\Lambda)} \leq  \delta(\Lambda) $, \eqref{exp4} follows, completing the proof of \eqref{eq:exp:close}. 

For the proof of \eqref{eq:exp:far} we proceed as before except with the roles of $y$ and $z$ reversed from \eqref{eq:exp1}. This yields
 \begin{align}
     \label{eq:exp4}
      \exp(-L(y,z)\left|y - \text{sgn}(y) \delta(\Lambda) \right|) &\lesssim \exp(-c_1  \delta(\Lambda) |y |\epsilon^{-\frac{1}{2}}), \quad y \in \T_\tp \setminus S_{c \delta(\Lambda)}, \notag\\
       \exp(-L(y,z)\left|z - \text{sgn}(y) \delta(\Lambda) \right|) &\lesssim \exp(-c_1 \delta(\Lambda) \epsilon^{-\frac{1}{2}} \left|z - \text{sgn}(y ) \delta(\Lambda) \right|),\quad z \in S_{\delta(\Lambda)}.
 \end{align}
 Then, for $y \in \T_{\tp} \setminus S_{c\delta(\Lambda)}$
  \begin{align}
   \label{eq:exp5}
       |\ai (f)(y)|\lesssim  &\exp(-c_1  \delta(\Lambda) |y |\epsilon^{-\frac{1}{2}})  \norm{f}_{L^{\infty}( S_{\delta(\Lambda)} )} \epsilon^{-\frac{2}{3}} \min( \delta(\Lambda), \epsilon^{\frac{1}{4}})^{-\frac{1}{3}}\\  
       & \times \int_{\T_\tp}   \exp(-c_1 \delta(\Lambda) \epsilon^{-\frac{1}{2}} \left|z - \text{sgn}(y) \delta(\Lambda) \right|) \, dz ,
   \end{align}
   which implies the desired bounds.
\end{proof}

We will prove of Propositions \ref{prop:k:bounds} -  \ref{prop:higher:reg:2} by proving the needed bounds for each $\tc{I}{1}, \tc{I}{2}, \tc{v}{1}$, and $\tc{v}{2}$ individually. Each of the following subsections is dedicated to one of the operators and proves the bounds needed for the proof of the propositions.

\subsection{Bounds on $\tc{I}{1}$ }
Recall that the definition of $\tc{I}{1}$ with $y_{1*} = 0$ is
\begin{equation}
 \tc{I}{1}g(y)  =  \int_{\T_\tp} \gr \ai ((1- \varphi_{\delta_0}) b'' g )(z) \, dz,\quad y\in\T_\tp.
\end{equation}
Since the $(1- \varphi_{\delta_0}) b'' g $ is supported approximately $\delta_0$ away from the critical layer 
we can essentially view $\ai$ as a bounded multiplication operator. However, in order to avoid the logarithmic loss in $\delta(\Lambda)$ for $\gr$ applied to bounded functions, we use Lemma \ref{lem:exp:decay} to view $\ai((1 - \varphi_{\delta_0}) b''g)$ as if it was supported $\delta_0$ distance away from the origin.  The following Lemma implies Propositions \ref{prop:higher:regularity1}
 and \ref{prop:higher:reg:2} for $\tc{I}{1}$.
\begin{lemma}
\label{lem:ti1}
  Assume that $(\alpha, \lambda) \in (-\sigma_0 \epsilon^{\frac{1}{2}}, 1)  \times \Sigma_{j,\delta_0}$, and  $k \in \Z \cap [1, \infty)$. Then for all $\epsilon \in (0, 1/8)$ and $g \in X^{\sigma_1, \sigma_2}(\mathfrak M)$  with $\sigma_1 ,\sigma_2  \in \R$ we have the following  bounds for all $y \in \T_\tp$ and $\beta \in \{ 0,1,2 \}$,
   \begin{align}
   \label{eq:ti1:pw}
         \big| \varrho_k^\beta(y ; \Lambda) \p_y^\beta \tc{I}{1}g(y)\big| \lesssim_{\sigma_1, \sigma_2} \norm{g}_{X^{\sigma_1, \sigma_2} 
 (\mathfrak M)}\varrho_k^{2}(y ; \Lambda) k^{\sigma_2}\times
         \begin{cases}
             k^{- 1}, & {\rm if}\,\, \varrho_k(y ; \Lambda) = \varrho(y, \Lambda ), \\
            1, & {\rm if}\,\, \varrho_k(y ; \Lambda) = \frac{1}{k} .
         \end{cases}
        \end{align}
    \end{lemma}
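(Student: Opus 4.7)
}

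The plan is to exploit two decoupled good properties. First, the source $h := (1-\varphi_{\delta_0})b''g$ is supported in $\{|z-y_{j*}|\geq \delta_0/2\}$, where $|b(z)-\lambda|\gtrsim 1$ (since $\lambda\in\Sigma_{j,\delta_0}$), so on this set the Airy inversion $A_\Theta^{-1}$ essentially amounts to division by $-i(\lambda-b(z))+\alpha$, with an $O(\epsilon)$ correction. Second, the modified Green's function $\mathcal{G}_k^j$ enjoys the triple bound in \eqref{mGk51}; in particular, for $z$ with $\varrho_j(z;\Lambda)\approx 1$ and any $y\in\T_\tp$ we get the clean pointwise inequality
\begin{equation*}
\varrho_{j,k}^{\beta}(y;\Lambda)|\partial_y^\beta \mathcal{G}_k^j(y,z;\Lambda)| \lesssim \varrho_{j,k}(z;\Lambda)\min\Big\{e^{-|k||y-z|/2},\,\varrho_j^{2}(y;\Lambda)\Big\},\qquad \beta\in\{0,1\}.
\end{equation*}

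First I would establish the pointwise bound
\begin{equation*}
|A_\Theta^{-1}h(z)| \lesssim \mathbf{1}_{\{|z|\geq \delta_0/4\}}\,|h(z)|  + e^{-c_1\delta_0^2\epsilon^{-1/2}}\,\|g\|_{X^{\sigma_1,\sigma_2}(\mathfrak{M})},
\end{equation*}
valid up to absolute constants. For $|z|\geq \delta_0/4$ this follows from Lemma \ref{ake1} applied with the weight $w\equiv 1$ (using $|\lambda-b(z)|\gtrsim 1$); on the support of $h$, the $X^{\sigma_1,\sigma_2}$ norm gives $|h(z)|\lesssim k^{\sigma_2}\|g\|_{X^{\sigma_1,\sigma_2}(\mathfrak M)}$. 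For $|z|<\delta_0/4$, since the source of $A_\Theta^{-1}h$ sits outside $S_{\delta_0/2}$, the exponential localization \eqref{eq:exp:close} of Lemma \ref{lem:exp:decay} (taking the role of $\delta(\Lambda)$ played by the fixed constant $\delta_0/2$) yields an $e^{-c_1\delta_0^2\epsilon^{-1/2}}$ factor that is absorbable into any polynomial of $1/k$, $\delta(\Lambda)$, or $\varrho_k$.

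Next I would plug these bounds into the integral representation of $\partial_y^\beta T_{I1}g$ and split $\T_\tp$ into two regimes using the decay of the modified Green's function. When $\varrho_{j,k}(y;\Lambda)=\varrho_j(y;\Lambda)$ (the ``close'' case), I would use the $\varrho_j^2(y;\Lambda)/\varrho_j^2(z;\Lambda)\lesssim \varrho_j^2(y;\Lambda)$ branch of the kernel bound, producing
\begin{equation*}
\varrho_{j,k}^\beta(y;\Lambda)|\partial_y^\beta T_{I1}g(y)|\lesssim \varrho_j^2(y;\Lambda)\int_{|z|\gtrsim \delta_0/4}\varrho_{j,k}(z;\Lambda)\,k^{\sigma_2}\,dz\,\|g\|_{X^{\sigma_1,\sigma_2}(\mathfrak M)}\lesssim \varrho_{j,k}^2(y;\Lambda)k^{\sigma_2-1}\|g\|_{X^{\sigma_1,\sigma_2}(\mathfrak M)},
\end{equation*}
where the $k^{-1}$ comes from $\varrho_{j,k}(z;\Lambda)\approx 1/k$ on the support. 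When $\varrho_{j,k}(y;\Lambda)=1/k$ (the ``far'' case), I would instead invoke the exponential branch $e^{-|k||y-z|/2}$; the $z$-integration then contributes an additional $1/k$, giving the bound $k^{-2}k^{\sigma_2}=\varrho_{j,k}^2(y;\Lambda)k^{\sigma_2}$ as required.

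Finally, the bound for $\beta=2$ would follow by combining the cases $\beta\in\{0,1\}$ with the defining equation \eqref{mGk1}, which for $y\in\T_\tp\setminus\mathrm{supp}(h)$ gives
\begin{equation*}
\partial_y^2 T_{I1}g(y)=k^2 T_{I1}g(y)+V(y)T_{I1}g(y)-A_\Theta^{-1}h(y),
\end{equation*}
with $V(y)=O(\varrho_j^{-2}(y;\Lambda))$ near the critical point; the factor $\varrho_j^2(y;\Lambda)$ already present in the $\beta=0$ bound exactly compensates the singularity of $V$, and on $\{|y|<\delta_0/4\}$ the source term $A_\Theta^{-1}h(y)$ is exponentially small by Lemma \ref{lem:exp:decay}. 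I expect the only real bookkeeping obstacle to be the separate treatment of the regions $k\delta(\Lambda)\geq 1$ versus $k\delta(\Lambda)\leq 1$ (which is why the two cases appear in the statement); in both, however, the mechanism is the same trade-off between the algebraic decay $\varrho_j^2(y;\Lambda)/\varrho_j^2(z;\Lambda)$ and the exponential decay $e^{-|k||y-z|}$ of $\mathcal{G}_k^j$.
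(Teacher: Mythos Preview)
Your approach is essentially the same as the paper's: split the Airy image according to whether $z$ is close to or far from the critical point, use Lemma~\ref{lem:exp:decay} (with $\delta_0$ playing the role of $\delta(\Lambda)$) for the near part, use boundedness of $A_\Theta^{-1}$ on the support of $h$ for the far part, and then integrate against the modified Green's function using the branches of \eqref{mGk51}; for $\beta=2$ both you and the paper invoke the equation \eqref{mGk1}. One small imprecision to fix: your displayed pointwise bound $|A_\Theta^{-1}h(z)|\lesssim \mathbf{1}_{\{|z|\ge\delta_0/4\}}|h(z)|+\cdots$ cannot hold as written, since $A_\Theta^{-1}$ is nonlocal and Lemma~\ref{ake1} only gives $L^p$ estimates---replace $|h(z)|$ by $\|h\|_{L^\infty}\lesssim k^{\sigma_2}\|g\|_{X^{\sigma_1,\sigma_2}}$ (this is exactly the paper's \eqref{ti1:1}) and the rest of your argument goes through unchanged.
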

\begin{proof}
First, suppose that $\varrho_k(y ; \Lambda) = \varrho(y ; \Lambda)$. 
Decompose $\tc{I}{1}g$ as 
    \begin{align}
     \int_{\T_\tp} \gr \ai ((1- \varphi_{\delta_0}) b'' g )(z) \, dz &=
           \int_{\T_\tp} \gr \varphi_{\frac{\delta_0}{3}}(z)\ai ((1- \varphi_{\delta_0}) b'' g )(z) \, dz \\\notag 
          &+ \int_{\T_\tp} \gr (1-\varphi_{\frac{\delta_0}{3}}(z))\ai ((1- \varphi_{\delta_0}) b'' g )(z) \, dz.
    \end{align}
Proposition \ref{Airy_main} and  \eqref{eq:exp:close} with $\delta = \delta_0$ imply
\begin{align}
\label{ti1:1}
|(1-\varphi_{\frac{\delta_0}{3}}(z))\ai ((1- \varphi_{\delta_0}) b'' g )(z)| &\lesssim_{\delta_0, \sigma_1, \sigma_2}   k^{\sigma_2}\norm{g}_{X^{\sigma_1, \sigma_2}(\mathfrak M)},
\notag\\
|\varphi_{\frac{\delta_0}{3}}(z)\ai ((1- \varphi_{\delta_0}) b'' g )(z)| &\lesssim_{\delta_0, \sigma_1, \sigma_2} \exp(-c_1 \delta_0^2 \epsilon^{-\frac{1}{2}}) k^{\sigma_2}\norm{g}_{X^{\sigma_1, \sigma_2}(\mathfrak M)}. 
\end{align}
 Lemma \ref{mGk50} combined with  \eqref{ti1:1} for $\beta \in \{ 0,1 \}$ yields
\begin{align}
\label{tbpw:1}
 &\left| \int_{\T_\tp} \varrho^\beta(y ; \Lambda)\p_y^\beta\gr \varphi_{\frac{\delta_0}{3}}(z)\ai ((1- \varphi_{\delta_0}) b'' g )(z) \, dz  \right|\notag \\
 &\lesssim_{\delta_0, \sigma_1, \sigma_2}  
 \norm{g}_{X^{\sigma_1, \sigma_2}(\mathfrak M)}\varrho^2(y ; \Lambda) \exp(-c_1 \delta_0^2 \epsilon^{-\frac{1}{2}}) \int_{\T_\tp}  \frac{k^{\sigma_2}\varrho_k(z, \Lambda)}{\varrho^2(z, \Lambda)}  \, dz \notag\\
     &\lesssim_{\delta_0, \sigma_1, \sigma_2}  \norm{g}_{X^{\sigma_1, \sigma_2}(\mathfrak M)} \varrho^2(y ; \Lambda) k^{\sigma_2 -1 } \delta(\Lambda)^{-1}\exp(-c_1 \delta_0^2 \epsilon^{-\frac{1}{2}}). 
\end{align}
Similarly, for $\beta \in \{0, 1 \}$ we obtain the bound
\begin{align}
\label{tbpw:2}
    \left| \int_{\T_\tp} \varrho^\beta(y ; \Lambda)\p_y^\beta\gr (1-\varphi_{\frac{\delta_0}{3}}(z))\ai ((1- \varphi_{\delta_0}) b'' g )(z) \, dz \right| \lesssim_{\delta_0, \sigma_1, \sigma_2} \frac{\varrho^{2}(y ; \Lambda)}{ k^{1 - \sigma_2  }}\norm{g}_{X^{\sigma_1, \sigma_2}(\mathfrak M)}. 
\end{align}
Since $\delta(\Lambda) \gtrsim \epsilon^{\frac{1}{4}}$ it follows
\begin{equation}
   \delta(\Lambda)^{-1} \exp(-c_1 \delta_0^2 \epsilon^{-\frac{1}{2}}) \lesssim 1,
\end{equation}
giving \eqref{eq:ti1:pw} for $\varrho_k(y ; \Lambda) = \varrho(y ; \Lambda)$ and $\beta \in \{ 0, 1 \}$. 
For $ \varrho_k(y ; \Lambda)  = \frac{1}{k}$, we bound $\ai ((1- \varphi_{\delta_0}) b'' g )$ as 
\begin{equation}
\label{ti1:bs}
    |\ai ((1- \varphi_{\delta_0}) b'' g )(z)| \lesssim_{\delta_0, \sigma_1, \sigma_2} k^{\sigma_2} \norm{g}_{X^{\sigma_1, \sigma_2}(\mathfrak M)}. 
\end{equation}
Combining \eqref{ti1:bs} with Lemma \ref{mGk50} then yields \eqref{eq:ti1:pw} for $\beta \in \{0,1 \}$. 
For $\beta = 2$, we have  
\begin{align}
    &\int_{\T_\tp} \p_y^2 \gr \ai ((1- \varphi_{\delta_0}) b'' g )(z) \, dz   \notag\\
    &= - \ai ((1- \varphi_{\delta_0}) b'' g )(y) + (V(y) + k^2)  \int_{\T_\tp}  \gr \ai ((1- \varphi_{\delta_0}) b'' g )(z) \, dz.
\end{align}
By \eqref{ti1:bs}, 
the rest of the proof proceeds as for $\beta = 0$.
\end{proof}
The following corollary implies Proposition \ref{prop:k:bounds} for $\tc{I}{1}$.
\begin{corollary}
\label{lem:ti1:X}
Assume that  $(\alpha, \lambda) \in (-\sigma_0 \epsilon^{\frac{1}{2}}, 1)  \times \Sigma_{j,\delta_0}$,  and $k \in \Z \cap [1, \infty)$. Then for all $\epsilon \in (0, 1/8)$  and $\sigma_1 \in [0,1]$, $\sigma_2 \in  [-2, 1/2]$, we have the following bounds for $g \in X^{\sigma_1, \sigma_2}(\mathfrak M)$,
     \begin{equation}\label{eq:ti1:X}
       \norm{\tc{I}{1}g}_{X^{\sigma_1, \sigma_2}(\mathfrak M)} \lesssim  \frac{\norm{g}_{X^{\sigma_1, \sigma_2}(\mathfrak M)}}{  k^{\frac{1}{2}} }. 
   \end{equation}
\end{corollary}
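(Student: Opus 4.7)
The plan is to deduce the weighted estimate \eqref{eq:ti1:X} by inserting the pointwise bound of Lemma \ref{lem:ti1} directly into the definition \eqref{Intd4} of the norm $X^{\sigma_1,\sigma_2}(\mathfrak M)$. The norm has two pieces, a weighted $L^2$ part on $S^j_{\delta(\Lambda)}$ and a weighted $L^\infty$ part on its complement, and I would handle these two regions separately. No new mechanism is needed beyond Lemma \ref{lem:ti1} itself; the task is purely one of bookkeeping the four interrelated scales $\varrho_j$, $\varrho_{j,k}$, $\delta(\Lambda)$, and $1/k$.

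For the $L^\infty$ piece on $\T_\tp \setminus S^j_{\delta(\Lambda)}$, Lemma \ref{lem:ti1} yields
$$|\partial_y^\beta \tc{I}{1}g(y)| \lesssim \|g\|_{X^{\sigma_1,\sigma_2}(\mathfrak M)} \varrho_{j,k}^{2-\beta}(y;\Lambda)\, k^{\sigma_2} \times (\mathrm{gain}),$$
where the gain is $k^{-1}$ when $\varrho_{j,k}(y;\Lambda) = \varrho_j(y;\Lambda)$ and $1$ when $\varrho_{j,k}(y;\Lambda) = 1/k$. Multiplying by the weight $\varrho_j^{\sigma_1}\varrho_{j,k}^{\sigma_2+\beta}$, the first case collapses to the bound $\varrho_j^{\sigma_1+\sigma_2+2}\, k^{\sigma_2-1}\|g\|_{X^{\sigma_1,\sigma_2}}$, which is controlled by $k^{-\sigma_1-3}\|g\|_{X^{\sigma_1,\sigma_2}}$ using $\varrho_j \leq 1/k$ together with $\sigma_1+\sigma_2+2 \geq 0$ (which holds since $\sigma_1 \geq 0$ and $\sigma_2 \geq -2$). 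In the second case one obtains $\varrho_j^{\sigma_1} k^{-2}\|g\|_{X^{\sigma_1,\sigma_2}} \lesssim k^{-2}\|g\|_{X^{\sigma_1,\sigma_2}}$ since $\sigma_1 \geq 0$ and $\varrho_j \leq \tp/2$. Both estimates are comfortably stronger than the claimed $k^{-1/2}$.

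For the $L^2$ piece on $S^j_{\delta(\Lambda)}$, the set has measure of order $\delta(\Lambda)$, and on it $\varrho_{j,k}(y;\Lambda) \approx d_{j,k}(\Lambda)$. Inserting the pointwise bound into the $L^2$ norm and multiplying by the prefactor $\delta(\Lambda)^{-1/2+\sigma_1}$ from \eqref{Intd4} reduces matters to estimating
$$\delta(\Lambda)^{\sigma_1}\, d_{j,k}(\Lambda)^{\sigma_2+2}\, k^{\sigma_2} \times (\mathrm{gain}),$$
with the same two-case split according to whether $k\delta(\Lambda) \leq 1$ or $k\delta(\Lambda) > 1$. This yields bounds of order $k^{-\sigma_1-3}\|g\|_{X^{\sigma_1,\sigma_2}}$ and $\delta(\Lambda)^{\sigma_1}k^{-2}\|g\|_{X^{\sigma_1,\sigma_2}} \lesssim k^{-2}\|g\|_{X^{\sigma_1,\sigma_2}}$ respectively, again well within the required $k^{-1/2}$. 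There is no genuine obstacle; the only subtlety is keeping track of when each scale saturates, and the range $\sigma_1 \in [0,1]$, $\sigma_2 \in [-2,1/2]$ was chosen precisely to ensure that all exponents of small factors remain nonnegative in every case.
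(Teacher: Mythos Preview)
Your proposal is correct and follows essentially the same approach as the paper: both arguments insert the pointwise bound of Lemma~\ref{lem:ti1} into the definition of the $X^{\sigma_1,\sigma_2}$ norm and split according to whether $\varrho_{j,k}=\varrho_j$ or $\varrho_{j,k}=1/k$, arriving at bounds of order $k^{-3}$ and $k^{-2}$ respectively. The only cosmetic difference is that the paper observes the single pointwise inequality $\varrho_j^{\sigma_1}\varrho_{j,k}^{\sigma_2+\beta}|\partial_y^\beta T_{I1}g|\lesssim k^{-1/2}\|g\|_{X^{\sigma_1,\sigma_2}}$ suffices for both the $L^2$ and $L^\infty$ pieces at once, whereas you treat them separately; the computations are otherwise identical.
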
     
\begin{proof}
Recalling the definition of $X^{\sigma_1, \sigma_2}(\mathfrak M)$, it is sufficient to prove that for all $y \in \T_\tp$ and $\beta \in \{0,1 \}$ we have
\begin{align}
       \varrho^{\sigma_1}(y ; \Lambda) \varrho_k^{\sigma_2}(y ; \Lambda) |\varrho_k^\beta(y ; \Lambda) \p_y^\beta \tc{I}{1}g(y) | \lesssim   \frac{\norm{g}_{X^{\sigma_1, \sigma_2}(\mathfrak M)} }{k^{\frac{1}{2}}}.
\end{align}
Since $\T_\tp$ is bounded and $\sigma_1 \geq 0$ it is enough to prove 
\begin{align}
        \varrho_k^{\sigma_2}(y ; \Lambda) |\varrho_k^\beta(y ; \Lambda) \p_y^\beta \tc{I}{1}g(y) | \lesssim   \frac{\norm{g}_{X^{\sigma_1, \sigma_2}(\mathfrak M)} }{k^{\frac{1}{2}}}. 
\end{align}
We first consider the case $\varrho_k(y ; \Lambda) = \varrho(y ; \Lambda)$. From \eqref{eq:ti1:pw} it follows that 
    \begin{align}
        \varrho^{\sigma_1}(y ; \Lambda) |\varrho^\beta(y ; \Lambda) \p_y^\beta \tc{I}{1}g(y) | \lesssim \varrho^{\sigma_1 +2} (y ; \Lambda) k^{\sigma_2 - 1}  \norm{g}_{X^{\sigma_1, \sigma_2}(\mathfrak M)} \lesssim  \frac{\norm{g}_{X^{\sigma_1, \sigma_2} (\mathfrak M)}}{k^{\frac{1}{2}}},
    \end{align}
    where we used that $\sigma_2  \in [-2, 1/2]$. Now assume that $\varrho_k(y ; \Lambda) = \frac{1}{k}$; \eqref{eq:ti1:pw} yields
     \begin{align}
        k^{-\sigma_2} |\varrho_k
        ^\beta(y ; \Lambda) \p_y^\beta \tc{I}{1}g(y) | \lesssim 
\frac{\norm{g}_{X^{\sigma_1, \sigma_2}(\mathfrak M)}}{k^2},
        \end{align}
        which completes the proof.
\end{proof}


\subsection{Bounds on $\tc{I}{2}$ }
Recall that the definition of $\tc{I}{2}$ with $y_{1*} = 0$ is given for $y\in\T_\tp$ by 
\begin{equation}
    \tc{I}{2} g(y) =   \int_{\T_\tp} \gr  \ai ( \varphi_{\delta(\Lambda)} b'' g)(z) \, dz.
\end{equation}
Unlike the other terms in the decomposition of $T_{\Theta}$,  the kernel 
of $\tc{I}{2}$ exhibits singular behavior due to the critical layer. This requires a more careful analysis near the critical layer than the pointwise estimates given by the fundamental solution bounds (see Lemma \ref{lem:Airybounds:intermediate}). However, the Airy fundamental solution bounds are still useful, even for $|y| \lesssim  \delta(\Lambda)$, {\it provided} that we have the bound
\begin{equation}
\label{eq:airy:delta}
|\ai(\varphi_{\delta(\Lambda)} b'' g)(y)| \lesssim \frac{\norm{g}_{L^{\infty}(S_{\delta})}}{\varrho^2(y ; \Lambda)}, 
\end{equation}
with the implicit constant independent of $\alpha, \epsilon, k, \lambda$. Comparing this with the bounds in Proposition \ref{Airy_main}, we determine that \eqref{eq:airy:delta} will happen in the following three situations:
\begin{subequations}
\label{reg:degen}
\begin{align}
    \delta(\Lambda) &\lesssim |\alpha|^{\frac{1}{2}}, \\
    \delta(\Lambda) &\lesssim \epsilon^{\frac{1}{4}}, \\
  \delta(\Lambda) &\lesssim |b(y) - \lambda|^{1/2}\,\,{\rm for}\,\,y\in\T_\tp,\quad{\rm and }\,\,\lambda \notin b(\T_\tp), 
\end{align}
\end{subequations}
with implicit constants independent of $\epsilon, \alpha, k,  \lambda$. We refer to the situations in \eqref{reg:degen} as  \emph{regularly degenerate}. In these situations, we have the following lemma.
\begin{lemma}
\label{lem:ti2:regularized}
Assume that $(\alpha, \lambda) \in (-\sigma_0 \epsilon^{\frac{1}{2}}, 1)  \times \Sigma_{j,\delta_0}$, and  $k \in \Z \cap [1, \infty)$. Furthermore, assume that at least one of the cases in $\eqref{reg:degen}$ holds. Then for all $\epsilon \in (0, 1/8)$,   $g \in L^{\infty}(S_{\delta})$ with ${\rm supp}\,g\subseteq S_{\delta}$, and  all $y \in \T_\tp$ the following bounds hold,
\begin{itemize}
\item If $k \delta(\Lambda) \leq 1$, then we have 
\begin{equation}
\label{eq:viscous:regime:delta}
          \varrho_k^\beta(y ; \Lambda)|\p_y^{\beta}\tc{I}{2}g(y) | \lesssim  \frac{\delta(\Lambda) \norm{g}_{L^{\infty}(\sd{})}}{\varrho(y;\Lambda)}\left[\frac{\varrho_k(y;\Lambda)}{\varrho(y;\Lambda)}\right]^{1/4}, \quad \beta \in \{ 0,1 ,2\};
\end{equation}

\item If $k\delta(\Lambda) \geq 1$, then
   \begin{align}
     \label{eq:viscous:regime:k}
        |\tc{I}{2}g(y) | +  \Big|\frac{1}{k}\p_y\tc{I}{2}g(y) \Big| \lesssim \frac{\norm{g}_{L^{\infty}(S_{\delta})}}{k^2 \varrho^2(y;\Lambda)}.
    \end{align}
    
   \end{itemize}
\end{lemma}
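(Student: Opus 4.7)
The plan is to exploit the hypothesis that we are in a regularly degenerate regime so as to obtain the uniform pointwise control
\[
|A_\Theta^{-1}(\varphi_{\delta(\Lambda)} b'' g)(y)| \lesssim \frac{\|g\|_{L^\infty(S_{\delta(\Lambda)})}}{\varrho_j^2(y;\Lambda)}, \qquad y\in\T_\tp.
\]
In each of the three regimes in \eqref{reg:degen}, this follows from Proposition \ref{Airy_main}: in the $\alpha$-dominated case $\alpha$ plays the role of $\delta(\Lambda)^2$, in the viscous case $\epsilon^{1/4}$ does, and in the off-spectrum case $|b(y)-\lambda|^{1/2}$ does. Combined with Lemma \ref{lem:exp:decay}(ii), which additionally gives exponential decay $\epsilon^{-1/2}\exp(-c_1|y|\delta(\Lambda)\epsilon^{-1/2})$ of the same quantity outside $S_{c\delta(\Lambda)}$, the operator $T_{I2}g$ is then controlled by direct integration against the modified Green's function using the bounds of Lemma \ref{mGk50}.

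More precisely, for $\beta\in\{0,1\}$ one writes
\[
\varrho_{j,k}^\beta(y;\Lambda)|\partial_y^\beta T_{I2}g(y)| \le \int_{S_{C\delta(\Lambda)}} + \int_{\T_\tp\setminus S_{C\delta(\Lambda)}}
\]
and estimates each piece separately. In the inner region $\varrho_j(z;\Lambda)\approx\delta(\Lambda)$, so the Airy bound contributes $\|g\|_\infty/\delta(\Lambda)^2$ and the kernel bound from Lemma \ref{mGk50} gives $\varrho_{j,k}(z;\Lambda)\min(\varrho_j(z;\Lambda)/\varrho_j(y;\Lambda),e^{-k|y-z|})\approx(\delta(\Lambda)\wedge k^{-1})\min(\delta(\Lambda)/\varrho_j(y;\Lambda),e^{-k|y|})$; integrating over an interval of length $\delta(\Lambda)$ produces the claimed decay. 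In the outer region the exponential factor from \eqref{eq:exp:far} dominates any algebraic loss of $\delta(\Lambda)^{-1}$ since $\delta(\Lambda)\gtrsim\epsilon^{1/4}$ forces $\delta(\Lambda)^2\epsilon^{-1/2}\gtrsim 1$. For $\beta=2$ (case $k\delta(\Lambda)\le 1$ only) the argument is slightly indirect: applying $-\partial_y^2$ to $T_{I2}g$ and using the equation $-\partial_y^2\mathcal{G}_k^j+(k^2+V)\mathcal{G}_k^j=\delta(y-z)$ gives
\[
\partial_y^2 T_{I2}g(y) = (k^2+V(y))T_{I2}g(y) - A_\Theta^{-1}(\varphi_{\delta(\Lambda)}b''g)(y),
\]
where the first term is handled by the $\beta=0$ estimate (absorbing a factor $\varrho_{j,k}^2$) and the second by the uniform $1/\varrho_j^2$ bound on the Airy inverse. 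Crucially, the potential $V$ vanishes on $S_{\delta(\Lambda)}$ by construction, which is exactly where $V$ would otherwise be singular.

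For the $k\delta(\Lambda)\ge 1$ case ($\beta=2$ omitted), the length scale $\varrho_{j,k}(z;\Lambda)$ saturates at $1/k$ for $z$ away from the origin, and the Green's function bound produces the extra factor $1/k$ needed to upgrade the expected $1/\varrho_j$ decay to $1/(k^2\varrho_j^2)$; the computation mirrors the $\beta=0,1$ case above with $\delta(\Lambda)$ replaced by $1/k$ wherever appropriate.

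The main technical obstacle will be extracting the sharp gain $[\varrho_{j,k}(y;\Lambda)/\varrho_j(y;\Lambda)]^{1/4}$ in the $k\delta(\Lambda)\le 1$ regime. This factor is trivial for $|y|\le 1/k$ and behaves like $(k|y|)^{-1/4}$ for $|y|\ge 1/k$, so it encodes an interpolation between the exponential and algebraic options in the minimum of Lemma \ref{mGk50}. I plan to establish it by bounding, for $z\in S_{C\delta(\Lambda)}$ and $|y|\gtrsim 1/k$,
\[
\min\bigl(e^{-k|y-z|},\,\varrho_j(z;\Lambda)/\varrho_j(y;\Lambda)\bigr) \le \bigl(e^{-k|y|}\bigr)^{\theta}\bigl(\delta(\Lambda)/\varrho_j(y;\Lambda)\bigr)^{1-\theta}
\]
with a suitable $\theta\in(0,1)$, then using $e^{-k|y|}\lesssim(k|y|)^{-M}$ for any $M$ to convert the exponential into the polynomial factor $(\varrho_{j,k}(y;\Lambda)/\varrho_j(y;\Lambda))^{1/4}$. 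Keeping track of the weights in the interpolation, and handling the cross-over region $k|y|\approx 1$, is the only delicate step; everything else is a bookkeeping exercise combining the Airy bound, Lemma \ref{lem:exp:decay}, and Lemma \ref{mGk50}.
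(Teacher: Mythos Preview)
Your proposal is correct and follows essentially the same approach as the paper: the same near/far decomposition around $S_{C\delta(\Lambda)}$, the same use of \eqref{eq:airy:delta} together with Lemma~\ref{lem:exp:decay} and Lemma~\ref{mGk50}, and the same reduction of the $\beta=2$ case via the equation for $\mathcal{G}_k^j$. The only point you flag as delicate, the extra factor $[\varrho_{j,k}/\varrho_j]^{1/4}$, is indeed obtained exactly by interpolating between the exponential and algebraic alternatives in the minimum of \eqref{mGk51}; the paper does not spell this out but your interpolation scheme is the right one.
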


\begin{proof}
We provide the details only for \eqref{eq:viscous:regime:delta} since  the proof of \eqref{eq:viscous:regime:k} is simpler. We decompose $\tc{I}{2}$ as 
\begin{equation}
      \int_{\T_\tp} \gr \varphi_{3 \delta(\Lambda)}(z)  \ai ( \varphi_{\delta(\Lambda)} b'' g)(z) \, dz  + \int_{\T_\tp} \gr 
 (1- \varphi_{3 \delta(\Lambda)}(z) ) \ai ( \varphi_{\delta(\Lambda)} b'' g)(z) \, dz.
\end{equation}
By Lemma \ref{mGk50}  it follows that for $\beta \in  \{0,1\}$
\begin{align}
\label{eq:ti:pw:far}
&\Big| \varrho_k^{\beta}(y ; \Lambda)\int_{\T_\tp}  \p_y^{\beta}\gr 
 (1- \varphi_{3 \delta(\Lambda)}(z) ) \ai ( \varphi_{\delta(\Lambda)} b'' g)(z) \, dz  \Big|  \notag\\
 &\lesssim \big(\delta\wedge \frac{1}{k}\big)\min\Big\{\frac{\delta}{\varrho(y;\Lambda)},\,e^{-k\varrho(y;\Lambda)}\Big\}  \int_{\T_\tp}  (1- \varphi_{3 \delta(\Lambda)}(z) ) |\ai ( \varphi_{\delta(\Lambda)} b'' g)(z)| \, dz  .
\end{align}
On the support of $ (1- \varphi_{3 \delta(\Lambda)}(z) )$,  \eqref{eq:exp:far} implies
\begin{equation}\label{APIJ1}
\begin{split}
    &|\ai(\varphi_\delta(\Lambda) b''g)(z)|\\
    &\lesssim  \norm{g}_{L^{\infty}(S_{\delta(\Lambda)}   )} \epsilon^{-1/2} \exp(-\frac{1}{2}c_1|z |\delta(\Lambda)\epsilon^{-\frac{1}{2}}) \exp(-\frac{c_1}{2} \frac{\delta(\Lambda)^2}{\epsilon^{\frac{1}{2}}}),
    \end{split}
\end{equation}
where $c_1$ is as in Lemma \ref{lem:exp:decay}. Since  
\begin{align}\label{APIJ2}
    \int_{\T_\tp} (1- \varphi_{3 \delta(\Lambda)}(z) ) \delta(\Lambda)\,\epsilon^{-1/2} \exp(-\frac{c_1}{2}|z|\delta(\Lambda)\epsilon^{-\frac{1}{2}}) \,dz \lesssim 1,  
\end{align}
The desired bound \eqref{eq:viscous:regime:delta} for $\beta \in \{ 0,1 \}$ follows from \eqref{APIJ1}-\eqref{APIJ2} and \eqref{eq:ti:pw:far}.

For the second derivatives, \eqref{mGk1} yields
\begin{align}
\label{eq:second:derivative}
     &\p_y^2 \int_{\T_\tp} \gr  \ai ( \varphi_{\delta(\Lambda))} b'' g)(z) \, dz \notag\\
    & = -  \ai ( \varphi_{\delta(\Lambda)} b'' g)(y) + (V(y)+ k^2) \int_{\T_\tp} \gr \ai ( \varphi_{\delta(\Lambda)} b'' g) (z)\, dz .
\end{align}
The desired bound \eqref{eq:viscous:regime:delta} for $\beta=2$ then follows from \eqref{eq:ti:pw:far} and \eqref{APIJ1}-\eqref{APIJ2}.
For the proof of \eqref{eq:viscous:regime:k}, we do not decompose $\tc{I}{2}$: We directly use \eqref{eq:airy:delta}, Lemma \ref{mGk50}, and that $ke^{-k|z|}$ is an approximation of the identity.
\end{proof}
Due to Lemma \ref{lem:ti2:regularized}, we can assume the following conditions hold for the remaining bounds on $\tc{I}{2}$ :
\begin{subequations}
\label{sing:degen}
    \begin{align}
    & |\lambda| \gg \epsilon^{\frac{1}{2}} 
     \text{ with an implicit constant independent of } \epsilon, \alpha, k, \lambda, \\
    & \lambda \in b(\T_\tp),  \\
    & \delta(\Lambda) \approx \delta_1(\lambda),
    \end{align}
\end{subequations}
where we recall that $\delta_1(\lambda) = 8 \sqrt{|\lambda| / | b''(0)|}$.
We seek to isolate the part of $\ai(\varphi_{\delta(\Lambda)} b'' g)$ that is supported at a distance closer to the critical layer than the length scale $d_k$ of the modified Green's function. On the complement of this region, the smallness from the Green's function can balance the size of $\ai(\varphi_{\delta(\Lambda)} b'' g)$ exactly as in the ``regularly degenerate" situation covered in Lemma \ref{lem:ti2:regularized}.  

Following  \eqref{Tnd:decomp},  
 we decompose $\tc{I}{2}$ as 
 \begin{align}
    \tc{I}{2}g &= \sum_{\ell=0}^2 \tc{I}{2\ell}g .
    \end{align}
We define $\tc{I}{20}$ and  $\tc{I}{2\ell}, \ell\in\{1,2\}$  as 
\begin{align}
    \tc{I}{20}g(y) := \int_{\T_\tp} \gr  \ai(\varphi_{\delta(\Lambda)} b'' g)(z) \Big(1  - \sum_{\ell=1}^2 \varphi_{\theta d_k}^2(z - y_\ell) \Big) \, dz , 
\end{align}
and
 \begin{align}
         \tc{I}{2\ell}g(y) := \int_{\T_\tp} \gr \varphi_{\theta d_k}^2(z - y_\ell) \ai(\varphi_{\delta(\Lambda)} b'' g)(z)\, dz,
    \end{align}
where $\ell \in \{1, 2 \}$,  $b(y_\ell) = \lambda$ 
and $\theta \in (0, 1)$ is a constant only depending on $b(y)$ (see Lemma \ref{lem:Airybounds:intermediate}).
Since $\delta(\Lambda) = \delta_1(\lambda)$, for $\theta$ sufficiently small, $\varphi_{\theta d_k}(y-y_1)$ and $\varphi_{\theta d_k} (y-y_2)$ have disjoint supports. Using a similar proof as for Lemma \ref{lem:ti2:regularized}, we obtain the following bounds for $\tc{I}{20}$.
\begin{lemma}
    \label{lem:tio:pw}
Assume that, $(\alpha, \lambda) \in (-\sigma_0 \epsilon^{\frac{1}{2}}, 1)  \times \Sigma_{j,\delta_0}$,   $k \in \Z \cap [1, \infty)$, and let $\theta$ to be fixed in the range given by Lemma \ref{lem:Airybounds:intermediate}. Then for all $\epsilon \in (0, 1/8)$ and  $g \in L^{\infty}(S_{\delta(\Lambda)})$
    we have the following bounds for all $y \in \T_{\tp}$.
\begin{itemize}
    \item If $k \delta(\Lambda) \leq 1$, then
    \begin{align} \varrho_k^\beta(y ; \Lambda)|\p_y^\beta\tc{I}{20}g(y)|  \lesssim \frac{\delta(\Lambda) \norm{g}_{L^{\infty}(S_{\delta(\Lambda)}) }}{\varrho(y ; \Lambda)}\left[\frac{\varrho_k(y;\Lambda)}{\varrho(y;\Lambda)}\right]^{1/4}, \quad{\rm for}\,\, \beta \in \{0, 1, 2 \},
\end{align}

\item If $k \delta(\Lambda) \geq 1$, then
\begin{align}
      \Big|\frac{1}{k^\beta}\p_y^\beta\tc{I}{20}g(y) \Big| \lesssim \frac{\norm{g}_{L^{\infty}(S_{\delta(\Lambda)})}}{k  \varrho(y ; \Lambda)}, \quad{\rm for}\,\, \beta \in \{ 0,1 \}. 
\end{align}
\end{itemize}

\end{lemma}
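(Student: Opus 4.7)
\medskip

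\noindent\textbf{Proof proposal for Lemma \ref{lem:tio:pw}.} The plan is to mirror the two-region decomposition used for Lemma \ref{lem:ti2:regularized}, replacing the regularly degenerate bound \eqref{eq:airy:delta} with a surrogate bound that holds on the complement of the $\theta d_k$--neighborhoods of the critical layer points $y_1,y_2$. Concretely, I would split
\[
 \tc{I}{20}g(y)=\int_{\T_\tp}\gr\,\varphi_{3\delta(\Lambda)}(z)\Big(1-\!\!\sum_{\ell=1}^2\varphi^2_{\theta d_k}(z-y_\ell)\Big)\ai(\varphi_{\delta(\Lambda)}b''g)(z)\,dz+(\text{far piece}),
\]
where the far piece is the analogous integral with $1-\varphi_{3\delta(\Lambda)}(z)$ in place of $\varphi_{3\delta(\Lambda)}(z)$. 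For the far piece the kernel factor $\ai(\varphi_{\delta(\Lambda)}b''g)(z)$ enjoys the Gaussian-type decay \eqref{eq:exp:far} of Lemma \ref{lem:exp:decay}, and combining this with the modified Green's function bounds of Lemma \ref{mGk50} and the integral computation \eqref{APIJ1}--\eqref{APIJ2} gives the desired estimate exactly as in Lemma \ref{lem:ti2:regularized}.

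The heart of the argument is therefore to establish a ``quasi-regularly degenerate'' pointwise bound
\[
 \Big|\Big(1-\sum_{\ell=1}^2\varphi^2_{\theta d_k}(z-y_\ell)\Big)\ai(\varphi_{\delta(\Lambda)}b''g)(z)\Big|\lesssim \frac{\|g\|_{L^\infty(S_{\delta(\Lambda)})}}{\varrho^2(z;\Lambda)},\qquad z\in S_{3\delta(\Lambda)}.
\]
To prove this I would use the same identity as in \eqref{decomp1}, namely
\[
 \ai(\varphi_{\delta(\Lambda)}b''g)(z)=-\frac{\varphi_{\delta(\Lambda)}(z)b''(z)g(z)}{b(z)-\lambda-i\alpha}+\frac{\epsilon\,\partial_z^2\ai(\varphi_{\delta(\Lambda)}b''g)(z)}{b(z)-\lambda-i\alpha},
\]
together with Proposition \ref{Airy_main} (which controls the second term). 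On the support of $1-\sum_\ell\varphi^2_{\theta d_k}(\cdot-y_\ell)$ one has $|z-y_\ell|\gtrsim \theta d_k$ for each $\ell$, and since the $y_\ell$ are simple zeros of $b-\lambda$ with $|b'(y_\ell)|\approx\delta(\Lambda)$ (this uses \eqref{sing:degen} and that $|y_\ell|\approx\delta_1(\lambda)\approx\delta(\Lambda)$), a Taylor expansion gives $|b(z)-\lambda|\gtrsim\delta(\Lambda)|z-y_\ell|\gtrsim\delta(\Lambda)\,d_k$. Separating the cases $k\delta(\Lambda)\le 1$ (where $d_k=\delta(\Lambda)$, so $\delta(\Lambda)d_k\approx\delta(\Lambda)^2\approx\varrho^2$) and $k\delta(\Lambda)\ge 1$ (where $d_k=1/k$, so $\delta(\Lambda)d_k\approx\delta(\Lambda)/k\approx k\,\varrho_k\,\varrho$) yields the stated pointwise bound together with the extra $1/k$ factor in case (ii). This is the step I expect to require the most care, because one must verify that $\theta$ can be chosen uniformly (depending only on $b$) so that the Airy kernel bounds from Proposition \ref{Airy_main} provide the small parameter needed to close the estimate when restricted to this excised region.

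Once the surrogate pointwise bound is in hand, the bounds for $\beta\in\{0,1\}$ follow by inserting it into the integral against $\gr$ and applying Lemma \ref{mGk50}, exactly as in \eqref{eq:ti:pw:far} and the subsequent lines of Lemma \ref{lem:ti2:regularized}; the extra factor $[\varrho_k(y;\Lambda)/\varrho(y;\Lambda)]^{1/4}$ in case (i) is produced exactly as in \eqref{eq:viscous:regime:delta} by a small interpolation between the two alternative Green's function bounds in Lemma \ref{mGk50}. For $\beta=2$ in case (i) I would invoke the identity \eqref{eq:second:derivative}, writing
\[
 \partial_y^2\tc{I}{20}g(y)=-\Big(1-\!\!\sum_{\ell=1}^2\varphi^2_{\theta d_k}(y-y_\ell)\Big)\ai(\varphi_{\delta(\Lambda)}b''g)(y)+(V(y)+k^2)\tc{I}{20}g(y),
\]
and control the first term by the surrogate bound above and the second term by the $\beta=0$ estimate together with the boundedness of $V(y)+k^2$ in the relevant weighted sense. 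No serious extra difficulties are expected beyond those in Lemma \ref{lem:ti2:regularized}.
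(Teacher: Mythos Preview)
Your proposal is correct and follows essentially the same approach as the paper. Two small remarks: the algebraic identity ``$\delta(\Lambda)/k\approx k\,\varrho_k\,\varrho$'' is a slip (it should read $\delta(\Lambda)/k\approx \varrho_k\cdot\varrho$ on $S_{3\delta}$ when $k\delta\ge 1$), and accordingly your ``quasi-regularly degenerate'' surrogate bound is really $\|g\|_{L^\infty}/(\varrho_k\varrho)$ rather than $\|g\|_{L^\infty}/\varrho^2$ in case (ii) --- the two coincide in case (i). The paper's presentation for $k\delta(\Lambda)\ge 1$ is slightly more streamlined than your near/far split: it combines your two pointwise regimes into a single bound $|(1-\sum_\ell\varphi_{\theta d_k}^2)\ai(\varphi_{\delta}b''g)(z)|\lesssim k\|g\|_{L^\infty}/\varrho(z;\Lambda)$ valid on all of $\T_\tp$, then invokes directly that $ke^{-k|\cdot|}$ is an approximation of the identity in the Green's function bound from Lemma~\ref{mGk50}; for $k\delta(\Lambda)\le 1$ the paper simply refers back to Lemma~\ref{lem:ti2:regularized}, exactly as you do.
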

\begin{proof}
    We only consider the case where $k \delta(\Lambda) \geq 1$ since the case for $k\delta(\Lambda) \leq 1$ is identical to Lemma \ref{lem:ti2:regularized}. 
We have the following:
\begin{align}
    |\ai(\varphi_{\delta(\Lambda)} b'' g)(z) (1  - \sum_{j=1}^2 \varphi_{\theta d_k}^2(z - y_\ell) )| \lesssim 
    \begin{cases}
        \frac{k\norm{g}_{L^{\infty}(S_\delta(\Lambda))}}{\delta(\Lambda)} & |z| \leq \delta(\Lambda) \\
        \frac{\norm{g}_{L^{\infty}(S_\delta(\Lambda))}}{\varrho^2(z, \Lambda)} & |z| \geq \delta(\Lambda)
    \end{cases}
\end{align}
Combining these estimates yields 
\begin{equation}
     \Big|\ai(\varphi_{\delta(\Lambda)} b'' g)(z) (1  - \sum_{j=1}^2 \varphi_{\theta d_k}^2(z - y_\ell) ) \Big| \lesssim \frac{k \norm{g}_{L^{\infty}(S_\delta(\Lambda))}}{\varrho(z, \Lambda)}.
\end{equation}
Therefore, by Lemma \ref{mGk50} we have
\begin{equation}
    \Big|\frac{1}{k^\beta}\p_y^\beta\tc{I}{20}g(y) \Big| \lesssim \frac{\norm{g}_{L^{\infty}(S_{\delta(\Lambda)})}}{k  \varrho(y ; \Lambda)}, \quad{\rm for}\,\, \beta \in \{ 0,1 \}, 
\end{equation}
which proves the result.
\end{proof}
For the region that lies in the support of   $\varphi_{\theta d_k}(y - y_\ell)$ with $b(y_\ell) = \lambda$, the size  of $\ai(\varphi_{\delta(\Lambda) }b'' g)$ is too large to be controlled by the pointwise bounds for the modified Green's function directly. As in the invisicd case we expect that integrating by parts to ``remove" a derivative from $\ai$ and placing it on $\gr$ would be beneficial to gain from the larger scale of variation of $\gr$ (see \cite{Iyer}, \cite{JiaL}). The following lemma makes this precise using a frequency based characterization of the inverse of the generalized Airy  operator.  
\begin{lemma}[]
\label{lem:Airybounds:intermediate}
Assume that $(\alpha, \lambda) \in (-\sigma_0 \epsilon^{\frac{1}{2}}, 1)  \times \Sigma_{j,\delta_0}$,  and  $k \in \Z \cap [1, \infty)$. Furthermore, assume that $\delta(\Lambda) \approx \delta_1(\lambda)$, $\lambda \in b(\T_\tp)$, and $ \epsilon^{\frac{1}{2}} \ll |\lambda| \ll 1$ with implicit constants independent of $(\epsilon, \alpha, k, \lambda)$. Then there exists $\theta_0 \in (0, 1/2)$, depending only on $b(y)$, such that for all $\theta \in (0, \theta_0]$ and $\epsilon \in (0, 1/8)$ the following statement holds.
Suppose that $f: \T_\tp \to \C$ satisfies 
\begin{equation}
\label{eq:f:H1:regularity}
    \norm{f}_{L^2(\sd{})} + d_k \norm{\p_y f}_{L^2(\sd{})} \leq M,
\end{equation}
and is 0 outside of $S_{ \delta(\Lambda)}$. We view the function
\begin{equation}
    \varphi_{ \theta d_k}( y - y_{\ell}) \ai( f)(y),
\end{equation}
 as a function on $\R$.
Then
\begin{equation}
\label{eq:Airy:freq:pw}
\big|\sup_{\xi \in \R }\mathcal{F}\left(\varphi_{ \theta d_k}( \cdot - y_{\ell}) \ai( f) \right)(\xi)\big| \lesssim 
\frac{M}{d_k^{\frac{1}{2}}\delta(\Lambda)},   
\end{equation}
and 
\begin{equation}
\label{eq:Airy:L2}
\norm{ (1 + d_k \p_y)^{-1} \left(\varphi_{ \theta d_k} (\cdot - y_{\ell})\ai(f) \right)}_{L^2(\R)}
\lesssim \frac{M}{d_k\delta(\Lambda)}, 
\end{equation}
where $y_{\ell}$ satisfies $b(y_{\ell}) = \lambda$.
\end{lemma}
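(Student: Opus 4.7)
The plan is to adapt the strategy of the proof of Lemma \ref{lem:Airybounds} to the present weighted-$L^2$ setting. First, I would decompose $A_\Theta^{-1}(f)$ into a principal critical-layer piece, solvable explicitly in Fourier, plus a smooth remainder. Since $f$ is only in a weighted $L^2$ space rather than $H_k^1$, one cannot replace $f$ by its value $f(y_\ell)$ as was done there; instead, $f$ itself must be carried through as the source in the model equation.

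For each root $y_\ell$ of $b(y)=\lambda$ in $S_{\delta(\Lambda)}$ (there are two under our hypotheses since $\delta(\Lambda)\approx\delta_1(\lambda)$), introduce the model operator
\begin{equation*}
  L_{c,\ell} := \epsilon\,\partial_y^2 - \alpha - i\,b'(y_\ell)(y-y_\ell),
\end{equation*}
and let $w_{c,\ell}$ be the unique bounded solution on $\mathbb R$ of $L_{c,\ell}w_{c,\ell}=f$ (extending $f$ by zero outside $S_{\delta(\Lambda)}$). After the rescaling $Y=(b'(y_\ell)/\epsilon)^{1/3}(y-y_\ell)$, the equation becomes $(\partial_Y^2-\gamma-iY)W=g$, with $\gamma=\alpha\,\epsilon^{-1/3}b'(y_\ell)^{-2/3}$ and $g$ the rescaled source. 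Fourier transforming in $Y$ turns this into a first-order ODE in the frequency $\xi$ whose decaying solution is
\begin{equation*}
 \widehat W(\xi) = -e^{\xi^3/3+\gamma\xi}\int_\xi^\infty e^{-\eta^3/3-\gamma\eta}\,\widehat g(\eta)\,d\eta.
\end{equation*}
The super-exponential decay of the kernel yields the pointwise bound $|\widehat W(\xi)|\lesssim \|\widehat g\|_{L^\infty}\langle\xi\rangle^{-2}$ and, via Plancherel, $\|(1+\partial_Y)^{-1}W\|_{L^2(\mathbb R)}\lesssim \|\widehat g\|_{L^2}$. Translating back to $y$ and using $\|g\|_{L^1}\lesssim M\delta(\Lambda)^{1/2}$ together with the derivative control on $f$ (to estimate $\|\widehat g\|_{L^2}$), these yield bounds on $\varphi_{\theta d_k}(\cdot-y_\ell)w_{c,\ell}$ matching the right-hand sides of \eqref{eq:Airy:freq:pw} and \eqref{eq:Airy:L2}.

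Next, I would control the remainder $w_R := A_\Theta^{-1}(f)-\sum_{\ell}\chi_\ell w_{c,\ell}$, where $\chi_\ell$ is a smooth cutoff slightly larger in support than $\varphi_{\theta d_k}(\cdot-y_\ell)$. Writing $b(y)-\lambda = b'(y_\ell)(y-y_\ell)+\tfrac12 b''(\zeta_\ell)(y-y_\ell)^2$, one obtains
\begin{equation*}
 A_\Theta w_R = -\sum_{\ell}\chi_\ell\cdot i\,\tfrac{b''(\zeta_\ell)}{2}(y-y_\ell)^2 w_{c,\ell} \; + \; (\text{commutators of }A_\Theta\text{ with }\chi_\ell),
\end{equation*}
where the commutator terms are supported at distance $\gtrsim\theta d_k$ from $y_\ell$, on which $w_{c,\ell}$ has already decayed exponentially on the Airy scale $(\epsilon/b'(y_\ell))^{1/3}$ (which is much smaller than $d_k$ under the hypothesis $\epsilon^{1/2}\ll|\lambda|$). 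Inside the cutoff the factor $(y-y_\ell)^2\lesssim d_k^2$ supplies a smallness. Inverting $A_\Theta$ by Lemma \ref{ake1} then produces bounds on $(1+d_k\partial_y)^{-1}w_R$ that are strictly better than those for $w_{c,\ell}$, and so can be absorbed after the cutoff $\varphi_{\theta d_k}(\cdot-y_\ell)$ is applied.

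The principal obstacle is the careful bookkeeping of three interacting scales: the source scale $\delta(\Lambda)\approx|\lambda|^{1/2}$, the Airy scale $(\epsilon/b'(y_\ell))^{1/3}$, and the cutoff scale $d_k=\delta(\Lambda)\wedge k^{-1}$. In particular, one must verify that the physical-space cutoff $\varphi_{\theta d_k}(\cdot-y_\ell)$ and the smoothing operator $(1+d_k\partial_y)^{-1}$ — a Fourier-side localization at scale $d_k^{-1}$ — are compatible, so that the gain $\langle\xi\rangle^{-2}$ supplied by the Airy dispersion actually produces the claimed $d_k^{-1}\delta(\Lambda)^{-1}$ factor on the right-hand side of \eqref{eq:Airy:L2}, rather than the weaker $d_k^{-1/2}\delta(\Lambda)^{-1}$ that a direct Cauchy–Schwarz through $\|\varphi_{\theta d_k}\|_{L^2}$ would yield.
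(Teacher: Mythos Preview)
Your proposal contains a genuine gap. The claim that ``one cannot replace $f$ by its value $f(y_\ell)$'' is incorrect: the hypothesis \eqref{eq:f:H1:regularity} includes the derivative bound $d_k\|\partial_y f\|_{L^2(S_{\delta(\Lambda)})}\leq M$, so the Sobolev inequality \eqref{eq:H1:bdd} (applied on an interval of length $\sim d_k$) gives $|f(y_\ell)|\lesssim M d_k^{-1/2}$. The paper \emph{does} take $w_{c,\ell}$ to solve the model equation with the constant right-hand side $f(y_\ell)$, exactly as in Lemma \ref{lem:Airybounds}. This is essential: for a constant source the rescaled solution has the explicit Fourier transform \eqref{exW1}, which is uniformly bounded, and one reads off
\[
|\widehat{w_{c,\ell}}(\xi)|\;\lesssim\;\frac{|f(y_\ell)|}{|b'(y_\ell)|}\;\lesssim\;\frac{M}{d_k^{1/2}\,\delta(\Lambda)},
\]
giving \eqref{eq:Airy:freq:pw} directly. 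The remainder $w_R$ is then bounded via Lemma \ref{ake6}.

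Your alternative---carrying the full source $f$ through the model equation and bounding $|\widehat W(\xi)|\lesssim\|\widehat g\|_{L^\infty}\langle\xi\rangle^{-2}$---loses precisely this structure. At $\xi=0$ the Airy kernel integrates over a frequency window of length $\sim(b'(y_\ell)/\epsilon)^{1/3}\approx(\delta(\Lambda)/\epsilon)^{1/3}$, so (after undoing the rescaling and using $\|\widehat f\|_{L^\infty}\lesssim\|f\|_{L^1}\lesssim M\delta(\Lambda)^{1/2}$) one obtains
\[
|\widehat{w_{c,\ell}}(0)|\;\lesssim\;\frac{M}{\epsilon^{1/3}\delta(\Lambda)^{1/6}},
\]
which exceeds the target $M/(d_k^{1/2}\delta(\Lambda))$ by a factor $\sim(\delta(\Lambda)^4/\epsilon)^{1/3}$, unbounded in the regime $\epsilon^{1/2}\ll|\lambda|$. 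Convolution with $\widehat{\varphi_{\theta d_k}}$ (whose $L^1$ norm is $O(1)$) does not help, and neither does the derivative bound on $f$, since the loss occurs already at frequency zero. (Separately, your stated estimate $\|g\|_{L^1}\lesssim M\delta(\Lambda)^{1/2}$ omits the Jacobian of the Airy rescaling.) The fix is precisely to freeze $f$ at $y_\ell$, as the paper does.
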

The following inequality will be useful in the  proof of Lemma \ref{lem:Airybounds:intermediate}:
\begin{equation}
    \label{eq:H1:bdd}
    \norm{f}_{L^{\infty}(I)} \lesssim |I|^{-\frac{1}{2}}\norm{f}_{L^2(I)} + |I|^{\frac{1}{2}}\norm{\p_y f }_{L^2(I)}
\end{equation}
where $I$ is an interval  and $|\cdot|$ denotes the length of the interval. 

\begin{proof}
Assuming \eqref{eq:Airy:freq:pw}, by Plancherel's theorem we have 
    \begin{align*}
      &\norm{ (1 + d_k \p_y)^{-1} \left(\varphi_{\theta d_k}(\cdot - y_{\ell})\ai( f) \right)}_{L^2(\R)} = \norm{ (1 - id_k \xi )^{-1} \mathcal{F}(\varphi_{\theta d_k}(\cdot - y_{\ell}) \ai(f))(\xi) }_{L_{\xi}^2(\R)} \notag \\
      &\lesssim  \frac{M}{d_k^{\frac{1}{2}}\delta(\Lambda)} \norm{(1 - id_k \xi )^{-1}}_{L_{\xi}^2(\R)}\lesssim \frac{M}{d_k\delta(\Lambda)^{\frac{1}{2}}}. 
    \end{align*}
Therefore, it suffices to prove $\eqref{eq:Airy:freq:pw}$. We proceed as in Lemma \ref{lem:Airybounds}. For non-critical values of $\lambda$, $b^{-1}(\lambda)$ has cardinality 2: $b^{-1}(\lambda)= \{y_1, y_2 \}$. Define $\varphi_\ell(y) := \varphi\left( \frac{2(y- y_{\ell})}{\theta d_k} \right)$ for $\ell \in \{ 1,2\} $. Choose $\theta_0>0$ so that the supports of $\varphi_1$ and $\varphi_2$ are separated by distance  $\theta_0 d_k$. Furthermore, define $w_{c,\ell}$ as the solution to the equation for $y\in\R$, 
\begin{equation}
    \epsilon \p_y^2 w_{c, \ell}(y) -\alpha w_{c, \ell}(y)   + ib'(y_{\ell})(y_{\ell} - y)w_{c , \ell}(y)  = f(y_{\ell}).
\end{equation}
We can solve for  $w_{c,\ell}$ explicitly as
\begin{equation}
    w_{c,\ell}(y,y_{\ell}) = \frac{\ f(y_{\ell})  }{b'(y_{\ell})} \Big(\frac{b'(y_{\ell})}{\epsilon} \Big)^{\frac{1}{3}}
    W \Big(\Big( \frac{ b'(y_{\ell})}{\epsilon} \Big)^{\frac{1}{3}}   (y-y_{\ell} )\Big),\quad y\in\R,
\end{equation}
where $W$ is given in the Fourier variable $\xi\in\R$ as
\begin{equation}\label{exW1}
    \widehat{W}(\xi) = -\sqrt{2\pi} \exp \left( \frac{\xi^3}{3} + \frac{\ept \alpha \xi }{(b'(y_{\ell}))^{\frac{2}{3}}}  \right) \mathbbm{1}_{(-\infty, 0]}(\xi). 
\end{equation}
We define $w_R: \T_\tp \to \C$ as 
\begin{align}
    w_R := \ai(f) - \sum_{\ell \in \{ 1,2\} }\varphi_{\ell} w_{c,\ell},  
\end{align}
where we have view $\varphi_{\ell} w_{c,\ell}$ as a function on $\T_\tp$. $w_R$ can be bounded similarly as in Lemma \ref{ake6} (see \eqref{ake6.5}), and is more regular than $W$.
Therefore, we obtain from \eqref{exW1} that
\begin{equation}
   \big|\mathcal{F}(\varphi_{ \theta d_k} (\cdot - y_{\ell}) \ai(f)) (\xi)\big|\lesssim \frac{M}{d_k^{\frac{1}{2}}\delta(\Lambda)}  \quad j \in \{ 1, 2\}
\end{equation}
which completes the proof.
\end{proof}
For the remainder of the section on $\tc{I}{2}$ we will take $\theta$ to be fixed.  Before proceeding with the proof of the bounds for $\tc{I2}{j}$, it will be convenient to introduce the following notation. 
For $\ell \in \{1 ,2 \}$ define for $y\in\R$,
\begin{equation}
    \Xi_{\ell}(y) := (1 + d_k \p_y)^{-1}\left[\varphi_{\theta d_k}(\cdot - y_{\ell}) \ai(\varphi_{\delta(\Lambda)} b'' g) \right].
\end{equation}
where $b(y_{\ell}) = \lambda$ and $\varphi_{\theta d_k}(y - y_{\ell})$ is as in Lemma \ref{lem:Airybounds:intermediate}.  We now proceed with obtaining the remaining bounds for $\tc{I2}{\ell}$.
\begin{lemma}
\label{lem:tI2j:pw}
Assume that $\ell \in \{1, 2 \}$, $(\alpha, \lambda) \in (-\sigma_0 \epsilon^{\frac{1}{2}}, 1)  \times \Sigma_{j,\delta_0}$,  and  $k \in \Z \cap [1, \infty)$. Furthermore, assume that $\delta(\Lambda) = \delta_1(\lambda)$, $\lambda \in b(\T_\tp)$, and $ \epsilon^{\frac{1}{2}} \ll |\lambda| \ll 1$ with implicit constants independent of $\epsilon, \alpha, k, \lambda$. Take $\theta$ to be fixed in the range given by Lemma \ref{lem:Airybounds:intermediate} 
Then for all $\epsilon \in (0, 1/8)$ and $g \in L^{\infty}(S_{\delta(\Lambda)})$the following bounds hold  for all $y \in \T_\tp$,
\begin{itemize}
    \item If $k \delta(\Lambda) \leq 1$, then
\begin{equation}\label{eq:tij:second:derivatitve}
\begin{split}
  &|\tc{I}{2\ell} g(y)| + \varrho_k(y ; \Lambda)\big|\p_y \tc{I}{2\ell} g(y) + d_k \varphi_{\theta d_k}(y - y_{\ell}) \Xi_{\ell}(y)\big| \\
  &\lesssim   \left[\frac{\varrho_k(y;\Lambda)}{\varrho(y;\Lambda)}\right]^{1/4}\frac{\delta(\Lambda) }{\varrho(y ; \Lambda)} \norm{g}_{L^{\infty}(\sd{}) },\\
& \varrho_k^2(y, \Lambda )\big| \p_y\big(\p_y \tc{I}{2j} g(y) + d_k \varphi_{\theta d_k} (y -y_{\ell})\Xi_{\ell}(y)\big) +  \varphi_{\theta d_k} (y-y_{\ell})\Xi_{\ell}(y)    \big| \\
  & \lesssim  \left[\frac{\varrho_k(y;\Lambda)}{\varrho(y;\Lambda)}\right]^{1/4}\frac{\delta(\Lambda) }{\varrho(y ; \Lambda)}\norm{g}_{L^{\infty}(\sd{})}.
   \end{split}
\end{equation}
  
\item If $k \delta(\Lambda) \geq 1$, then
  \begin{align}
  \label{eq:tij:k}
  &|\tc{I}{2\ell} g(y)| + \frac{1}{k}\big|\p_y \tc{I}{2\ell} g(y) + d_k \varphi_{\theta d_k}(y - y_{\ell}) \Xi_{\ell}(y)\big| \notag\\ 
  &\lesssim   \frac{ \exp\big(- k\max(|y| - \delta_1(\lambda), 0) \big)  }{k\delta(\Lambda)}\norm{g}_{L^{\infty} (\sd{})}. 
  \end{align}
\end{itemize}

\end{lemma}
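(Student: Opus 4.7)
The proof rests on the identity
$$\varphi_{\theta d_k}(z-y_\ell)\,\ai(\varphi_{\delta(\Lambda)}b''g)(z) = (1 + d_k\partial_z)\Xi_\ell(z)$$
built into the definition of $\Xi_\ell$, combined with the $L^2$-bound $\|\Xi_\ell\|_{L^2(\R)} \lesssim \|g\|_{L^\infty(\sd{})}/(d_k\,\delta(\Lambda)^{1/2})$ furnished by Lemma \ref{lem:Airybounds:intermediate} (applied with $f = \varphi_{\delta(\Lambda)}b''g$, so that $M \lesssim \delta(\Lambda)^{1/2}\|g\|_{L^\infty(\sd{})}$). The point is that this transforms the oscillatory object $\ai(\varphi_{\delta(\Lambda)}b''g)$, whose amplitude is comparable to $\delta(\Lambda)^{-1}\|g\|_{L^\infty}$ near the critical layer, into an honest $L^2$ function of manageable norm. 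Substituting and integrating the $d_k\partial_z$ contribution by parts yields the representation
\begin{align*}
\tc{I}{2\ell}g(y) &= \int_{\T_\tp} \mathcal{G}_k(y,z;\Lambda)\,\varphi_{\theta d_k}(z-y_\ell)\,\Xi_\ell(z)\,dz\\
&\quad - d_k\int_{\T_\tp}\partial_z\!\big[\mathcal{G}_k(y,z;\Lambda)\,\varphi_{\theta d_k}(z-y_\ell)\big]\,\Xi_\ell(z)\,dz.
\end{align*}
The pointwise bound on $|\tc{I}{2\ell}g(y)|$ in \eqref{eq:tij:second:derivatitve} now follows by Cauchy--Schwarz, pairing $\Xi_\ell$ against $\mathcal{G}_k(y,\cdot;\Lambda)\varphi_{\theta d_k}(\cdot -y_\ell)$ and its $\partial_z$-derivative, controlled through Lemma \ref{mGk50}.

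For the first-derivative refinement I would differentiate in $y$ under the integrals and invoke the distributional identity
$$\partial_y\partial_z \mathcal{G}_k(y,z;\Lambda) = \delta(y-z) + H(y,z;\Lambda),$$
where $H$ is a locally bounded function. This identity follows from the defining equation \eqref{mGk1} together with the Rankine--Hugoniot jump $[\partial_y\mathcal{G}_k]_{y=z^-}^{y=z^+} = -1$ at the diagonal (mirroring the free-space kernel $G_k(y,z) = (2k)^{-1}e^{-k|y-z|}$). The $\delta$-contribution produces precisely the leading singular piece, giving
$$\partial_y\tc{I}{2\ell}g(y) = -d_k\,\varphi_{\theta d_k}(y-y_\ell)\,\Xi_\ell(y) + R_1(y),$$
where $R_1$ is a sum of pairings of $\Xi_\ell$ against $\partial_y\mathcal{G}_k$, against the regular part $H$, and against the cutoff-derivative contributions from $\varphi_{\theta d_k}'$, whose support lives on the scale $d_k$. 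Each piece is then estimated by $L^2$-duality using Lemma \ref{mGk50} (and Lemma \ref{mGk30} for the mixed second-order contributions). The second-order bound is obtained in the same manner: differentiating $R_1$ once more yields, through the same $\delta$-mechanism, a further leading piece $-\varphi_{\theta d_k}(y-y_\ell)\Xi_\ell(y)$ without the $d_k$ prefactor, and the residual obeys the claimed decay. The case $k\delta(\Lambda)\geq 1$ in \eqref{eq:tij:k} is treated identically, except that the exponential decay $e^{-k|y-z|}$ of the modified Green's function from Lemma \ref{mGk50} replaces the polynomial decay and produces the factor $\exp\bigl(-k\max(|y|-\delta_1(\lambda),0)\bigr)$.

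The principal technical difficulty is the sharpening of the residual bounds to include the extra factor $[\varrho_k(y;\Lambda)/\varrho(y;\Lambda)]^{1/4}$. Direct Cauchy--Schwarz using only the polynomial estimate $|\mathcal{G}_k(y,z;\Lambda)|\lesssim d_k\,\delta(\Lambda)/\varrho(y;\Lambda)$ produces a bound of order $\delta(\Lambda)/\varrho(y;\Lambda)\cdot \|g\|_{L^\infty(\sd{})}$, which is weaker than the stated estimate whenever $k\varrho(y;\Lambda)\gg 1$. The extra gain must be recovered by interpolating between this polynomial estimate and the exponential estimate $|\mathcal{G}_k(y,z;\Lambda)|\lesssim d_k\,e^{-k|y-z|}$ of Lemma \ref{mGk50}, exploiting that $z$ is confined to a ball of radius $\theta d_k$ about $y_\ell$ so that $|y-z|\approx\varrho(y;\Lambda)$ in the relevant regime. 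The same interpolation handles the cutoff-derivative terms $\varphi_{\theta d_k}'$ uniformly in the parameters, and the gain transfers unchanged to the second-derivative estimate once the leading singular term $-\varphi_{\theta d_k}(y-y_\ell)\Xi_\ell(y)$ has been subtracted.
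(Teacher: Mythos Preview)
Your proposal is correct and follows essentially the same strategy as the paper's proof: integrate by parts against $(1+d_k\partial_z)\Xi_\ell$, extract the singular piece $-d_k\varphi_{\theta d_k}(y-y_\ell)\Xi_\ell(y)$ via the $\delta$ at the diagonal, and bound the remainder by pairing $\Xi_\ell$ in $L^2$ against the modified Green's function estimates of Lemmas~\ref{mGk50}--\ref{mGk30}. The only cosmetic difference is that the paper extracts the delta through $\partial_y^2\mathcal{G}_k = -\delta(y-z) + (V+k^2)\mathcal{G}_k$ (writing $-d_k\partial_y\partial_z = d_k\partial_y^2 - d_k\partial_y(\partial_y+\partial_z)$ and invoking \eqref{mGk1}), whereas you use the equivalent identity $\partial_y\partial_z\mathcal{G}_k = \delta(y-z) + H$; both routes produce the same leading term and comparable remainders, with the paper's version making the $(\partial_y+\partial_z)\mathcal{G}_k$ contribution (controlled by \eqref{lem:mGK30:New}) explicit. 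Your identification of the need to interpolate between the polynomial and exponential bounds in \eqref{mGk51} to recover the extra factor $[\varrho_k/\varrho]^{1/4}$ is exactly right and is in fact more explicit than the paper's treatment of this point.
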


\begin{proof}
First, observe that $\tc{I2}{\ell}$  can be written as 
\begin{align}
\label{eq:tI:noder}
    \tc{I}{2\ell}g(y) &=  \int_{\T_\tp}   \gr  \varphi_{\theta d_k}^2(z - y_{\ell}) \ai(\varphi_{\delta(\Lambda)} b'' g)(z) \, dz \notag \\  
    &=  \int_{\T_\tp}  (1 - d_k \p_z)\left(  \varphi_{\theta d_k}(z - y_{\ell}) \gr  \right)\Xi_{\ell}(z) \,dz. 
\end{align}

Similarly, $\p_y\tc{I2}{\ell}$ can be written as
\begin{align}
\label{eq:tI:derivative}
&\p_y\tc{I}{2\ell}g(y) = \notag \p_y \int_{\T_\tp}   \gr  \varphi_{\theta d_k}^2(z - y_{\ell}) \ai(\varphi_{\delta(\Lambda)} b'' g)(z) \,dz  \notag\\
&= \p_y \int_{\T_\tp}  (1 - d_k \p_z)\left(  \varphi_{\theta d_k}(z - y_{\ell}) \gr  \right)\Xi_{\ell}(z) \,dz ,   \notag\\
    &=  d_k\int_{\T_\tp} \varphi_{\theta d_k}(z - y_{\ell}) \p_y^2\gr \Xi_{\ell}(z) \,dz  \notag\\
    &-d_k \int_{\T_\tp} \p_y (\p_y + \p_z)\left(  \varphi_{\theta d_k}(z - y_{\ell}) \gr  \right)\Xi_{\ell}(z) \,dz  \notag\\
    &+ \int_{\T_\tp}  \left(  \varphi_{\theta d_k}(z - y_{\ell}) \p_y\gr  \right)\Xi_{\ell}(z) \, dz. 
    \end{align}
The equation for $\gr$ implies 
\begin{align}
\label{eq:green:second:order}
    &\int_{\T_\tp} \varphi_{\theta d_k}(z - y_{\ell}) \p_y^2\gr \Xi_{\ell}(z) \, dz  \notag \\
    &= -\varphi_{\theta d_k} (y - y_{\ell})\Xi_{\ell}(y) + (V(y)+ k^2) \int_{\T_\tp}  \left(  \varphi_{\theta d_k}(z - y_{\ell}) \gr  \right)\Xi_{\ell}(z) \, dz. 
\end{align}
To summarize, we write 
\begin{align}
\label{eq:tI:derivative:summary}
 \p_y \tc{I2}{\ell}g(y) =
  - d_k\varphi_{\theta d_k}(y - y_{\ell})\Xi_{\ell}(y) + \int_{\T_\tp} R_\ell(y,z) \Xi_{\ell}(z)  \, dz,  
\end{align}
where $\text{supp}(R_\ell(y, \cdot)) \subset \left[y_{\ell} -\theta d_k, y_{\ell} -\theta d_k\right]$, and 
\begin{align}
    |R_\ell(y,z)|\lesssim 
    \begin{cases}
        e^{-k|y-z|},  \quad  & d_k = \frac{1}{k}, \\
          \min\Big\{\frac{\delta(\Lambda)}{\varrho(y ; \Lambda)},e^{-k|y-z|}\Big\}, & d_k = \delta(\Lambda).
    \end{cases}
\end{align}
Therefore,
\begin{align}
  &|\tc{I}{2\ell} g(y)| + \varrho_k(y)\left|\p_y \tc{I}{2\ell} g(y) + d_k\varphi_{\theta d_k}(y-y_{\ell})\Xi_{\ell}(y)\right|   \notag\\
  &\lesssim  
  \begin{cases}
     \frac{ \exp{- k\max\big(|y| - \delta_1(\lambda), 0\big) }  }{k\delta(\Lambda)}\norm{g}_{L^{\infty} (\sd{})}, & {\rm if}\,\,d_k = \frac{1}{k},\\
      \left[\frac{\varrho_k(y;\Lambda)}{\varrho(y;\Lambda)}\right]^{1/4}\frac{\delta(\Lambda) }{\varrho(y ; \Lambda)}\norm{g}_{L^{\infty}(\sd{}) }, &{\rm if}\,\, d_k = \delta(\Lambda),
  \end{cases}
\end{align}
as desired.
To derive \eqref{eq:tij:second:derivatitve}, observe that \eqref{eq:tI:derivative} and \eqref{eq:green:second:order} imply
\begin{align}
    &\p_y\big(\p_y \tc{I}{2\ell} g + d_k\varphi_{\theta d_k} (y - y_{\ell})\Xi_{\ell}(y) \big)  \notag\\
    &=   - \varphi_{\theta d_k} (y - y_{\ell})\Xi_{\ell}(y) + (V(y) + k^2) \int_{\T_\tp} \gr \Xi_{\ell}(z) \, dz
    \notag\\
    &\quad+d_k\p_y\Big( \big(V(y) + k^2\big) \int_{\T_\tp} \gr \Xi_{\ell}(z) \, dz \Big) \\
    &\quad- d_k \int_{\T_\tp} \p_y^2 (\p_y + \p_z)\left(  \varphi_{\theta d_k}(z - y_{\ell}) \gr  \right)\Xi_{\ell}(z) \,dz,
\end{align}
from which \eqref{eq:tij:second:derivatitve} follows. 
\end{proof}
Combining Lemmas \ref{lem:ti2:regularized}, \ref{lem:tio:pw}, and \ref{lem:tI2j:pw} we get the following bounds for $\tc{I}{2}$ on $X^{\sigma_1, \sigma_2}(\mathfrak M)$. 
\begin{lemma}
\label{lem:ti2:X}
Assume that  $(\alpha, \lambda) \in (-\sigma_0 \epsilon^{\frac{1}{2}}, 1)  \times \Sigma_0$,  and  $k \in \Z \cap [1, \infty)$. Then for all $\epsilon \in (0, 1/8)$, $\sigma_1 \in [0,1]$,  and $\sigma_2 \in [-2, 1/2]$ satisfying $\sigma_1 + \sigma_2 \leq 1$, we have the following bounds for $g \in X^{\sigma_1, \sigma_2}(\mathfrak M)$,
\begin{equation}\label{eq:tc:X}
       \norm{\tc{I}{2}g}_{X^{\sigma_1, \sigma_2}(\mathfrak M)} \lesssim \frac{\norm{g}_{X^{\sigma_1, \sigma_2}(\mathfrak M)}}{ \lb k \delta(\Lambda) \rb^{\frac{1}{2}} },
   \end{equation}
   and for $y\in\T_\tp$ with $|y|\gtrsim \delta_0$,
   \begin{equation}\label{eq:tc:X.5J}
        \varrho^{\sigma_1}(y ; \Lambda) \varrho_k^{\sigma_2}(y ; \Lambda)\big| \varrho_k^\beta(y ; \Lambda) \p_y^\beta\tc{I}{2}g(y)\big|\lesssim_{\sigma_1,\sigma_2}\varrho^{\sigma_1-1/4}(y ; \Lambda)\varrho_k^{\sigma_2+1/4}(y ; \Lambda)  \norm{g}_{X^{\sigma_1, \sigma_2}(\mathfrak M) }.
   \end{equation}
\end{lemma}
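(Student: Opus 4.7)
The strategy is to derive the norm estimate \eqref{eq:tc:X} by combining the pointwise kernel bounds just established in Lemmas~\ref{lem:ti2:regularized}, \ref{lem:tio:pw} and \ref{lem:tI2j:pw} with a Sobolev-type control of $\|g\|_{L^\infty(S^j_{\delta(\Lambda)})}$ by $\|g\|_{X^{\sigma_1,\sigma_2}(\mathfrak{M})}$. From the $L^2$ piece of the norm \eqref{Intd4} and the interpolation \eqref{eq:H1:bdd} applied with $I = S^j_{\delta(\Lambda)}$, one readily obtains the embedding
$$ \|g\|_{L^\infty(S^j_{\delta(\Lambda)})} \lesssim \delta(\Lambda)^{-\sigma_1}d_{j,k}^{-\sigma_2}\langle k\delta(\Lambda)\rangle\,\|g\|_{X^{\sigma_1,\sigma_2}(\mathfrak{M})}, $$
together with an analogous $L^2$-in-$\partial_yg$ estimate needed to feed Lemma~\ref{lem:Airybounds:intermediate}. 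In the ``regularly degenerate'' regime \eqref{reg:degen}, Lemma~\ref{lem:ti2:regularized} is directly applicable; in the ``singularly degenerate'' regime \eqref{sing:degen} one uses the decomposition $\tc{I}{2}= \tc{I}{20}+\sum_{\ell=1}^2 \tc{I}{2\ell}$ and Lemmas~\ref{lem:tio:pw}, \ref{lem:tI2j:pw}.

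I would then verify the two halves of the $X^{\sigma_1,\sigma_2}$-norm of $\tc{I}{2}g$ separately, in each of the subcases $k\delta(\Lambda)\lesssim 1$ and $k\delta(\Lambda)\gtrsim 1$. For the $L^\infty$ piece on $\T_\tp\setminus S^j_{\delta(\Lambda)}$, the pointwise bounds give roughly $|\partial_y^\beta \tc{I}{2}g(y)|\lesssim \varrho_{j,k}^{-\beta}(y;\Lambda)[\varrho_{j,k}/\varrho_j]^{1/4}\bigl(\delta(\Lambda)/\varrho_j(y;\Lambda)\bigr)\|g\|_{L^\infty(S^j_{\delta(\Lambda)})}$ when $k\delta\lesssim 1$, or $|\partial_y^\beta\tc{I}{2}g(y)|\lesssim k^{\beta-1}\varrho_j^{-1}(y;\Lambda)\|g\|_{L^\infty(S^j_{\delta(\Lambda)})}$ when $k\delta\gtrsim 1$. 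In either case, pairing these with the weight $\varrho_j^{\sigma_1}\varrho_{j,k}^{\sigma_2+\beta}$, evaluating at the worst point $|y|\sim\delta(\Lambda)$, using $\sigma_1+\sigma_2\leq 1$ to absorb the resulting powers, and substituting the embedding above gives the factor $\langle k\delta(\Lambda)\rangle^{-1/2}$. For the $L^2$ piece on $S^j_{\delta(\Lambda)}$, integration of the pointwise bounds over an interval of length $\delta(\Lambda)$ produces an extra $\delta(\Lambda)^{1/2}$, which combined with the prefactor $\delta(\Lambda)^{-1/2+\sigma_1}$ and the weights $d_{j,k}^{\sigma_2+\beta}$ yields the same estimate. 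The bound \eqref{eq:tc:X.5J} is then immediate: the constraint $|y|\gtrsim \delta_0$ forces $\varrho_j(y;\Lambda)\gtrsim \delta_0$, so the singular factor $\delta(\Lambda)/\varrho_j(y;\Lambda)$ is harmless and the claim reduces to inserting the embedding of $\|g\|_{L^\infty(S^j_{\delta(\Lambda)})}$ into the pointwise bounds.

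The main obstacle is controlling the first-derivative piece of $\tc{I}{2\ell}$ on $S^j_{\delta(\Lambda)}$ in the singularly degenerate regime, where \eqref{eq:tij:second:derivatitve}--\eqref{eq:tij:k} only bound $\partial_y\tc{I}{2\ell}g+d_{j,k}\varphi_{\theta d_{j,k}}(\cdot-y_\ell)\Xi_\ell$ and leave the potentially singular correction $d_{j,k}\varphi_{\theta d_{j,k}}(\cdot-y_\ell)\Xi_\ell$ unresolved. This correction is supported in a $d_{j,k}$-neighbourhood of the critical point $y_\ell\in S^j_{\delta(\Lambda)}$, hence only affects the $L^2(S^j_{\delta(\Lambda)})$ piece. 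Using \eqref{eq:Airy:L2} with $M$ estimated by the Sobolev embedding above, one finds
$$\bigl\|d_{j,k}^{\sigma_2+1}\,d_{j,k}\,\varphi_{\theta d_{j,k}}(\cdot-y_\ell)\Xi_\ell\bigr\|_{L^2(\R)}\lesssim d_{j,k}^{\sigma_2+1}\frac{M}{\delta(\Lambda)^{1/2}}\lesssim \delta(\Lambda)^{1/2-\sigma_1}\langle k\delta(\Lambda)\rangle^{-1/2}\|g\|_{X^{\sigma_1,\sigma_2}(\mathfrak{M})},$$
which together with the overall prefactor $\delta(\Lambda)^{-1/2+\sigma_1}$ produces the gain $\langle k\delta(\Lambda)\rangle^{-1/2}$ claimed in \eqref{eq:tc:X}. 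The truly delicate point is to verify that this last accounting works uniformly across all admissible $(\sigma_1,\sigma_2)$ and across both subcases $k\delta(\Lambda)\lesssim 1$ and $k\delta(\Lambda)\gtrsim 1$, so that the smoothing of $(1+d_{j,k}\partial_y)^{-1}$ supplied by Lemma~\ref{lem:Airybounds:intermediate} matches exactly the loss coming from the singular correction $d_{j,k}\Xi_\ell$.
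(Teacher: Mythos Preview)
Your overall strategy matches the paper's proof: split into the regularly versus singularly degenerate cases, apply Lemmas~\ref{lem:ti2:regularized}, \ref{lem:tio:pw}, \ref{lem:tI2j:pw} for the pointwise parts and Lemma~\ref{lem:Airybounds:intermediate} for the $\Xi_\ell$ correction, then assemble into the two halves of the $X^{\sigma_1,\sigma_2}$-norm.

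There is, however, a genuine gap in the $k\delta(\Lambda)\gtrsim 1$ regime. Your Sobolev embedding, applied with $I=S^j_{\delta(\Lambda)}$, gives
\[
\|g\|_{L^\infty(S^j_{\delta(\Lambda)})}\lesssim \delta(\Lambda)^{-\sigma_1}d_{j,k}^{-\sigma_2}\langle k\delta(\Lambda)\rangle\,\|g\|_{X^{\sigma_1,\sigma_2}},
\]
and this is too lossy by a factor $\langle k\delta(\Lambda)\rangle^{1/2}$. Concretely, when $k\delta\geq 1$ the pointwise bound from \eqref{eq:tij:k} is of order $(k\delta)^{-1}\|g\|_{L^\infty}$; pairing with your embedding cancels the $(k\delta)^{-1}$ entirely and leaves only $\lesssim\|g\|_X$ on the $L^\infty(\T_\tp\setminus S^j_\delta)$ part of the norm --- no $\langle k\delta\rangle^{-1/2}$ gain. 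The fix, which is implicit in the paper's computation, is to apply \eqref{eq:H1:bdd} on a subinterval of length $d_{j,k}=\min(\delta(\Lambda),1/k)$ rather than $\delta(\Lambda)$, yielding the sharp embedding
\[
\|g\|_{L^\infty(S^j_{\delta(\Lambda)})}\lesssim d_{j,k}^{-1/2-\sigma_2}\,\delta(\Lambda)^{1/2-\sigma_1}\|g\|_{X^{\sigma_1,\sigma_2}}\approx \delta(\Lambda)^{-\sigma_1}d_{j,k}^{-\sigma_2}\langle k\delta(\Lambda)\rangle^{1/2}\|g\|_{X^{\sigma_1,\sigma_2}}.
\]
With this correction the remainder of your argument goes through. (A minor slip: in your $\Xi_\ell$ display the denominator should be $\delta(\Lambda)$, not $\delta(\Lambda)^{1/2}$, by \eqref{eq:Airy:L2}; the stated conclusion is nonetheless correct since $M$ is controlled directly from the $L^2$ pieces of the $X$-norm rather than via the $L^\infty$ embedding.)
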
    
\begin{proof}
We focus on \eqref{eq:tc:X}, since \eqref{eq:tc:X.5J} is easier as $|y|\gtrsim\delta_0$.  We first assume that we are in the regularly degenerate case so that one of the cases in \eqref{reg:degen} holds. From Lemma \ref{lem:ti2:regularized}  we have for $\beta\in\{0,1\}$,
\begin{align}
    \varrho^{\sigma_1}(y ; \Lambda) \varrho_k^{\sigma_2}(y ; \Lambda)| \varrho_k^\beta(y ; \Lambda) \p_y^\beta\tc{I}{2}g(y)| \lesssim 
    \begin{cases}
         \varrho^{\sigma_1}(y ; \Lambda)\varrho_k^{\sigma_2}(y ; \Lambda) \frac{ \delta(\Lambda)  \norm{g}_{X^{\sigma_1, \sigma_2}(\mathfrak M) }}{\delta(\Lambda)^{\sigma_1 + \sigma_2} \varrho(y ; \Lambda)},& {\rm if}\,\,  k \delta(\Lambda) \leq 1, \\
         \varrho^{\sigma_1}(y ; \Lambda) \frac{ (k \delta(\Lambda))^{\frac{1}{2}}  \norm{g}_{X^{\sigma_1, \sigma_2}(\mathfrak M) }}{ k^2 \varrho^2(y ; \Lambda) \delta(\Lambda)^{\sigma_1 } },& {\rm if}\,\,  k \delta(\Lambda) \geq 1.
    \end{cases}
   \end{align}
Using the fact that $\sigma_1 + \sigma_2 \leq 1$ when $k\delta(\Lambda) \leq 1$ and that $\sigma_1 \leq 1$ when $k \delta(\Lambda) \geq 1$ implies the desired bound.
Now consider the singularly degenerate case \eqref{sing:degen}. 
For $g \in X^{\sigma_1, \sigma_2}(\mathfrak M)$ we have from Lemma that
\ref{lem:Airybounds:intermediate}
\begin{equation}
    \delta(\Lambda)^{-\frac{1}{2}}d_k^{\sigma_2 +1} \delta(\Lambda)^{\sigma_1}\norm{d_k \varphi_{\theta d_k} (\cdot- y_{\ell}) \Xi_{\ell} }_{L^2(\T_\tp)} \lesssim 
    \begin{cases}
        \norm{g}_{X^{\sigma_1, \sigma_2}(\mathfrak M)},  &  {\rm if}\,\,  k \delta(\Lambda) \leq 1,\\
        \frac{\norm{g}_{X^{\sigma_1, \sigma_2}(\mathfrak M)} }{ k\delta(\Lambda) }, & {\rm if}\,\, k\delta(\Lambda) \geq 1,
    \end{cases}
    \end{equation}
where $d_k \varphi_{\theta d_k} ( \cdot - y_{\ell}) \Xi_{\ell} $ is as in Lemma \ref{lem:tI2j:pw}. Finally, \eqref{eq:tij:k} implies that for $y \in \T_\tp$ 
\begin{align}
&\varrho^{\sigma_1}(y ; \Lambda) \varrho_k^{\sigma_2}(y ; \Lambda)\big|\tc{I}{2j}g(y)\big|  + 
 \varrho^{\sigma_1}(y ; \Lambda)  \varrho_k^{1 + \sigma_2}(y ; \Lambda)\big|  \p_y\tc{I}{2j}g(y) + d_k \varphi_{\theta d_k} (\cdot- y_{\ell}) \Xi_{\ell}(y) \big| \notag \\
& \lesssim
\begin{cases}
\varrho^{\sigma_1}(y ; \Lambda)\varrho_k^{\sigma_2}(y ; \Lambda) \frac{ \delta(\Lambda)  \norm{g}_{X^{\sigma_1, \sigma_2}(\mathfrak M) }}{\delta(\Lambda)^{\sigma_1 + \sigma_2} \varrho(y ; \Lambda)},&  \quad{\rm if}\,\,  k \delta(\Lambda) \leq 1, \\
         \varrho^{\sigma_1}(y ; \Lambda) \frac{   \norm{g}_{X^{\sigma_1, \sigma_2} (\mathfrak M) }}{ (k\delta(\Lambda))^{\frac{1}{2}}\delta(\Lambda)^{\sigma_1 }  } \times 
         \exp{- k\max(|y| - \delta_1(\lambda), 0) } ,
         &  \quad{\rm if}\,\,  k \delta(\Lambda) \geq 1.
    \end{cases}
   \end{align}
   The fact that $\sigma_1 + \sigma_2 \leq 1$ when $k\delta(\Lambda) \leq 1$ implies the desired bound.
   \end{proof}

\subsection{Bounds on $\tc{v}{1}$ }
Recall the definition of $\tc{v}{1}$ (assuming that $y_{1*} = 0$) for $y\in\T_\tp$, 
\begin{equation}
     \tc{v}{1}g(y) = \int_{\T_\tp} \gr  
  \varphi_{\frac{\delta(\Lambda)}{3}}(z) \ai( \eta_{\delta(\Lambda)} b'' g )(z)\, dz.
\end{equation}
Heuristically, we expect that $\tc{v}{1}$ to be very small since  for functions $g$ that are supported at distances greater than  $\delta(\Lambda)$ away from the critical layer, $\ai(g)(y)$ is exponentially small for $|y| < \delta(\Lambda)$ from Lemma \ref{lem:exp:decay}. Crucially, Lemma \ref{lem:exp:decay} also has the correct scaling between $\delta(\Lambda)$ and $\epsilon$: when $\delta(\Lambda) \approx \epsilon^{1/4}$, the exponential term is no longer small enough to view $\tc{v}{1}$ as negligible and we need higher regularity estimates for $\tc{v}{1}$ in order to prove the required  compactness for the limiting absorption principle. 
\begin{lemma}
 \label{lem:tv1:pw}
Assume that  $(\alpha, \lambda) \in (-\sigma_0 \epsilon^{\frac{1}{2}}, 1)  \times \Sigma_{j,\delta_0}$,  and  $k \in \Z \cap [1, \infty)$. Then for all $\epsilon \in (0, 1/8)$ and  $g \in X^{\sigma_1, \sigma_2}(\mathfrak M)$
 with $\sigma_1 \in [0,1], \sigma_2 \in [-2, 1/2] $ satisfying $\, \sigma_1 + \sigma_2 \in (-2, 1)$,  the following bounds hold for all $y \in \T_\tp$.
 \begin{itemize}
     \item For $\beta \in \{ 0,1 ,2\} \text{ and } k\delta(\Lambda) \leq 1$,

     \begin{align}
     \label{tv1:pw}
\varrho_k^\beta(y ; \Lambda)|\p_y^{\beta}\tc{v}{1}g(y) | \lesssim  \frac{\exp(-c_1 \delta(\Lambda)^2 \epsilon^{-\frac{1}{2}})\delta(\Lambda) \norm{g}_{X^{\sigma_1, \sigma_2} (\mathfrak M) } }{\varrho(y ; \Lambda) \delta(\Lambda)^{\sigma_1 + \sigma_2}};
     \end{align}

     \item For $\beta\in\{0,1\}$, we also have
\begin{align}
\label{tv1;X}
\varrho_k^\beta(y ; \Lambda)|\p_y^{\beta}\tc{v}{1}g(y) | \lesssim 
\begin{cases}
\exp(-c_1 \delta(\Lambda)^2 \epsilon^{-\frac{1}{2}}) \varrho^{-\sigma_1-1/4}\varrho_k^{-\sigma_2+1/4}(y ; \Lambda)\norm{g}_{ X^{\sigma_1, \sigma_2}(\mathfrak M) },   & k\delta(\Lambda) \lesssim 1, \\
 k^{\sigma_2}\exp(-c_1 \delta(\Lambda)^2 \epsilon^{-\frac{1}{2}}) \varrho^{-\sigma_1}(y ; \Lambda) \big(k\delta(\Lambda)\big)^{-2}\norm{g}_{ X^{\sigma_1, \sigma_2} (\mathfrak M) }, & k \delta(\Lambda) \gtrsim 1, 
\end{cases}
\end{align}  
where $c_1$ is as in \eqref{eq:exp:close}. 

 \end{itemize}
     \end{lemma}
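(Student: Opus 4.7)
The plan hinges on the observation that the inner variable $z$ in $T_{v1}g$ is localized to $S^j_{\delta(\Lambda)/3}$ by the cut-off $\varphi_0((z-y_{j*})/(\delta(\Lambda)/3))$, while the argument $\eta_{\delta(\Lambda)} b''g$ of $A_\Theta^{-1}$ is supported in $\{|z-y_{j*}|\ge \delta(\Lambda)/2\}$. This places us squarely in the setting of Lemma \ref{lem:exp:decay}, bound \eqref{eq:exp:close}: applying it to $f=\eta_{\delta(\Lambda)} b'' g$, whose $X^{\sigma_1,\sigma_2}$-norm is controlled by $\|b''\|_{L^\infty}\|g\|_{X^{\sigma_1,\sigma_2}(\mathfrak M)}$, delivers
\begin{equation*}
\sup_{z\in S^j_{\delta(\Lambda)/3}}\big|A_\Theta^{-1}(\eta_{\delta(\Lambda)} b'' g)(z)\big|\lesssim \frac{\exp(-c_1 \delta(\Lambda)^2 \epsilon^{-1/2})}{d_{j,k}^{\sigma_2}\,\delta(\Lambda)^{2+\sigma_1}}\,\|g\|_{X^{\sigma_1,\sigma_2}(\mathfrak M)}.
\end{equation*}
The price for a slightly smaller support gap ($\delta(\Lambda)/2$ instead of $\delta(\Lambda)$) is only a constant change in $c_1$, which is harmless.

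For $\beta\in\{0,1\}$, I would pull this $L^\infty$-bound out of the $z$-integral and then apply the modified Green's function estimate of Lemma \ref{mGk50}. On $S^j_{\delta(\Lambda)/3}$ we have $\varrho_j(z;\Lambda)\approx \delta(\Lambda)$ and $\varrho_{j,k}(z;\Lambda)\approx d_{j,k}$, so integrating $\varrho_{j,k}^\beta(y;\Lambda)|\partial_y^\beta \mathcal{G}^j_k(y,z;\Lambda)|$ over $z\in S^j_{\delta(\Lambda)/3}$ yields an $O(d_{j,k}\,\delta(\Lambda)/\varrho_j(y;\Lambda))$ contribution when $k\delta(\Lambda)\le 1$, and an $O(\delta(\Lambda)/(k\varrho_j(y;\Lambda)))$ contribution when $k\delta(\Lambda)\gtrsim 1$. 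Combining with the exponential factor above and using $d_{j,k}=\delta(\Lambda)$ in the first regime and $d_{j,k}=1/k$ in the second, the bounds \eqref{tv1:pw} (for $\beta\in\{0,1\}$) and \eqref{tv1;X} fall out by elementary bookkeeping of the exponents $\sigma_1,\sigma_2$.

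For $\beta=2$ the direct pointwise bound on $\partial_y^2 \mathcal{G}^j_k$ is unavailable because of the delta function in \eqref{mGk1}; instead I would differentiate under the integral once and then use the equation for $\mathcal{G}^j_k$, obtaining
\begin{equation*}
\partial_y^2 T_{v1}g(y)=-\varphi_0\Big(\tfrac{y-y_{j*}}{\delta(\Lambda)/3}\Big)A_\Theta^{-1}(\eta_{\delta(\Lambda)} b'' g)(y)+\big(k^2+V(y)\big)T_{v1}g(y).
\end{equation*}
The first term is nonzero only for $y\in S^j_{\delta(\Lambda)/3}$ and is exponentially small by Lemma \ref{lem:exp:decay}; the second term is controlled by the already-established $\beta=0$ bound multiplied by the factor $k^2+|V(y)|\lesssim \varrho_{j,k}^{-2}(y;\Lambda)$, which provides exactly the required weight $\varrho_{j,k}^2(y;\Lambda)$ on the left-hand side.

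The main technical subtlety, but not a genuine obstacle, is tracking the weights across the transition $k\delta(\Lambda)\sim 1$ to verify \eqref{tv1;X} uniformly in the regime. What makes this essentially a counting exercise is that every polynomial loss of the form $\delta(\Lambda)^{-N}$ or $\epsilon^{-N}$ is crushed by the factor $\exp(-c_1\delta(\Lambda)^2\epsilon^{-1/2})$: even in the worst case $\delta(\Lambda)\approx \epsilon^{1/4}$ where this factor degenerates to a fixed constant less than one, the constraint $\delta(\Lambda)\gtrsim\epsilon^{1/4}$ gives enough slack to close every estimate. Thus no delicate cancellations are needed, and the argument is dominated throughout by the exponential smallness produced by Lemma \ref{lem:exp:decay}.
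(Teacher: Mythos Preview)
Your proposal is correct and follows essentially the same approach as the paper: both apply Lemma~\ref{lem:exp:decay} (specifically \eqref{eq:exp:close}) to bound $\varphi_{\delta(\Lambda)/3}\,A_\Theta^{-1}(\eta_{\delta(\Lambda)} b''g)$ pointwise by the exponentially small factor, then invoke the modified Green's function bounds of Lemma~\ref{mGk50} together with the approximation-of-identity behavior of $ke^{-k|\cdot|}$ for the case $k\delta(\Lambda)\gtrsim 1$. Your explicit handling of $\beta=2$ via the equation \eqref{mGk1} for $\mathcal{G}^j_k$ is exactly the device the paper uses elsewhere (e.g.\ in \eqref{eq:second:derivative}) and leaves implicit here; this is a useful clarification rather than a different route.
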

\begin{corollary}
    \label{lem:tv1:X}
    Assume that $(\alpha, \lambda) \in (-\sigma_0 \epsilon^{\frac{1}{2}}, 1)  \times \Sigma_{j,\delta_0}$,  and  $k \in \Z \cap [1, \infty)$. Then for all $\epsilon \in (0, 1/8)$  
    with $\sigma_1 \in [0,1]$, $\sigma_2 \in [-2, 1/2]$ satisfying $\sigma_1 + \sigma_2 \in (-2,1)$  the following bounds hold for $g \in X^{\sigma_1, \sigma_2}(\mathfrak M)$,
    \begin{align}
        \label{eq:tv1:X}
       \norm{\tc{v1}{}g}_{X^{\sigma_1, \sigma_2}(\mathfrak M)} \lesssim_{\sigma_1, \sigma_2} 
       \begin{cases}
           \frac{\epsilon \norm{ g}_{X^{\sigma_1, \sigma_2}(\mathfrak M)}}{\delta(\Lambda)^4}  & 
            k\delta(\Lambda) \leq 1 
           \\
           \frac{\norm{g}_{X^{\sigma_1, \sigma_2}(\mathfrak M)}}{ \lb k \delta(\Lambda) \rb^{2} } & k \delta(\Lambda) \geq 1,
           \end{cases}
    \end{align}
    where the implicit constants are is uniform in $(\alpha, \epsilon, k, \lambda)$.
\end{corollary}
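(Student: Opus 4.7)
\medskip

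\noindent\textbf{Proof proposal for Corollary \ref{lem:tv1:X}.} The plan is to reduce to the pointwise bounds already established in Lemma \ref{lem:tv1:pw} and then absorb the exponential factor $\exp(-c_1 \delta(\Lambda)^2 \epsilon^{-1/2})$ into the polynomial smallness claimed in \eqref{eq:tv1:X}. First, I would unpack the norm $\norm{\cdot}_{X^{\sigma_1,\sigma_2}(\mathfrak M)}$ as in \eqref{Intd4} and split the task into two contributions: (a) the weighted $L^2$ contribution of $\partial_y^\beta\tc{v}{1}g$ on the ``inner'' interval $S^j_{\delta(\Lambda)}$ for $\beta\in\{0,1\}$, and (b) the weighted $L^\infty$ contribution on the ``outer'' region $\T_\tp\setminus S^j_{\delta(\Lambda)}$.

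For the outer region, I would use the estimate \eqref{tv1;X}, which already provides a bound of the correct structural form $\exp(-c_1 \delta^2\epsilon^{-1/2})\,\varrho^{-\sigma_1}\varrho_k^{-\sigma_2-\beta}(y;\Lambda)\times (\text{slow factor})\,\|g\|_{X^{\sigma_1,\sigma_2}}$; here the extra $(\varrho_k/\varrho)^{1/4}$ margin in the first branch (and the $(k\delta)^{-2}$ in the second branch) is exactly what one needs after multiplying through by the weights $\varrho^{\sigma_1}\varrho_k^{\sigma_2+\beta}$. For the inner region, since $\varrho(y;\Lambda)\approx\delta(\Lambda)$ and $\varrho_{j,k}(y;\Lambda)\approx d_{j,k}$ on $S^j_{\delta(\Lambda)}$, the pointwise bound \eqref{tv1:pw} yields
\begin{equation*}
|\partial_y^\beta \tc{v}{1}g(y)|\lesssim d_{j,k}^{-\beta}\delta(\Lambda)^{-\sigma_1-\sigma_2}\exp(-c_1\delta(\Lambda)^2\epsilon^{-1/2})\,\|g\|_{X^{\sigma_1,\sigma_2}(\mathfrak M)},
\end{equation*}
and integrating the square over $S^j_{\delta(\Lambda)}$ (an interval of length $\approx\delta(\Lambda)$), then multiplying by the prescribed prefactor $\delta(\Lambda)^{-1/2+\sigma_1}d_{j,k}^{\sigma_2+\beta}$, collapses all powers of $\delta(\Lambda)$ and $d_{j,k}$ and leaves only the exponential factor times $\|g\|_{X^{\sigma_1,\sigma_2}(\mathfrak M)}$.

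At this point both contributions have been reduced to an estimate of the form $\exp(-c_1\delta(\Lambda)^2\epsilon^{-1/2})\,\|g\|_{X^{\sigma_1,\sigma_2}(\mathfrak M)}$ (modulo the already favorable extra $(k\delta)^{-2}$ in the regime $k\delta(\Lambda)\ge 1$). The final step is the elementary conversion: setting $x:=\delta(\Lambda)^2\epsilon^{-1/2}$, the definition \eqref{Intd1} of $\delta(\Lambda)$ with the large constant $C^\dagger$ ensures $x\ge (C^\dagger)^2$; since $x^2 e^{-c_1 x}$ is uniformly bounded on $[(C^\dagger)^2,\infty)$, one gets $\exp(-c_1 x)\lesssim x^{-2}=\epsilon/\delta(\Lambda)^4$, which is exactly the factor appearing in \eqref{eq:tv1:X} when $k\delta(\Lambda)\le 1$. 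In the regime $k\delta(\Lambda)\ge 1$ the same argument combined with the built-in $(k\delta(\Lambda))^{-2}$ factor from \eqref{tv1;X} directly produces the required $\langle k\delta(\Lambda)\rangle^{-2}$ decay.

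There is no essential difficulty beyond careful bookkeeping of the weights; the only ``thinking'' step is the observation that $\delta(\Lambda)\ge C^\dagger\epsilon^{1/4}$ is precisely calibrated so that the (otherwise merely qualitative) exponential smallness in Lemma \ref{lem:tv1:pw} upgrades to the quantitative $\epsilon/\delta(\Lambda)^4$ decay needed to treat $\tc{v}{1}$ perturbatively in Proposition \ref{prop:k:bounds}. The main obstacle, such as it is, is simply confirming that the trade-off between the powers $\varrho^{\sigma_1-1/4}\varrho_k^{\sigma_2+1/4}$ in \eqref{tv1;X} and the weights in the definition of $X^{\sigma_1,\sigma_2}(\mathfrak M)$ leaves a uniformly bounded ratio over the range $\sigma_1\in[0,1]$, $\sigma_2\in[-2,1/2]$, $\sigma_1+\sigma_2\in(-2,1)$; this is verified case-by-case according to whether $\varrho_{j,k}$ equals $\varrho_j$ or $1/k$.
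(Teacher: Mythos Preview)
Your proposal is correct and matches the paper's approach: the paper declares the corollary ``immediate'' from Lemma \ref{lem:tv1:pw}, and you have correctly unpacked why, with the key step being the conversion $\exp(-c_1\delta(\Lambda)^2\epsilon^{-1/2})\lesssim \epsilon/\delta(\Lambda)^4$ afforded by $\delta(\Lambda)\ge C^\dagger\epsilon^{1/4}$. The only remark is that for the inner $L^2$ piece in the regime $k\delta(\Lambda)\ge 1$ you should invoke \eqref{tv1;X} rather than \eqref{tv1:pw} (the latter is stated only for $k\delta(\Lambda)\le 1$), but you implicitly do this in your final paragraph anyway.
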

\begin{proof}
   From \eqref{eq:exp:close} we have for $z \in \T_\tp$,
   \begin{equation}
       \Big|\varphi_{\frac{\delta(\Lambda)}{3}}(z)\ai( \eta_{\delta(\Lambda)} b'' g )(z)\Big| \lesssim_{\sigma_1, \sigma_2}  \exp(-c_1 \delta(\Lambda)^2 \epsilon^{-\frac{1}{2}})\frac{\norm{g}_{X^{\sigma_1, \sigma_2}(\mathfrak M)}}{d_k^{\sigma_1} \delta(\Lambda)^{2 + \sigma_2}}.  
   \end{equation}
   The proof of \eqref{tv1:pw} then follows from Lemma \ref{mGk50}. For \eqref{tv1;X}, observe that since $\sigma_1 + \sigma_2 < 1$ and  $k\delta(\Lambda) \leq 1$ it follows  for  $y \in \T_\tp$ that 
\begin{equation}
    \frac{\delta(\Lambda)\varrho^{\sigma_1}(y ; \Lambda) \varrho_k^{\sigma_2}(y ; \Lambda)  }{\varrho(y ; \Lambda) \delta(\Lambda)^{\sigma_1 + \sigma_2}} \lesssim 1.   
\end{equation}
 For the case of $k \delta(\Lambda) \geq 1$, we use that $ke^{-k|z|}$ is an approximation of the identity.
The proof of corollary \ref{lem:tv1:X} is immediate.
\end{proof}

\subsection{Bounds on $\tc{v}{2}$}
Recall from \eqref{eq:tv2} the definition of $\tc{v}{2}$ assuming that $y_{1*} = 0$, 
\begin{equation}
\label{tv2:def}
    \tc{v}{2}g(y) =  \int_{\T_\tp} \gr (1 -\varphi_{\frac{\delta(\Lambda)}{{3}}}(z) )\frac{\epsilon \p_z^2\ai( \eta_{\delta(\Lambda)} b'' g)(z)}{b(z) - \lambda - i  \alpha }\, dz.
\end{equation}
 For $\epsilon^{\frac{1}{2}} \ll |\lambda|$, we expect that $\tc{v}{2}$ to be small since $\ai(\eta_{\delta(\Lambda)} b'' g)$ acts like multiplication by $\frac{1}{|b(y) - \lambda| + \delta(\Lambda)^2 }$. However, some care is needed in the case when $ |\lambda|\lesssim  \epsilon^{\frac{1}{2}}$ in order to ensure the correct  behavior for $ |y| \gg \delta(\Lambda)$ (see \eqref{eq:tv2:pw1}). Unlike $\tc{v}{1}$, the kernel of $\tc{v}{2}$ is not localized to $S_{\delta(\Lambda)}$ which complicates matters since it is no longer sufficient to simply consider the size of the kernel on $S_{\delta(\Lambda)}$. Capturing the optimal decay in the kernel away from $S_{\delta(\Lambda)}$ is the reason why we integrate by parts in \eqref{eq:tv2:noder} and \eqref{eq:tv2:derivative}.
\begin{lemma}
    \label{lem:tv2:pw}
Assume that $(\alpha, \lambda) \in (-\sigma_0 \epsilon^{\frac{1}{2}}, 1)  \times \Sigma_{j,\delta_0}$,  and  $k \in \Z \cap [1, \infty)$. Then for all $\epsilon \in (0, 1/8)$ and  $g \in X^{\sigma_1, \sigma_2}(\mathfrak M)$
 with $\sigma_1 \in [0,1], \, \sigma_1 + \sigma_2 \in (-2, 1)$  the following bounds hold for all $y \in \T_\tp$.
 \begin{itemize}
 \item For all values of $k \delta(\Lambda)$ and all $\lambda \in \Sigma_{j,\delta_0}$ we have 
    \begin{align}
    \label{eq:tv2:pw}
|\varrho_k^\beta(y ; \Lambda)\p_y^\beta\tc{v}{2}g(y)|   \lesssim_{\sigma_1, \sigma_2} \frac{\epsilon^{1/2} \norm{g}_{X^{\sigma_1, \sigma_2} (\mathfrak M) } }{ \delta(\Lambda)^2 \varrho^{\sigma_1+1/2}(y;\Lambda)\varrho_k^{\sigma_2-1/2}(y ; \Lambda) }, \quad \beta \in\{0,1 \};
\end{align}
\item For $k \delta(\Lambda) \leq 1$,
\begin{align}
\label{eq:tv2:pw1}
    \big|\varrho_k^\beta(y ; \Lambda) \p_y^\beta\tc{v}{2}g(y)\big| &\lesssim_{\sigma_1, \sigma_2} 
 \frac{\epsilon^{1/2} \delta(\Lambda) \norm{g}_{X^{\sigma_1, \sigma_2} (\mathfrak M) }}{\delta(\Lambda)^2\varrho(y; \Lambda ) \delta(\Lambda)^{\sigma_1 + \sigma_2}}, \quad \beta \in \{ 0,1\} ;
\end{align}
\item For  $|\lambda| \lesssim \epsilon^{\frac{1}{2}}$, 
\begin{subequations}
\begin{align}
\label{eq:tv2:kdom}
\big|k^{-\beta}\p_y^\beta\tc{v}{2}g(y) \big|  &\lesssim_{\sigma_1, \sigma_2} \frac{1}{\big(k\delta(\Lambda)\big)^{1/2}} \frac{\epsilon^{\frac{1}{4}}  k^{\sigma_2} \norm{g
    }_{X^{\sigma_1, \sigma_2} (\mathfrak M) }  }{\delta(\Lambda) \varrho^{\sigma_1}(y ; \Lambda)},\quad k \delta(\Lambda) \ge 1, \,\,\beta \in \{0,1 \}, \\
    \label{eq:tv2:2der}
   \big |\varrho_k^2(y ; \Lambda) \p_y^2\tc{v}{2}g(y)\big| &\lesssim_{\sigma_1, \sigma_2} \frac{\norm{g}_{X^{\sigma_1, \sigma_2}(\mathfrak{M})  } }{ \varrho^{\sigma_1+1/2}(y ; \Lambda)\varrho_k^{\sigma_2-1/2}(y ,\Lambda)    }, \quad k \delta(\Lambda) \leq 1.
\end{align}
\end{subequations}

\end{itemize}
\end{lemma}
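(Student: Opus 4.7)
\smallskip

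The plan is to reduce the two $z$-derivatives acting on $\ai(\eta_{\delta(\Lambda)}b'' g)$ in \eqref{tv2:def} to a manageable form via the generalized Airy equation. Applying $A_\Theta \ai(h) = h$ with $h = \eta_{\delta(\Lambda)} b'' g$ gives $\epsilon\p_z^2\ai(h) = h + [\alpha + i(b(z)-\lambda)]\ai(h)$. Since $\alpha + i(b(z)-\lambda) = i(b(z)-\lambda-i\alpha)$, division by $b(z)-\lambda - i\alpha$ produces the key algebraic identity
\begin{equation*}
\frac{\epsilon\p_z^2\ai(\eta_{\delta(\Lambda)}b'' g)(z)}{b(z) - \lambda - i\alpha} = \frac{\eta_{\delta(\Lambda)}(z)b''(z)g(z)}{b(z) - \lambda - i\alpha} + i\,\ai(\eta_{\delta(\Lambda)}b'' g)(z).
\end{equation*}
Substituting into \eqref{tv2:def} rewrites $\tc{v}{2}g$ as the sum of two integrals, both free of $z$-derivatives on $\ai$. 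The smallness of $\tc{v}{2}g$ reflects the near-cancellation between these two terms, since for $z$ in the support of $(1-\varphi_{\delta/3})\eta_{\delta(\Lambda)}$ (i.e.\ $\delta(\Lambda)/6 \lesssim |z| \lesssim \delta_0$) the refined approximation supplied by Lemma~\ref{ake6} shows that $\ai(\eta_{\delta(\Lambda)}b''g)(z) \approx -\eta_{\delta(\Lambda)}(z)b''(z)g(z)/[i(b(z)-\lambda)+\ell_\nu(\lambda)]$ up to remainders of size $O(\epsilon^{1/3}|z|^{-2/3})$. The residual discrepancy accounts for the factor $\epsilon^{1/2}/\delta(\Lambda)^2$ that appears in \eqref{eq:tv2:pw}.

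For the bound \eqref{eq:tv2:pw} itself, I combine the pointwise estimates on $\gr$ and $\p_y\gr$ from Lemma~\ref{mGk50} with the refined estimates on $\ai(\eta_{\delta(\Lambda)} b'' g)$ and its discrepancy from the above principal part. Since $|b(z)-\lambda|\gtrsim |z|^2 \gtrsim \delta(\Lambda)^2$ on the relevant support, the pointwise magnitude of the integrand (after cancellation) is controlled by $\epsilon^{1/2}\delta(\Lambda)^{-2}\|g\|_{X^{\sigma_1,\sigma_2}(\mathfrak{M})}$ times a localized profile, and integration against $\gr$ yields the stated weighted bound.

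For \eqref{eq:tv2:pw1} (the regime $k\delta(\Lambda)\le 1$), the improved bound $|\gr|\lesssim \delta(\Lambda)/\varrho(y;\Lambda)$ from Lemma~\ref{mGk50} removes the loss $\varrho_k^{1/2-\sigma_2}$ present in \eqref{eq:tv2:pw} and produces the sharper factor $\delta(\Lambda)$ on the right-hand side. The second-derivative bound \eqref{eq:tv2:2der} is obtained by applying $\p_y^2$ to the first factor $\gr$ and using the defining equation \eqref{mGk1} to trade $\p_y^2\gr$ for $(k^2 + V(y))\gr$ plus a $\delta$-function contribution; this reduces the estimate to one for $\beta\in\{0,1\}$ already treated, plus a local contribution at $y=z$ controlled directly via Proposition~\ref{Airy_main}.

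The main obstacle is \eqref{eq:tv2:kdom}, corresponding to the viscous regime $|\lambda|\lesssim\epsilon^{1/2}$ with $k\delta(\Lambda)\ge 1$. Here the approximation from Lemma~\ref{ake6} is not available, since $b(z) - \lambda$ may vanish on the relevant region and the cancellation used above degenerates. Instead, I will integrate by parts once in $z$ to transfer one derivative from $\epsilon\p_z^2\ai$ onto the weighted kernel $\gr(1-\varphi_{\delta/3})/(b(z)-\lambda-i\alpha)$, and then apply \eqref{eq:kernel:bounds:1} from Proposition~\ref{Airy_main} in the viscous region, where the Airy length scale is $\epsilon^{1/4}$ uniformly. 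The factor $(k\delta(\Lambda))^{-1/2}$ arises from balancing the pointwise size of $\p_y\gr \sim k$ (since $k\delta(\Lambda)\ge 1$) against the exponential Airy decay on length scale $\epsilon^{1/4}$, which gives the improvement $\epsilon^{1/4}/\delta(\Lambda)\sim (k\delta(\Lambda))^{-1/2}\cdot (k\epsilon^{1/4})$ needed to close the estimate.
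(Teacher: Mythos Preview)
Your algebraic identity
\[
\frac{\epsilon\,\p_z^2\ai(h)(z)}{b(z)-\lambda-i\alpha}
=\frac{h(z)}{b(z)-\lambda-i\alpha}+i\,\ai(h)(z),
\qquad h:=\eta_{\delta(\Lambda)}b''g,
\]
is correct, but the argument built on it is circular. The two terms on the right are each of size $O(|h(z)|/|b(z)-\lambda|)$ and individually reproduce the main Rayleigh potential; the smallness of $T_{v2}$ must therefore come from their cancellation, and the cancellation error is \emph{exactly} the left-hand side, i.e.\ the original integrand you set out to bound. In fact this identity is the same as \eqref{decomp1}, which is what was used to \emph{define} $T_{v2}$ in the first place. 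Invoking Lemma~\ref{ake6} does not break the loop: that lemma refines $\ai(h)$ only \emph{near the critical layer} $\{b(z)=\lambda\}\subset S_{\delta(\Lambda)/3}$, whereas the integrand of $T_{v2}$ is supported on $|z|>\delta(\Lambda)/3$, away from that set. No global approximation with remainder of the form $O(\epsilon^{1/3}|z|^{-2/3})$ is supplied by Lemma~\ref{ake6}.

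The paper avoids this circularity by integrating by parts once in $z$, which reduces matters to a pointwise bound on $\epsilon\,\p_z\ai(h)$; this is then obtained directly from the weighted estimates \eqref{ake2}--\eqref{ake3.15} of Lemma~\ref{ake1}, giving \eqref{tv2:1}, and Lemma~\ref{LAMweight} converts this into \eqref{eq:tv2:pw}--\eqref{eq:tv2:pw1}. The missing ingredient in your plan is Lemma~\ref{ake1} (not Lemma~\ref{ake6}). Your treatment of \eqref{eq:tv2:kdom} also moves the derivative in the wrong direction: when $|\lambda|\lesssim\epsilon^{1/2}$ and $k\delta(\Lambda)\ge 1$, the modified Green's function varies on scale $1/k\le\delta(\Lambda)\approx\epsilon^{1/4}$, i.e.\ \emph{faster} than the Airy kernel, so transferring a $\p_z$ onto $\mathcal{G}_k$ via integration by parts is counterproductive. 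The paper instead keeps both derivatives on $\ai$ and bounds $\epsilon\,\p_z^2\ai(h)$ directly through Lemma~\ref{ake1} (see \eqref{eq:Airy:derivative1}). Your sketch for \eqref{eq:tv2:2der} (using the equation for $\mathcal{G}_k$ to trade $\p_y^2$ for $k^2+V(y)$ plus a local term) is, by contrast, correct and matches the paper.
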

\begin{remark}
    For $ |\epsilon|^{\frac{1}{2}} \ll |\lambda| \ll 1 $, \eqref{eq:tv2:pw} is sufficient to view $\tc{v}{2}$ as negligible as part of the proof of the limiting absorption principle. However, when $|\lambda| \lesssim \epsilon^{\frac{1}{2}}$, we need to treat $\tc{v}{2}$ like $\tc{I}{2}$ and prove higher regularity estimates and favorable dependence on $k \delta(\Lambda)$.   
\end{remark}

\begin{proof}
Integration by parts in \eqref{tv2:def} yields that $T_{v2}$ can be written as 
\begin{align}
\label{eq:tv2:noder}
    \tc{v}{2}g(y)&= 
     \int_{\T_\tp} \gr (1 -\varphi_{\frac{\delta(\Lambda)}{{3}}}(z) )\frac{\epsilon \p_z^2\ai( \eta_{\delta(\Lambda)} b'' g)(z)}{b(z) - \lambda - i  \alpha }\, dz \notag \\
    &=  - \epsilon \int_{\T_\tp} \p_z \gr \frac{1 - \varphi_{\frac{\delta(\Lambda)}{{3}}} (z)}{b(z) - \lambda - i  \alpha} \p_z\ai( \eta_{\delta(\Lambda)} b'' g)(z)\, dz \notag \\
    &- \epsilon\int_{\T_\tp}  \gr \p_z\left(\frac{1 - \varphi_{\frac{\delta(\Lambda)}{{3}}} (z)}{b(z) - \lambda - i  \alpha} \right) \p_z\ai( \eta_{\delta(\Lambda)} b'' g)(z)\, dz.
\end{align}

Similarly, $\p_y\tc_{v2}$ can be written for $y\in\T_\tp$ as
\begin{equation}
\label{eq:tv2:derivative}
\begin{split}
    \p_y\tc{v}{2}g(y)&= 
     \p_y\int_{\T_\tp} \gr (1 -\varphi_{\frac{\delta(\Lambda)}{{3}}}(z) )\frac{\epsilon \p_z^2\ai( \eta_{\delta(\Lambda)} b'' g)(z)}{b(z) - \lambda - i  \alpha }\, dz  \\
    &=  - \epsilon \p_y\int_{\T_\tp} \p_z \gr \frac{1 - \varphi_{\frac{\delta(\Lambda)}{{3}}} (z)}{b(z) - \lambda - i  \alpha} \p_z\ai( \eta_{\delta(\Lambda)} b'' g)(z)\, dz  \\
    &- \epsilon\int_{\T_\tp} \p_y \gr \p_z\left(\frac{1 - \varphi_{\frac{\delta(\Lambda)}{{3}}} (z)}{b(z) - \lambda - i  \alpha} \right) \p_z\ai( \eta_{\delta(\Lambda)} b'' g)(z)\, dz, 
    \end{split}
    \end{equation}
and using \eqref{mGk1}, as
    \begin{equation}\label{eq:tv2:derivativeJ}
    \begin{split}
    \p_y\tc{v}{2}g(y) =& -\epsilon \frac{1 - \varphi_{\frac{\delta(\Lambda)}{{3}}} (y)}{b(y) - \lambda - i  \alpha} \p_y\ai( \eta_{\delta(\Lambda)} b'' g)(y) \\
    &+ \epsilon \big(V(y) + k^2\big)\int_{\T_\tp} \gr \frac{1 - \varphi_{\frac{\delta(\Lambda)}{{3}}} (z)}{b(z) - \lambda - i  \alpha} \p_z\ai( \eta_{\delta(\Lambda)} b'' g)(z) \, dz   \notag\\
    &-\epsilon \int_{\T_\tp} \p_y(\p_z + \p_y) \gr \frac{1 - \varphi_{\frac{\delta(\Lambda)}{{3}}} (z)}{b(z) - \lambda - i  \alpha} \p_z\ai( \eta_{\delta(\Lambda)} b'' g)(z) \, dz \notag \\
    &- \epsilon\int_{\T_\tp} \p_y \gr \p_z\left(\frac{1 - \varphi_{\frac{\delta(\Lambda)}{{3}}} (z)}{b(z) - \lambda - i  \alpha} \right) \p_z\ai( \eta_{\delta(\Lambda)} b'' g)(z)\, dz.
    \end{split}
\end{equation}
Since $\eta_{\delta(\Lambda)}$ is supported a distance of approximately $\delta(\Lambda)$ away from 0, Lemma \ref{ake1} implies that for $y\in\T_\tp$, 
\begin{align}
\label{tv2:1}
   \left|  \epsilon \frac{1 - \varphi_{\frac{\delta(\Lambda)}{{3}}} (y)}{b(y) - \lambda - i  \alpha} \p_y\ai( \eta_{\delta(\Lambda)} b'' g)(y)  \right| \lesssim 
       \frac{\epsilon^{1/2} \norm{g}_{X^{\sigma_1, \sigma_2}(\mathfrak M)}}{\varrho^{3 + \sigma_1}(y ; \Lambda) \varrho_k^{ \sigma_2}(y ; \Lambda)} .
   \end{align}
The bounds \eqref{eq:tv2:pw1}-\eqref{eq:tv2:pw1} then follow from \eqref{tv2:1} and Lemma \ref{LAMweight}. 

We now consider the case  $|\lambda| \lesssim  \epsilon^{\frac{1}{2}}$. We no longer integrate by parts to remove a derivative from  $\p_y^2\ai$ since the scale of variation of $\gr$ is comparable or smaller than that of $\ai$. 
Lemma \ref{ake1} implies that for $y \in \T_\tp$,
\begin{equation}
\label{eq:Airy:derivative1}
    \left|\epsilon \frac{1 - \varphi_{\frac{\delta(\Lambda)}{{3}}} (y)}{b(y) - \lambda - i  \alpha} \p_y^{2} \ai(\eta_{\delta(\Lambda)} b'' g)(y) \right|\lesssim \frac{\epsilon^{\frac{1}{2}} \norm{g}_{X^{\sigma_1, \sigma_2} (\mathfrak M)}}{\varrho^{ 3 + \sigma_1}(y; \Lambda)\varrho_k^{1 + \sigma_2  }(y; \Lambda)}  ,
\end{equation}
 which, in combination with Lemma \ref{LAMweight} 
 and \eqref{eq:Airy:derivative1}, implies that for $k \delta(\Lambda) \geq 1$, 
\begin{align}
    \left|k^{-\beta}\p_y^\beta\tc{v}{2}g(y) \right|  \lesssim \frac{1}{\big(k\delta(\Lambda)\big)^{1/2}} \frac{\epsilon^{\frac{1}{4}}  k^{\sigma_2} \norm{g
    }_{X^{\sigma_1, \sigma_2}(\mathfrak M)}  }{\delta(\Lambda) \varrho^{\sigma_1}(y;\Lambda)}, \quad \beta \in \{0,1 \}.
\end{align}
For the second derivative estimates, we simply use the identity that
\begin{align}
     \p_y^2\tc{v}{2}g(y)
     =& -\epsilon \frac{1 - \varphi_{\frac{\delta(\Lambda)}{{3}}} (y)}{b(y) - \lambda - i  \alpha} \p_y^2\ai( \eta_{\delta(\Lambda)} b'' g)(y) \notag\\
     &+ (V(y) + k^2)\int_{\T_\tp} \gr \frac{1 - \varphi_{\frac{\delta(\Lambda)}{{3}}} (z)}{b(z) - \lambda - i  \alpha} \epsilon\p_z^2\ai( \eta_{\delta(\Lambda)} b'' g)(z) \, dz.
\end{align}
Therefore for $k\delta(\Lambda) \leq 1$
\begin{equation}
    \varrho_k^2(y ; \Lambda)|\p_y^2\tc{v}{2}g(y)| \lesssim_{\sigma_1, \sigma_2} \frac{\norm{g}_{X^{\sigma_1, \sigma_2}(\mathfrak M)}}{ \varrho^{\sigma_1}(y; \Lambda)\varrho_k^{\sigma_2}(y; \Lambda)    },
\end{equation}
which concludes the proof. 
\end{proof}
The following is an immediate corollary of Lemma \ref{lem:tv2:pw}.
\begin{corollary}
    \label{lem:tv2:X}
    Assume that  $(\alpha, \lambda) \in (-\sigma_0 \epsilon^{\frac{1}{2}}, 1)  \times \Sigma_{j,\delta_0}$,  and  $k \in \Z \cap [1, \infty)$. Then for all $\epsilon \in (0, 1/8)$ the following bounds hold for $\tc{v}{2}: X^{\sigma_1, \sigma_2}(\mathfrak M) \to X^{\sigma_1, \sigma_2}(\mathfrak M)$ 
 with $\sigma_1 \in [0,1]$, $\sigma_1 + \sigma_2 \in (-2,1)$.
\begin{align}
        \label{eq:tv2:X}
       \norm{\tc{v2}{}}_{X^{\sigma_1, \sigma_2} (\mathfrak M)} \lesssim_{\sigma_1, \sigma_2} 
       \begin{cases}
           \frac{\epsilon^{1/2}}{\delta(\Lambda)^2},  & {\rm if}\,\,\lambda \in \Sigma_{j,\delta_0}, \\
           \frac{\epsilon^{\frac{1}{4}}}{ \delta(\Lambda)\lb k \delta(\Lambda) \rb^{1/2} }, & {\rm if}\,\,
           |\lambda| \lesssim  \epsilon^{\frac{1}{2}} \text{ and } k \delta(\Lambda) \geq 1.
           \end{cases}
    \end{align}
    
\end{corollary}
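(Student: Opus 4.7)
The plan is to derive Corollary \ref{lem:tv2:X} directly from the pointwise estimates of Lemma \ref{lem:tv2:pw} by integrating them against the weights defining the $X^{\sigma_1,\sigma_2}(\mathfrak{M})$ norm. There is essentially no new analytic content beyond Lemma \ref{lem:tv2:pw}; the work is purely one of matching weights. I would split into the two cases appearing in the statement and, within each case, separately estimate the $L^\infty$ contribution on $\T_\tp\setminus S^j_{\delta(\Lambda)}$ and the (weighted) $L^2$ contribution on $S^j_{\delta(\Lambda)}$.

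For the first case $\lambda\in\Sigma_{j,\delta_0}$, I would start from \eqref{eq:tv2:pw}, which gives for $\beta\in\{0,1\}$ and all $y\in\T_\tp$,
\begin{equation*}
\varrho_{j,k}^\beta(y;\Lambda)\,|\partial_y^\beta T_{v2}g(y)| \lesssim \frac{\epsilon^{1/2}\|g\|_{X^{\sigma_1,\sigma_2}(\mathfrak M)}}{\delta(\Lambda)^2\,\varrho_j^{\sigma_1+1/2}(y;\Lambda)\,\varrho_{j,k}^{\sigma_2-1/2}(y;\Lambda)}.
\end{equation*}
On $\T_\tp\setminus S^j_{\delta(\Lambda)}$, multiplying by $\varrho_j^{\sigma_1}\varrho_{j,k}^{\sigma_2}$ and using $\varrho_{j,k}\leq \varrho_j$ gives an $L^\infty$ bound of order $\epsilon^{1/2}/\delta(\Lambda)^2\cdot \|g\|_X$. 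On $S^j_{\delta(\Lambda)}$, the weights simplify via $\varrho_j\approx\delta(\Lambda)$ and $\varrho_{j,k}\approx d_{j,k}$, so after multiplying by $d_{j,k}^{\sigma_2+\beta}$ and integrating in $L^2$ over an interval of length $\lesssim\delta(\Lambda)$, a direct computation gives $\delta(\Lambda)^{-1/2+\sigma_1}\|d_{j,k}^{\sigma_2+\beta}\partial_y^\beta T_{v2}g\|_{L^2(S^j_{\delta(\Lambda)})}\lesssim \epsilon^{1/2}/\delta(\Lambda)^2\cdot(d_{j,k}/\delta(\Lambda))^{1/2}\|g\|_X$, which is $\lesssim \epsilon^{1/2}/\delta(\Lambda)^2\cdot\|g\|_X$ since $d_{j,k}\leq\delta(\Lambda)$.

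For the second case $|\lambda|\lesssim \epsilon^{1/2}$ with $k\delta(\Lambda)\geq 1$, I would instead start from \eqref{eq:tv2:kdom}, noting that in this regime $\delta(\Lambda)\approx\epsilon^{1/4}$ and $d_{j,k}=1/k$. On $\T_\tp\setminus S^j_{\delta(\Lambda)}$, we have $\varrho_j\geq\delta(\Lambda)>1/k$, so $\varrho_{j,k}=1/k$ and the product $\varrho_{j,k}^{\sigma_2+\beta}k^{\sigma_2+\beta}=1$ exactly, yielding $\varrho_j^{\sigma_1}\varrho_{j,k}^{\sigma_2+\beta}|\partial_y^\beta T_{v2}g|\lesssim \epsilon^{1/4}/(\delta(\Lambda)\langle k\delta(\Lambda)\rangle^{1/2})\cdot\|g\|_X$. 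On $S^j_{\delta(\Lambda)}$ (length $\approx\delta(\Lambda)$), replacing $\varrho_j$ by $\delta(\Lambda)$ and $d_{j,k}^{\sigma_2+\beta}$ by $k^{-\sigma_2-\beta}$ and taking $L^2$ produces an extra factor of $\delta(\Lambda)^{1/2}$, so the normalization $\delta(\Lambda)^{-1/2+\sigma_1}$ cancels it and we again obtain $\epsilon^{1/4}/(\delta(\Lambda)\langle k\delta(\Lambda)\rangle^{1/2})\cdot\|g\|_X$. Since the statement is phrased only for $\beta\in\{0,1\}$ in the $X$-norm definition, the second-derivative estimate \eqref{eq:tv2:2der} is not needed for this corollary.

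Since both steps amount to bookkeeping that matches powers of $\varrho_j$, $\varrho_{j,k}$, $\delta(\Lambda)$ and $d_{j,k}$, I do not anticipate any genuine obstacle. The only point worth double-checking is that in the $L^2$ estimate on $S^j_{\delta(\Lambda)}$ the resulting power of $d_{j,k}/\delta(\Lambda)$ is non-negative so that the bound $d_{j,k}\leq\delta(\Lambda)$ can be applied; this is indeed the case in both regimes, as the exponent comes out to $1/2$ after combining the $L^2$ normalization factor $\delta(\Lambda)^{-1/2+\sigma_1}$ with the extra $\delta(\Lambda)^{1/2}$ from integration and with the weights from \eqref{eq:tv2:pw} and \eqref{eq:tv2:kdom}.
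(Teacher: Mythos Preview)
Your proposal is correct and matches the paper's approach: the paper simply states that the result is ``an immediate corollary of Lemma \ref{lem:tv2:pw}'' without further proof, and your weight-matching argument is exactly the bookkeeping that makes this immediate. The one minor imprecision is the side remark that $\delta(\Lambda)\approx\epsilon^{1/4}$ in the second case (this need not hold if $\alpha$ is large within $(-\sigma_0\epsilon^{1/2},1)$), but you never actually use it, so the argument stands.
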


\bigskip

\begin{center}
   {\bf Acknowledgement}
\end{center}
We thank Peter Constantin for bringing the paper \cite{Sinai} to our attention. RB would also like to thank Dallas Albritton,  Tarek Elgindi, and Vlad Vicol for helpful discussions and encouragement.

\end{document}